 \numberwithin{equation}{section}
	\newtheorem{theorem}{Theorem}[section]
	\newtheorem{lemma}{Lemma}[section]
	\newtheorem{proposition}{Proposition}[section]
	\newtheorem{remark}{Remark}[section]
	\newtheorem{definition}{Definition}[section]
	\newtheorem{corollary}{Corollary}[section]
	 \theoremstyle{plain}
\journal{}
\begin{document}

\title{Decay characterization of solutions to semi-linear structurally damped $\sigma$-evolution equations with time-dependent damping}
\date{}

\author[1]{Cung The Anh}
\ead{anhctmath@hnue.edu.vn} 
\author[2]{Phan Duc An}
\ead{anan26042001@gmail.com}
\author[3]{Pham Trieu Duong}
\ead{duongptmath@hnue.edu.vn}

\address[1]{Cung T. A., Department of Mathematics\\  Hanoi National University of Education\\ 136 Xuan Thuy, Hanoi, Vietnam}

\address[2]{Phan D. A., Department of Mathematics\\  Hanoi National University of Educatino\\ 136 Xuan Thuy, Hanoi, Vietnam}
\address[3]{ Pham T. D., Department of Mathematics\\  Hanoi National University of Education\\ 136 Xuan Thuy, Hanoi, Vietnam}

\begin{abstract}
In this paper, we study the Cauchy problem to the linear damped $\sigma$-evolution equation with time-dependent damping in the
effective cases
\begin{equation*}
u_{t t}+(-\Delta)^\sigma u+b(t)(-\Delta)^\delta u_t=0,
\end{equation*}
and investigate the decay rates of the solution and its derivatives that are expressed in terms of the decay character of the initial data $u_0(x)=u(0, x)$ and $u_1(x)=u_t(0, x)$. We are interested also in the existence and decay rate of the global in time solution with small data for the corresponding semi-linear problem with the nonlinear term of power type $||D|^\gamma u|^p$. The blow-up results for solutions to the semi-linear problem in the case  $\gamma=0$ are presented to show the sharpness of the exponent $p$.

\end{abstract}
\maketitle

\noindent {\it Keywords}: decay character; semi-linear $\sigma$-evolution equations; global existence of small data solution; decay rates; blow-up result.

\noindent {\it MSC 2020 Mathematics Subject Classification}:  35A01; 35G25; 35L71

\section{Introduction}
\subsection{Motivation}
Starting with the results by Matsumura \cite{Matsumura1976}, the problem of determining the critical exponent
for the damped wave equations
\begin{equation} \label{eq:Matsumura1976}
\begin{cases}u_{t t}-\Delta u+u_t=|u|^p, & (t, x) \in[0, \infty) \times \mathbb{R}^n,\quad p>1, \\ u(0, x)=u_0(x), \quad u_t(0, x)=u_1(x), & x \in \mathbb{R}^n,\end{cases}
\end{equation}
has been investigated successfully by Todorova, Yordanov in \cite{Yordanov2001} and Zhang in \cite{Zhang1999}.

Another issue to explore is related to the linear wave equation with time-dependent damping term
\begin{equation} \label{eq:effective}
\begin{cases}u_{t t}+\left(-\Delta\right)^\sigma u+b(t) u_t=0, & (t, x) \in[0, \infty) \times \mathbb{R}^n, \quad \sigma \geq 1, \\ u(0, x)=u_0(x), u_t(0, x)=u_1(x), & x \in \mathbb{R}^n .\end{cases}
\end{equation}
Asymptotic behavior of solutions and their wave energy essentially depends on the positive coefficient $b=b(t)$ in the damping term. According to the categorization  by J. Wirth in \cite{Wirth2006,  Wirth2007}, the following types of time-dependent dissipation terms are of special interest: scattering to the free wave equation, non-effective dissipation, effective dissipation and over-damping generating. Briefly speaking, if the solution exhibits an asymptotic behavior similar to that of the wave equation, then as $t \rightarrow \infty$, it scatters towards the solution of the free wave equation. This situation is referred to as a {\it scattering-generating} case. If the $L^p-L^q$ estimates, with $1 \leq p \leq q \leq \infty$, for the solution to equation \eqref{eq:effective} are closely linked to those of the solution to the free wave equation, then the damping term is referred to as {\it non-effective}. If the solution to equation \eqref{eq:effective} displays the same decay characteristics as the corresponding parabolic Cauchy problem
$$
\begin{cases}v_t=-\frac{1}{b(t)} \left(-\Delta \right)^\sigma v, & (t, x) \in[0, \infty) \times \mathbb{R}^n, \quad \sigma \geq 1, \\ v(0, x)=v_0(x), & x \in \mathbb{R}^n,\end{cases}
$$
where a suitable data $v_0=v_0(x)$ depends on $u_0, u_1$, then the damping term is classified as {\it effective}. Meanwhile, if there are no established decay estimates for the energy of solutions, then the damping term is called {\it over-damping generating}. Several authors, such as D'Abbicco, Lucente, and Reissig in \cite{DAbbicco_Reissig2013} have succeeded in deriving the estimates for the solution of linear Cauchy problems \eqref{eq:effective} for $\sigma=1$, while Aslan and Dao \cite{Dao2023} have derived  estimates for $\sigma>1$.

In the next step, we  consider the following Cauchy problem for semi-linear damped $\sigma$-evolution equation:
\begin{equation} \label{eq:semilinear_effective}
\begin{cases}u_{t t}+\left(-\Delta \right)^\sigma u+b(t) u_t=|u|^p, & (t, x) \in[0, \infty) \times \mathbb{R}^n, \\ u(0, x)=u_0(x), \quad u_t(0, x)=u_1(x), & x \in \mathbb{R}^n ,\end{cases}
\end{equation}
with $\sigma \geq 1, p>1$. Readers can refer to \cite{DAbbicco_Reissig2013, Dao2023} (in the case of $b(t) u_t$ is effective) and Ikeda and Wakasugi \cite{Ikeda2020} (in the case of $b(t) u_t$ is over damping) for other semi-linear problems for damped wave and $\sigma$-evolution equations with time-dependent damping models. A substantial amount of important results on blow-up mechanics for nonlinear problems of damped wave models has been obtained recently by Lin, Nishihara and Zhai \cite{Nishihara2012}, D’Abbicco and Lucente \cite{DAbbicco2013}, Ikeda and Sobajima \cite{Ikeda2018}, Lai and Takamura \cite{Takamura2018}, Ikeda and Inui \cite{Ikeda2019}, Ikeda, Sobajima and Wakasugi \cite{Sobajima2019}. 

In order to study the uniform decay rates of many dissipative evolution equations with only $L^2$ data, a new idea has been introduced by Bjorland and Schonbek in \cite{Bjorland2009} to associate to a function $u_0 \in L^2\left(\mathbb{R}^n\right)$ a decay character $r^*=r^*\left(u_0\right)$ that assigns an improved decay rate to the solution of the heat equation with the datum $u_0$. The use of the decay character may lead to finer decay estimates of the solutions to dissipative evolution Cauchy problems with only $L^2$ initial data.

Next, we consider the semi-linear structurally damped $\sigma$-evolution
equation
\begin{equation} \label{ineq:semi_linear_b(t)=1}
\begin{cases}
u_{t t}+(-\Delta)^\sigma u+ (-\Delta)^\delta u_t=|u|^p, & (t, x) \in [0, \infty) \times \mathbb{R}^n, \\ u(0, x)=u_0(x), \quad u_t(0, x)=u_1(x), & x \in \mathbb{R}^n,
\end{cases}
\end{equation}
where $\sigma \geq 1, \delta \in [0, \sigma], p>1$ and $n \geq 1$. The corresponding linear Cauchy problem for \eqref{ineq:semi_linear_b(t)=1}
is
\begin{equation} \label{ineq:linear_b(t)=1}
\begin{cases}
u_{t t}+(-\Delta)^\sigma u+ (-\Delta)^\delta u_t=0, & (t, x) \in [0, \infty) \times \mathbb{R}^n, \\ u(0, x)=u_0(x), \quad u_t(0, x)=u_1(x), & x \in \mathbb{R}^n.
\end{cases}
\end{equation}

Using decay character to study the solutions of linear and semilinear equations to obtain desired decay  estimates is a relatively new approach. Cárdenas, Armando and César \cite{Armando2022} (in the case of $\sigma=1, \delta=0$) along with Anh, Duong, Loc \cite{ADL2024} (in the case of $\sigma \geq 1, \delta \in [0, \sigma]$) have successfully established solutions for both linear  equation \eqref{ineq:linear_b(t)=1} and semilinear equations \eqref{ineq:semi_linear_b(t)=1}  based on the decay character. For equation \eqref{eq:1.0}, there have been no results related to the decay character for estimating. Additionally, there have been no blow-up results related to this equation when $\gamma=0$.

In this article, we will apply the Fourier Splitting Method developed by Schonbek in \cite{Schonbek1980, Schonbek1985,  Schonbek1986} based on  the notion of decay characters to study the decay rates of solutions for linear and semi-linear Cauchy problems with time-dependent damping.  More precisely,  we will establish global existence and decay rates of global solutions with small data and derive as well the
blow-up results for the following semi-linear problem with parameters $\sigma \geq 1, \delta \in [0,\frac{\sigma}{2}], \gamma \in [0,\sigma), p>1$ and $n \geq 1$:
\begin{equation}
\begin{cases} \label{eq:1.0}
u_{t t}(t, x)+(-\Delta)^\sigma u(t, x)+b(t) (-\Delta)^\delta u_t(t, x)=||D|^{\gamma} u(t,x)|^p, & (t, x) \in [0, \infty) \times \mathbb{R}^n, \\ u(0, x)=u_0(x), \quad u_t(0, x)=u_1(x), & x \in \mathbb{R}^n.
\end{cases}
\end{equation}
The corresponding linear Cauchy problem for \eqref{eq:1.0} with $\sigma \geq 1, \delta \in [0,\frac{\sigma}{2}] $ and $n \geq 1$ is the following
\begin{equation}
\begin{cases} \label{eq:1.1}
u_{t t}(t, x)+(-\Delta)^\sigma u(t, x)+b(t) (-\Delta)^\delta u_t(t, x)=0, & (t, x) \in [0, \infty) \times \mathbb{R}^n, \\ u(0, x)=u_0(x), \quad u_t(0, x)=u_1(x), & x \in \mathbb{R}^n.
\end{cases}
\end{equation}

\subsection{Notations}

Let $P_r\left(u_0\right)$ be the decay indicator of the function $u_0 \in L^2\left(\mathbb{R}^n\right)$ (see details in Definition \ref{definition_2.1}). Roughly speaking, the indicator $P_r\left(u_0\right)$ compares the Fourier transform $\widehat{u_0}$ to the power $|\xi|^r$ near $\xi=0$. Let $r^*\left(u_0\right)$ be the decay character of the function $u_0$ (see Definition \ref{definition_2.2}). We denote by $\|h\|_{P, \eta}$ and $\|h\|_{P}$ the following norm of $h$ in $H^\eta$ and $L^2$ (see the details in Section \ref{section4.6}).
$$
\|h\|_{P, \eta}=
\|h\|_{_{\dot{H}^\eta}}+\left(P_{r^*\left(h\right)}\left(h\right)\right)^{1 / 2},
$$
$$
\|h\|_{P}=
\|h\|_{L^2}+\left(P_{r^*\left(h\right)}\left(h\right)\right)^{1 / 2}.
$$

Throughout the paper, for nonnegative functions $f(t), g(t)$ the notation $f(t) \lesssim g(t)$ is used to denote the inequalities $f(t) \leq C g(t)$ that are satisfied uniformly for all $t>0$, with a positive constant $C$.

 The spaces $H^a$ and $\dot{H}^a$ with $a \geq 0$ are fractional Sobolev spaces  (Bessel and Riesz potential spaces) based on the $L^2$ spaces. We also denote by $\langle D\rangle^a$ and $|D|^a$ the pseudo-differential operators with symbol $\langle\xi\rangle^a$ and $|\xi|^a$, respectively.

For the function $u=u(t, x)$ defined on $\left\{(t, x): 0 \leq t \in \mathbb{R}, \quad x \in \mathbb{R}^n\right\}$, for brevity we  write $\|u\|_{L^p}$ instead of $\|u\|_{L^p(\mathbb{R}^n)}$. For $s \in[0, t]$, we denote 
\begin{align*}
\mathcal{B}(s, t):=\int_s^t \frac{1}{b(\tau)} d \tau, ~~~
\widehat{\mathcal{B}}(s, t):=\int_s^t b(\tau) d \tau.
\end{align*}
\subsection{ Our main results}

\begin{theorem} \label{theorem:1.1}
Let $\sigma \geq 1, \delta \in\left[0, \frac{\sigma}{2}\right]$ and $\left(u_0, u_1\right) \in D^\sigma:=H^\sigma \times L^2$. If $-\frac{n}{2}<r^*\left(u_0\right), r^*\left(u_1\right) - 2 \delta < \infty$ and $b(t)$ satisfies assumptions from Definition \ref{definition_2.0}, then there exists a solution $u \in \mathcal{C}\left([0, \infty), H^\sigma \right) \cap \mathcal{C}^1\left([0, \infty), L^2\right)$ to \eqref{eq:1.1} 
with the following decay rates for all $\alpha\in [0, \sigma]$
\begin{enumerate}[i)]
\item for $\delta \in (0, \frac{\sigma}{2}]:$
\begin{align} \label{ineq:linear_1}
\|u(t,\cdot)\|_{\dot{H}^\alpha} \lesssim & \frac{1}{b(t)} \left(1+\mathcal{B}(0,t)\right)^{-\frac{r^{*}(u_1) + \frac{n}{2} + \alpha - 2 \delta}{2\sigma - 2\delta}} \|u_1\|_{P}+  \left(1+\mathcal{B}(0,t)\right)^{-\frac{r^{*}(u_0) + \frac{n}{2} + \alpha}{2\sigma - 2\delta}} \|u_0\|_{P, \alpha}\notag \\
 +& \frac{1}{b(t)} \left(1+t\right)^{-\frac{r^{*}(u_1) + \frac{n}{2} + \alpha - 2 \delta}{2\sigma - 2\delta}} \|u_1\|_{P} +  \left(1+t\right)^{-\frac{r^{*}(u_0) + \frac{n}{2} + \alpha}{2\sigma - 2\delta}} \|u_0\|_{P, \alpha},
\end{align}
\begin{align} \label{ineq:linear_2}
\|u_t(t,\cdot)\|_{L^2} \lesssim & \|u_1\|_{P} \left(\frac{1}{b(0)}+\frac{1}{b(t)} \right) \frac{1}{b(t)}  \left(1+\mathcal{B}(0,t)\right)^{-\frac{r^{*}(u_1) + \frac{n}{2} + 2 \sigma - 4 \delta}{2\sigma - 2\delta}} \notag\\
+& \|u_0\|_{P, \sigma} \frac{1}{b(t)} \left(1+\mathcal{B}(0,t)\right)^{-\frac{r^{*}(u_0) + \frac{n}{2} + 2 \sigma - 2 \delta}{2\sigma - 2\delta}} \notag\\
 +& \|u_1\|_{P} \left(1+\widehat{\mathcal{B}}(0,t)\right)^{-\frac{r^{*}(u_1) + \frac{n}{2} + 2 \sigma - 4 \delta}{2\sigma - 2\delta}} + \|u_0\|_{P, \sigma} \frac{1}{b(t)} \left(1+t\right)^{-\frac{r^{*}(u_0) + \frac{n}{2} + 2 \sigma - 2 \delta}{2\sigma - 2\delta}}.
\end{align}
\item for $\delta=0:$
\begin{equation} \label{ineq:linear_1.000}
\|u(t,\cdot)\|_{\dot{H}^\alpha} \lesssim \|u_1\|_{P} \left(1+\mathcal{B}(0,t)\right)^{-\frac{r^{*}(u_1) + \frac{n}{2} + \alpha}{2\sigma}} + \|u_0\|_{P, \alpha} \left(1+\mathcal{B}(0,t)\right)^{-\frac{r^{*}(u_0) + \frac{n}{2} + \alpha}{2\sigma }},
\end{equation}
\begin{equation}  \label{ineq:linear_2.000}
\|u_t(t,\cdot)\|_{L^2} \lesssim \|u_1\|_{P} \frac{1}{b(t)} \left(1+\mathcal{B}(0,t)\right)^{-\frac{r^{*}(u_1) + \frac{n}{2} + 2 \sigma}{2\sigma}} + \|u_0\|_{P, \sigma} \frac{1}{b(t)} \left(1+\mathcal{B}(0,t)\right)^{-\frac{r^{*}(u_0) + \frac{n}{2} + 2 \sigma}{2\sigma}}.
\end{equation}
\end{enumerate}
\end{theorem}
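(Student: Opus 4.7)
The plan is to pass to Fourier variables, derive sharp pointwise multiplier estimates for $\hat u(t,\xi)$ and $\hat u_t(t,\xi)$, and then carry out a Fourier splitting argument in which the decay character of the data governs the low-frequency piece while the exponential dissipation controls the high-frequency piece.

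After Fourier transforming \eqref{eq:1.1} in $x$ one obtains the non-autonomous second-order ODE $\hat u_{tt} + |\xi|^{2\sigma}\hat u + b(t)|\xi|^{2\delta}\hat u_t = 0$, whose characteristic roots are
\[
\lambda_\pm(t,\xi) = \tfrac{1}{2}\bigl(-b(t)|\xi|^{2\delta} \pm \sqrt{b(t)^2|\xi|^{4\delta}-4|\xi|^{2\sigma}}\bigr).
\]
The diffusive zone $b(t)^2|\xi|^{4\delta}\gtrsim |\xi|^{2\sigma}$ has real roots, with dominant slow eigenvalue $\lambda_+(t,\xi) \sim -|\xi|^{2\sigma-2\delta}/b(t)$ and fast eigenvalue $\lambda_-(t,\xi) \sim -b(t)|\xi|^{2\delta}$; the complementary oscillating zone has complex roots with real part $-b(t)|\xi|^{2\delta}/2$. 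Writing $\hat u(t,\xi) = K_0(t,\xi)\hat u_0(\xi) + K_1(t,\xi)\hat u_1(\xi)$, I expect the pointwise bounds
\[
|K_0(t,\xi)| \lesssim e^{-c|\xi|^{2\sigma-2\delta}\mathcal{B}(0,t)},\qquad |K_1(t,\xi)| \lesssim \frac{1}{b(t)|\xi|^{2\delta}}\,e^{-c|\xi|^{2\sigma-2\delta}\mathcal{B}(0,t)}
\]
in the diffusive zone, together with $|K_j(t,\xi)| \lesssim e^{-c|\xi|^{2\delta}\widehat{\mathcal{B}}(0,t)}$ in the oscillating zone; these come from the variation-of-constants formula applied to the above ODE, using the structural assumptions on $b$ in Definition~\ref{definition_2.0} to absorb the $b'(t)$ remainder terms. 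Analogous bounds for $\partial_t K_0, \partial_t K_1$ follow with an extra factor $|\xi|^{2\sigma-2\delta}/b(t)$ inherited from $\lambda_+$.

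With these kernels available, a dyadic Fourier splitting along $|\xi|=\rho(t)$ with $\rho(t)^{2\sigma-2\delta}\mathcal{B}(0,t)\sim 1$ completes the argument. For the low-frequency piece of $\int |\xi|^{2\alpha}|\hat u|^2\,d\xi$ one inserts and extracts $|\xi|^{2 r^*(u_j)}$ and appeals to Definition~\ref{definition_2.2} to bound
\[
\int_{|\xi|\leq \rho(t)}|\xi|^{2\beta}|\widehat{u_j}(\xi)|^2\,d\xi \lesssim P_{r^*(u_j)}(u_j)\,\rho(t)^{n+2\beta+2r^*(u_j)},
\]
valid whenever $r^*(u_j)+n/2+\beta>0$. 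For the $u_1$ contribution the factor $|\xi|^{-2\delta}$ from $K_1$ forces $\beta=\alpha-2\delta$, which is precisely the reason the statement requires $r^*(u_1)-2\delta > -n/2$. Substituting $\rho(t)\sim \mathcal{B}(0,t)^{-1/(2\sigma-2\delta)}$ converts these bounds into the $(1+\mathcal{B}(0,t))^{-\cdots/(2\sigma-2\delta)}$ terms of \eqref{ineq:linear_1}--\eqref{ineq:linear_2}. For the high-frequency piece, the exponential weight $e^{-c|\xi|^{2\sigma-2\delta}\mathcal{B}(0,t)}$ (respectively $e^{-c|\xi|^{2\delta}\widehat{\mathcal{B}}(0,t)}$) swamps any polynomial $|\xi|^{2\beta}$ factor and, after invoking the hypotheses in Definition~\ref{definition_2.0} that relate $b(t)$, $t$, $\mathcal{B}$ and $\widehat{\mathcal{B}}$, delivers the $(1+t)^{-\cdots}$ and $(1+\widehat{\mathcal{B}})^{-\cdots}$ tails. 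When $\delta=0$ the oscillating zone is absent (or collapses to a bounded region) and only the pure $\mathcal{B}$-rates of \eqref{ineq:linear_1.000}--\eqref{ineq:linear_2.000} survive.

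The main obstacle is the sharp asymptotic description of $K_0$ and $K_1$ in the diffusive zone when $b$ is time-dependent: the non-autonomous ODE does not diagonalize as cleanly as in the $b\equiv 1$ setting of \cite{ADL2024}, so one has to use a Liouville/WKB-type substitution to reduce the problem to a nearly autonomous one and then show that the error generated by $b'(t)/b(t)$ is subsumed by the dominant exponential factor via the structural assumptions collected in Definition~\ref{definition_2.0}. Once the kernel bounds are proved, all four terms in \eqref{ineq:linear_1} (and the six in \eqref{ineq:linear_2}) emerge from pairing the low- and high-frequency splittings with the two natural time scales $\mathcal{B}(0,t)$ and $t$ (respectively $\widehat{\mathcal{B}}(0,t)$).
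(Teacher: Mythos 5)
Your overall strategy (Fourier transform, zone decomposition near and away from the characteristic curve, kernel bounds, then a Fourier splitting that uses the decay character of the data on low frequencies and exponential dissipation on high frequencies) is the same strategy the paper follows. However, your plan contains a genuine gap in the zone decomposition, and as a result you have misattributed the origin of two of the four terms in \eqref{ineq:linear_1} and of the $\widehat{\mathcal{B}}$-term in \eqref{ineq:linear_2}.

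Your picture has only two regions: a ``diffusive'' zone where the roots are real and a complementary ``oscillating'' zone, with kernel bound $e^{-c|\xi|^{2\sigma-2\delta}\mathcal{B}(0,t)}$ in the former. This misses the key point that near $\xi=0$ the quantity $m(t,\xi)=|\xi|^{2\sigma}-\tfrac{1}{4}|\xi|^{4\delta}b^2(t)-\tfrac{1}{2}|\xi|^{2\delta}b'(t)$ is \emph{not} dominated by $|\xi|^{2\sigma}-\tfrac{1}{4}|\xi|^{4\delta}b^2(t)$: the $b'(t)$ term is of the same order, and the two-term diagonalization used in the elliptic region no longer controls the remainder. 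The paper therefore carves out an additional \emph{dissipative} zone $\{|\xi|^{2\delta}\leq d_0/((1+t)b(t))\}$ (inside the elliptic region) and treats it separately by a Volterra-integral/Gronwall argument. In that zone the bounds are of the form $|\widehat{K}_1|\lesssim \frac{1}{|\xi|^{2\delta}b(t)}e^{-|\xi|^{2\sigma-2\delta}(t-s)}$ and $|\partial_t \widehat{K}_1|\lesssim e^{-|\xi|^{2\delta}\widehat{\mathcal{B}}(s,t)}$, with exponential rates in $(t-s)$ and $\widehat{\mathcal{B}}$, \emph{not} in $\mathcal{B}$. It is precisely this low-frequency dissipative region that produces the $(1+t)^{-\cdot}$ and $(1+\widehat{\mathcal{B}}(0,t))^{-\cdot}$ terms in the theorem, via the decay-character estimate applied with the weight $e^{-c|\xi|^{2\sigma-2\delta}t}$ (respectively $e^{-c|\xi|^{2\delta}\widehat{\mathcal{B}}(0,t)}$). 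In your write-up you ascribe these terms to the high-frequency tail, which is wrong: the high-frequency region contributes a super-polynomial (exponential in $\widehat{\mathcal{B}}(0,t)$) quantity that is swallowed by everything else, and without isolating the dissipative zone you would not be able to reproduce the second line of \eqref{ineq:linear_1} or the $\widehat{\mathcal{B}}$-term in \eqref{ineq:linear_2}.

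A related and more localized error: you assert that $\partial_t K_1$ carries ``an extra factor $|\xi|^{2\sigma-2\delta}/b(t)$ inherited from $\lambda_+$''. That is true in the elliptic zone, but false in the dissipative zone, where $\widehat{K}_1$ decays like $e^{-|\xi|^{2\sigma-2\delta}(t-s)}$ while $\partial_t \widehat{K}_1$ decays like $e^{-|\xi|^{2\delta}\widehat{\mathcal{B}}(s,t)}$ — a qualitatively different exponential, not a polynomial multiple of the same one. This difference is exactly what makes the $u_t$-estimate \eqref{ineq:linear_2} have a $(1+\widehat{\mathcal{B}}(0,t))$-term that is absent from the $u$-estimate. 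To repair the argument you would need to introduce the dissipative zone, derive the Volterra/Gronwall bounds for the fundamental solution there (cf.\ Lemma~\ref{lemma_diss_1}), and track the three distinct time scales $\mathcal{B}(0,t)$, $t$, and $\widehat{\mathcal{B}}(0,t)$ through the low-frequency Fourier splitting.
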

We obtain the next result on the global (in time) existence of small data solutions.
\begin{theorem} \label{theorem:1.3}Let $\sigma \geq 1, \delta \in\left[0, \frac{\sigma}{2}\right],  0 \leq \gamma < \sigma$ and $n \in \mathbb{N}^*$. Assume that $b(t)$ satisfies Definition \ref{definition_2.0} and $\left(u_0, u_1\right) \in H^\sigma \times L^2$ satisfy
\begin{align*}
-\frac{n}{2}&<\min \left\{r^*\left(u_0\right), r^*\left(u_1\right)-2 \delta\right\} \leq-2 \delta & \text{ if } b^{\prime}(t) \geq 0 \text{ and } \delta\in (0, \sigma], \\
-\frac{n}{2}&<\min \left\{r^*\left(u_0\right), r^*\left(u_1\right)-2\sigma-2 \delta\right\} \leq \min \left\{r^*\left(u_0\right), r^*\left(u_1\right)-2 \delta\right\}\le  -2\delta & \text{ if } b^{\prime}(t) < 0 \text{ and } \delta\in (0, \sigma],\\
-\frac{n}{2}&<\min \left\{r^*\left(u_0\right), r^*\left(u_1\right)\right\} \leq 0 & \text{ if } \delta=0.
\end{align*}
Let $\omega:=\frac{n}{n+\min \left\{ r^{*}(u_0), r^{*}(u_1) - 2 \delta \right\} + 2 \delta}$, and \begin{equation} \label{p_*}
p^*:=\tfrac{n + 2\omega(\sigma-\delta)}{\omega n + \omega \min \left\{ r^{*}(u_0), r^{*}(u_1) - 2\delta \right\}+\omega \gamma}.
\end{equation}
We assume that $p>p^*$  and satisfies the following condition:
$$
\begin{cases} p \in\left[\frac{2}{\omega}, \infty\right) & \text { if } n \leq 2 \sigma - 2 \gamma , \\ p \in\left[\frac{2}{\omega}, \frac{n}{n-2 \sigma+2\gamma}\right] & \text { if } 2 \sigma - 2 \gamma <n \leq \frac{4 \sigma - 4 \gamma}{2-\omega}.\end{cases}
$$
Then there exists a constant $\varepsilon>0$ such that for the initial data $\left(u_0, u_1\right) \in D^\sigma:=H^\sigma \times L^2$ with the norm $\left\|\left(u_0, u_1\right)\right\|_{D^\sigma}:=\left\|u_0\right\|_{P, \sigma}+\left\|u_1\right\|_{P}<\varepsilon$,  problem \eqref{eq:1.0} admits a unique global in time solution $u \in \mathcal{C}\left([0, \infty), H^\sigma\right) \cap \mathcal{C}^1 \left([0, \infty), L^2\right)$ with the following decay rates:\\

\noindent i) for $\delta \in (0, \frac{\sigma}{2}]$ and $b^{\prime}(t) \geq 0:$ 
\begin{align*}
\|u(t,\cdot)\|_{L^2} & \lesssim \left\|\left(u_0, u_1\right)\right\|_{D^\sigma} \left(1+\mathcal{B}(0,t)\right)^{-\frac{\frac{n}{2} + \min \left\{ r^{*}(u_0), r^{*}(u_1) - 2 \delta \right\} }{2 \sigma - 2 \delta}}, \\
\|u(t,\cdot)\|_{\dot{H}^\sigma}& \lesssim \left\|\left(u_0, u_1\right)\right\|_{D^\sigma} \left(1+\mathcal{B}(0,t)\right)^{-\frac{\frac{n}{2} + \sigma + \min \left\{ r^{*}(u_0), r^{*}(u_1) - 2 \delta \right\} }{2 \sigma - 2 \delta}}, \\
\|u_t(t,\cdot)\|_{L^2} & \lesssim \left\|\left(u_0, u_1\right)\right\|_{D^\sigma} \frac{1}{b(t)}\left(1+\mathcal{B}(0,t)\right)^{-\frac{\frac{n}{2} + 2 \sigma - 2 \delta + \min \left\{ r^{*}(u_0), r^{*}(u_1)- 2 \delta \right\} }{2 \sigma - 2 \delta}}.
\end{align*}
ii) for $\delta \in (0, \frac{\sigma}{2}]$ and $b^{\prime}(t) < 0:$
\begin{align*}
\|u(t,\cdot)\|_{L^2} & \lesssim \left\|\left(u_0, u_1\right)\right\|_{D^\sigma} \left(1+\widehat{\mathcal{B}}(0,t)\right)^{-\frac{\frac{n}{2} + \min \left\{ r^{*}(u_0), r^{*}(u_1) - 2 \delta \right\} }{2 \sigma - 2 \delta}}, \\
\|u(t,\cdot)\|_{\dot{H}^\sigma}& \lesssim \left\|\left(u_0, u_1\right)\right\|_{D^\sigma} \left(1+\widehat{\mathcal{B}}(0,t)\right)^{-\frac{\frac{n}{2} + \sigma + \min \left\{ r^{*}(u_0), r^{*}(u_1) - 2 \delta \right\} }{2 \sigma - 2 \delta}}, \\
\|u_t(t,\cdot)\|_{L^2} & \lesssim  \left\|\left(u_0, u_1\right)\right\|_{D^\sigma} b(t) \left(1+\widehat{\mathcal{B}}(0,t)\right)^{-\frac{\frac{n}{2} + 2 \sigma - 2 \delta + \min \left\{ r^{*}(u_0), r^{*}(u_1)- 2 \delta \right\} }{2 \sigma - 2 \delta}}.
\end{align*}
iii) for $\delta=0:$
\begin{align*}
\|u(t,\cdot)\|_{L^2} & \lesssim \left\|\left(u_0, u_1\right)\right\|_{D^\sigma} \left(1+\mathcal{B}(0,t)\right)^{-\frac{\frac{n}{2} + \min \left\{ r^{*}(u_0), r^{*}(u_1)\right\} }{2 \sigma}}, \\
\|u(t,\cdot)\|_{\dot{H}^\sigma}& \lesssim \left\|\left(u_0, u_1\right)\right\|_{D^\sigma} \left(1+\mathcal{B}(0,t)\right)^{-\frac{\frac{n}{2} + \sigma + \min \left\{ r^{*}(u_0), r^{*}(u_1) \right\} }{2 \sigma}}, \\
\|u_t(t,\cdot)\|_{L^2} & \lesssim  \left\|\left(u_0, u_1\right)\right\|_{D^\sigma} \frac{1}{b(t)} \left(1+\mathcal{B}(0,t)\right)^{-\frac{\frac{n}{2} + 2 \sigma+ \min \left\{ r^{*}(u_0), r^{*}(u_1) \right\} }{2 \sigma - 2 \delta}}.
\end{align*}
\end{theorem}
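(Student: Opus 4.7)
The plan is to apply Banach's fixed-point theorem to the Duhamel integral operator associated with \eqref{eq:1.0}. Writing $K_1(t,s)$ for the linear propagator that maps a source datum $(0,f)$ inserted at time $s$ to the solution at time $t$, any solution of \eqref{eq:1.0} satisfies
\[
u(t,x) = u^{\mathrm{lin}}(t,x) + \int_0^t K_1(t,s) *_x \bigl||D|^\gamma u(s,x)\bigr|^p\, ds =: \Phi(u)(t,x),
\]
where $u^{\mathrm{lin}}$ is the linear solution from Theorem~\ref{theorem:1.1}. I introduce a solution space $X(T)$ equipped with the norm
\[
\|u\|_{X(T)} := \sup_{t\in[0,T]} \Bigl( \mathcal{M}_0(t)^{-1} \|u(t,\cdot)\|_{L^2} + \mathcal{M}_\sigma(t)^{-1} \|u(t,\cdot)\|_{\dot H^\sigma} + \mathcal{M}_1(t)^{-1}\|u_t(t,\cdot)\|_{L^2} \Bigr),
\]
where $\mathcal{M}_0, \mathcal{M}_\sigma, \mathcal{M}_1$ are precisely the three decay weights appearing on the right-hand sides of (i), (ii) or (iii), depending on the regime of $\delta$ and the sign of $b'$. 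The two inequalities to establish uniformly in $T>0$ are
\[
\|\Phi(u)\|_{X(T)} \lesssim \|(u_0,u_1)\|_{D^\sigma} + \|u\|_{X(T)}^p, \qquad \|\Phi(u)-\Phi(v)\|_{X(T)} \lesssim \bigl(\|u\|_{X(T)}^{p-1}+\|v\|_{X(T)}^{p-1}\bigr)\|u-v\|_{X(T)},
\]
whence smallness of $\|(u_0,u_1)\|_{D^\sigma}<\varepsilon$ will yield a unique fixed point in a small ball of $X(T)$, and the norm definition gives the stated decay rates directly.

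The linear piece is controlled by Theorem~\ref{theorem:1.1}. For the nonlinear piece one reapplies Theorem~\ref{theorem:1.1} to the Cauchy problem started at time $s$ with source datum $(0, F(u(s,\cdot)))$, $F(v)=||D|^\gamma v|^p$. I will split $\int_0^t = \int_0^{t/2}+\int_{t/2}^t$: on $[0,t/2]$ the propagator retains its full decay in $t-s$, whereas on $[t/2,t]$ one uses the uniform-in-time $L^2$ bound on the propagator. The source is controlled in $L^1$ and $L^2$ by Gagliardo--Nirenberg,
\[
\|F(u(s,\cdot))\|_{L^q} = \bigl\||D|^\gamma u(s,\cdot)\bigr\|_{L^{pq}}^p \lesssim \|u(s,\cdot)\|_{L^2}^{p(1-\theta_q)}\|u(s,\cdot)\|_{\dot H^\sigma}^{p\theta_q}, \qquad q\in\{1,2\},
\]
with $\theta_q=(\tfrac{n}{2}-\tfrac{n}{pq}+\gamma)/\sigma$; the hypotheses $p\ge 2/\omega$ and $p\le n/(n-2\sigma+2\gamma)$ when $n>2\sigma-2\gamma$ are exactly what render both values of $\theta_q$ admissible in the GN range $[\gamma/\sigma,1]$. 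Plugging the $X(T)$-bounds on $\|u(s)\|_{L^2}$ and $\|u(s)\|_{\dot H^\sigma}$ into the inequality above produces integrand weights of the form $(1+\mathcal{B}(0,s))^{-Ap}$ (or $(1+\widehat{\mathcal{B}}(0,s))^{-Ap}$ in case (ii)). The threshold exponent $p^*$ of \eqref{p_*} is computed so that, under $p>p^*$, both halves of the split integral can be absorbed into the target outer weights $\mathcal{M}_\bullet(t)$.

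The main obstacle is that Theorem~\ref{theorem:1.1} requires control of the $P$-seminorm of the source, and the decay character of $F(u(s,\cdot))$ is not inherited in a transparent way from $r^*(u_0)$ and $r^*(u_1)$. My plan is to bypass this by invoking the Bjorland--Schonbek embedding $\|h\|_{P}\lesssim\|h\|_{L^1}+\|h\|_{L^2}$ (valid whenever $h\in L^1\cap L^2$, and corresponding to the admissible worst-case decay character $r^*(h)=-\tfrac{n}{2}$), so that the $P$-norm of $F(u(s,\cdot))$ is majorized precisely by the two Gagliardo--Nirenberg estimates above. This is fully compatible with the standing assumption $-\tfrac{n}{2}<\min\{r^*(u_0),\,r^*(u_1)-2\delta\}$, since $r^*$ of the nonlinear source is never made sharper than the one already driving the linear part. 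The three cases (i)--(iii) are treated in parallel, the only changes being $\mathcal{B}\leftrightarrow\widehat{\mathcal{B}}$ driven by the sign of $b'$ and $1/b(t)\leftrightarrow b(t)$ in the $u_t$-estimate, as already reflected in \eqref{ineq:linear_1}--\eqref{ineq:linear_2.000}.
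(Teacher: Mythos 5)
Your overall skeleton — the Banach fixed-point set-up on the ball of a weighted space $X(T)$ with weights coming from Theorem~\ref{theorem:1.1}, the Duhamel split $\int_0^{t/2}+\int_{t/2}^t$, and the Gagliardo--Nirenberg control of the nonlinear source — is the right one and coincides with the paper's scheme. But the key technical ingredient you substitute in is not correct, and the error matters for exactly the part of the parameter range that makes the exponent $\omega$ relevant.

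There are two problems with your treatment of the Duhamel integrand. First, the claim that $\|h\|_{P}\lesssim\|h\|_{L^1}+\|h\|_{L^2}$ "corresponds to the worst-case decay character $r^*(h)=-n/2$" has the inequality backwards: if $h\in L^1$ then $\widehat h\in L^\infty$, so $\int_{B(\rho)}|\widehat h|^2\lesssim\|h\|_{L^1}^2\rho^{n}$, which forces $P_r(h)=0$ for every $r<0$ and hence $r^*(h)\geq 0$. An $L^1\cap L^2$ source therefore does not mimic the slow-decaying regime $r^*\approx -n/2$; it always sits at the \emph{fast} end of the scale, and the bound you would actually obtain by re-running Theorem~\ref{theorem:1.1} on the source is the ordinary $L^1\cap L^2\to L^2$ parabolic rate (what one would get from $\omega=1$), not a rate tuned to $\min\{r^*(u_0),r^*(u_1)-2\delta\}$. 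That overshoot in the rate is not by itself fatal (one can always downgrade), but it shows that the decay-character machinery is not actually doing anything in your argument once the source is measured in $L^1\cap L^2$.

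Second, and decisively, controlling $\|F(u(s,\cdot))\|_{L^1}=\||D|^\gamma u(s,\cdot)\|_{L^p}^p$ by Gagliardo--Nirenberg with exponent $\theta_1=\bigl(\tfrac n2-\tfrac np+\gamma\bigr)/\sigma$ forces $\theta_1\geq\gamma/\sigma$, i.e.\ $p\geq 2$. But the theorem's hypotheses only guarantee $p\geq 2/\omega$, and under the standing assumption $-n/2<\min\{r^*(u_0),r^*(u_1)-2\delta\}\leq -2\delta$ one has $\omega\in[1,2)$ with $\omega>1$ precisely when the minimum is strictly below $-2\delta$; in that range $2/\omega<2$ and there are admissible $p\in[2/\omega,2)$ for which your $q=1$ Gagliardo--Nirenberg interpolation simply does not apply. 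The paper avoids this by never taking $q=1$: it proves dedicated $(L^\omega\cap L^2)\to L^2$ propagator estimates (Propositions~\ref{proposition:1.3} and \ref{proposition_b(t)_increasing}) directly from the pointwise Fourier-multiplier bounds of Corollaries~\ref{Corollary_4.1}--\ref{Corollary_4.3}, and then interpolates the nonlinearity at level $L^{\omega p}$, whose admissibility $\theta_2(\omega p)\geq\gamma/\sigma$ is exactly $\omega p\geq 2$, i.e.\ $p\geq 2/\omega$. The number $\omega$ is thus not a bookkeeping device: it is what lets the source space shrink from $L^1$ toward $L^2$ as $\min\{r^*(u_0),r^*(u_1)-2\delta\}$ increases toward $-2\delta$, keeping the Gagliardo--Nirenberg range large enough. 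To repair your proof you would have to prove the $(L^\omega\cap L^2)$--$L^2$ estimate rather than appealing to Theorem~\ref{theorem:1.1} with an $L^1$ source, and you would also need the distinct growth hypotheses $\tfrac1\omega-\tfrac12-\tfrac{2\delta}{n}\geq 0$ (for $b'\geq 0$) versus $\tfrac1\omega-\tfrac12-\tfrac{2\sigma+2\delta}{n}\geq 0$ (for $b'<0$) that feed into the second and third admissibility conditions in the theorem statement and are not mere $\mathcal B\leftrightarrow\widehat{\mathcal B}$ relabelings.
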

\begin{remark}{\rm If $b(t)\equiv 1$ (and $\gamma=0$ in \eqref{eq:1.0}), from Theorems  \ref{theorem:1.1} and  \ref{theorem:1.3}, we recover of course the corresponding results in \cite{ADL2024} for linear/semi-linear structurally damped $\sigma$-evolution equations in the case of constant coefficients.  As pointed out in \cite{ADL2024},  these results recover/improve many existing results in literature as particular cases and contain even some new ones.

For example,  if $\left(u_0, u_1\right) \in \left(H^\alpha \cap \dot{H}^{-\eta} \right) \times \left(L^2 \cap \dot{H}^{-\eta} \right)$ with $\eta \in \left( 0, \frac{n}{2} \right)$. Then,  since $|\xi|^{-\eta}\widehat{u}_0, |\xi|^{-\eta}\widehat{u}_1 \in L^2\left(\mathbb{R}^n\right)$, we have 
$$
\begin{cases}
\rho^{-2 r^*(u_0) - n} \int_{B(\rho)}\left|\widehat{u}_0(\xi)\right|^2 d \xi \leq \rho^{2 \eta - 2 r^*(u_0) - n} \int_{B(\rho)} |\xi|^{-2 \eta} \left|\widehat{u}_0(\xi)\right|^2 d \xi \leq C \rho^{2 \eta - 2 r^*(u_0) - n} , \\
\rho^{- 2 r^*(u_1) - n} \int_{B(\rho)}\left|\widehat{u}_1(\xi)\right|^2 d \xi \leq \rho^{2 \eta - 2 r^*(u_1) - n} \int_{B(\rho)} |\xi|^{-2 \eta} \left|\widehat{u}_1(\xi)\right|^2 d \xi \leq C \rho^{2 \eta - 2 r^*(u_1) - n} .
\end{cases}
$$
From the definition of decay character, we obtain that $r^*(u_0), r^*(u_1) \geq \eta - \frac{n}{2}$.  Therefore,  in Theorem 1.2,  if the data $\left(u_0, u_1\right) \in \left(H^\alpha \cap \dot{H}^{-\eta} \right) \times \left(L^2 \cap \dot{H}^{-\eta} \right)$ with $\eta \in \left( 0, \frac{n}{2} \right)$ and $\sigma=1, \delta=0, \gamma=0,  b(t)\equiv 1$, we recover the result obtained very recently by Chen and Reissig in \cite[Theorem 1]{Chen2023} for damped wave equations with data from Sobolev spaces of negative order.}
\end{remark}

The next result shows the sharpness of the exponent $p$ to \eqref{eq:1.0}.
\begin{theorem} \label{theorem:1.5}
(Blow-up). Let $\sigma \geq 1, \delta \in [0,\sigma/2], \gamma=0$, $n > 2 \delta$. Assume that $b(t)$ satisfies Definition \ref{definition_2.0} and condition \eqref{B-L} below,  and  the initial data  $u_0 \in W^{2 \delta, 1}(\mathbb{R}^n)$ and $u_1 \in L^1(\mathbb{R}^n)$ are chosen such that
\begin{equation} \label{condition_u0u1}
\begin{cases}
\displaystyle\int_{\mathbb{R}^n} \left(- \mathbb{A}_0 u_0(x) + \mathbb{B}_0 u_1(x) + (-\Delta)^\delta \left[ \mathbb{A}_0 u_0(x) \right] + (-\Delta)^\delta u_0(x)\right) dx > 0 & \text { if }~~ b^{\prime}(t) \ge 0, \\
\\
\displaystyle\int_{\mathbb{R}^n} \left(u_1(x) + b(0) (-\Delta)^\delta u_0(x)\right) dx > 0 & \text { if } ~~b^{\prime}(t) < 0,
\end{cases}
\end{equation}
where $\mathbb{B}_0:=\displaystyle\int_0^{\infty} \exp \left(-\int_0^t b(\tau) d \tau\right) d t$ and $\mathbb{A}_0:=b(0) \mathbb{B}_0 - 1$. Assume that the following condition holds
\begin{equation} \label{p_critical}
1 < p \leq 1+\frac{2 \sigma}{n-2\delta}.
\end{equation}
Then, there is no global (in time) Sobolev solution $u \in \mathcal{C}\left([0, \infty), L^2 (\mathbb{R}^n)\right)$ to problem \eqref{eq:1.0}.    
\end{theorem}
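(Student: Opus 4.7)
The plan is to argue by contradiction via the Mitidieri--Pohozaev test function method, tailored to the time-dependent coefficient $b(t)$ and the non-local damping $(-\Delta)^\delta$. Suppose that a global Sobolev solution $u\in\mathcal{C}([0,\infty),L^2(\mathbb{R}^n))$ to \eqref{eq:1.0} exists. The test function will be of separated form $\varphi(t,x)=\eta_T(t)\psi_R(x)$, where $\psi_R(x)=\psi(x/R)$ is built from a fixed smooth non-negative $\psi$ with tailored decay (for instance a Bessel potential or $\psi(x)=(1+|x|^2)^{-k}$ with $k$ large enough), so that both $(-\Delta)^\sigma\psi$ and $(-\Delta)^\delta\psi$ admit sharp pointwise bounds of the form $|(-\Delta)^\alpha\psi|\lesssim(1+|x|)^{-n-2\alpha}$. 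The temporal profile $\eta_T$ is a cutoff with $\eta_T(T)=\eta_T'(T)=0$, and its initial values $\eta_T(0),\eta_T'(0)$ are selected according to the sign of $b'$ so that integration by parts in time produces at $t=0$ precisely the quantity inside the integral in \eqref{condition_u0u1}. The appearance of the constants $\mathbb{A}_0,\mathbb{B}_0$ is natural since $\int_0^\infty e^{-\int_0^tb(\tau)d\tau}dt$ is the fundamental time-scale of the first-order damping operator $\partial_t+b(t)$.

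Multiplying \eqref{eq:1.0} by $\varphi$ and transferring $\partial_t^2,(-\Delta)^\sigma,(-\Delta)^\delta$ onto the test function by two integrations by parts yields, schematically,
\[
\iint|u|^p\eta_T\psi_R\,dx\,dt + \mathrm{IC}_R = \iint u\bigl[\eta_T''\psi_R+\eta_T(-\Delta)^\sigma\psi_R-(b'\eta_T+b\eta_T')(-\Delta)^\delta\psi_R\bigr]dx\,dt,
\]
where $\mathrm{IC}_R$ denotes the $t=0$ contribution; by \eqref{condition_u0u1} and dominated convergence, $\mathrm{IC}_R\ge c>0$ for all $R$ sufficiently large (the extra $(-\Delta)^\delta$-pieces in \eqref{condition_u0u1} are exactly the tail contributions that appear when $\psi_R$ is not constant, and they vanish in the limit because $\int(-\Delta)^\delta u_0\,dx=0$). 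Applying Hölder's inequality with conjugate exponents $p$ and $p/(p-1)$ against the weight $\varphi$, then exploiting the homogeneity $(-\Delta)^\alpha\psi_R=R^{-2\alpha}((-\Delta)^\alpha\psi)(\cdot/R)$ with the parabolic balance $T\sim R^{2(\sigma-\delta)}/\overline{b}$ coming from the effective heat-type reduction $b(t)v_t+(-\Delta)^{\sigma-\delta}v\approx 0$, gives the master inequality
\[
\iint|u|^p\varphi\,dx\,dt + \mathrm{IC}_R \lesssim R^{-\kappa}\Bigl(\iint|u|^p\varphi\,dx\,dt\Bigr)^{1/p},
\]
with a scaling exponent $\kappa=\kappa(n,\sigma,\delta,p)$ that is strictly positive exactly when $p<1+\tfrac{2\sigma}{n-2\delta}$ and vanishes at the critical exponent.

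In the subcritical range $1<p<1+\tfrac{2\sigma}{n-2\delta}$, sending $R\to\infty$ forces $\iint|u|^p\varphi\,dx\,dt=0$, which contradicts $\mathrm{IC}_R\ge c>0$. The critical case $p=1+\tfrac{2\sigma}{n-2\delta}$, in which $\kappa=0$ and bare scaling fails, is the principal obstacle. I would dispatch it by a refinement of the test function method: the subcritical deduction first yields the a priori integrability $\iint|u|^p\varphi\,dx\,dt<\infty$ globally in space-time, and then re-running the Hölder step restricted to the dyadic annulus $\{R\le|x|\le 2R\}$ (with a time window at the same parabolic scale) extracts a vanishing factor from the absolute continuity of the integral, yielding a contradiction as $R\to\infty$. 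A secondary technical difficulty is the non-locality: $(-\Delta)^\alpha\psi_R$ does not vanish outside $\mathrm{supp}\,\psi_R$, so the dominant contribution to the right-hand side comes from a slowly decaying tail, requiring the sharp pointwise bounds above to guarantee $(-\Delta)^\sigma\psi,(-\Delta)^\delta\psi\in L^{p/(p-1)}$. Finally, the two sign cases of $b'$ in \eqref{condition_u0u1} correspond to different choices of $\eta_T$: for $b'\ge 0$ one uses the profile built from $\int_t^\infty e^{-\int_0^sb(\tau)d\tau}ds$, producing the coefficients $\mathbb{A}_0$ and $\mathbb{B}_0$, while for $b'<0$ the simpler affine choice with $\eta_T(0)=1,\eta_T'(0)=b(0)$ reproduces the simpler condition displayed in \eqref{condition_u0u1}.
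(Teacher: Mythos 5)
Your overall strategy coincides with the paper's: a Mitidieri--Pohozaev test function argument with a separated profile $\eta_T(t)\psi_R(x)$, a slowly decaying power-law spatial profile $\psi(x)=\langle x\rangle^{-n-2\kappa}$ with pointwise bounds $|(-\Delta)^\alpha\psi|\lesssim\langle x\rangle^{-n-2\alpha}$ to cope with the non-local powers, the parabolic time scale $T\sim R^{2(\sigma-\delta)}$, a H\"older/scaling estimate that degenerates exactly at $p=1+2\sigma/(n-2\delta)$, and an absolute-continuity argument for the critical exponent. For $b'\ge0$ the paper also introduces an auxiliary ODE multiplier (following Lin--Nishihara--Zhai and Ikeda--Sobajima--Wakasugi), and you correctly identify this as the source of the constants $\mathbb{A}_0,\mathbb{B}_0$.

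The genuine gap is in your description of how the boundary term at $t=0$ is produced, and it matters because this bookkeeping is precisely what generates the hypothesis \eqref{condition_u0u1}. For $b'<0$ the paper uses the standard normalization $\eta_R(0)=1$, $\eta_R'(0)=0$; the term $b(0)(-\Delta)^\delta u_0$ arises from integrating $b(t)(-\Delta)^\delta u_t\,\phi_R$ by parts in time, not from setting $\eta_T'(0)=b(0)$. Your choice would introduce a spurious $b(0)\int u_0\,\varphi_R$ in the boundary, alter the sign hypothesis, and the contradiction would no longer close under the stated condition. For $b'\ge0$ the correct multiplier is $g(t)=e^{\int_0^tb}\int_t^\infty e^{-\int_0^sb}\,ds$, the solution of $-g'+bg=1$ with $g(0)=\mathbb{B}_0$: the paper multiplies the PDE by $g$ (so that the ODE converts the damping coefficient into $g'+1=bg$, with the derivatives of $g$ controlled through \eqref{B-L}), then applies a standard cutoff, and the boundary data reduce exactly to $\mathbb{A}_0=g'(0)$ and $\mathbb{B}_0=g(0)$. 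Testing directly against $\int_t^\infty e^{-\int_0^sb}\,ds$, without the $e^{\int_0^tb}$ factor, gives different boundary terms and does not recover \eqref{condition_u0u1}. Furthermore, the $(-\Delta)^\delta u_0$ pieces in \eqref{condition_u0u1} are not ``tail contributions'' caused by $\psi_R$ failing to be constant; they come directly from the damping term under integration by parts and only vanish against the constant limit when $\delta>0$, which is why the theorem is stated with them to cover $\delta=0$. You also leave unaddressed how the additional $b'(t)$ contribution from the non-constant damping coefficient is absorbed; the paper controls it through $|b'(t)|\lesssim b(t)/(1+t)$ from condition (B3) and, in the increasing case, through the growth of $g$ guaranteed by \eqref{B-L}.
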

This article is organized as follows. In Section \ref{section_2} we recall the definition of the effective dissipation. In Section \ref{section_3} we study the corresponding linear equation. In Section \ref{section_4} we present the detailed proof of Theorem \ref{theorem:1.1} to obtain the linear decay estimates for problem \eqref{eq:1.1}. In Section \ref{section_5} the existence of global small data solutions for semi-linear problem \eqref{eq:1.0} stated in Theorem \ref{theorem:1.3} is proved. Finally, Section \ref{section_6} presents the proof of the blow-up results for \eqref{eq:1.0}. In Appendices we present the definition and some basic properties of the decay characters of functions, as well as some useful inequalities.
\section{Preliminaries}\label{section_2}
\subsection{Effective dissipation} \label{section_2.1}
\begin{definition}[Effective dissipation] \label{definition_2.0}
If the strictly positive function $b=b(t)$ satisfies

$\mathbf{(B1)}$ $b \in \mathcal{C}^3([0, \infty))$;

$\mathbf{(B2)}  \left|b^{\prime}(t)\right|=\begin{cases}
o\left( ( 1+t )^{-\frac{2\delta}{\sigma}} b^2(t)\right), &\text{i.e.} ~~ t^{\frac{\sigma-2\delta}{\sigma}} b(t) \rightarrow \infty ~\text{as} ~t \rightarrow \infty~~\text{ if}  ~~\delta \in (0, \frac{\sigma}{2}),
\\
o\left(b^2(t)\right),& \text{i.e}~ t b(t) \rightarrow \infty ~\text{as} ~t \rightarrow \infty~\text{ if}~\delta=0 ~\text{or}~\delta=\frac{\sigma}{2};
\end{cases}$

$\mathbf{(B3)}$ $\frac{\left|b^{(k)}(t)\right|}{b(t)} \lesssim \frac{1}{(1+t)^k}$ for $k=1,2$;

$\mathbf{(B4)}$ $\frac{1}{b(t)} \notin L^1([0, \infty))$;

$\mathbf{(B5)}$ $\left((1+t)^2 b(t)\right)^{-1} \in L^1([0, \infty))$;

$\mathbf{(B6)}$ $b^{\prime}(t)$ does not change its sign,\\
then the damping term $b(t) \left(-\Delta \right)^{\delta}u_t$ is called effective.
\end{definition}
From assumption $\mathbf{(B2)}$, it follows that $t b(t) \rightarrow \infty$ as $t \rightarrow \infty$, hence $b(t) \notin L^1([0, \infty))$.  Conditions $\mathbf{(B1)}$-$\mathbf{(B6)}$ will be useful later  in Sections \ref{section_3} - \ref{section_5}. To prove the blow-up results  in Section \ref{section_6} we introduce another important condition on $b=b(t)$:
\begin{equation} \label{B-L}
\mathbf{(B-L)} \quad \mathbb{B}_{\infty}:=\limsup _{t \rightarrow \infty}\left|\frac{b^{\prime}(t)}{b^2(t)}\right|<1.
\end{equation}
\begin{lemma}[\cite{DAbbicco_Reissig2013}] \label{lemmaB} Conditions from Definition \ref{definition_2.0}
imply that $\mathcal{B}(0, t)$ is positive, strictly increasing and $\mathcal{B}(0, t) \rightarrow$ $\infty$ as $t \rightarrow \infty$. In addition, the function $\mathcal{B}(s, t)$ satisfies the followings:
$$
\begin{aligned}
& \mathcal{B}(s, t) \approx \frac{t}{b(t)}-\frac{s}{b(s)} \text { for all } s \in[0, t], \\
& \mathcal{B}(s, t) \approx \mathcal{B}(0, t) \text { for all } s \in[0, t / 2], \\
& \mathcal{B}(0, s) \approx \mathcal{B}(0, t) \text { for all } s \in[t / 2, t] .
\end{aligned}
$$
\end{lemma}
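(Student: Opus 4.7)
The plan is to split the proof into three pieces matching the four assertions. Positivity and strict monotonicity of $\mathcal{B}(0,t) = \int_0^t d\tau/b(\tau)$ are immediate from the strict positivity and continuity of $b$ (by (B1)), which make the integrand $1/b$ continuous and strictly positive; divergence to $+\infty$ is exactly the non-integrability statement (B4).

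For the first comparison $\mathcal{B}(s,t) \approx t/b(t) - s/b(s)$, I start from the elementary identity
\begin{equation*}
\frac{1}{b(\tau)} = \frac{d}{d\tau}\!\left(\frac{\tau}{b(\tau)}\right) + \frac{\tau\, b'(\tau)}{b(\tau)^2},
\end{equation*}
which integrates to
\begin{equation*}
\mathcal{B}(s,t) = \frac{t}{b(t)} - \frac{s}{b(s)} + E(s,t), \qquad E(s,t) := \int_s^t \frac{\tau\, b'(\tau)}{b(\tau)^2}\,d\tau.
\end{equation*}
Assumption (B6) forces $b'$ to have constant sign on $[0,\infty)$, so $E(s,t)$ is monotone in $(s,t)$ with no oscillatory cancellation. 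The pointwise estimate (B3) gives $|\tau b'(\tau)/b(\tau)^2| \lesssim 1/b(\tau)$ and hence $|E(s,t)| \lesssim \mathcal{B}(s,t)$; sharpening this with the asymptotic smallness from (B2) (which forces $b'(\tau)/b(\tau)^2 \to 0$) yields $|E(s,t)| \leq \kappa\,\mathcal{B}(s,t)$ with some $\kappa < 1$ for large times. Rearranging produces the two-sided equivalence, while behavior on bounded time windows is handled by continuity of $b$ and $1/b$.

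For the localization estimates, I use the equivalence just established together with the slow variation of $b$. Integrating $|b'/b| \lesssim 1/(1+\tau)$ from (B3) across $[t/2, t]$ bounds the ratio $b(t)/b(t/2)$, and (B2) sharpens this to a multiplier strictly less than $2$; consequently for $s \in [0, t/2]$,
\begin{equation*}
\frac{s}{b(s)} \leq \frac{t/2}{b(t/2)} \lesssim \lambda\,\frac{t}{b(t)} \qquad \text{with } \lambda < 1.
\end{equation*}
This gives $t/b(t) - s/b(s) \gtrsim t/b(t) \approx \mathcal{B}(0,t)$, so $\mathcal{B}(s,t) \gtrsim \mathcal{B}(0,t)$; the matching upper bound $\mathcal{B}(s,t) \leq \mathcal{B}(0,t)$ is trivial from monotonicity. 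The companion claim $\mathcal{B}(0,s) \approx \mathcal{B}(0,t)$ for $s \in [t/2, t]$ then follows by sandwiching: $\mathcal{B}(0,t/2) \leq \mathcal{B}(0,s) \leq \mathcal{B}(0,t)$, with the left end $\approx \mathcal{B}(0,t)$ by the preceding step applied at $s = t/2$.

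The main obstacle is the quantitative control of the error integral $E(s,t)$ in the second step: the pointwise bound from (B3) alone yields only $|E(s,t)| \lesssim \mathcal{B}(s,t)$ with an unspecified constant, and closing the two-sided equivalence requires combining the strict asymptotic smallness of (B2) with the sign rigidity of (B6) to upgrade this to a constant strictly less than $1$. Once that is in hand, the localization statements reduce to routine slow-variation manipulations driven by the integrated form of (B3).
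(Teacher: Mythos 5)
The paper does not actually supply a proof of this lemma; it is quoted from \cite{DAbbicco_Reissig2013}, so your argument has to stand on its own. Your skeleton is the right one: the identity $\tfrac{1}{b(\tau)}=\tfrac{d}{d\tau}\bigl(\tfrac{\tau}{b(\tau)}\bigr)+\tfrac{\tau b'(\tau)}{b(\tau)^2}$, the decomposition $\mathcal{B}(s,t)=\tfrac{t}{b(t)}-\tfrac{s}{b(s)}+E(s,t)$, the sign rigidity from $\mathbf{(B6)}$, and the reduction of the two localization statements to slow variation of $b$. When $b'\le 0$ the argument closes, since then $E(s,t)\le 0$ and the bound $|E|\lesssim\mathcal{B}$ from $\mathbf{(B3)}$ gives both directions of the first equivalence.

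The genuine gap is the step where you upgrade $|E(s,t)|\lesssim\mathcal{B}(s,t)$ to $|E(s,t)|\le\kappa\,\mathcal{B}(s,t)$ with $\kappa<1$ for increasing $b$, which you attribute to $\mathbf{(B2)}$. What $\mathbf{(B2)}$ gives is $|b'(\tau)|/b(\tau)^2\to 0$; what you need is $\tau|b'(\tau)|/b(\tau)\le\kappa<1$, and these are not the same quantity. $\mathbf{(B3)}$ only bounds $\tau|b'(\tau)|/b(\tau)$ by an unspecified constant, and in fact the desired conclusion is not derivable from $\mathbf{(B1)}$--$\mathbf{(B6)}$ as stated: take $b(t)=1+t$, which satisfies every item of Definition~\ref{definition_2.0} (in particular $b'/b^2=(1+t)^{-2}\to0$), yet $\tau b'(\tau)/b(\tau)\to 1$, $E(0,t)/\mathcal{B}(0,t)\to 1$, and $\mathcal{B}(0,t)=\log(1+t)$ is not comparable to $\tfrac{t}{b(t)}\le 1$. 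The same example defeats your claim that the multiplier $b(t)/b(t/2)$ is strictly less than $2$ (here it tends to $2$), so the second equivalence fails as well. To close the proof one must import the sharper hypothesis used in \cite{DAbbicco_Reissig2013} and in Wirth's work, namely $\limsup_{t\to\infty}(1+t)|b'(t)|/b(t)<1$ (equivalently, that the implicit constant in $\mathbf{(B3)}$ for $k=1$ can be taken below $1$ when $b$ is increasing); this cannot be manufactured from $\mathbf{(B2)}$. A secondary slip: for the third equivalence you need $\mathcal{B}(0,t/2)\gtrsim\mathcal{B}(0,t)$, but the ``preceding step'' at $s=t/2$ yields $\mathcal{B}(t/2,t)\approx\mathcal{B}(0,t)$, which is a statement about the complementary piece; the correct route is to apply the first equivalence at times $t/2$ and $t$ together with $b(t/2)\approx b(t)$.
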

\begin{lemma} \label{lemmaB^} 
The function $\widehat{\mathcal{B}}(0, t)$ is positive, strictly increasing and $\widehat{\mathcal{B}}(0, t) \rightarrow$ $\infty$ as $t \rightarrow \infty$. Moreover,
$$
\begin{aligned}
& \widehat{\mathcal{B}}(s, t) \approx tb(t)-sb(s) \text { for all } s \in[0, t], \\
& \widehat{\mathcal{B}}(s, t) \approx \widehat{\mathcal{B}}(0, t) \text { for all } s \in[0, t / 2], \\
& \widehat{\mathcal{B}}(0, s) \approx \widehat{\mathcal{B}}(0, t) \text { for all } s \in[t / 2, t] .
\end{aligned}
$$
\end{lemma}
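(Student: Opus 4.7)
The plan is to establish the four assertions in turn, in close parallel with the proof of Lemma \ref{lemmaB} for $\mathcal{B}(0,t)$.

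First I would dispose of the easy claims. Positivity of $\widehat{\mathcal{B}}(0,t)$ follows immediately from $b>0$ by $\mathbf{(B1)}$, and strict monotonicity from $\frac{d}{dt}\widehat{\mathcal{B}}(0,t)=b(t)>0$. For divergence, condition $\mathbf{(B2)}$ forces $tb(t)\to\infty$ (explicitly in both branches), so for every $M>0$ there exists $T_M$ with $b(\tau)\ge M/\tau$ for $\tau\ge T_M$; integrating then yields $\widehat{\mathcal{B}}(0,t)\to\infty$.

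The heart of the proof is the equivalence $\widehat{\mathcal{B}}(s,t)\approx tb(t)-sb(s)$. I would start from the integration by parts identity
\[
tb(t)-sb(s)\;=\;\widehat{\mathcal{B}}(s,t)+\int_s^t\tau\,b'(\tau)\,d\tau,
\]
and use $\mathbf{(B6)}$ (sign of $b'$) to handle two cases. If $b'\ge 0$, the correction integral is nonnegative, giving $tb(t)-sb(s)\ge \widehat{\mathcal{B}}(s,t)$; an upper bound comes from $\mathbf{(B3)}$, which gives $\tau\,b'(\tau)\le \frac{C\tau}{1+\tau}b(\tau)\le C\,b(\tau)$, hence $\int_s^t \tau b'(\tau)\,d\tau\le C\widehat{\mathcal{B}}(s,t)$, yielding $\widehat{\mathcal{B}}(s,t)\le tb(t)-sb(s)\le(1+C)\widehat{\mathcal{B}}(s,t)$. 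If $b'\le 0$, the correction integral has the opposite sign, and here I would invoke the full strength of $\mathbf{(B2)}$: writing $\tau|b'(\tau)|/b(\tau)=(\tau b(\tau))\cdot(|b'(\tau)|/b^2(\tau))$ and inserting the asymptotic control provided by $\mathbf{(B2)}$ (together with the pointwise bound from $\mathbf{(B3)}$ to cover an initial bounded segment), one shows that $\int_s^t\tau|b'(\tau)|\,d\tau$ is dominated by a small multiple of $\widehat{\mathcal{B}}(s,t)$, so that $tb(t)-sb(s)$ and $\widehat{\mathcal{B}}(s,t)$ remain comparable uniformly.

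Finally, the two dyadic equivalences follow from the decomposition $\widehat{\mathcal{B}}(0,t)=\widehat{\mathcal{B}}(0,s)+\widehat{\mathcal{B}}(s,t)$ combined with the identity already proved. Integrating $|b'/b|\le C/(1+\tau)$ from $\mathbf{(B3)}$ shows that $b$ varies at most polynomially, so that for $\tau\in[t/2,t]$ one has $b(\tau)\approx b(t)$; hence $sb(s)$ and $tb(t)$ are comparable when $s$ and $t$ lie in the same dyadic range, and the desired relations $\widehat{\mathcal{B}}(s,t)\approx\widehat{\mathcal{B}}(0,t)$ (for $s\le t/2$) and $\widehat{\mathcal{B}}(0,s)\approx \widehat{\mathcal{B}}(0,t)$ (for $s\ge t/2$) fall out.

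The main obstacle is the case $b'\le 0$ in the central equivalence: because the integration-by-parts correction carries the opposite sign to $\widehat{\mathcal{B}}(s,t)$, the pointwise estimate from $\mathbf{(B3)}$ alone is not strong enough, and one must extract a quantitative smallness statement from $\mathbf{(B2)}$. Once that estimate is in hand the remainder of the argument is essentially the same as in \cite{DAbbicco_Reissig2013}.
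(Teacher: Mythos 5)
Your proposal follows essentially the same route as the paper, which simply defers to the proof of Lemma~\ref{lemmaB} in \cite{DAbbicco_Reissig2013}: integration by parts for the central equivalence, a sign--of--$b'$ case split, and the polynomial variation of $b$ (from $\mathbf{(B3)}$) for the dyadic comparisons. The positivity/monotonicity/divergence claims, the $b'\ge 0$ branch of the equivalence, and the two dyadic statements are all treated correctly, so at the level of strategy you are aligned with the paper.

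There is, however, a genuine gap in the one place you flag as the ``main obstacle.'' For $b'\le 0$ the integration-by-parts identity gives
$tb(t)-sb(s)=\widehat{\mathcal{B}}(s,t)-\int_s^t\tau|b'(\tau)|\,d\tau$,
so you must show the negative correction is a \emph{small} multiple of $\widehat{\mathcal{B}}(s,t)$, equivalently that $\tau|b'(\tau)|/b(\tau)$ stays bounded away from $1$ (so that $\frac{d}{d\tau}\bigl(\tau b(\tau)\bigr)=b(\tau)\bigl(1+\tau b'(\tau)/b(\tau)\bigr)$ is comparable to $b(\tau)$). The decomposition you propose,
$\tau|b'(\tau)|/b(\tau)=\bigl(\tau b(\tau)\bigr)\cdot\bigl(|b'(\tau)|/b^2(\tau)\bigr)$,
does not deliver this: by the ``i.e.'' clause of $\mathbf{(B2)}$ the first factor $\tau b(\tau)\to\infty$ while the second tends to $0$, so the product is indeterminate, not small. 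What $\mathbf{(B3)}$ does give directly is $\tau|b'(\tau)|/b(\tau)\le C_1\tau/(1+\tau)<C_1$, which closes the gap only if one can justify $C_1<1$; that is exactly the quantitative input used implicitly in \cite{DAbbicco_Reissig2013} for the \emph{complementary} sign case of Lemma~\ref{lemmaB} (there, $b'\ge 0$ is the delicate branch for $\mathcal{B}(s,t)$, since $\frac{d}{d\tau}(\tau/b(\tau))=\bigl(1-\tau b'(\tau)/b(\tau)\bigr)/b(\tau)$). You should therefore replace the product decomposition by a direct adaptation of that argument -- establish $1+\tau b'(\tau)/b(\tau)\ge c>0$ for $\tau$ beyond some $T_0$ (using whatever quantitative control on $\tau b'/b$ the hypotheses furnish), and handle the compact range $[0,T_0]$ by continuity. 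Without such a step the $b'\le 0$ branch of the central equivalence is not actually proved.
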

\begin{proof} This lemma is proved similarly as  Lemma \ref{lemmaB} (see \cite{DAbbicco_Reissig2013}).
\end{proof}
\begin{remark}{\rm 
From Lemmas \ref{lemmaB} and \ref{lemmaB^} we have
\begin{align*}
& 1+t \approx b(t) \left( 1+\mathcal{B}(0, t) \right) \approx b(t)^{-1} \left(1+\widehat{\mathcal{B}}(0, t)\right),  \forall t\geq 0,\\
& 1+\mathcal{B}(0, t) \approx \frac{1}{b^2(t)} \left(1+\widehat{\mathcal{B}}(0, t)\right), \forall t\geq 0.
\end{align*}}
\end{remark}

\begin{lemma}[\cite{Sobajima2019}, \cite{Nishihara2012}] \label{lemma:b(t)g(t)}
Let us consider the initial value problem for the ODE
\begin{align} \label{eq:5.0}
\left\{\begin{array}{l}
-g^{\prime}(t)+b(t) g(t)=1, \quad t>0, \\
g(0)=\mathbb{B}_0,
\end{array}\right.
\end{align}
where the constant $\mathbb{B}_0$ is defined in Theorem \ref{theorem:1.5}. Then, the solution to \eqref{eq:5.0} satisfies the following properties:
\begin{enumerate}[i)]
\item There exist positive constants $T_0, \mathbb{B}_1$ and $\mathbb{B}_2$, such that  
$\mathbb{B}_1 \leq b(t) g(t) \leq \mathbb{B}_2$ for any $t \geq T_0$;
\item There exists a positive constant $T_1$, such that 
$
\left|g^{\prime}(t)\right| \leq \frac{1+\mathbb{B}_{\infty}}{1-\mathbb{B}_{\infty}}$ for any $t \geq T_1$.
\end{enumerate}
\end{lemma}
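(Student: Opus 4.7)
My plan is to write down the unique solution to \eqref{eq:5.0} in closed form and extract the asymptotic bounds from there. The ODE is first-order linear: with the integrating factor $e^{-B(t)}$, where $B(t) := \int_0^t b(\tau)\,d\tau$, and using the definition $\mathbb{B}_0 = \int_0^\infty e^{-B(s)}ds$ together with $B(s)\to\infty$ (which follows from $b\notin L^1$, noted right after Definition \ref{definition_2.0}), I obtain the explicit representation
\begin{equation*}
g(t) \;=\; \int_t^\infty \exp\!\left(-\int_t^s b(\tau)\,d\tau\right) ds \;>\; 0.
\end{equation*}
This automatically satisfies $g(0) = \mathbb{B}_0$, and the ODE rewritten as $g'(t) = b(t)g(t) - 1$ will let (ii) be deduced from (i).

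The key step is integration by parts, writing the integrand as $\frac{1}{b(s)}\cdot b(s) e^{-(B(s)-B(t))}$ and differentiating the exponential. Under the assumption that the boundary term $e^{-(B(s)-B(t))}/b(s)$ vanishes as $s\to\infty$, this produces the identity
\begin{equation*}
b(t)g(t) \;=\; 1 \;-\; b(t)\int_t^\infty \frac{b'(s)}{b^2(s)}\,e^{-(B(s)-B(t))}\,ds.
\end{equation*}
I expect verifying the boundary vanishing to be the main technical obstacle. Condition \eqref{B-L} gives $|(1/b)'(s)| = |b'(s)/b^2(s)| \le \eta$ for some $\eta\in(\mathbb{B}_\infty,1)$ and all $s\ge T_0$, so on one hand $1/b(s) = O(s)$, while on the other hand the same bound forces $b(s) \gtrsim (1+\eta(s-T_0))^{-1}$, whence $B(s)-B(T_0) \ge \eta^{-1}\log(1+c(s-T_0))$. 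Since $\eta<1$ we have $1/\eta>1$, so the power-type decay of the exponential beats the linear growth of $1/b(s)$ and the limit is $0$.

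Once the identity above is in hand, the conclusion follows by a linear inequality. With $\eta$ chosen as above, for every $t\ge T_0$,
\begin{equation*}
|b(t)g(t)-1| \;\le\; \eta\, b(t)\int_t^\infty e^{-(B(s)-B(t))}\,ds \;=\; \eta\, b(t) g(t),
\end{equation*}
which solves to $\frac{1}{1+\eta}\le b(t)g(t) \le \frac{1}{1-\eta}$; this is (i) with $\mathbb{B}_1:=(1+\eta)^{-1}$ and $\mathbb{B}_2:=(1-\eta)^{-1}$. For (ii), the ODE gives $|g'(t)| = |b(t)g(t)-1| \le \eta/(1-\eta)$ on $[T_0,\infty)$; choosing $\eta$ close enough to $\mathbb{B}_\infty$ we get $\eta/(1-\eta)\le (1+\mathbb{B}_\infty)/(1-\mathbb{B}_\infty)$, which is the stated bound (take $T_1 := T_0$). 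The only substantive work beyond this linear algebra is the boundary-term estimate controlled by \eqref{B-L}.
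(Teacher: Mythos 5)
The paper states Lemma~\ref{lemma:b(t)g(t)} with citations to \cite{Sobajima2019} and \cite{Nishihara2012} and supplies no proof, so there is no in-paper argument to compare you against; I am judging your proposal on its own. Your proof is correct. The explicit representation $g(t)=\int_t^\infty e^{-(B(s)-B(t))}\,ds$ follows from the integrating-factor computation together with the definition of $\mathbb{B}_0$, the integration by parts is legitimate once the boundary term is shown to vanish, and your control of the boundary term is sound: for $\eta\in(\mathbb{B}_\infty,1)$ and $s\ge T_0$, the bound $|(1/b)'|\le\eta$ gives both $1/b(s)\lesssim 1+s$ and $B(s)-B(T_0)\ge\eta^{-1}\log(1+c(s-T_0))$, and the exponent $1/\eta>1$ indeed beats the linear growth. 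The linear inequality $|b(t)g(t)-1|\le\eta\,b(t)g(t)$ then yields (i) directly, and (ii) follows from $g'=bg-1$ with the observation that $\eta/(1-\eta)\le(1+\mathbb{B}_\infty)/(1-\mathbb{B}_\infty)$ holds precisely for $\eta\le(1+\mathbb{B}_\infty)/2$, which lies in $(\mathbb{B}_\infty,1)$ and is therefore admissible.

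One small bookkeeping remark: the convergence of $\mathbb{B}_0=\int_0^\infty e^{-B(s)}ds$, and hence the validity of your closed-form expression for $g$, does not follow merely from $b\notin L^1$ (which only gives $B(s)\to\infty$); it needs the faster-than-logarithmic growth of $B$, which is exactly what your $(B\text{-}L)$-based estimate $B(s)-B(T_0)\ge\eta^{-1}\log(1+c(s-T_0))$ with $1/\eta>1$ supplies. Since the theorem defines $\mathbb{B}_0$ as a finite constant, this is at most a presentational point, but the parenthetical ``which follows from $b\notin L^1$'' slightly undersells what is actually being used.
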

Next, let us recall some well-known facts on the radial function $\psi=\psi(x):=\langle x\rangle^{-r}$ for any $r>0$, with $\langle x\rangle:=\left(1+|x|^2\right)^{\frac{1}{2}}$ for all $x \in \mathbb{R}^n$.
\begin{lemma}[Lemma 9.2.1 in \cite{Dao2020}] \label{lemma:2.10.000}
Let $q>0$. Then $
\left|\partial_x^\alpha\langle x\rangle^{-q}\right| \lesssim\langle x\rangle^{-q-|\alpha|} \quad \text { for all } x \in \mathbb{R}^n 
$, for any multi-index $\alpha$ with $|\alpha| \geq 1$.
\end{lemma}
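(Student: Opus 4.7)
The plan is to prove this pointwise bound by induction on $|\alpha|$, using the fact that $\langle x\rangle^{-q}=(1+|x|^2)^{-q/2}$ is a smooth function of $|x|^2$ whose partial derivatives have a rigid algebraic structure. The base case $|\alpha|=1$ is immediate: a direct computation gives $\partial_{x_i}\langle x\rangle^{-q} = -q\, x_i \langle x\rangle^{-q-2}$, and combined with the elementary bound $|x_i| \le \langle x\rangle$ this already yields $|\partial_{x_i}\langle x\rangle^{-q}| \lesssim \langle x\rangle^{-q-1}$.

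For the general case, rather than estimating raw derivatives, I would prove by induction the stronger structural identity
\begin{equation*}
\partial_x^\alpha \langle x\rangle^{-q} = \sum_{k=\lceil |\alpha|/2\rceil}^{|\alpha|} Q_{\alpha,k}(x)\,\langle x\rangle^{-q-2k},
\end{equation*}
where each $Q_{\alpha,k}$ is a polynomial in $x$ of degree at most $2k-|\alpha|$, with coefficients depending only on $q$ and $\alpha$. Assuming this at length $|\alpha|-1$, one more derivative $\partial_{x_i}$ applied to a summand $Q_{\beta,k}(x)\langle x\rangle^{-q-2k}$ produces, via the product rule, two contributions: one in which $k$ is unchanged and the polynomial loses one degree, and one in which the exponent drops to $-q-2(k+1)$ while the polynomial gains a factor $x_i$. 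Both contributions still fit the template with degrees at most $2k-|\alpha|$ and $2(k+1)-|\alpha|$ respectively, and the lower end of the $k$-range is preserved because any would-be summand with $2k<|\alpha|$ would have a polynomial of negative degree, hence vanishes.

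Once the structural identity is established, the bound is immediate from the trivial monomial estimate $|x^\gamma|\le \langle x\rangle^{|\gamma|}$ applied term by term, giving
\begin{equation*}
|Q_{\alpha,k}(x)\,\langle x\rangle^{-q-2k}| \lesssim \langle x\rangle^{2k-|\alpha|-q-2k} = \langle x\rangle^{-q-|\alpha|},
\end{equation*}
so that summing over the finitely many $k$ concludes the proof. The only non-routine point is the combinatorial bookkeeping of polynomial degrees through the induction; there is no analytic obstacle. In principle the same structural identity could be obtained in one shot by applying Fa\`a di Bruno's formula to the composition $x\mapsto 1+|x|^2 \mapsto (1+|x|^2)^{-q/2}$, which would shorten the write-up at the cost of invoking a heavier combinatorial tool.
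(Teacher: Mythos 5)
Your proof is correct: the base case, the structural identity with the degree bookkeeping $\deg Q_{\alpha,k}\le 2k-|\alpha|$, and the final estimate via $|x^\gamma|\le\langle x\rangle^{|\gamma|}$ all check out, and the observation that terms with $2k<|\alpha|$ vanish because their polynomial would have negative degree correctly justifies the lower limit of the sum. The paper itself gives no proof (it quotes the lemma from Lemma 9.2.1 of the cited thesis), and your induction is essentially the standard argument used there, so there is nothing to reconcile.
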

\begin{lemma}[Lemma 9.2.2 in \cite{Dao2020}]  \label{lemma:2.10.00}

Let $m \in \mathbb{Z}, s \in(0,1)$ and $\gamma:=m+s$. If $v \in H^{2 \gamma}\left(\mathbb{R}^n\right)$, then it holds
$$
(-\Delta)^\gamma v(x)=(-\Delta)^m\left((-\Delta)^s v(x)\right)=(-\Delta)^s\left((-\Delta)^m v(x)\right).
$$
\end{lemma}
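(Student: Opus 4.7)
The approach is to pass to Fourier space, where fractional powers of $-\Delta$ become multiplication by powers of $|\xi|^2$. Recall that for real $\alpha$, the operator $(-\Delta)^\alpha$ acts on a tempered distribution $w$ by $\widehat{(-\Delta)^\alpha w}(\xi)=|\xi|^{2\alpha}\hat{w}(\xi)$, whenever the right-hand side is a tempered distribution. Under this convention, the two compositions on the right of the lemma are a priori well-defined once one checks that each intermediate result lies in a space on which the next operator acts; the identity itself will then follow from the scalar factorization $|\xi|^{2\gamma}=|\xi|^{2m}\,|\xi|^{2s}=|\xi|^{2s}\,|\xi|^{2m}$ for $\xi\neq 0$.

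The first step is therefore to verify well-definedness. Since $v\in H^{2\gamma}(\mathbb{R}^n)$ means $\langle\xi\rangle^{2\gamma}\hat{v}\in L^2$, and since $|\xi|^{2s}\lesssim\langle\xi\rangle^{2\gamma}$ (valid because $0<s\le\gamma$ in the relevant case $m\geq 0$), I conclude $(-\Delta)^s v\in H^{2\gamma-2s}=H^{2m}$. As $m$ is a nonnegative integer, $(-\Delta)^m$ is a classical differential operator sending $H^{2m}$ into $L^2$, so $(-\Delta)^m\bigl((-\Delta)^s v\bigr)$ is a bona-fide $L^2$ function. By the same estimate with the roles of $m$ and $s$ interchanged, $(-\Delta)^m v\in H^{2\gamma-2m}=H^{2s}$, on which the Fourier multiplier $(-\Delta)^s$ is bounded into $L^2$, so the second composition is also a well-defined $L^2$ function. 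In the same way, $(-\Delta)^\gamma v\in L^2$ directly from $v\in H^{2\gamma}$.

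Having every object in the statement realized as an $L^2$ function, I would finish by a short Fourier computation: for a.e.\ $\xi\in\mathbb{R}^n$,
$$\widehat{(-\Delta)^\gamma v}(\xi)=|\xi|^{2\gamma}\hat{v}(\xi)=|\xi|^{2m}\bigl(|\xi|^{2s}\hat{v}(\xi)\bigr)=\widehat{(-\Delta)^m\bigl((-\Delta)^s v\bigr)}(\xi),$$
and analogously with the factors reversed; applying the inverse Fourier transform yields both equalities in the lemma. The only genuine subtlety, and the one I view as the main obstacle, is the behavior near $\xi=0$: the identity $|\xi|^{2\gamma}=|\xi|^{2m}|\xi|^{2s}$ breaks at the origin when $m<0$, since $|\xi|^{2m}$ is then singular and the intermediate object $(-\Delta)^m v$ may fail to be locally $L^2$. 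In that case I would first prove the identity for Schwartz data $v$ whose Fourier transform vanishes in a neighbourhood of the origin, where all expressions are smooth, and then extend by continuity using the boundedness of each multiplier on the appropriate Sobolev scale; however, since this paper invokes Lemma~\ref{lemma:2.10.00} only for exponents of the form $\gamma=\delta\in[0,\sigma]$ with $\sigma\geq 1$, the natural range is $m\in\mathbb{N}$ and the argument above is sufficient.
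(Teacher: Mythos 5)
The paper does not prove this lemma at all: it is quoted verbatim as Lemma 9.2.2 of \cite{Dao2020} and used as a black box, so there is no internal argument to compare against. Your Fourier-multiplier proof is a correct self-contained substitute for the range that actually matters here. The well-definedness step is sound: from $\langle\xi\rangle^{2\gamma}\hat v\in L^2$ and $|\xi|^{2s}\le\langle\xi\rangle^{2s}$ you correctly get $(-\Delta)^s v\in H^{2m}$ and $(-\Delta)^m v\in H^{2s}$ when $m\ge 0$, after which the a.e.\ factorization $|\xi|^{2\gamma}=|\xi|^{2m}|\xi|^{2s}$ and Plancherel give both identities as $L^2$ functions. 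You are also right that the only delicate point is $m<0$, where $|\xi|^{2m}$ is singular at the origin and the intermediate object need not be locally square-integrable; your observation that the paper only ever invokes the lemma with $m=\lfloor\delta\rfloor\ge 0$ (indeed $m_\delta\ge 1$ in Section 6) means the density argument you sketch for $m<0$ is not needed for any application in this paper, and it is reasonable to leave it as a sketch. In short: where the authors cite, you prove, and the proof is correct for the stated use.
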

\begin{lemma}[Lemma 9.2.3 in \cite{Dao2020}]  \label{lemma:2.10.0}

Let $q>0, m \in \mathbb{Z}, s \in(0,1)$ and $\gamma:=m+s$. Then for all $x \in \mathbb{R}^n$:
$$
\left|(-\Delta)^\gamma\langle x\rangle^{-q}\right| \lesssim \begin{cases}\langle x\rangle^{-q-2 \gamma} & \text { if } \quad 0<q+2 m<n, \\ \langle x\rangle^{-n-2 s} \log (e+|x|) & \text { if } q+2 m=n, \\ \langle x\rangle^{-n-2 s} & \text { if } q+2 m>n .\end{cases}
$$
\end{lemma}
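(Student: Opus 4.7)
The plan is to reduce, via Lemma \ref{lemma:2.10.00}, to the case of a single fractional exponent $s\in(0,1)$, and then analyze the resulting hypersingular integral by splitting $\mathbb{R}^n$ into a near-field and a far-field around the base point; the trichotomy in the statement will emerge from an elementary integral of $\langle y\rangle^{-q-2m}$ over a ball of radius $\approx\langle x\rangle$.

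First, by Lemma \ref{lemma:2.10.00} write $(-\Delta)^\gamma \langle x\rangle^{-q} = (-\Delta)^s g(x)$ with $g := (-\Delta)^m \langle\cdot\rangle^{-q}$ (the relevant case is $m\ge 0$). Expanding the iterated Laplacian as a polynomial in the derivatives of $\langle x\rangle^{-q}$ and applying Lemma \ref{lemma:2.10.000} to each term, $g$ is smooth on $\mathbb{R}^n$ and satisfies
$$\bigl|\partial_x^\alpha g(x)\bigr| \lesssim \langle x\rangle^{-q-2m-|\alpha|}\qquad\text{for every multi-index }\alpha.$$
Now use the symmetric hypersingular representation
$$(-\Delta)^s g(x) = -\frac{c_{n,s}}{2}\int_{\mathbb{R}^n}\frac{g(x+z)+g(x-z)-2g(x)}{|z|^{n+2s}}\,dz,$$
and split the $z$-domain into $A:=\{|z|\le \langle x\rangle/2\}$ and $B:=\{|z|>\langle x\rangle/2\}$. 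On $A$, the second-order Taylor estimate $|g(x+z)+g(x-z)-2g(x)|\lesssim |z|^{2}\langle x\rangle^{-q-2m-2}$ (valid because $\langle x+w\rangle\approx\langle x\rangle$ throughout $A$) produces a contribution $\lesssim\langle x\rangle^{-q-2m-2}\int_0^{\langle x\rangle/2}r^{1-2s}\,dr\lesssim\langle x\rangle^{-q-2\gamma}$. The $g(x)$-piece on $B$ contributes $\langle x\rangle^{-q-2m}\int_B |z|^{-n-2s}\,dz\lesssim\langle x\rangle^{-q-2\gamma}$.

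It remains to estimate the pieces with $g(x\pm z)$ over $B$. After the change of variables $y=x\pm z$ these reduce to controlling
$$I(x):=\int_{|y-x|>\langle x\rangle/2}\frac{\langle y\rangle^{-q-2m}}{|y-x|^{n+2s}}\,dy.$$
Split $I=I_1+I_2$ where $I_1$ integrates over $\{|y|>2\langle x\rangle\}$ (on which $|y-x|\approx |y|$, giving $I_1(x)\lesssim\langle x\rangle^{-q-2\gamma}$) and $I_2$ over the complementary region $\{|y|\le 2\langle x\rangle\}\cap\{|y-x|>\langle x\rangle/2\}$, on which $|y-x|^{-n-2s}\lesssim\langle x\rangle^{-n-2s}$. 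One is then left with the elementary integral
$$\int_{|y|\le 2\langle x\rangle}\langle y\rangle^{-q-2m}\,dy \lesssim \begin{cases}\langle x\rangle^{n-q-2m}, & q+2m<n,\\ \log(e+|x|), & q+2m=n,\\ 1, & q+2m>n,\end{cases}$$
and multiplication by $\langle x\rangle^{-n-2s}$, combined with the near-field bound, yields exactly the three cases in the statement.

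The main technical obstacle is the bookkeeping in the far-field split and, in particular, checking that the logarithmic factor arises only in the critical case $q+2m=n$ and is not an artifact of the decomposition; one must also confirm that for small $|x|$ (where $\langle x\rangle\approx 1$) all three right-hand sides are $\approx 1$, a bound that follows from the local smoothness of $g$ and the absolute convergence of the hypersingular integral near the origin.
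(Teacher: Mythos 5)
Your argument is correct, but note that the paper itself offers no proof of this lemma at all: it is quoted verbatim from Lemma 9.2.3 of the cited thesis \cite{Dao2020}, so there is no in-paper proof to compare against. Your proof is a clean, self-contained version of the standard argument (and, as far as the structure goes, essentially the one in the cited reference): reduce to a single fractional power via Lemma \ref{lemma:2.10.00}, use the derivative bounds of Lemma \ref{lemma:2.10.000} to get $|\partial_x^\alpha g(x)|\lesssim\langle x\rangle^{-q-2m-|\alpha|}$, represent $(-\Delta)^s$ by the symmetric second-difference kernel, and split at the scale $|z|\approx\langle x\rangle$. All the individual estimates check out: the Taylor bound on $A$ uses $\langle x+w\rangle\approx\langle x\rangle$ for $|w|\le\langle x\rangle/2$ (valid since $\langle\cdot\rangle$ is $1$-Lipschitz), the radial integrals $\int_0^{\langle x\rangle/2}r^{1-2s}\,dr$ and $\int_{\langle x\rangle/2}^\infty r^{-1-2s}\,dr$ converge because $s\in(0,1)$, the tail integral $I_1$ converges because $q+2m+2s>0$, and the trichotomy indeed comes only from $\int_{|y|\le 2\langle x\rangle}\langle y\rangle^{-q-2m}\,dy$, with the near-field contribution $\langle x\rangle^{-q-2\gamma}$ dominated by $\langle x\rangle^{-n-2s}$ precisely when $q+2m\ge n$.

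Two small caveats worth recording. First, you silently restrict to $m\ge 0$; this is indeed the only case the paper uses (there $m=\lfloor\delta\rfloor$ or $\lfloor\sigma\rfloor$), and for $m<0$ the statement would require a separate treatment of the Riesz potential $(-\Delta)^m$, so you should state the restriction explicitly. Second, Lemma \ref{lemma:2.10.00} is stated for $v\in H^{2\gamma}$, which for $v=\langle\cdot\rangle^{-q}$ requires $q>n/2$; this holds in every application in the paper ($q=n+2\delta$, $n+2s_\delta$, etc.), but for general $q>0$ you should either invoke a distributional version of the composition identity or simply take the pointwise hypersingular integral applied to $g=(-\Delta)^m\langle\cdot\rangle^{-q}$ as the definition of the left-hand side, which is what your estimates actually control.
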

\begin{lemma}[Lemma 9.2.4 in \cite{Dao2020}]  \label{lemma:2.11.0} Let $s \in(0,1)$. Let $\psi$ be a smooth function satisfying $\partial_x^2 \psi \in L^{\infty}$. For any $R>0$, let $\psi_R$ be a function defined by
$
\psi_R(x):=\psi\left(R^{-1} x\right)
$
for $x \in \mathbb{R}^n$. Then
$$
(-\Delta)^s\left(\psi_R\right)(x)=R^{-2 s}\left((-\Delta)^s \psi\right)\left(R^{-1} x\right), \;\;\forall x\in \mathbb{R}^n.
$$
\end{lemma}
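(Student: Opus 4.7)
The plan is to work directly from the singular integral representation of the fractional Laplacian, since the hypothesis $\partial_x^2\psi\in L^\infty$ is exactly what is needed to make that integral well-defined as a principal value. Specifically, for $s\in(0,1)$ I will use
\[
(-\Delta)^s f(x)=c_{n,s}\,\mathrm{p.v.}\int_{\mathbb{R}^n}\frac{f(x)-f(y)}{|x-y|^{n+2s}}\,dy,
\]
with the usual constant $c_{n,s}>0$. Applied to $\psi_R(x)=\psi(R^{-1}x)$, the integrand becomes $(\psi(R^{-1}x)-\psi(R^{-1}y))/|x-y|^{n+2s}$.

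The key computational step is a single change of variables. Setting $z=R^{-1}y$, so that $dy=R^{n}dz$ and $|x-y|=R|R^{-1}x-z|$, the denominator contributes a factor $R^{-(n+2s)}$ while the Jacobian contributes $R^{n}$, leaving an overall factor $R^{-2s}$ in front of
\[
\mathrm{p.v.}\int_{\mathbb{R}^n}\frac{\psi(R^{-1}x)-\psi(z)}{|R^{-1}x-z|^{n+2s}}\,dz,
\]
which is exactly $c_{n,s}^{-1}(-\Delta)^s\psi(R^{-1}x)$. Collecting the constants yields the claim $(-\Delta)^s(\psi_R)(x)=R^{-2s}\bigl((-\Delta)^s\psi\bigr)(R^{-1}x)$.

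The only subtle point, which I will address briefly before the change of variables, is justifying that the principal-value integrals on both sides actually converge, because only then does the change of variables commute with the p.v.\ limit. This is where the assumption $\partial_x^2\psi\in L^\infty$ enters: a second-order Taylor expansion at $x$ gives $|\psi(x)-\psi(y)|\lesssim |x-y|^2$ locally, so the integrand is dominated near the singularity by $|x-y|^{2-n-2s}$, which is locally integrable since $2-2s>0$; away from the singularity one uses boundedness of $\psi$ (which follows from $\partial_x^2\psi\in L^\infty$ together with smoothness, after a standard symmetric truncation argument using the evenness of $|x-y|^{-n-2s}$). With the integrals absolutely convergent on shells $\{|x-y|\ge \varepsilon\}$ for each $\varepsilon>0$, the change of variables can be performed on the truncated integral and the limit $\varepsilon\downarrow 0$ taken afterwards.

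I do not expect a genuine obstacle here; the only place one has to be careful is to handle the p.v.\ limit cleanly rather than treating the integral as absolutely convergent at the origin. An alternative route, which I would mention as a cross-check, is via the Fourier transform: $\widehat{\psi_R}(\xi)=R^{n}\widehat{\psi}(R\xi)$ gives $\widehat{(-\Delta)^s\psi_R}(\xi)=|\xi|^{2s}R^{n}\widehat{\psi}(R\xi)$, which, after rewriting $|\xi|^{2s}=R^{-2s}|R\xi|^{2s}$, is precisely the Fourier transform of $R^{-2s}((-\Delta)^s\psi)(R^{-1}\cdot)$; this quickly confirms the identity at least in the tempered-distribution sense, but the singular-integral argument has the advantage of producing the pointwise identity required by the statement under the given regularity hypothesis.
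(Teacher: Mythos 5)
The paper does not prove this lemma at all --- it is quoted verbatim from Lemma 9.2.4 of the cited thesis \cite{Dao2020} --- so there is nothing to compare against except the standard argument, which is exactly what you give: the truncated integrals over $\{|x-y|\ge\varepsilon\}$ transform exactly into truncated integrals over $\{|R^{-1}x-z|\ge R^{-1}\varepsilon\}$ under $z=R^{-1}y$, the factors $R^{n}$ and $R^{-(n+2s)}$ combine to $R^{-2s}$, and the principal-value limits correspond. This is correct, and your Fourier-side cross-check is also fine. One inaccuracy in your convergence discussion, though: boundedness of $\psi$ does \emph{not} follow from smoothness together with $\partial_x^2\psi\in L^{\infty}$ (any nonconstant affine or quadratic function is a counterexample), and indeed with only the stated hypothesis $\psi$ may grow quadratically, in which case the tail $\int_{|x-y|\ge 1}|\psi(x)-\psi(y)|\,|x-y|^{-n-2s}\,dy$ diverges for $s\in(0,1)$ even in the symmetrized form. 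This is really a defect of the lemma's hypotheses as stated rather than of your scaling computation: since the truncated integrals on the two sides of the identity are term-by-term equal, one principal value exists if and only if the other does, and they coincide. So the clean way to phrase your proof is to establish the identity at the level of the $\varepsilon$-truncations and then note that in all applications here $\psi$ is bounded (it is $\langle x\rangle^{-q}$ or a compactly supported cutoff), which together with the $|x-y|^{2}$ bound near the diagonal makes both sides genuinely well defined; you should not claim to deduce boundedness from the second-derivative hypothesis.
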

\begin{lemma}[Lemma 7 in \cite{Dao2021}]  \label{lemma:2.12}
Let $s \in \mathbb{R}$. Assume that $\mu_1=\mu_1(x) \in H^s$ and $\mu_2=\mu_2(x) \in H^{-s}$. Then
$$
\int_{\mathbb{R}^n} \mu_1(x) \mu_2(x) d x=\int_{\mathbb{R}^n} \widehat{\mu}_1(\xi) \hat{\mu}_2(\xi) d \xi.
$$   
\end{lemma}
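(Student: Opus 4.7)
The plan is to prove the identity by a standard density argument from the Schwartz class $\mathcal{S}(\mathbb{R}^n)$, after first verifying that both sides of the claimed equality are well-defined continuous bilinear forms on $H^s \times H^{-s}$. For the right-hand side I would factor the integrand as
$$
\widehat{\mu}_1(\xi)\,\widehat{\mu}_2(\xi) = \bigl(\langle\xi\rangle^{s}\widehat{\mu}_1(\xi)\bigr)\bigl(\langle\xi\rangle^{-s}\widehat{\mu}_2(\xi)\bigr);
$$
by definition of the Bessel potential spaces, $\langle\xi\rangle^{s}\widehat{\mu}_1$ and $\langle\xi\rangle^{-s}\widehat{\mu}_2$ both lie in $L^2(\mathbb{R}^n)$, so the Cauchy--Schwarz inequality gives absolute integrability together with the bound $\bigl|\int \widehat{\mu}_1\widehat{\mu}_2\,d\xi\bigr| \le \|\mu_1\|_{H^s}\|\mu_2\|_{H^{-s}}$. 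For the left-hand side I would interpret $\int \mu_1\mu_2\,dx$, when $s\neq 0$, as the $H^s$--$H^{-s}$ duality pairing, namely as the continuous extension of $\int \langle D\rangle^{s}\mu_1\cdot\langle D\rangle^{-s}\mu_2\,dx$, which by Plancherel in $L^2$ is bounded by the same product of norms and coincides with the ordinary integral whenever both factors are genuine functions (e.g.\ when both are Schwartz).

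The second step is the identity on the Schwartz class. For $\mu_1,\mu_2\in\mathcal{S}(\mathbb{R}^n)$ the equality
$$
\int_{\mathbb{R}^n}\mu_1(x)\mu_2(x)\,dx = \int_{\mathbb{R}^n}\widehat{\mu}_1(\xi)\,\widehat{\mu}_2(\xi)\,d\xi
$$
is a direct consequence of the Fourier inversion formula (a version of Parseval's identity without complex conjugation, valid under the Fourier-transform convention adopted in the paper): substituting $\mu_2(x) = c_n\int e^{ix\cdot\xi}\widehat{\mu}_2(\xi)\,d\xi$ into the left-hand side and applying Fubini produces exactly the right-hand side. Alternatively, one can rewrite both sides in terms of $\langle D\rangle^{\pm s}$ applied to $\mathcal{S}$-functions and invoke the $L^2$ Plancherel theorem directly.

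The third step is extension by density. Choose sequences $\mu_1^{(k)},\mu_2^{(k)}\in\mathcal{S}(\mathbb{R}^n)$ with $\mu_1^{(k)}\to \mu_1$ in $H^s$ and $\mu_2^{(k)}\to \mu_2$ in $H^{-s}$; such sequences exist because $\mathcal{S}$ is dense in every $H^r$. By Step one both sides are continuous bilinear forms on $H^s \times H^{-s}$, and by Step two they agree on $\mathcal{S}\times\mathcal{S}$, so passing to the limit $k\to\infty$ yields the identity on all of $H^s\times H^{-s}$. The main (and essentially only) subtlety is the interpretation of the integral on the left when $s>0$: in that case $\mu_2$ is a genuine tempered distribution rather than a locally integrable function, and the symbol $\int \mu_1\mu_2\,dx$ must be understood as the canonical $H^s$--$H^{-s}$ duality bracket; once that convention is fixed, the proof reduces to the standard Schwartz-density-plus-continuity scheme outlined above.
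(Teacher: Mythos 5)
The paper does not actually prove this lemma --- it is quoted verbatim as Lemma 7 of \cite{Dao2021} --- so there is no in-paper argument to compare against; your proof (Cauchy--Schwarz applied to $\langle\xi\rangle^{s}\widehat{\mu}_1\cdot\langle\xi\rangle^{-s}\widehat{\mu}_2$ for integrability and continuity of both bilinear forms, Fourier inversion plus Fubini on $\mathcal{S}\times\mathcal{S}$, then extension by density) is the standard one and is correct, matching the argument in the cited source. The only caveat, inherited from the statement itself rather than introduced by you, is the missing complex conjugation: the Fubini computation literally yields $\int\widehat{\mu}_1(-\xi)\,\widehat{\mu}_2(\xi)\,d\xi$, which coincides with the stated right-hand side only under the convention in force here (in this paper the lemma is applied with $\mu_1$ a real, radial test function, whose Fourier transform is real and even, so the identity holds as written).
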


\section{The linear equation}
\label{section_3}

First we study the corresponding linear Cauchy problem for \eqref{eq:1.0} with $\sigma \geq 1, \delta \in [0, \frac{\sigma}{2}] $ and $n \geq 1$\begin{equation}
\begin{cases} \label{eq:2.1}
u_{t t}(t, x)+(-\Delta)^\sigma u(t, x)+b(t) (-\Delta)^\delta u_t(t, x)=0, & (t, x) \in [0, \infty) \times \mathbb{R}^n, \\ u(0, x)=u_0(x), \quad u_t(0, x)=u_1(x), & x \in \mathbb{R}^n.
\end{cases}
\end{equation}
The Fourier transformation w.r.t. $x$ to \eqref{eq:2.1}, with $\widehat{u}=\widehat{u}(t, \xi)=\mathcal{F}_{x \rightarrow \xi}(u(t, x))(t, \xi)$, yields
\begin{equation}
\begin{cases} \label{eq:2.2}
\widehat{u}_{t t}+|\xi|^{2 \sigma} \widehat{u}+b(t) |\xi|^{2 \delta} \widehat{u}_t=0, & (t, \xi) \in[0,\infty) \times \mathbb{R}^n, \\ \widehat{u}(0, \xi)= \widehat{u}_0(\xi), \quad \widehat{u}_t(0, \xi)=\widehat{u}_1(\xi), & \xi \in \mathbb{R}^n .\end{cases}
\end{equation}
Putting
$
\widehat{u}(t, \xi)=\exp \left(-\frac{|\xi|^{2 \delta}}{2} \int\limits_0^t b(\tau) d \tau\right) \widehat{v}(t, \xi),
$
we can rewrite Cauchy problem \eqref{eq:2.2}  as
\begin{equation} \label{eq:2.3}
\begin{cases}\widehat{v}_{t t}+m(t, \xi) \widehat{v}=0, & (t, \xi) \in[0, \infty) \times \mathbb{R}^n, \\ \widehat{v}(0, \xi)= \widehat{u}_0( \xi), & \xi \in \mathbb{R}^n, \\ \widehat{v}_t(0, \xi)=\frac{b(0)}{2}|\xi|^{2 \delta} \widehat{u}_0( \xi)+\widehat{u}_1( \xi), & \xi \in \mathbb{R}^n,
\end{cases}
\end{equation}
where the coefficient $m=m(t, \xi)$ is defined by
\begin{equation} \label{m(t,xi)}
m(t, \xi):=|\xi|^{2 \sigma}-\frac{|\xi|^{4 \delta}}{4} b^2(t)-\frac{|\xi|^{2 \delta}}{2} b^{\prime}(t) .
\end{equation}
We introduce the curve $\Gamma:=\left\{(t, \xi) \in[0, \infty) \times \mathbb{R}^n:|\xi|^{\sigma-2 \delta}=\frac{1}{2} b(t)\right\}$ which divides the extended phase space   into two following regions
\begin{equation*}
\begin{cases}
\Pi_{\text {hyp }}=\left\{(t, \xi) \in[0, \infty) \times \mathbb{R}^n:|\xi|^{\sigma-2 \delta} > \frac{1}{2} b(t)\right\}, \\ 
\Pi_{\mathrm{ell}}=\left\{(t, \xi) \in[0, \infty) \times \mathbb{R}^n:|\xi|^{\sigma-2 \delta} < \frac{1}{2} b(t)\right\}.
\end{cases}
\end{equation*} 
The following auxiliary weight function has a crucial role in our analysis
\begin{equation} \label{xi_b(t)}
\langle\xi\rangle_{b(t)}:=\sqrt{\left| |\xi|^{2 \sigma}-\frac{|\xi|^{4 \delta} b^2(t)}{4} \right|}.
\end{equation}
We will focus on the case $\delta \in (0, \frac{\sigma}{2}]$ (the case $\delta=0$ has been studied previously in \cite{Dao2023}). We will perform a further division of both regions of the extended phase space into following smaller zones:
$$
\begin{aligned}
Z_{\mathrm{hyp}}(N) & =\left\{(t, \xi) \in[0, \infty) \times \mathbb{R}^n:\langle\xi\rangle_{b(t)} \geq N \frac{|\xi|^{2\delta}b(t)}{2}\right\} \cap \Pi_{\mathrm{hyp}}, \\
Z_{\mathrm{pd}}(N, \varepsilon) & =\left\{(t, \xi) \in[0, \infty) \times \mathbb{R}^n: \varepsilon \frac{|\xi|^{2\delta}b(t)}{2} \leq \langle\xi\rangle_{b(t)} \leq N \frac{|\xi|^{2\delta}b(t)}{2}\right\} \cap \Pi_{\mathrm{hyp}}, \\
Z_{\mathrm{red}}(\varepsilon) & =\left\{(t, \xi) \in[0, \infty) \times \mathbb{R}^n:\langle\xi\rangle_{b(t)} \leq \varepsilon \frac{|\xi|^{2\delta}b(t)}{2}\right\}, \\
Z_{\mathrm{ell}}\left(\varepsilon, d_0\right) & =\left\{(t, \xi) \in[0, \infty) \times \mathbb{R}^n:\langle\xi\rangle_{b(t)} \geq \varepsilon \frac{|\xi|^{2\delta}b(t)}{2}\right\} \cap \left\{|\xi|^{2\delta} \geq \frac{d_0}{(1+t)b(t)}\right\} \cap \Pi_{\mathrm{ell}}, \\
Z_{\mathrm{diss}}\left(d_0\right) &=\left\{(t, \xi) \in[0, \infty) \times \mathbb{R}^n:|\xi|^{2\delta} \leq \frac{d_0}{(1+t)b(t)}\right\} \cap \Pi_{\text {ell }}.
\end{aligned}
$$

The assumption $\mathbf{(B2)}$ ensures that the elliptic zone is not empty if $\delta \in (0,\frac{\sigma}{2})$.  If $\delta = \frac{\sigma}{2}$, the elliptic zone is naturally determined without the need for assumption $\mathbf{(B2)}$.

Generally, $N$ is a large positive constant and $\varepsilon$ is a small positive constant, which will be chosen later. The separating curves between these zones are defined as follows:

- Curve $t_{\text {diss }}=t_{\text {diss }}(|\xi|)$ separates the zones $Z_{\mathrm{diss}}\left(d_0\right)$ and $Z_{\mathrm{ell}}(\varepsilon, d_0)$;

- Curve $t_{\text {ell }}=t_{\text {ell }}(|\xi|)$ separates the zones $Z_{\mathrm{ell}}\left(\varepsilon, d_0\right)$ and $Z_{\mathrm{red}}(\varepsilon)$;

- Curve $t_{\mathrm{red}}=t_{\mathrm{red}}(|\xi|)$  separates the zones $Z_{\mathrm{red}}(\varepsilon)$ and $Z_{\mathrm{pd}}(N, \varepsilon)$;

- Curve $t_{\mathrm{pd}}=t_{\mathrm{pd}}(|\xi|)$ separates the zones $Z_{\mathrm{pd}}(N, \varepsilon)$ and $Z_{\mathrm{hyp}}(N)$.

The estimates for the corresponding
fundamental solutions in zones $Z_{\mathrm{hyp}}(N), Z_{\mathrm{pd}}(N, \varepsilon), Z_{\mathrm{red}}(\varepsilon), Z_{\mathrm{ell}}\left(\varepsilon, d_0\right)$ can be processed almost identically as those for the case $\delta=0$ obtained in \cite{DAbbicco2013,  Wirth2004, Wirth2007} and especially recently in \cite{Dao2023}  (by considering instead of $b(t)$ the "new coefficient" $\tilde{b}(t):=|\xi|^{2\delta} b(t)$). Thus we will omit the details and sketch only the main steps of the estimates in these zones and  focus only on the important estimates in $Z_{\mathrm{diss}}\left(d_0\right)$, since only in that case with small frequencies $|\xi|$ near $0$, the term $|\xi|^{2 \sigma}-\frac{|\xi|^{4 \delta}}{4} b^2(t)$ cannot be considered anymore as the principal part of the term $m(t, \xi)$.
\subsubsection{Estimates in the hyperbolic zone}
We see that $\langle\xi\rangle_{b(t)} \sim|\xi|^\sigma$ in  $Z_{\text {hyp }}(N)$.
By considering the micro-energy $V=\left(\langle\xi\rangle_{b(t)} \hat{v}, D_t \hat{v}\right)^{\mathrm{T}}$, the asymptotic behavior of the fundamental solution $E_{\text {hyp }}^V=E_{\text {hyp }}^V(t, s, \xi)$ is given by the following statement.

\begin{lemma} \label{lemma2.1}
The following estimate holds for the fundamental solution $E_{\text {hyp }}^V=E_{\text {hyp }}^V(t, s, \xi)$ with $(s, \xi),(t, \xi) \in Z_{\mathrm{hyp}}$ and $t \geq s$:
$$
\left(\left|E_{\text {hyp }}^V(t, s, \xi)\right|\right) \lesssim\left(\begin{array}{ll}
1 & 1 \\
1 & 1
\end{array}\right) .
$$
\end{lemma}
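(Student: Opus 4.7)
The plan is to rewrite the scalar equation in \eqref{eq:2.3} as a first-order system for the micro-energy $V = (\langle\xi\rangle_{b(t)} \widehat{v}, D_t \widehat{v})^{\mathrm T}$. Using $D_t^2 \widehat{v} = m(t,\xi) \widehat{v}$, a direct computation yields $D_t V = A(t,\xi) V$ with
\begin{equation*}
A(t,\xi) = \begin{pmatrix} \dfrac{D_t \langle\xi\rangle_{b(t)}}{\langle\xi\rangle_{b(t)}} & \langle\xi\rangle_{b(t)} \\[4pt] \dfrac{m(t,\xi)}{\langle\xi\rangle_{b(t)}} & 0 \end{pmatrix}.
\end{equation*}
Inside $Z_{\mathrm{hyp}}(N)$ one has $\langle\xi\rangle_{b(t)} \sim |\xi|^\sigma$, and since $m(t,\xi) = \langle\xi\rangle_{b(t)}^2 - \tfrac{1}{2}|\xi|^{2\delta} b'(t)$, the $(2,1)$-entry is a controlled perturbation of $\langle\xi\rangle_{b(t)}$. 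Thus the principal part of $A$ is the symmetric matrix with off-diagonal entries $\langle\xi\rangle_{b(t)}$, whose eigenvalues are real and equal to $\pm \langle\xi\rangle_{b(t)}$.

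Next I would apply the constant diagonalizer $M = \tfrac{1}{\sqrt{2}} \begin{pmatrix} 1 & 1 \\ 1 & -1 \end{pmatrix}$, which is orthogonal, so $\|M\| = \|M^{-1}\| = 1$. Setting $\widetilde{V} = M^{-1} V$, the system becomes
\begin{equation*}
D_t \widetilde{V} = \bigl(\mathcal{D}(t,\xi) + R(t,\xi)\bigr) \widetilde{V},
\end{equation*}
where $\mathcal{D}(t,\xi) = \langle\xi\rangle_{b(t)}\,\mathrm{diag}(1,-1)$ generates the oscillatory factors $\exp\!\bigl(\pm i \int_s^t \langle\xi\rangle_{b(\tau)}\,d\tau\bigr)$ of modulus $1$, while $R(t,\xi)$ collects the lower-order entries $D_t \langle\xi\rangle_{b(t)} / \langle\xi\rangle_{b(t)}$ and $\tfrac{|\xi|^{2\delta} b'(t)}{2\langle\xi\rangle_{b(t)}^2}$ (up to bounded combinatorial factors from the change of basis). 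The fundamental solution of the diagonal part alone therefore has entries of modulus $1$.

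The main obstacle is to establish the uniform bound
\begin{equation*}
\int_s^t \|R(\tau,\xi)\|\, d\tau \lesssim 1, \qquad (s,\xi),(t,\xi) \in Z_{\mathrm{hyp}}(N),
\end{equation*}
independently of $s,t,\xi$. Here the zone condition $\langle\xi\rangle_{b(\tau)} \geq \tfrac{N}{2}|\xi|^{2\delta} b(\tau)$ together with assumption $\mathbf{(B3)}$ provides the required Levi-type symbol estimates: the term $D_t \langle\xi\rangle_{b(t)}/\langle\xi\rangle_{b(t)}$ inherits the integrable decay of $b'/b \lesssim (1+t)^{-1}$, and the $b'$-correction in $m$ is suppressed by $\langle\xi\rangle_{b(t)}^{-2} \lesssim |\xi|^{-2\sigma}$ against the hyperbolicity condition, after a change of variables along the characteristic strip. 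This is precisely the step already carried out in \cite{Dao2023, Wirth2004, Wirth2007} (with the formal substitution $b(\tau) \mapsto |\xi|^{2\delta} b(\tau)$ to accommodate the fractional damping), so the same calculation applies verbatim in our setting. Once integrability of $R$ is in hand, the Peano--Baker formula yields
\begin{equation*}
\left| E_{\mathrm{hyp}}^V(t,s,\xi) \right| \lesssim \|M\|\,\|M^{-1}\|\, \exp\!\left(\int_s^t \|R(\tau,\xi)\|\, d\tau\right) \lesssim \begin{pmatrix} 1 & 1 \\ 1 & 1 \end{pmatrix}
\end{equation*}
entry-wise, which is the stated bound.
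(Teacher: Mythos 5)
Your framework is the right one and is exactly what the paper relies on (the paper gives no proof of this lemma at all beyond referring to the estimates in \cite{Dao2023, Wirth2004, Wirth2007} with $b(t)$ replaced by $|\xi|^{2\delta}b(t)$). However, the central quantitative claim in your argument, namely that
\[
\int_s^t \|R(\tau,\xi)\|\,d\tau \lesssim 1
\]
after a \emph{single} constant diagonalization, is false. From $m(t,\xi)=\langle\xi\rangle_{b(t)}^2-\tfrac12|\xi|^{2\delta}b'(t)$ the remainder entries are $D_t\langle\xi\rangle_{b(t)}/\langle\xi\rangle_{b(t)}$ and $\tfrac{|\xi|^{2\delta}b'(t)}{2\langle\xi\rangle_{b(t)}}$ (note: one power of $\langle\xi\rangle_{b(t)}$, not two). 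Using $\langle\xi\rangle_{b(t)}\geq N\tfrac{|\xi|^{2\delta}b(t)}{2}$ and $\mathbf{(B3)}$ one gets only $\|R(\tau,\xi)\|\lesssim |b'(\tau)|/b(\tau)\lesssim(1+\tau)^{-1}$, which you call ``integrable decay'' but which is \emph{not} integrable: for $b$ decreasing the hyperbolic zone for fixed $\xi$ is a half-line $[t_{\mathrm{pd}}(|\xi|),\infty)$ and the integral diverges, while for $b$ increasing it equals $\log\bigl(1+t_{\mathrm{pd}}(|\xi|)\bigr)$, which is unbounded as $|\xi|\to\infty$. So the Peano--Baker/Gronwall bound $\exp\bigl(\int\|R\|\bigr)$ is not uniform and the argument as written does not close.

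What the references actually do, and what the paper itself says explicitly in its sketch for $Z_{\mathrm{ell}}$ (``after the second step of the diagonalization the entries of the remainder matrix are uniformly integrable''), is a two-step argument. The $D_t\langle\xi\rangle_{b(t)}/\langle\xi\rangle_{b(t)}$ contribution is a total derivative and gets absorbed into the diagonal fundamental solution as a factor $\sqrt{\langle\xi\rangle_{b(t)}/\langle\xi\rangle_{b(s)}}\sim 1$, using $\langle\xi\rangle_{b(\cdot)}\sim|\xi|^\sigma$ uniformly in $Z_{\mathrm{hyp}}(N)$; it is \emph{not} handled by absolute integrability. The remaining $|\xi|^{2\delta}b'/\langle\xi\rangle_{b}$ block requires a second (time-dependent) diagonalization step, after which the new remainder picks up $b''$ and is $\lesssim(1+\tau)^{-2}$ by $\mathbf{(B3)}$ with $k=2$ (together with $\mathbf{(B5)}$), and \emph{that} is integrable. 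Your phrase ``after a change of variables along the characteristic strip'' gestures at this, but the proof needs the second diagonalization step spelled out (or at minimum a correct statement of why the non-integrable first-step terms are harmless), rather than the false uniform bound on $\int\|R\|$.
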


\subsubsection{Estimates in the elliptic zone}
In $Z_{\text {ell }}$ we use the micro-energy $V=\left(\langle\xi\rangle_{b(t)} \hat{v}, D_t \widehat{v}\right)^{\mathrm{T}}$ for all $t \geq s$ and $(t, \xi),(s, \xi) \in Z_{\text {ell }}\left(\varepsilon, d_0\right)$. The corresponding first-order system of Cauchy problem \eqref{eq:2.3} is the following
\begin{equation*}
\mathrm{D}_t V=\left(\begin{array}{cc} 0
& \langle\xi\rangle_{b(t)} \\
- \langle\xi\rangle_{b(t)} & 0
\end{array}\right) V +\left(\begin{array}{cc}
\frac{\mathrm{D}_t\langle\xi\rangle_{b(t)}}{\langle\xi\rangle_{b(t)}} & 0 \\
- \frac{ |\xi|^{2 \delta} b^{\prime}(t)}{2\langle\xi\rangle_{b(t)}} & 0
\end{array}\right) V.
\end{equation*}

{\it Step 1. Diagonalization procedure:} In order to prove estimates and structural properties for the fundamental solution $E_{\text {ell }}^V=$ $E_{\mathrm{ell}}^V(t, s, \xi)$ corresponding to the micro-energy $V$, we applying the diagonalization procedure, to obtain after the second step of the diagonalization that the entries of the remainder matrix are uniformly integrable over the elliptic zone. Arguing as in \cite[Section 4.2.2]{Wirth2004} we can prove the following lemma.
\begin{lemma} \label{lemma 2.2}
The fundamental solution $E_{\mathrm{ell}}^V=E_{\mathrm{ell}}^V(t, s, \xi)$ can be estimated by
$$
\left(\left|E_{\text {ell }}^V(t, s, \xi)\right|\right) \lesssim \frac{\langle\xi\rangle_{b(t)}}{\langle\xi\rangle_{b(s)}} \exp \left(\int\limits_s^t\langle\xi\rangle_{b(\tau)} d \tau\right)\left(\begin{array}{ll}
1 & 1 \\
1 & 1
\end{array}\right),
$$
with $(t, \xi),(s, \xi) \in Z_{\mathrm{ell}}\left(\varepsilon, d_0\right)$ and $0 \leq s \leq t$.
\end{lemma}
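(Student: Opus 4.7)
The plan is to adapt the two-step diagonalization procedure of \cite[Section 4.2.2]{Wirth2004} to the present weight $\langle\xi\rangle_{b(t)}$. First I would diagonalize the principal part of the $V$-system displayed above. Since its eigenvalues $\pm i\langle\xi\rangle_{b(t)}$ possess the constant eigenbasis
\[
N = \begin{pmatrix} 1 & 1 \\ i & -i \end{pmatrix},
\]
the substitution $V = N W$ yields
\[
D_t W = \bigl(\mathcal{D}(t,\xi) + R_1(t,\xi)\bigr) W, \qquad \mathcal{D} = \begin{pmatrix} i\langle\xi\rangle_{b(t)} & 0 \\ 0 & -i\langle\xi\rangle_{b(t)} \end{pmatrix},
\]
where $R_1 = N^{-1} R N$ and its four entries are linear combinations of the two model quantities $\frac{D_t\langle\xi\rangle_{b(t)}}{\langle\xi\rangle_{b(t)}}$ and $\frac{|\xi|^{2\delta} b'(t)}{\langle\xi\rangle_{b(t)}}$.

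Second, I would perform a further diagonalization step. Writing $R_1 = R_1^{(d)} + F_1$ as the sum of its diagonal and off-diagonal parts, I would conjugate by $I + N_1$ with the off-diagonal entries of $N_1$ chosen as the entries of $F_1$ divided by the eigenvalue gap $\pm 2i\langle\xi\rangle_{b(t)}$. This cancels $F_1$ at leading order and produces a new remainder $R_2$ that is quadratic in the small quantities from $R_1$. The key point is to verify that, uniformly over $Z_{\mathrm{ell}}(\varepsilon, d_0)$,
\[
\sup_{\tau, \xi} |N_1(\tau, \xi)| \leq \tfrac{1}{2}, \qquad \int_s^t |R_2(\tau, \xi)|\, d\tau \lesssim 1.
\]
Both estimates follow from assumption $\mathbf{(B3)}$ (which gives $|b'(t)|/b(t) \lesssim (1+t)^{-1}$ and a similar bound for $b''$), from the lower bound $\langle\xi\rangle_{b(t)} \geq \varepsilon |\xi|^{2\delta} b(t)/2$ defining the zone, and from the threshold $|\xi|^{2\delta} \geq d_0/((1+t)b(t))$ inherited from excluding $Z_{\mathrm{diss}}(d_0)$: combining them, each off-diagonal coefficient in the remainder is controlled by $C(\varepsilon, d_0)/(1+t)$, hence is uniformly integrable in $t$.

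Once the diagonal-dominated form is obtained, the fundamental solution of the $Z$-system factorizes as
\[
E_Z(t,s,\xi) = \exp\!\Bigl(\,\int_s^t \bigl(i\,\mathcal{D}(\tau,\xi) + i R_1^{(d)}(\tau,\xi)\bigr)\, d\tau\Bigr) \cdot Q(t,s,\xi),
\]
where $Q$ is uniformly bounded and invertible on $Z_{\mathrm{ell}}(\varepsilon, d_0)$ by the Peano--Baker expansion applied to the integrable remainder $R_2$. The diagonal exponential produces the real exponentials $\exp(\pm\int_s^t \langle\xi\rangle_{b(\tau)}\, d\tau)$; after the growing mode dominates the decaying one, one obtains the exponential factor in the claim. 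Unwinding the transformations $V = N(I+N_1)Z$ and identifying the contribution of the diagonal piece $R_1^{(d)}$, which essentially consists of the logarithmic derivative of $\langle\xi\rangle_{b(t)}$, produces the prefactor $\langle\xi\rangle_{b(t)}/\langle\xi\rangle_{b(s)}$; the bounded matrices $N$ and $I+N_1$ then only contribute the $O(1)$ rank-one factor displayed in the statement.

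The main obstacle is the uniform integrability of $R_2$ over the elliptic zone. Near the separating curve $t_{\mathrm{ell}}$, where $\langle\xi\rangle_{b(t)}$ becomes comparable to $\varepsilon |\xi|^{2\delta} b(t)/2$, one must carefully balance the negative powers of $\langle\xi\rangle_{b(t)}$ appearing in $N_1$ and $R_2$ against the factor $|\xi|^{2\delta} b'(t)$ via $\mathbf{(B3)}$; at the other end the dissipative threshold $|\xi|^{2\delta} \geq d_0/((1+t) b(t))$ is what ensures the required $(1+t)^{-1}$ decay. Once these uniform estimates are secured, the entrywise bound on $E_{\mathrm{ell}}^V$ claimed in the lemma follows from the representation above by standard manipulations.
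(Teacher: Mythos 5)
Your high-level strategy---constant eigenbasis $N$, a second diagonalizer $I+N_1$ chosen so that $[\mathcal{D},N_1]=-F_1$, and a Peano--Baker expansion on the resulting remainder---is indeed the route the paper attributes to Wirth's thesis, and most of the bookkeeping you give (the structure of $R_1$, the smallness of $N_1$ for $d_0$ large, the origin of the prefactor $\langle\xi\rangle_{b(t)}/\langle\xi\rangle_{b(s)}$ from the diagonal part $R_1^{(d)}$) is sound.

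There is, however, a genuine gap in the central integrability claim. You assert that each entry of the remainder $R_2$ is $\lesssim C(\varepsilon,d_0)/(1+\tau)$ and that this yields $\int_s^t|R_2|\,d\tau\lesssim 1$. But $(1+\tau)^{-1}$ is \emph{not} integrable, and for increasing $b$ (and more generally as $|\xi|\to 0$) the $\tau$-range covered by $Z_{\mathrm{ell}}(\varepsilon,d_0)$ at a fixed $\xi$ is unbounded, so $\int_s^t(1+\tau)^{-1}\,d\tau=\log\frac{1+t}{1+s}$ is itself unbounded; this cannot be absorbed into the prefactor $\langle\xi\rangle_{b(t)}/\langle\xi\rangle_{b(s)}\sim b(t)/b(s)$, which does not in general dominate $(1+t)/(1+s)$. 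The $(1+\tau)^{-1}$ bound arises because you substitute the pointwise inequality $|\xi|^{2\delta}b(\tau)\geq d_0/(1+\tau)$ into the remainder estimate, but that substitution discards precisely the structure needed for integrability. Keeping $|\xi|^{2\delta}b(\tau)$ intact, every term of $R_2$ (after $\langle\xi\rangle_{b(\tau)}\gtrsim\varepsilon|\xi|^{2\delta}b(\tau)$ is used) is of the type $|b'|^2/(|\xi|^{2\delta}b^3)$ or $|b''|/(|\xi|^{2\delta}b^2)$, which by $\mathbf{(B3)}$ for $k=1,2$ is bounded by $C(\varepsilon)\bigl((1+\tau)^2|\xi|^{2\delta}b(\tau)\bigr)^{-1}$. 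Only then, because $\tau\geq t_{\mathrm{diss}}(|\xi|)$ on $Z_{\mathrm{ell}}$ and $|\xi|^{2\delta}(1+t_{\mathrm{diss}})b(t_{\mathrm{diss}})=d_0$, does one obtain the $\xi$-uniform bound
\[
\int_s^t|R_2(\tau,\xi)|\,d\tau\lesssim \frac{1}{|\xi|^{2\delta}}\int_{t_{\mathrm{diss}}(|\xi|)}^{\infty}\frac{d\tau}{(1+\tau)^2\,b(\tau)}\lesssim \frac{1}{|\xi|^{2\delta}(1+t_{\mathrm{diss}})b(t_{\mathrm{diss}})}=\frac{1}{d_0},
\]
where the tail estimate for $\int_T^\infty\bigl((1+\tau)^2 b(\tau)\bigr)^{-1}d\tau$ uses $\mathbf{(B5)}$ and the monotonicity of $(1+\tau)b(\tau)$, neither of which appears in your argument. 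Without this refinement the uniform boundedness of the Peano--Baker factor $Q$ does not follow from the estimates you state.
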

{\it Step 2. Transforming back to the original Cauchy problem:} After obtaining estimates for $E_{\text {ell }}^V=E_{\text {ell }}^V(t, s, \xi)$ it is sufficient to apply the backward transformation to the original Cauchy problem. This means that we transform back $E_{\text {ell }}^V=E_{\text {ell }}^V(t, s, \xi)$ to estimate the fundamental solution $E_{\text {ell }}=E_{\text {ell }}(t, s, \xi)$ which is related to a first-order system for the micro-energy $\left(|\xi|^\sigma \widehat{u}, D_t \hat{u}\right)^{\mathrm{T}}$ and gives the representation
\begin{equation} \label{ell}
E_{\mathrm{ell}}(t, s, \xi)=T(t, \xi) E_{\mathrm{ell}}^V(t, s, \xi) T^{-1}(s, \xi),
\end{equation}
where the matrix $T(t, \xi)$ and its inverse matrix $T^{-1}(t, \xi)$ are given in the following way:
\begin{equation*}
\left(\begin{array}{c}
|\xi|^\sigma \hat{u} \\
D_t \hat{u}
\end{array}\right)=\underbrace{\left(\begin{array}{cc}
\frac{|\xi|^\sigma}{\lambda(t) h(t, \xi)} & 0 \\
i \frac{|\xi|^{2 \delta}b(t)}{2 \lambda(t) h(t, \xi)} & \frac{1}{\lambda(t)}
\end{array}\right)}_{T(t, \xi)}\left(\begin{array}{c}
h(t, \xi) \hat{v} \\
D_t \hat{v}
\end{array}\right), \
T^{-1}(t, \xi)=\left(\begin{array}{cc}
\frac{\lambda(t) h(t, \xi)}{|\xi|^\sigma} & 0 \\
-i \frac{|\xi|^{2 \delta} b(t) \lambda(t)}{2|\xi|^\sigma} & \lambda(t)
\end{array}\right),
\end{equation*}
where the auxiliary function $\lambda=\lambda(t)$ is given by
\begin{equation} \label{lamda}
\lambda(t):=\exp \left(\frac{|\xi|^{2 \delta}}{2} \int\limits_0^t b(\tau) d \tau\right).
\end{equation}
\begin{lemma} \label{lemma2.3} In the elliptic zone it holds that
\begin{equation*}
\langle\xi\rangle_{b(t)}-\frac{|\xi|^{2 \delta} b(t)}{2} \leq-\frac{|\xi|^{2 \sigma}}{|\xi|^{2 \delta} b(t)},
\end{equation*}
\begin{equation*}
\frac{\lambda(s)}{\lambda(t)} \exp \left(\int\limits_s^t\langle\xi\rangle_{b(\tau)} d \tau\right) \leq \exp \left(-|\xi|^{2 \sigma - 2 \delta} \int\limits_s^t \frac{1}{b(\tau)} d \tau\right),
\end{equation*}
where $\lambda=\lambda(t)$ is defined in \eqref{lamda}.
\end{lemma}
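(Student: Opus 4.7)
The plan is to reduce the lemma to the single pointwise algebraic inequality in the first claim, and then derive the second claim by integrating it after rewriting the ratio $\lambda(s)/\lambda(t)$ as an exponential.

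For the first inequality, I would begin by observing that in $\Pi_{\mathrm{ell}}$ one has $|\xi|^{2\sigma} < \tfrac{|\xi|^{4\delta}b^2(t)}{4}$, so the absolute value in \eqref{xi_b(t)} is taken of a non-positive quantity and
$$
\langle\xi\rangle_{b(t)} = \sqrt{\tfrac{|\xi|^{4\delta}b^2(t)}{4} - |\xi|^{2\sigma}}.
$$
Setting $A:=\tfrac{|\xi|^{2\delta}b(t)}{2}$ and $B:=|\xi|^{2\sigma}$, the first claim reduces to
$$
\sqrt{A^2 - B}\;-\;A \;\leq\; -\tfrac{B}{2A}, \qquad 0 < B < A^2.
$$
This can be proved either by factoring out $A$ and invoking the elementary inequality $\sqrt{1-x}\leq 1-x/2$ on $x=B/A^2\in(0,1)$, or, equivalently, by multiplying through by the positive conjugate $\sqrt{A^2-B}+A$ and using the trivial bound $\sqrt{A^2-B}\leq A$. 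Either route yields the required pointwise estimate for every $(t,\xi)\in\Pi_{\mathrm{ell}}$.

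For the second inequality, unraveling \eqref{lamda} gives
$$
\frac{\lambda(s)}{\lambda(t)} \;=\; \exp\!\left(-\tfrac{|\xi|^{2\delta}}{2}\int_s^t b(\tau)\,d\tau\right),
$$
so the left-hand side equals
$$
\exp\!\left(\int_s^t\Bigl[\langle\xi\rangle_{b(\tau)} - \tfrac{|\xi|^{2\delta}b(\tau)}{2}\Bigr]d\tau\right).
$$
Applying the first inequality pointwise under the integral sign and pulling the common factor $|\xi|^{2\sigma-2\delta}$ outside the $\tau$-integration produces exactly the claimed bound.

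The only delicate point is to justify that the (purely algebraic) first inequality is actually available at every intermediate time $\tau\in[s,t]$, i.e.\ that the vertical slice $\{(\tau,\xi):\tau\in[s,t]\}$ stays in $\Pi_{\mathrm{ell}}$. This is ensured by the shape of the zone decomposition introduced above: for fixed $\xi$ the elliptic region is a $t$-interval bounded by the separating curves $t_{\mathrm{diss}}(|\xi|)$ and $t_{\mathrm{ell}}(|\xi|)$, so once both endpoints lie in $Z_{\mathrm{ell}}(\varepsilon,d_0)$ the whole segment $[s,t]$ lies in $\Pi_{\mathrm{ell}}$ and the first inequality is applicable throughout. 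Since the only nontrivial step is the elementary bound on $\sqrt{A^2-B}$, I do not anticipate any serious further obstacle.
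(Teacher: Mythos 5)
Your proof is correct. The paper states Lemma~\ref{lemma2.3} without proof, so there is nothing to compare against; your argument is exactly the natural one. Writing $A=\tfrac{|\xi|^{2\delta}b(t)}{2}$ and $B=|\xi|^{2\sigma}$ so that in $\Pi_{\mathrm{ell}}$ one has $\langle\xi\rangle_{b(t)}=\sqrt{A^2-B}$ with $0<B<A^2$, the first claim is $\sqrt{A^2-B}-A\le -B/(2A)$, which follows from $\sqrt{1-x}\le 1-x/2$ on $[0,1]$ (or, as you note, by multiplying by the conjugate $\sqrt{A^2-B}+A$ and using $\sqrt{A^2-B}\le A$). Unwinding $\lambda(s)/\lambda(t)=\exp\bigl(-\tfrac{|\xi|^{2\delta}}{2}\int_s^t b(\tau)\,d\tau\bigr)$ and applying the first inequality under the integral sign gives the second claim, and your observation that the monotonicity of $b$ (assumption $\mathbf{(B6)}$) guarantees the whole segment $\{(\tau,\xi):\tau\in[s,t]\}$ stays inside $\Pi_{\mathrm{ell}}$ is the right way to justify applying the pointwise bound at every intermediate $\tau$.
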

{\it Step 3. A refined estimate for the fundamental solution in the elliptic zone:} From Lemma \ref{lemma 2.2} we get for $(t, \xi),(s, \xi) \in Z_{\text {ell }}\left(\varepsilon, d_0\right)$, the estimate
$$
\left(\left|E_{\mathrm{ell}}^V(t, s, \xi)\right|\right) \lesssim \frac{b(t)}{b(s)} \exp \left(\int\limits_s^t\langle\xi\rangle_{b(\tau)} d \tau\right)\left(\begin{array}{ll}
1 & 1 \\
1 & 1
\end{array}\right) .
$$
This yields in combination with \eqref{ell} and Lemma \ref{lemma2.3} the estimate
\begin{align} \label{ell_1}
\left(\left|E_{\mathrm{ell}}(t, s, \xi)\right|\right) & \lesssim\left(\begin{array}{cc}
\frac{|\xi|^\sigma}{|\xi|^{2 \delta}} & 0 \\
 b(t) & b(t)
\end{array}\right) \exp \left(\int\limits_s^t\left(\langle\xi\rangle_{b(\tau)}-\frac{|\xi|^{2 \delta} b(\tau)}{2}\right) d \tau\right)\left(\begin{array}{ll}
1 & 1 \\
1 & 1
\end{array}\right)\left(\begin{array}{cc}
\frac{|\xi|^{2 \delta}}{|\xi|^\sigma} & 0 \\
\frac{|\xi|^{2 \delta}}{|\xi|^\sigma} & \frac{1}{b(s)}
\end{array}\right) \notag \\
& \lesssim \exp \left(-|\xi|^{2 \sigma - 2 \delta} \int\limits_s^t \frac{1}{b(\tau)} d \tau\right)\left(\begin{array}{cc}
1 & \frac{|\xi|^\sigma}{|\xi|^{2 \delta} b(s)} \\
\frac{|\xi|^{2 \delta} b(t)}{|\xi|^\sigma} & \frac{b(t)}{b(s)}
\end{array}\right).
\end{align}
It should be noted that among the two matrices $T(t, \xi)$ and $T^{-1}(t, \xi)$, we choose $h(t, \xi)=|\xi|^{2 \delta} b(t)$.
\begin{lemma} \label{lemma2.4}
The fundamental solution $E_{\mathrm{ell}}=E_{\mathrm{ell}}(t, s, \xi)$ satisfies the following estimate:
\begin{equation*}
\left(\left|E_{\mathrm{ell}}(t, s, \xi)\right|\right) \lesssim \exp \left(-|\xi|^{2 \sigma - 2 \delta} \int\limits_s^t \frac{1}{b(\tau)} d \tau\right)\left(\begin{array}{cc}
1 & \frac{|\xi|^{\sigma - 2 \delta}}{b(s)} \\
\frac{|\xi|^{\sigma - 2 \delta}}{b(t)} & \frac{|\xi|^{2 \sigma - 4 \delta}}{b(s) b(t)}
\end{array}\right)+\frac{\lambda^2(s)}{\lambda^2(t)}\left(\begin{array}{ll}
0 & 0 \\
0 & 1
\end{array}\right)
\end{equation*}
for all $t \geq s$ and $(t, \xi),(s, \xi) \in Z_{\mathrm{ell}}\left(\varepsilon, d_0\right)$.
\end{lemma}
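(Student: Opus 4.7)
The plan is to sharpen the matrix product \eqref{ell_1} by exploiting the two-mode structure of the fundamental solution $E_{\mathrm{ell}}^V$ in the elliptic zone, rather than using the crude uniform bound of Lemma~\ref{lemma 2.2}. Since $m(t,\xi)<0$ in the elliptic zone, the scalar equation \eqref{eq:2.3} admits two characteristic modes with leading behaviour $\exp\bigl(\pm\int_s^t\langle\xi\rangle_{b(\tau)}\,d\tau\bigr)$, and the Wirth-type diagonalization already used for Lemma~\ref{lemma 2.2} actually yields the refined decomposition
\begin{equation*}
E_{\mathrm{ell}}^V(t,s,\xi)=\mathcal{E}_+(t,s,\xi)\,e^{\int_s^t\langle\xi\rangle_{b(\tau)}\,d\tau}+\mathcal{E}_-(t,s,\xi)\,e^{-\int_s^t\langle\xi\rangle_{b(\tau)}\,d\tau},
\end{equation*}
where $\mathcal{E}_\pm$ are, to leading order, rank-one matrices proportional to the eigenprojectors of the principal matrix appearing in \eqref{eq:2.3}, with uniformly integrable corrections on $Z_{\mathrm{ell}}(\varepsilon,d_0)$.

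Next I apply the back-transformation \eqref{ell} mode by mode. For the growing mode $\mathcal{E}_+$, the prefactor $\tfrac{\lambda(s)}{\lambda(t)}e^{\int_s^t\langle\xi\rangle_{b(\tau)}\,d\tau}$ generated by the $\lambda^{\pm 1}$ entries of $T$ and $T^{-1}$ is converted by Lemma~\ref{lemma2.3} into the parabolic-type decay $\exp\bigl(-|\xi|^{2\sigma-2\delta}\int_s^t\tfrac{1}{b(\tau)}\,d\tau\bigr)$. The key observation is that the slow characteristic root of \eqref{eq:2.2} satisfies $\omega_+(t,\xi)\approx-|\xi|^{2\sigma-2\delta}/b(t)$, so the slow-mode eigenvector in the $(|\xi|^\sigma\widehat u,D_t\widehat u)^{\mathrm T}$ coordinates is parallel to $(1,\,i|\xi|^{\sigma-2\delta}/b(t))^{\mathrm T}$; this is verified by transporting the first-order eigenvector $(|\xi|^{2\delta}b(t)/\langle\xi\rangle_{b(t)},\,-i)^{\mathrm T}$ through $T(t,\xi)$ with $h(t,\xi)=|\xi|^{2\delta}b(t)$ and using the expansion $|\xi|^{2\delta}b(t)/2-\langle\xi\rangle_{b(t)}\approx|\xi|^{2\sigma-2\delta}/b(t)$ valid in $Z_{\mathrm{ell}}(\varepsilon,d_0)$. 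Taking absolute values, the corresponding rank-one outer product reproduces exactly the main matrix in the statement:
\begin{equation*}
\begin{pmatrix} 1 \\ |\xi|^{\sigma-2\delta}/b(t) \end{pmatrix}\begin{pmatrix} 1 & |\xi|^{\sigma-2\delta}/b(s) \end{pmatrix}=\begin{pmatrix} 1 & \tfrac{|\xi|^{\sigma-2\delta}}{b(s)} \\ \tfrac{|\xi|^{\sigma-2\delta}}{b(t)} & \tfrac{|\xi|^{2\sigma-4\delta}}{b(s)b(t)} \end{pmatrix}.
\end{equation*}
This is the source of the improvement over \eqref{ell_1}: the $(2,1)$ and $(2,2)$ entries there, obtained by bounding every matrix component independently, are now replaced by the correct rank-one entries that reflect the smallness of $\omega_+$.

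For the decaying mode $\mathcal{E}_-$, the fast eigenvector in the same coordinates is parallel to $(-i|\xi|^{\sigma-2\delta}/b(t),\,1)^{\mathrm T}$, so the associated rank-one matrix has (in absolute values) $|\xi|^{2\sigma-4\delta}/(b(s)b(t))$, $|\xi|^{\sigma-2\delta}/b(t)$, $|\xi|^{\sigma-2\delta}/b(s)$, and $1$ in positions $(1,1)$, $(1,2)$, $(2,1)$, and $(2,2)$ respectively. The prefactor $\tfrac{\lambda(s)}{\lambda(t)}e^{-\int_s^t\langle\xi\rangle_{b(\tau)}\,d\tau}$ is bounded, using $\langle\xi\rangle_{b(\tau)}\approx\tfrac12|\xi|^{2\delta}b(\tau)$ in $Z_{\mathrm{ell}}(\varepsilon,d_0)$, by a constant multiple of $\lambda^2(s)/\lambda^2(t)$. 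The pointwise inequality $|\xi|^{\sigma-2\delta}\le b(\tau)/2$ then integrates to $\lambda^2(s)/\lambda^2(t)\lesssim\exp\bigl(-|\xi|^{2\sigma-2\delta}\int_s^t\tfrac{1}{b(\tau)}\,d\tau\bigr)$, which shows that the $\mathcal{E}_-$-contributions in the $(1,1)$, $(1,2)$ and $(2,1)$ positions are absorbed by the corresponding entries already extracted from $\mathcal{E}_+$. Only the $(2,2)$ contribution of $\mathcal{E}_-$ survives, because its rank-one factor is $1$ while the $\mathcal{E}_+$ factor in position $(2,2)$ is the much smaller $|\xi|^{2\sigma-4\delta}/(b(s)b(t))$; this yields exactly the residual summand $\lambda^2(s)/\lambda^2(t)\left(\begin{smallmatrix}0&0\\0&1\end{smallmatrix}\right)$ appearing in the statement.

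The main obstacle will be the bookkeeping of the rank-one structure of $\mathcal{E}_\pm$ against the sub-leading but non-trivial corrections produced by the Wirth diagonalization: one must verify that the leading eigenvectors are the ones written above and that the integrable remainders do not spoil the cancellations responsible for the sharp $(2,1)$ and $(2,2)$ entries of the main matrix. A secondary difficulty is to make the domination of $\mathcal{E}_-$ by $\mathcal{E}_+$ in the off-diagonal positions uniform over $Z_{\mathrm{ell}}(\varepsilon,d_0)$, even when $b(s)/b(t)$ is not bounded above (as allowed by $\mathbf{(B3)}$); here the stronger pointwise inequality $|\xi|^{2\delta}b(\tau)\ge 4|\xi|^{2\sigma-2\delta}/b(\tau)$ valid in $Z_{\mathrm{ell}}$ provides an exponential margin that absorbs the polynomial ratio $b(s)/b(t)$ allowed by $\mathbf{(B3)}$.
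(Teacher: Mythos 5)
You take a genuinely different route from the paper, and although your eigenvector computations and back-transformation bookkeeping are sound, the proposal rests on an unestablished claim that constitutes a real gap. Your algebra is correct: the slow characteristic root of $\widehat u_{tt}+b(t)|\xi|^{2\delta}\widehat u_t+|\xi|^{2\sigma}\widehat u=0$ in $Z_{\mathrm{ell}}$ is indeed $\omega_+\approx -|\xi|^{2\sigma-2\delta}/b(t)$, the fast root is $\omega_-\approx -b(t)|\xi|^{2\delta}$, the associated eigenvectors and biorthogonal covectors in $(|\xi|^\sigma\widehat u,\,D_t\widehat u)^{\mathrm T}$ coordinates give exactly the two outer products in the statement (with $O(1)$ normalization constants since $|\xi|^{\sigma-2\delta}/b\leq 1/2$ there), and the domination $\lambda^2(s)/\lambda^2(t)\lesssim\exp\bigl(-|\xi|^{2\sigma-2\delta}\int_s^t b^{-1}(\tau)\,d\tau\bigr)$, which kills the $\mathcal{E}_-$-contributions everywhere except in position $(2,2)$, is also correct.

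The gap is the opening assertion that the Wirth diagonalization ``already used for Lemma~\ref{lemma 2.2} actually yields'' the decomposition $E_{\mathrm{ell}}^V=\mathcal{E}_+e^{\int\langle\xi\rangle_{b}}+\mathcal{E}_-e^{-\int\langle\xi\rangle_{b}}$ with rank-one and \emph{uniformly bounded} $\mathcal{E}_\pm$. What that diagonalization gives is an integrable remainder, hence precisely the crude entrywise bound of Lemma~\ref{lemma 2.2}, $\bigl|E_{\mathrm{ell}}^V\bigr|\lesssim\frac{\langle\xi\rangle_{b(t)}}{\langle\xi\rangle_{b(s)}}e^{\int_s^t\langle\xi\rangle_{b}}\bigl(\begin{smallmatrix}1&1\\1&1\end{smallmatrix}\bigr)$, and nothing finer. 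In the elliptic zone the two modes are exponentially separated, so the off-diagonal remainder coupling from the growing into the decaying channel is enhanced by the factor $e^{\int(\omega_+-\omega_-)}$; since the remainder is only integrable rather than small, the resulting entry of $E_{\mathrm{ell}}^V$ in the ``decaying row, growing column'' position can itself be of size $e^{\int\langle\xi\rangle_{b}}$ up to a bounded constant, which would destroy the rank-one alignment that your sharp $(2,1)$ and $(2,2)$ entries rely on. You flag this as ``the main obstacle'' but never close it; closing it would require a Levinson-type asymptotic integration argument on top of the diagonalization, and that extra estimate is exactly the missing step. The paper's proof sidesteps the issue entirely: it works with the scalar solutions $\Phi_1,\Phi_2$, keeps the crude bounds for $\Phi_1,\Phi_2$ (which are already sharp in the first row), and re-derives the improved bounds for $\Psi_k=\partial_t\Phi_k$ from the explicit Duhamel identity $\Psi_k=\tfrac{\lambda^2(s)}{\lambda^2(t)}\delta_{2k}-|\xi|^{2\sigma}\int_s^t\tfrac{\lambda^2(\tau)}{\lambda^2(t)}\Phi_k\,d\tau$ via an integration by parts that lets a fraction of the original integral be reabsorbed. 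That one-dimensional argument never invokes any rank-one decomposition and is therefore the more robust route.
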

\begin{proof}
If $\Phi_k(t, s, \xi), k=1,2$, solves $\Phi_{t t}+b(t)|\xi|^{2 \delta} \Phi_t+|\xi|^{2 \sigma} \Phi=0$, with  $\Phi_k(s, s, \xi)=\delta_{1 k},\;\partial_t \Phi_k(s, s, \xi)=\delta_{2 k}$, then
$$
\left(\begin{array}{c}
|\xi|^\sigma \hat{u}(t, \xi) \\
D_t \hat{u}(t, \xi)
\end{array}\right)=\left(\begin{array}{cc}
\Phi_1(t, s, \xi) & i|\xi|^\sigma \Phi_2(t, s, \xi) \\
\frac{D_t \Phi_1(t, s, \xi)}{|\xi| \sigma} & i D_t \Phi_2(t, s, \xi)
\end{array}\right)\left(\begin{array}{c}
|\xi|^\sigma \hat{u}(s, \xi) \\
D_t \hat{u}(s, \xi)
\end{array}\right) .
$$
Hence, it follows from \eqref{ell_1}
$$
\begin{aligned}
\left|\Phi_1(t, s, \xi)\right| & \lesssim \exp \left(- \int_s^t \frac{|\xi|^{2 \sigma-2 \delta}}{b(\tau)} d \tau\right), \;\;
\left|\Phi_2(t, s, \xi)\right|  \lesssim \frac{1}{b(s)|\xi|^{2 \delta}} \exp \left(- \int_s^t \frac{|\xi|^{2 \sigma-2 \delta}}{b(\tau)} d \tau\right), \\
\left|\partial_t \Phi_1(t, s, \xi)\right| & \lesssim b(t)|\xi|^{2 \delta} \exp \left(- \int_s^t \frac{|\xi|^{2 \sigma-2 \delta}}{b(\tau)} d \tau\right), \;\;
\left|\partial_t \Phi_2(t, s, \xi)\right|  \lesssim \frac{b(t)}{b(s)} \exp \left(- \int_s^t \frac{|\xi|^{2 \sigma-2 \delta}}{b(\tau)} d \tau\right).
\end{aligned}
$$
Let $\Psi_k(t, s, \xi)=\partial_t \Phi_k(t, s, \xi),  k=1,2$. Then $\partial_t \Psi_k+b(t)|\xi|^{2 \delta} \Psi_k=-|\xi|^{2 \sigma} \Phi_k, \quad \Psi_k(s, s, \xi)=$ $\delta_{2 k}$. Standard calculations lead to
$$
\begin{aligned}
 \Psi_1(t, s, \xi)=-|\xi|^{2 \sigma} \int_s^t \frac{\lambda^2(\tau)}{\lambda^2(t)} \Phi_1(\tau, s, \xi) d \tau, \;\;
\Psi_2(t, s, \xi)=\frac{\lambda^2(s)}{\lambda^2(t)}-|\xi|^{2 \sigma} \int_s^t \frac{\lambda^2(\tau)}{\lambda^2(t)} \Phi_2(\tau, s, \xi) d \tau.
\end{aligned}
$$
If we are able to derive the desired estimate for $\left|\Psi_1(t, s, \xi)\right|$, then we immediately  conclude the desired estimate for $\left|\Psi_2(t, s, \xi)\right|$. Using the estimates for $\left|\Phi_1(t, s, \xi)\right|$ and applying integration by parts we get
$$
\begin{aligned}
\left|\Psi_1(t, s, \xi)\right| \lesssim & \underbrace{\frac{|\xi|^{2\sigma}}{\lambda^2(t)} \int_s^t \lambda^2(\tau) \exp \left(-|\xi|^{2\sigma-2\delta} \int_s^\tau \frac{1}{b(\theta)} d \theta\right) d \tau}_{I} \\
= & \frac{|\xi|^{2\sigma-2\delta}}{\lambda^2(t)} \int_s^t \frac{1}{b(\tau)} \exp \left(-|\xi|^{2\sigma-2\delta} \int_s^\tau \frac{1}{b(\theta)} d \theta\right) \partial_\tau \lambda^2(\tau) d \tau \\
= & \frac{|\xi|^{2\sigma-2\delta}}{\lambda^2(t)}\left( \frac{\lambda^2(\tau)}{b(\tau)} \exp \left(-|\xi|^{2\sigma-2\delta} \int_s^\tau \frac{1}{b(\theta)} d \theta\right)\right)_s^t \\
&+\frac{|\xi|^{2\sigma}}{\lambda^2(t)} \int_s^t \lambda^2(\tau)\left(\frac{b(\tau)^{\prime}}{|\xi|^{2\delta} b^2(\tau)}+\frac{|\xi|^{2\sigma-4\delta}}{b^2(\tau)}\right) \exp \left(-|\xi|^{2\sigma-2\delta} \int_s^\tau \frac{1}{b(\theta)} d \theta\right) d \tau \\
\leq & \frac{|\xi|^{2\sigma-2\delta}}{b(t)} \exp \left(-|\xi|^{2\sigma-2\delta} \int_s^t \frac{1}{b(\tau)} d \tau\right)-\frac{|\xi|^{2\sigma-2\delta}}{b(s)} \frac{\lambda^2(s)}{\lambda^2(t)}\\
&+\underbrace{\left(\frac{C_1}{d_0}+\frac{1}{4}\right)}_{C_\tau < 1} \underbrace{\frac{|\xi|^{2\sigma}}{\lambda^2(t)} \int_s^t \lambda^2(\tau) \exp \left(-|\xi|^{2\sigma-2\delta} \int_s^\tau \frac{1}{b(\theta)} d \theta\right) d \tau}_{I}\\
\lesssim & \frac{|\xi|^{2\sigma-2\delta}}{b(t)} \exp \left(-|\xi|^{2\sigma-2\delta} \int_s^t \frac{1}{b(\tau)} d \tau\right)-\frac{|\xi|^{2\sigma-2\delta}}{b(s)} \frac{\lambda^2(s)}{\lambda^2(t)}.
\end{aligned}
$$
Here we use $|\xi|^{2\delta} \geq \frac{d_0}{(1+\tau)b(\tau)}, |\xi|^{2\sigma-4\delta} \leq \frac{b^2(\tau)}{4}\left(1-\varepsilon^2\right)$ from the definition of the elliptic zone and $\frac{b(\tau)^{\prime}}{b(\tau)} \leq \frac{|b(\tau)^{\prime}|}{b(\tau)} \leq \frac{C_1}{1+\tau}$ from assumption $\textbf{(B6)}$, if  $d_0>\frac{4C_1}{3}$. The second term  is dominated the first one, since $$
\frac{b(s)}{b(t)} \exp \left(-|\xi|^{2\sigma-2\delta} \int_s^t \frac{1}{b(\tau)} d \tau\right) \frac{\lambda^2(t)}{\lambda^2(s)}=\exp \left(\int_s^t(\underbrace{|\xi|^{2\delta} b(\tau)-\frac{|\xi|^{2\sigma-2\delta}}{b(\tau)}-\frac{b^{\prime}(\tau)}{b(\tau)}}_{>0, \text { if } \tau \geq t_0}) d \tau\right)
$$
for $t_0 \leq s \leq t$ with $t_0$ sufficiently large. Thus, we get
$
\left|\Phi_t^1(t, s, \xi)\right| \lesssim \frac{|\xi|^{2\sigma-2\delta}}{b(t)} \exp \left(-|\xi|^{2\sigma-2\delta} \int_s^t \frac{1}{b(\tau)} d \tau\right)$.
Similarly, we can represent $\Psi_2$ in the following way:
$$
\begin{aligned}
\Psi_2(t, s, \xi) & =\frac{\lambda^2(s)}{\lambda^2(t)}-|\xi|^{2\sigma} \int_s^t \frac{\lambda^2(\tau)}{\lambda^2(t)} \Phi_2(\tau, s, \xi) d \tau, \\
\left|\Psi_2(t, s, \xi)\right| & \lesssim \frac{\lambda^2(s)}{\lambda^2(t)}+\frac{|\xi|^{2\sigma-2\delta}}{b(s) \lambda^2(t)} \int_s^t \lambda^2(\tau) \exp \left(-|\xi|^{2\sigma-2\delta} \int_s^\tau \frac{1}{b(\theta)} d \theta\right) d \tau \\
& \lesssim \frac{\lambda^2(s)}{\lambda^2(t)}+\frac{|\xi|^{2\sigma-4\delta}}{b(t) b(s)} \exp \left(-|\xi|^{2\sigma-2\delta} \int_s^t \frac{1}{b(\tau)} d \tau\right).
\end{aligned}
$$
Thus, all desired estimates have been verified.
\end{proof}

\subsubsection{Estimates in the dissipative zone}
In the dissipative zone we define the micro-energy $U=U(t, \xi):=\left(\gamma(t) \hat{u}, D_t \hat{u}\right)^{\mathrm{T}}, \; \gamma(t):=\frac{1}{\left((1+t)b(t)\right)^\frac{\sigma}{2\delta}}$.
Then, the Fourier transformed Cauchy problem \eqref{eq:2.2} leads to the system of first order
\begin{align} \label{eq:2.51}
D_t U=\underbrace{\left(\begin{array}{cc}
\frac{D_t \gamma(t)}{\gamma(t)} & \gamma(t) \\
\frac{|\xi|^{2\sigma}}{\gamma(t)} & i |\xi|^{2\delta} b(t)
\end{array}\right)}_{A(t, \xi)} U .
\end{align} 
We are interested in the fundamental solution
$
E_{\text {diss }}=E_{\text {diss }}(t, s, \xi)=\left(\begin{array}{ll}
E_{\text {diss }}^{(11)} & E_{\text {diss }}^{(12)} \\
E_{\text {diss }}^{(21)} & E_{\text {diss }}^{(22)}
\end{array}\right)
$
to system \eqref{eq:2.51}, that is, the solution of
$
D_t E_{\text {diss }}(t, s, \xi)=A(t, \xi) E_{\text {diss }}(t, s, \xi), \quad E_{\text {diss }}(s, s, \xi)=I,
$
for all $0 \leq s \leq t$ and $(t, \xi),(s, \xi) \in Z_{\text {diss }}\left(d_0\right)$. Thus, the solution $U=U(t, \xi)$ is represented as
$
U(t, \xi)=E_{\text {diss }}(t, s, \xi) U(s, \xi) .
$
The entries $E_{\text {diss }}^{(k \ell)}(t, s, \xi), k, \ell=1,2$, of the fundamental solution $E_{\text {diss }}(t, s, \xi)$ satisfy the following system of Volterra integral equations for $k=1,2$:
$$
\begin{aligned}
& D_t E_{\text {diss }}^{(1 \ell)}(t, s, \xi)=\frac{D_t \gamma(t)}{\gamma(t)} E_{\text {diss }}^{(1 \ell)}(t, s, \xi)+\gamma(t) E_{\text {diss }}^{(2 \ell)}(t, s, \xi), \\
& D_t E_{\text {diss }}^{(2 \ell)}(t, s, \xi)=\frac{|\xi|^{2\sigma}}{\gamma(t)} E_{\text {diss }}^{(1 \ell)}(t, s, \xi)+i |\xi|^{2\delta} b(t) E_{\text {diss }}^{(2 \ell)}(t, s, \xi)
\end{aligned}
$$
together with their initial conditions
$
\left(\begin{array}{ll}
E_{\text {diss }}^{(11)}(s, s, \xi) & E_{\text {diss }}^{(12)}(s, s, \xi) \\
E_{\text {diss }}^{(21)}(s, s, \xi) & E_{\text {diss }}^{(22)}(s, s, \xi)
\end{array}\right)=\left(\begin{array}{ll}
1 & 0 \\
0 & 1
\end{array}\right)$. 
By direct calculations, with  $\lambda=\lambda(t)$ is defined in \eqref{lamda},
we get
$$
\begin{aligned}
& E_{\text {diss }}^{(11)}(t, s, \xi)=\frac{\gamma(t)}{\gamma(s)}+i \gamma(t) \int_s^t E_{\text {diss }}^{(21)}(\tau, s, \xi) d \tau, \;\; E_{\text {diss }}^{(21)}(t, s, \xi)=\frac{i|\xi|^{2\sigma}}{\lambda^2(t)} \int_s^t \frac{\lambda^2(\tau)}{\gamma(\tau)} E_{\text {diss }}^{(11)}(\tau, s, \xi) d \tau, \\
& E_{\text {diss }}^{(12)}(t, s, \xi)=i \gamma(t) \int_s^t E_{\text {diss }}^{(22)}(\tau, s, \xi) d \tau, \;\; E_{\text {diss }}^{(22)}(t, s, \xi)=\frac{\lambda^2(s)}{\lambda^2(t)}+\frac{i|\xi|^{2\sigma}}{\lambda^2(t)} \int_s^t \frac{\lambda^2(\tau)}{\gamma(\tau)} E_{\text {diss }}^{(12)}(\tau, s, \xi) d \tau.
\end{aligned}
$$

The next lemma estimates for the entries $E_{\text {diss }}^{(k \ell)}(t, s, \xi)$, $k, \ell=1,2$.
\begin{lemma} \label{lemma_diss_1}
Assume effective conditions for $b(t)$. Then in the dissipative zone:
$$
\left(\left|E_{\text {diss }}(t, s, \xi)\right|\right) \lesssim \left(\begin{array}{cc}
\frac{\gamma(t)}{\gamma(s)} \exp \left(-|\xi|^{\sigma} (t-s) \right) &  \frac{\gamma(t)}{|\xi|^{2\delta}b(t)} \exp \left(-|\xi|^{\sigma} (t-s) \right) \\
\frac{|\xi|^{2\sigma-2\delta}}{\gamma(s) b(t)} \exp \left(-|\xi|^{\sigma} (t-s) \right) &H
\end{array}\right)
$$
with $(s, \xi),(t, \xi) \in Z_{\text {diss }}\left(d_0\right), \; 0 \leq s \leq t$, $\gamma(t):=\frac{1}{\left((1+t)b(t)\right)^\frac{\sigma}{2\delta}}$ and $H:=\exp \left( -\max\{|\xi|^{\sigma} (t-s), |\xi|^{2\delta}\widehat{B} (s,t)) \} \right)$.
\end{lemma}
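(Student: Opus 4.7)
The plan is to convert the four coupled Volterra relations for the entries $E_{\text{diss}}^{(k\ell)}$ into two decoupled scalar Volterra equations of the second kind for $E_{\text{diss}}^{(11)}$ and $E_{\text{diss}}^{(22)}$, then iterate and refine by integration by parts against $\partial_\tau\lambda^2(\tau)=|\xi|^{2\delta}b(\tau)\lambda^2(\tau)$, mirroring the elliptic-zone argument of Lemma \ref{lemma2.4} but now with the dissipative-zone smallness $|\xi|^{2\delta}(1+t)b(t)\leq d_0$ driving the convergence of the iteration.

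First I would substitute the stated formula for $E_{\text{diss}}^{(21)}$ into that for $E_{\text{diss}}^{(11)}$ to obtain
\begin{equation*}
E_{\text{diss}}^{(11)}(t,s,\xi) = \frac{\gamma(t)}{\gamma(s)} - \gamma(t)|\xi|^{2\sigma}\int_s^t \frac{1}{\lambda^2(\tau_1)}\int_s^{\tau_1}\frac{\lambda^2(\tau)}{\gamma(\tau)}E_{\text{diss}}^{(11)}(\tau,s,\xi)\,d\tau\,d\tau_1,
\end{equation*}
and the analogous equation for $E_{\text{diss}}^{(22)}$ with inhomogeneous term $\lambda^2(s)/\lambda^2(t)$. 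Dividing out the free term and running a Neumann-series iteration, I would use the dissipative-zone bound $|\xi|^{2\sigma}\leq (d_0/((1+t)b(t)))^{\sigma/\delta}$ together with one integration by parts against $\partial_\tau\lambda^2=|\xi|^{2\delta}b\lambda^2$: the boundary terms absorb the iteration provided $d_0$ is large enough to dominate the $b'/b$ contribution from $\mathbf{(B3)}$, exactly as in the elliptic case. This yields first the baseline estimates $|E_{\text{diss}}^{(11)}|\lesssim \gamma(t)/\gamma(s)$ and $|E_{\text{diss}}^{(22)}|\lesssim \lambda^2(s)/\lambda^2(t)$.

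To extract the sharper exponential factor $\exp(-|\xi|^\sigma(t-s))$, I would continue the iteration and track the $k$-th term: it carries $|\xi|^{2\sigma k}$ together with a nested time integral that, after bounding the $\lambda^2(\tau)/\lambda^2(\tau_1)$ ratios uniformly on the dissipative region, is controlled by $(t-s)^k/k!$. Summation in $k$ produces the claimed $\exp(-|\xi|^\sigma(t-s))$ refinement. For the $(2,2)$ entry, the homogeneous contribution $\lambda^2(s)/\lambda^2(t)=\exp(-|\xi|^{2\delta}\widehat{\mathcal{B}}(s,t))$ competes with this dispersive factor, and the minimum of the two exponentials — equivalently the exponential of $-\max\{|\xi|^\sigma(t-s),\,|\xi|^{2\delta}\widehat{\mathcal{B}}(s,t)\}$ — is precisely the quantity $H$ in the statement.

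With the diagonal bounds secured, the off-diagonal entries follow directly from
\begin{equation*}
E_{\text{diss}}^{(21)}(t,s,\xi) = \frac{i|\xi|^{2\sigma}}{\lambda^2(t)}\int_s^t \frac{\lambda^2(\tau)}{\gamma(\tau)}E_{\text{diss}}^{(11)}(\tau,s,\xi)\,d\tau, \qquad E_{\text{diss}}^{(12)}(t,s,\xi) = i\gamma(t)\int_s^t E_{\text{diss}}^{(22)}(\tau,s,\xi)\,d\tau,
\end{equation*}
by plugging in the diagonal bounds and doing one final integration by parts; the boundary contributions produce the $1/b(t)$ and $1/(|\xi|^{2\delta}b(t))$ prefactors in the stated estimates, while the remaining interior terms are absorbed using the dissipative-zone smallness. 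The main obstacle will be the quantitative bookkeeping required to track the two competing decay rates — the dispersive $|\xi|^\sigma(t-s)$ coming out of the Neumann iteration and the elliptic $|\xi|^{2\delta}\widehat{\mathcal{B}}(s,t)$ built into $\lambda^2(s)/\lambda^2(t)$ — and to ensure that the contraction constant in the Volterra iteration is strictly less than one once the $b'(t)/b(t)$ contribution from $\mathbf{(B3)}$ is accounted for; this is what dictates the choice of $d_0$ in the definition of the dissipative zone.
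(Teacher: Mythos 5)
Your high-level setup is correct: convert the four Volterra relations into scalar equations for $E_{\text{diss}}^{(11)}$ and $E_{\text{diss}}^{(22)}$ by substitution, bound those, and then read off the off-diagonal entries by plugging back in. But the mechanism you propose for extracting the exponential factor $\exp(-|\xi|^\sigma(t-s))$ is wrong, and this is not a cosmetic issue. A Neumann/Gronwall iteration applied to
\begin{equation*}
y(t)=1-|\xi|^{2\sigma}\int_s^t\int_s^\tau \frac{\lambda^2(\theta)}{\lambda^2(\tau)}\,y(\theta)\,d\theta\,d\tau
\end{equation*}
gives a bound of the form $\exp\bigl(+|\xi|^{2\sigma}(t-s)^2/2\bigr)$, i.e.\ a \emph{growing} exponential; tracking the $k$-th term and summing cannot give $\exp(-|\xi|^\sigma(t-s))$. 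In the paper's argument the Gronwall bound is used only to conclude $|y|\lesssim\exp(|\xi|^{2\sigma}(1+t)^2)\lesssim 1$, where the last step uses $|\xi|^\sigma\lesssim (1+t)^{-1}$ — a consequence of combining $|\xi|^{\sigma-2\delta}\lesssim b(t)$ (elliptic region) with $|\xi|^{2\delta}\lesssim\bigl((1+t)b(t)\bigr)^{-1}$ (dissipative zone). The factor $\exp(-|\xi|^\sigma(t-s))$ is then inserted \emph{trivially}, via $1=e\cdot e^{-1}\lesssim\exp(-|\xi|^\sigma(t-s))$, precisely because $|\xi|^\sigma(t-s)\lesssim 1$ in this zone so the exponential is bounded below by a positive constant. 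It is a free cosmetic factor that makes gluing to the elliptic zone seamless, not a decay produced by the iteration.

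Your other imported mechanism — integration by parts against $\partial_\tau\lambda^2=|\xi|^{2\delta}b\lambda^2$ with the boundary terms absorbing the iteration, conditioned on $d_0$ being large enough relative to $b'/b$ via $\mathbf{(B3)}$ — belongs to the \emph{elliptic} zone proof of Lemma~\ref{lemma2.4}, not the dissipative one, and importing it here is a misdirection. In the dissipative zone no integration by parts occurs anywhere: the off-diagonal entries are handled by the crude bounds $\int_s^t\lambda^2(\tau)/\lambda^2(t)\,d\tau\leq t-s\lesssim 1+t$ and $\lambda^2(s)\int_s^t\lambda^{-2}(\tau)\,d\tau\lesssim 1+t$, followed by $|\xi|^{2\delta}(1+t)\lesssim b(t)^{-1}$ to produce the $1/b(t)$ and $1/(|\xi|^{2\delta}b(t))$ prefactors. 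Likewise, $d_0$ plays no contraction-threshold role in this zone; the condition $d_0>4C_1/3$ is imposed only in the elliptic-zone argument. Finally, your explanation of $H$ as the two exponentials ``competing'' is conceptually off: the Volterra free term gives $|E^{(22)}|\lesssim\lambda^2(s)/\lambda^2(t)=\exp(-|\xi|^{2\delta}\widehat{\mathcal{B}}(s,t))$, the trivial insertion gives $|E^{(22)}|\lesssim\exp(-|\xi|^\sigma(t-s))$, and $H$ is simply the minimum of these two independent upper bounds.
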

\begin{proof}
First let us consider the first column, that is,
$$
\begin{aligned}
& E_{\text {diss }}^{(11)}(t, s, \xi)=\frac{\gamma(t)}{\gamma(s)}+i \gamma(t) \int_s^t E_{\text {diss }}^{(21)}(\tau, s, \xi) d \tau, \;\; E_{\text {diss }}^{(21)}(t, s, \xi)=\frac{i|\xi|^{2\sigma}}{\lambda^2(t)} \int_s^t \frac{\lambda^2(\tau)}{\gamma(\tau)} E_{\text {diss }}^{(11)}(\tau, s, \xi) d \tau.
\end{aligned}
$$
Plugging the representation for $E_{\text {diss }}^{(21)}(t, s, \xi)$ into the integral equation for $E_{\text {diss }}^{(11)}(t, s, \xi)$ gives
$$
\begin{aligned}
E_{\mathrm{diss}}^{(11)}(t, s, \xi) & =\frac{\gamma(t)}{\gamma(s)}+i \gamma(t) \int_s^t\left(\frac{i|\xi|^{2\sigma}}{\lambda^2(\tau)} \int_s^\tau \frac{\lambda^2(\theta)}{\gamma(\theta)} E_{\mathrm{diss}}^{(11)}(\theta, s, \xi) d \theta\right) d \tau \\
& =\frac{\gamma(t)}{\gamma(s)}-|\xi|^{2\sigma} \gamma(t) \int_s^t \int_s^\tau \frac{\lambda^2(\theta)}{\lambda^2(\tau)} \frac{1}{\gamma(\theta)} E_{\mathrm{diss}}^{(11)}(\theta, s, \xi) d \theta d \tau.
\end{aligned}
$$
It follows that
$$
\frac{\gamma(s)}{\gamma(t)} E_{\text {diss }}^{(11)}(t, s, \xi)=1-|\xi|^{2\sigma} \int_s^t \int_s^\tau \frac{\lambda^2(\theta)}{\lambda^2(\tau)} \frac{\gamma(s)}{\gamma(\theta)} E_{\text {diss }}^{(11)}(\theta, s, \xi) d \theta d \tau.
$$
By setting $y(t, s, \xi):=\frac{\gamma(s)}{\gamma(t)} E_{\text {diss }}^{(11)}(t, s, \xi)$ we obtain
$$
\begin{aligned}
y(t, s, \xi) =1-|\xi|^{2\sigma} \int_s^t \int_s^\tau \frac{\lambda^2(\theta)}{\lambda^2(\tau)} y(\theta, s, \xi) d \theta d \tau .
\end{aligned}
$$
Thus, we have
$
|y(t, s, \xi)| \lesssim 1+ \int_s^t\int_s^\tau |\xi|^{2\sigma} |y(\theta, s, \xi)| d \theta d \tau .
$
Applying Gronwall's inequality (cf.  Lemma \ref{Gronwall}) we get the estimate
\begin{align} \label{ineq:E12}
|y(t, s, \xi)| &\lesssim \exp \left(\int_s^t\int_s^\tau |\xi|^{2\sigma} d \theta d \tau \right) \lesssim \exp \left(|\xi|^{2\sigma} (1+t)^2 \right) \lesssim 1 \notag \\
&=ee^{-1} \lesssim e \exp \left(-|\xi|^{\sigma} (t-s) \right)\lesssim \exp \left(-|\xi|^{\sigma} (t-s) \right).
\end{align}
In the above we have used the inequality $ |\xi|^{\sigma} \lesssim \frac{1}{1+t}$, which is implied from $|\xi|^{\sigma-2\delta} \lesssim b(t)$ (from the definition of the elliptic region), and $|\xi|^{2\delta} \lesssim \frac{1}{(1+t)b(t)}$ (from the definition of the dissipative zone), respectively. Hence, we can conclude that
$
\left|E_{\text {diss }}^{(11)}(t, s, \xi)\right| \lesssim \frac{\gamma(t)}{\gamma(s)} \exp \left(-|\xi|^{\sigma} (t-s) \right).
$

Now we consider $E_{\text {diss }}^{(21)}(t, s, \xi)$. By using the estimate for $\left|E_{\text {diss }}^{(11)}(t, s, \xi)\right|$ we obtain
$$
\begin{aligned}
\left|E_{\text {diss }}^{(21)}(t, s, \xi)\right| & \lesssim|\xi|^{2\sigma} \int_s^t \frac{1}{\gamma(\tau)} \frac{\lambda^2(\tau)}{\lambda^2(t)}\left|E_{\text {diss }}^{(11)}(\tau, s, \xi)\right| d \tau \\
& \lesssim \frac{1}{\gamma(s)}|\xi|^{2\sigma} \int_s^t  \frac{\lambda^2(\tau)}{\lambda^2(t)} d \tau \lesssim \frac{1}{\gamma(s)} |\xi|^{2\sigma} (1+t) \lesssim \frac{1}{\gamma(s)} \frac{|\xi|^{2\sigma-2\delta}}{b(t)}.
\end{aligned}
$$
Here, $|\xi|^{2\delta} \lesssim \frac{1}{(1+t)b(t)}$ thanks to the definition of the dissipative zone. Similarly  as in \eqref{ineq:E12} we obtain
\begin{align*}
\left|E_{\text {diss }}^{(21)}(t, s, \xi)\right| & \lesssim \frac{1}{\gamma(s)} \frac{|\xi|^{2\sigma-2\delta}}{b(t)} \exp \left(-|\xi|^{2\sigma-2\delta} (t-s) \right).
\end{align*}
Next, we consider the entries of the second column. We have
$$
\begin{aligned}
 E_{\text {diss }}^{(12)}(t, s, \xi)=i \gamma(t) \int_s^t E_{\text {diss }}^{(22)}(\tau, s, \xi) d \tau, \;\; E_{\text {diss }}^{(22)}(t, s, \xi)=\frac{\lambda^2(s)}{\lambda^2(t)}+\frac{i|\xi|^{2\sigma}}{\lambda^2(t)} \int_s^t \frac{\lambda^2(\tau)}{\gamma(\tau)} E_{\text {diss }}^{(12)}(\tau, s, \xi) d \tau .
\end{aligned}
$$
Plugging the representation for $E_{\text {diss }}^{(22)}(t, s, \xi)$ into the integral equation for $E_{\text {diss }}^{(12)}(t, s, \xi)$ gives
$$
\begin{aligned}
E_{\mathrm{diss}}^{(12)}(t, s, \xi) & =i \gamma(t) \int_s^t\left(\frac{\lambda^2(s)}{\lambda^2(\tau)}+\frac{i|\xi|^{2\sigma}}{\lambda^2(\tau)} \int_s^\tau \frac{\lambda^2(\theta)}{\gamma(\theta)} E_{\mathrm{diss}}^{(12)}(\theta, s, \xi) d \theta\right) d \tau \\
& =i \gamma(t) \lambda^2(s) \int_s^t \frac{1}{\lambda^2(\tau)} d \tau-|\xi|^{2\sigma} \gamma(t) \int_s^t \int_s^\tau \frac{\lambda^2(\theta)}{\lambda^2(\tau)} \frac{1}{\gamma(\theta)} E_{\mathrm{diss}}^{(12)}(\theta, s, \xi) d \theta d \tau.
\end{aligned}
$$
By setting $y(t, s, \xi):=\frac{1}{\gamma(t)} E_{\text {diss }}^{(12)}(t, s, \xi)$, it follows that
$$
y(t, s, \xi)=i \lambda^2(s) \int_s^t \frac{1}{\lambda^2(\tau)} d \tau-|\xi|^{2\sigma} \int_s^t \int_s^\tau \frac{\lambda^2(\theta)}{\lambda^2(\tau)} y(\theta, s, \xi) d \theta d \tau.
$$
In the same way as it was done for $E_{\text {diss }}^{(11)}(t, s, \xi)$, we obtain
\begin{align*}
|y(t, s, \xi)| & \lesssim \lambda^2(s) \int_s^t \frac{1}{\lambda^2(\tau)} d \tau+\int_s^t\int_s^\tau |\xi|^{2\sigma} |y(\theta, s, \xi)| d \theta d \tau  \lesssim (1+t) +\int_s^t\int_s^\tau |\xi|^{2\sigma} |y(\theta, s, \xi)| d \theta d \tau \\ & \lesssim \frac{1}{|\xi|^{2\delta}b(t)} +\int_s^t\int_s^\tau |\xi|^{2\sigma} |y(\theta, s, \xi)| d \theta d \tau.
\end{align*}
Here we use $|\xi|^{2\delta} \lesssim \frac{1}{(1+t)b(t)}$ in the dissipative zone. Now by Gronwall's inequality we get
\begin{align*}
|y(t, s, \xi)|  \lesssim \frac{1}{|\xi|^{2\delta}b(t)} \exp \left(\int_s^t\int_s^\tau |\xi|^{2\sigma} |y(\theta, s, \xi)| d \theta d \tau \right) \lesssim \frac{1}{|\xi|^{2\delta}b(t)} \exp \left(|\xi|^{2\sigma} (1+t)^2 \right) \lesssim \frac{1}{|\xi|^{2\delta}b(t)}.
\end{align*}
In the last estimate we have used  $|\xi|^{\sigma-2\delta} \lesssim b(t)$ from the definition of the elliptic region, $|\xi|^{2\delta} \lesssim \frac{1}{(1+t)b(t)}$ from the definition of the dissipative zone, respectively.
Thus, we obtain $\left|E_{\text {diss }}^{(12)}(t, s, \xi)\right| \lesssim \frac{\gamma(t)}{|\xi|^{2\delta}b(t)}$. Similarly as in \eqref{ineq:E12}, we get
$
\left|E_{\text {diss }}^{(12)}(t, s, \xi)\right| \lesssim \frac{\gamma(t)}{|\xi|^{2\delta}b(t)} \exp \left(-|\xi|^{\sigma} (t-s) \right).
$

Finally, let us consider $E_{\text {diss }}^{(22)}(t, s, \xi)$ by using the representations for $\left|E_{\text {diss }}^{(12)}(t, s, \xi)\right|$. It holds that
$$
\begin{aligned}
\left|E_{\mathrm{diss}}^{(22)}(t, s, \xi)\right| & \lesssim \frac{\lambda^2(s)}{\lambda^2(t)}+|\xi|^{2\sigma} \int_s^t \frac{\lambda^2(\tau)}{\lambda^2(t)} \frac{1}{\gamma(\tau)}\left|E_{\mathrm{diss}}^{(12)}(\tau, s, \xi)\right| d \tau \\
& \lesssim \frac{\lambda^2(s)}{\lambda^2(t)}+|\xi|^{2\sigma} \int_s^t \int_s^\tau E_{\text {diss }}^{(22)}(\theta, s, \xi) d \theta d \tau.
\end{aligned}
$$
We apply Gronwall's inequality to obtain
\begin{align*}
\left|E_{\text {diss }}^{(22)}(t, s, \xi)\right| \lesssim \frac{\lambda^2(s)}{\lambda^2(t)} \exp \left(\int_s^t\int_s^\tau |\xi|^{2\sigma} d \theta d \tau \right) \lesssim \frac{\lambda^2(s)}{\lambda^2(t)} \exp \left(|\xi|^{2\sigma} (1+t)^2 \right) \lesssim \frac{\lambda^2(s)}{\lambda^2(t)}.
\end{align*}
Therefore, $ \left|E_{\text {diss }}^{(22)}(t, s, \xi)\right| \lesssim \exp \left(     -|\xi|^{2\delta}\hat{B}(s,t) \right)$. 
In the last estimate we have used $|\xi|^{\sigma-2\delta} \lesssim b(t)$ from the definition of the elliptic region and $|\xi|^{2\delta} \lesssim \frac{1}{(1+t)b(t)}$ from the definition of the dissipative zone, respectively. 

Similarly as in deriving \eqref{ineq:E12} we can prove that $ \left|E_{\text {diss }}^{(22)}(t, s, \xi)\right| \lesssim \exp \left(-|\xi|^{\sigma}(t-s)\right)$. This completes the proof.
\end{proof}
\subsubsection{Estimates in the reduced zone and pseudo-differential zone}
In  $Z_{\mathrm{red}}(\varepsilon)$ we introduce the micro-energy $V=\left(\varepsilon \frac{b(t)}{2} |\xi|^{2 \delta} \widehat{v}, D_t \widehat{v}\right)^{\mathrm{T}}$ which, thanks to  \eqref{eq:2.3}, satisfies
\begin{equation} \label{eq:2.9}
D_t V=\underbrace{\left(\begin{array}{cc}
\frac{D_t b(t)}{b(t)} & \varepsilon \frac{b(t)}{2} |\xi|^{2 \delta} \\
\frac{|\xi|^{2 \sigma}-\frac{|\xi|^{4 \delta}}{4} b^2(t)-\frac{|\xi|^{2 \delta}}{2} b^{\prime}(t)}{\varepsilon \frac{b(t)}{2} |\xi|^{2 \delta}} & 0
\end{array}\right)}_{A^V(t, \xi)} V.
\end{equation}
Let us estimate the fundamental solution $E_{\mathrm{red}}^V=E_{\mathrm{red}}^V(t, s, \xi)$ to \eqref{eq:2.9} that solves 
$$
\left\{\begin{array}{l}
D_t E_{\mathrm{red}}^V(t, s, \xi)=A^V(t, \xi) E_{\mathrm{red}}^V(t, s, \xi), \\
E_{\mathrm{red}}^V(s, s, \xi)=I .
\end{array}\right.
$$
\begin{lemma} \label{lemma2.5}
The fundamental solution $E_{\mathrm{red}}^V=E_{\mathrm{red}}^V(t, s, \xi)$ to \eqref{eq:2.9} satisfies the following estimate:
\begin{equation*} 
\left(\left|E_{\mathrm{red}}^V(t, s, \xi)\right|\right) \lesssim \exp \left(\varepsilon |\xi|^{2 \delta} \int\limits_s^t b(\tau) d \tau\right)\left(\begin{array}{ll}
1 & 1 \\
1 & 1
\end{array}\right)
\end{equation*}
for all $t \geq s \geq t_0$ with sufficiently large $t_0=t_0(\varepsilon)$ and $(t, \xi),(s, \xi) \in Z_{\mathrm{red}}(\varepsilon)$.
\end{lemma}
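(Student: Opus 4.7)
The plan is to estimate the operator norm of the coefficient matrix $A^V(t,\xi)$ entrywise on $Z_{\mathrm{red}}(\varepsilon)$, show that every entry is dominated by $\varepsilon b(t)|\xi|^{2\delta}$ up to an absolute constant once $t\ge t_0(\varepsilon)$, and then apply a standard matrix Gronwall argument to the Volterra representation of the fundamental solution.

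First I would bound the four entries of $A^V(t,\xi)$. The $(1,2)$-entry equals exactly $\varepsilon b(t)|\xi|^{2\delta}/2$, which sets the target scale. For the $(1,1)$-entry, assumption $\mathbf{(B3)}$ gives $|b'(t)/b(t)|\lesssim(1+t)^{-1}$. The $(2,1)$-entry I split as
\begin{equation*}
\frac{|\xi|^{2\sigma}-\tfrac{|\xi|^{4\delta}}{4}b^2(t)}{\varepsilon\,b(t)|\xi|^{2\delta}/2}\;-\;\frac{b'(t)}{\varepsilon\,b(t)}.
\end{equation*}
By the very definition of $Z_{\mathrm{red}}(\varepsilon)$ and \eqref{xi_b(t)}, $\bigl||\xi|^{2\sigma}-\tfrac{|\xi|^{4\delta}}{4}b^2(t)\bigr|=\langle\xi\rangle_{b(t)}^2\le(\varepsilon b(t)|\xi|^{2\delta}/2)^2$, so the first summand is at most $\varepsilon b(t)|\xi|^{2\delta}/2$; by $\mathbf{(B3)}$ again the second summand is $O(\varepsilon^{-1}(1+t)^{-1})$. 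The $(2,2)$-entry is $0$.

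Next I would absorb the $(1+t)^{-1}$ contributions into $\varepsilon b(t)|\xi|^{2\delta}$. On $Z_{\mathrm{red}}(\varepsilon)$, the inequality $\langle\xi\rangle_{b(t)}\le\varepsilon b(t)|\xi|^{2\delta}/2$ forces $|\xi|^{\sigma-2\delta}\approx b(t)/2$, hence $b(t)|\xi|^{2\delta}\approx 2|\xi|^{\sigma}\approx b(t)^{\sigma/(\sigma-2\delta)}$. Assumption $\mathbf{(B2)}$ gives $(1+t)^{(\sigma-2\delta)/\sigma}b(t)\to\infty$, which translates into $b(t)|\xi|^{2\delta}\gg(1+t)^{-1}$ for $t\ge t_0(\varepsilon)$. (The limiting case $\delta=\sigma/2$ is simpler: the reduced-zone condition is $|\xi|^\sigma\approx b(t)/2$ directly, and $\mathbf{(B2)}$ then gives $tb(t)\to\infty$.) Consequently, for $t\ge s\ge t_0(\varepsilon)$ every entry of $A^V(t,\xi)$ is bounded by $C\varepsilon b(t)|\xi|^{2\delta}$ for a universal $C$.

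Finally, the Volterra identity $E_{\mathrm{red}}^V(t,s,\xi)=I+i\int_s^t A^V(\tau,\xi)E_{\mathrm{red}}^V(\tau,s,\xi)\,d\tau$ combined with the scalar Gronwall inequality of Lemma \ref{Gronwall} applied componentwise (or the Peano--Baker series) yields
\begin{equation*}
\bigl(|E_{\mathrm{red}}^V(t,s,\xi)|\bigr)\lesssim\exp\!\Bigl(C\varepsilon\,|\xi|^{2\delta}\!\int_s^t b(\tau)\,d\tau\Bigr)\begin{pmatrix}1&1\\1&1\end{pmatrix},
\end{equation*}
and the absolute constant $C$ is absorbed by redefining $\varepsilon$ (equivalently, replacing $\varepsilon$ by $C^{-1}\varepsilon$ in the definition of $Z_{\mathrm{red}}(\varepsilon)$). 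The main obstacle is the second step: one must show that the correction terms of size $\varepsilon^{-1}(1+t)^{-1}$ arising from $b'(t)$ are genuinely dominated by the target scale $\varepsilon b(t)|\xi|^{2\delta}$ on the reduced zone, which is exactly where the effectiveness condition $\mathbf{(B2)}$ (and the implicit exclusion of the dissipative regime) is used and which fixes the threshold $t_0=t_0(\varepsilon)$.
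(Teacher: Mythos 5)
Your overall strategy --- bound the entries of $A^V(t,\xi)$ on $Z_{\mathrm{red}}(\varepsilon)$ using the zone inequality and condition $\mathbf{(B3)}$, then apply Gronwall to the Volterra representation of $E_{\mathrm{red}}^V$ --- is the correct one and is in line with the argument of \cite[Section 2.3]{Wirth2007}, which the paper cites in lieu of its own proof. For $\delta\in(0,\sigma/2)$ your chain of estimates works: the zone inequality indeed gives $|\xi|^{\sigma-2\delta}\approx b(t)/2$, so $b(t)|\xi|^{2\delta}\approx b(t)^{\sigma/(\sigma-2\delta)}$ up to $\varepsilon$-controlled constants, and $\mathbf{(B2)}$ then makes $(1+t)b(t)^{\sigma/(\sigma-2\delta)}\to\infty$, which allows the $(1+t)^{-1}$ corrections to be absorbed for $t\ge t_0(\varepsilon)$.

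There is, however, a genuine gap in the endpoint case $\delta=\sigma/2$, which the lemma covers. In that case $\langle\xi\rangle_{b(t)}=|\xi|^\sigma\sqrt{|1-b^2(t)/4|}$, so the zone inequality $\langle\xi\rangle_{b(t)}\le\varepsilon\tfrac{|\xi|^{2\delta}b(t)}{2}$ divides out the factor $|\xi|^\sigma$ and reduces to the purely time-dependent condition $\sqrt{|1-b^2(t)/4|}\le\varepsilon b(t)/2$, i.e.\ $\tfrac{2}{\sqrt{1+\varepsilon^2}}\le b(t)\le\tfrac{2}{\sqrt{1-\varepsilon^2}}$; there is no restriction on $|\xi|$ at all. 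Your characterization ``$|\xi|^\sigma\approx b(t)/2$'' is therefore incorrect: the consistent reading of $|\xi|^{\sigma-2\delta}\approx b(t)/2$ at $\delta=\sigma/2$ is $1\approx b(t)/2$. As a consequence, the pointwise domination $\varepsilon^{-1}(1+t)^{-1}\lesssim \varepsilon b(t)|\xi|^{2\delta}$ fails: with $b(t)\approx 2$ the right-hand side is $\approx 2\varepsilon|\xi|^{\sigma}$, which can be arbitrarily small for fixed $t$, while the left-hand side is $\gtrsim(1+t)^{-1}$. Condition $\mathbf{(B2)}$ cannot rescue this, since it only controls $t$, not $|\xi|$.

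The clean way to close this gap (and to treat all $\delta\in(0,\sigma/2]$ uniformly, making the pointwise absorption unnecessary) is to integrate the $b'$-contributions instead of trying to dominate them pointwise. The column sums of $|A^V|$ satisfy
\begin{equation*}
|A^V_{11}|+|A^V_{21}|\le \frac{|b'(t)|}{b(t)}+\varepsilon\frac{b(t)|\xi|^{2\delta}}{2}+\frac{|b'(t)|}{\varepsilon b(t)},\qquad |A^V_{12}|+|A^V_{22}|=\varepsilon\frac{b(t)|\xi|^{2\delta}}{2},
\end{equation*}
where the bound for $|A^V_{21}|$ uses $\langle\xi\rangle_{b(t)}^2\le(\varepsilon b(t)|\xi|^{2\delta}/2)^2$ exactly as you argued. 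Since by $\mathbf{(B6)}$ the sign of $b'$ is fixed, the extra terms integrate to
\begin{equation*}
\Bigl(1+\frac{1}{\varepsilon}\Bigr)\int_s^t\frac{|b'(\tau)|}{b(\tau)}\,d\tau
=\Bigl(1+\frac{1}{\varepsilon}\Bigr)\bigl|\ln b(t)-\ln b(s)\bigr|,
\end{equation*}
and the zone inequality pins $b$ to the window $\tfrac{2|\xi|^{\sigma-2\delta}}{\sqrt{1+\varepsilon^2}}\le b(\cdot)\le\tfrac{2|\xi|^{\sigma-2\delta}}{\sqrt{1-\varepsilon^2}}$ along the whole path (with $\xi$ fixed), so that $|\ln b(t)-\ln b(s)|\le\ln\tfrac{\sqrt{1+\varepsilon^2}}{\sqrt{1-\varepsilon^2}}=O(\varepsilon^2)$. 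Hence these terms contribute only a factor $\exp(O(\varepsilon))\lesssim 1$ to the Gronwall exponent, uniformly in $(s,t,\xi)\in Z_{\mathrm{red}}(\varepsilon)$, and the remaining $\varepsilon\tfrac{b(\tau)|\xi|^{2\delta}}{2}$ integrates to $\tfrac{\varepsilon}{2}|\xi|^{2\delta}\widehat{\mathcal{B}}(s,t)$, which yields the claimed estimate with an even smaller numerical constant. This also makes your final ``absorb $C$ by redefining $\varepsilon$'' step, which is technically circular since the same $\varepsilon$ appears in the definition of $Z_{\mathrm{red}}(\varepsilon)$, unnecessary.
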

For the proof of Lemma \ref{lemma2.5}, see  \cite[Section 2.3]{Wirth2007}.\\
In $Z_{\mathrm{pd}}(N, \varepsilon)$ we introduce the micro-energy $V=\left(\langle\xi\rangle_{b(t)} \widehat{v}, D_t \widehat{v}\right)^{\mathrm{T}}$. By \eqref{eq:2.3} it holds that
\begin{equation} \label{eq:z_pd}
D_t V=\left(\begin{array}{cc}
0 & \langle\xi\rangle_{b(t)} \\
\langle\xi\rangle_{b(t)} & 0
\end{array}\right) V+\left(\begin{array}{cc}
\frac{D_t\langle\xi\rangle_{b(t)}}{\langle\xi\rangle_{b(t)}} & 0 \\
-\frac{|\xi|^{2 \delta}b^{\prime}(t)}{2\langle\xi)_{b(t)}} & 0
\end{array}\right) V \text {. }
\end{equation}
\begin{lemma} \label{lemma2.6}
The fundamental solution $E_{\mathrm{pd}}^V=E_{\mathrm{pd}}^V(t, s, \xi)$ to \eqref{eq:z_pd} satisfies the following estimate:
$$
\left(\left|E_{\mathrm{pd}}^V(t, s, \xi)\right|\right) \lesssim\left(\frac{1+t}{1+s}\right)^{C |\xi|^{2 \delta}}\left(\begin{array}{ll}
1 & 1 \\
1 & 1
\end{array}\right)
$$
for all $t \geq s \geq t_0$ with sufficiently large $t_0=t_0(\varepsilon)$ and $(t, \xi),(s, \xi) \in Z_{\mathrm{pd}}(N, \varepsilon)$.
\end{lemma}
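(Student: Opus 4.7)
The plan is to adapt the diagonalization strategy of \cite[Section 2.3]{Wirth2007} (used also for Lemma \ref{lemma2.5}) to the symmetric principal structure in \eqref{eq:z_pd}. First, I would diagonalize the principal matrix $\bigl(\begin{smallmatrix}0 & \langle\xi\rangle_{b(t)}\\ \langle\xi\rangle_{b(t)} & 0\end{smallmatrix}\bigr)$: its eigenvalues are $\pm\langle\xi\rangle_{b(t)}$, with $(t,\xi)$-independent eigenvectors $(1,\pm 1)^\mathrm{T}$. Setting $W:=MV$ with the constant diagonalizer $M:=\tfrac{1}{\sqrt{2}}\bigl(\begin{smallmatrix}1 & 1\\ 1 & -1\end{smallmatrix}\bigr)$ transforms \eqref{eq:z_pd} into
\begin{equation*}
D_t W=\mathrm{diag}\bigl(\langle\xi\rangle_{b(t)},-\langle\xi\rangle_{b(t)}\bigr)W+\widetilde R(t,\xi)\,W,\qquad\widetilde R:=MRM,
\end{equation*}
with $R$ the remainder matrix in \eqref{eq:z_pd}; since $M$ is constant no extra term is generated.

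Next, I would estimate $\widetilde R$ uniformly in $Z_{\mathrm{pd}}(N,\varepsilon)$. A direct computation gives $\partial_t\langle\xi\rangle_{b(t)}=-\tfrac{|\xi|^{4\delta}b(t)b'(t)}{4\langle\xi\rangle_{b(t)}}$, and the defining inequalities of the pd zone force $\langle\xi\rangle_{b(t)}\sim |\xi|^{2\delta}b(t)$ (equivalently $|\xi|^{\sigma-2\delta}\sim b(t)$), so both entries of $R$ are controlled by $|b'(t)|/b(t)$. Using assumption $\mathbf{(B3)}$ together with this pd-zone characterization, I would upgrade the naive bound on each entry of $\widetilde R$ to a bound of the form $\lesssim |\xi|^{2\delta}/(1+t)$, which is what eventually produces the $|\xi|^{2\delta}$ exponent in the final estimate. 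The requirement $s\geq t_0=t_0(\varepsilon)$ enters here through the asymptotic regime in which $\mathbf{(B3)}$ becomes sharp.

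To handle the diagonal principal part, I would rewrite the system in the $\partial_t$-variable: the principal matrix becomes $i\,\mathrm{diag}(\langle\xi\rangle_{b(t)},-\langle\xi\rangle_{b(t)})$ with purely imaginary entries, and its propagator
\begin{equation*}
\Lambda(t,s):=\mathrm{diag}\Bigl(e^{i\int_s^t\langle\xi\rangle_{b(\tau)}d\tau},\,e^{-i\int_s^t\langle\xi\rangle_{b(\tau)}d\tau}\Bigr)
\end{equation*}
is unitary. Writing $W=\Lambda\widetilde W$ reduces the problem to $\partial_t\widetilde W=\Lambda^{-1}(i\widetilde R)\Lambda\,\widetilde W$, and conjugation by $\Lambda$ preserves entrywise magnitudes.

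Finally, an application of Gronwall's inequality to $|\widetilde W|$ with the bound from the previous step yields
\begin{equation*}
|\widetilde W(t)|\lesssim |\widetilde W(s)|\exp\!\Bigl(C\int_s^t\tfrac{|\xi|^{2\delta}}{1+\tau}\,d\tau\Bigr)=|\widetilde W(s)|\left(\tfrac{1+t}{1+s}\right)^{C|\xi|^{2\delta}},
\end{equation*}
and transferring back by the isometries $M$ and $\Lambda$ gives the claimed bound on $E_{\mathrm{pd}}^V(t,s,\xi)$. The main obstacle I anticipate is precisely the sharpening in the second step: passing from the straightforward bound $C/(1+t)$ on the entries of $\widetilde R$ (which $\mathbf{(B3)}$ supplies at once) to the refined $C|\xi|^{2\delta}/(1+t)$ needed to install the $|\xi|^{2\delta}$ in the exponent. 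This requires a careful book-keeping of the algebraic form of each entry of $MRM$ together with the defining equivalence $|\xi|^{\sigma-2\delta}\sim b(t)$ of the pd zone, rather than a black-box application of $\mathbf{(B3)}$.
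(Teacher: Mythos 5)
Your framework is structurally sound: the constant diagonalizer $M=\tfrac{1}{\sqrt{2}}\bigl(\begin{smallmatrix}1&1\\1&-1\end{smallmatrix}\bigr)$ does diagonalize the principal part of \eqref{eq:z_pd}, and since $M$ is $t$-independent no commutator term appears, so the problem indeed reduces to a Gronwall estimate on $|\widetilde R|$. (The paper itself gives no internal proof — it cites Wirth~\cite{Wirth2007}, Section~2.4 — so there is nothing in the paper to compare against directly.) The problem is the step you yourself flag as the ``main obstacle.''

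The promotion of $|\widetilde R|\lesssim 1/(1+t)$ to $|\widetilde R|\lesssim |\xi|^{2\delta}/(1+t)$ cannot be accomplished by the route you describe. Concretely, $R$ has only the two nontrivial entries $R_{11}=\frac{D_t\langle\xi\rangle_{b(t)}}{\langle\xi\rangle_{b(t)}}$ and $R_{21}=-\frac{|\xi|^{2\delta}b'(t)}{2\langle\xi\rangle_{b(t)}}$. With $\partial_t\langle\xi\rangle_{b(t)}=-\frac{|\xi|^{4\delta}b(t)b'(t)}{4\langle\xi\rangle_{b(t)}}$ and the two-sided pd-zone bound $\varepsilon\frac{|\xi|^{2\delta}b(t)}{2}\le\langle\xi\rangle_{b(t)}\le N\frac{|\xi|^{2\delta}b(t)}{2}$ one finds
\begin{equation*}
|R_{11}|=\frac{|\xi|^{4\delta}b(t)|b'(t)|}{4\,\langle\xi\rangle_{b(t)}^2}\le\frac{1}{\varepsilon^2}\,\frac{|b'(t)|}{b(t)},
\qquad
|R_{21}|=\frac{|\xi|^{2\delta}|b'(t)|}{2\,\langle\xi\rangle_{b(t)}}\le\frac{1}{\varepsilon}\,\frac{|b'(t)|}{b(t)},
\end{equation*}
and then $\mathbf{(B3)}$ gives $\frac{|b'(t)|}{b(t)}\lesssim\frac{1}{1+t}$. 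All the $|\xi|$-powers have cancelled exactly: the equivalence $|\xi|^{\sigma-2\delta}\sim b(t)$ cannot inject an extra factor $|\xi|^{2\delta}$, because the numerator and denominator already balance. So the ``careful book-keeping of $MRM$ plus pd-zone equivalence'' you propose genuinely stops at $|\widetilde R|\lesssim 1/(1+t)$, and Gronwall then lands you at $\bigl(\frac{1+t}{1+s}\bigr)^{C}$, \emph{not} at $\bigl(\frac{1+t}{1+s}\bigr)^{C|\xi|^{2\delta}}$. These are different: for $|\xi|^{2\delta}<1$ (and for decreasing $b$ with $b_\infty=0$ the pd zone does contain arbitrarily small $|\xi|$) the claimed bound is strictly stronger than what you can reach. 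That $|\xi|^{2\delta}$ in the exponent is precisely what lets Lemma~\ref{lemma2.6} be absorbed into $\bigl(\frac{\lambda(t)}{\lambda(s)}\bigr)^{2\beta}=\exp\bigl(\beta|\xi|^{2\delta}\widehat{\mathcal{B}}(s,t)\bigr)$ in the derivation of \eqref{ineq:3.6}--\eqref{ineq:3.7} (the comparison $\frac{C}{1+\tau}\le\beta b(\tau)$ for $\tau\geq t_0$ uses the cancellation of $|\xi|^{2\delta}$), so the weaker exponent is not merely cosmetically off. Closing the gap would require a genuinely different device (a further diagonalization step or a more refined symmetrizer à la Wirth), not the sharpened entrywise estimate you outline; as written, the proposal is incomplete.
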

For the proof of Lemma \ref{lemma2.6}, see Section 2.4 in \cite{Wirth2007}.


 \section{Energy estimates of higher order} \label{section_4}

\subsection{A family of parameter-dependent linear Cauchy problems}
Let us consider the following family of parameter-dependent Cauchy problem
\begin{equation}
\begin{cases} \label{eq:3.1}
u_{t t}(t, x)+(-\Delta)^\sigma u(t, x)+b(t) (-\Delta)^\delta u_t(t, x)=0, & (t, x) \in[s, \infty) \times \mathbb{R}^n, \\ u(s, x)=f(s,x), \quad u_t(s, x)=g(s,x), & x \in \mathbb{R}^n,
\end{cases}
\end{equation}
where $\sigma \geq 1, \delta \in [0,\frac{\sigma}{2}] $ and $n \geq 1$.
The partial Fourier transformation with respect to $x$-variables yields
\begin{equation}
\begin{cases} \label{eq:3.2}
\widehat{u}_{t t}+|\xi|^{2 \sigma} \widehat{u}+b(t) |\xi|^{2 \delta} \widehat{u}_t=0, & (t, \xi) \in[s,\infty) \times \mathbb{R}^n, \\ \widehat{u}(s, \xi)= \widehat{f}(s,\xi), \quad \widehat{u}_t(s, \xi)= \widehat{g}(s,\xi), & \xi \in \mathbb{R}^n .\end{cases}
\end{equation}
Applying the change of variables
$\widehat{y}(t, \xi)=\frac{\lambda(t)}{\lambda(s)} \widehat{u}(t, \xi)$ with $\lambda(t)$ is defined in \eqref{lamda}, we obtain the  problem
\begin{equation} \label{eq:3.3}
\begin{cases}\widehat{y}_{t t}+m(t, \xi) \widehat{y}=0, & (t, \xi) \in[s, \infty) \times \mathbb{R}^n, s \geq 0, \\ \widehat{y}(s, \xi)= \exp \left(\frac{|\xi|^{2 \delta}}{2} \int\limits_0^s b(\tau) d \tau\right) \widehat{f}(s,\xi) , & \xi \in \mathbb{R}^n, \\
\widehat{y}_t(s, \xi)=\exp \left(\frac{|\xi|^{2 \delta}}{2} \int\limits_0^s b(\tau) d \tau\right)\left(\frac{b(s)}{2}|\xi|^{2 \delta} \widehat{f}(s,\xi) +\widehat{g}(s,\xi)\right) , & \xi \in \mathbb{R}^n .
\end{cases}
\end{equation}
\subsubsection{For $\delta \in (0, \frac{\sigma}{2}]$} In the same manner as in Section \ref{section_3}, we divide the  phase space $[s, \infty) \times \mathbb{R}^n$ into the following zones:
$$
\begin{aligned}
Z_{\mathrm{hyp}}(N) & =\left\{(t, \xi) \in[s, \infty) \times \mathbb{R}^n:\langle\xi\rangle_{b(t)} \geq N \frac{|\xi|^{2\delta}b(t)}{2}\right\} \cap \Pi_{\mathrm{hyp}}, \\
Z_{\mathrm{pd}}(N, \varepsilon) & =\left\{(t, \xi) \in[s, \infty) \times \mathbb{R}^n: \varepsilon \frac{|\xi|^{2\delta}b(t)}{2} \leq \langle\xi\rangle_{b(t)} \leq N \frac{|\xi|^{2\delta}b(t)}{2}\right\} \cap \Pi_{\mathrm{hyp}}, \\
Z_{\mathrm{red}}(\varepsilon) & =\left\{(t, \xi) \in[s, \infty) \times \mathbb{R}^n:\langle\xi\rangle_{b(t)} \leq \varepsilon \frac{|\xi|^{2\delta}b(t)}{2}\right\}, \\
Z_{\mathrm{ell}}\left(\varepsilon, d_0\right) & =\left\{(t, \xi) \in[s, \infty) \times \mathbb{R}^n:\langle\xi\rangle_{b(t)} \geq \varepsilon \frac{|\xi|^{2\delta}b(t)}{2}\right\} \cap \left\{|\xi|^{2\delta} \geq \frac{d_0}{(1+t)b(t)}\right\} \cap \Pi_{\mathrm{ell}}, \\
Z_{\mathrm{diss}}\left(d_0\right) &=\left\{(t, \xi) \in[s, \infty) \times \mathbb{R}^n:|\xi|^{2\delta} \leq \frac{d_0}{(1+t)b(t)}\right\} \cap \Pi_{\text {ell }}.
\end{aligned}
$$
We introduce the limit $b_{\infty}:=\lim _{t \rightarrow \infty} b(t) \in[0, \infty]$ that exists  thanks to the monotonic behavior of $b=b(t)$.

Let us introduce the function
\begin{equation} \label{h_1}
h_1=h_1(t, \xi)=\chi(|\xi|^{2\delta} (1+t)b(t)) \frac{1}{\left((1+t)b(t)\right)^\frac{\sigma}{2\delta}}+(1-\chi(|\xi|^{2\delta} (1+t)b(t)) |\xi|^{\sigma},
\end{equation}
and
\begin{equation} \label{h}
h_2=h_2(t, \xi)=\chi\left(\frac{\langle\xi\rangle}{\varepsilon \frac{b(t)}{2}}\right) \varepsilon |\xi|^{2 \delta} \frac{b(t)}{2}+\left(1-\chi\left(\frac{\langle\xi\rangle}{\varepsilon \frac{b(t)}{2}}\right)\right) \langle\xi\rangle_{b(t)},
\end{equation}
for our models \eqref{eq:3.2} and \eqref{eq:3.3}. Here $\chi \in C^{\infty}([0, \infty))$ is a localizing function with $\chi(\zeta)=1$ for $0 \leq \zeta \leq \frac{1}{2}$ and $\chi(\zeta)=0$ for $\zeta \geq 1$. We define the energy $
U(t, \xi)=\left(h_1(t, \xi) \hat{u}(t, \xi), D_t \hat{u}(t, \xi)\right)^{T}$.

By \eqref{eq:3.2}, $U=U(t, \xi)$ solves the system
\begin{equation} \label{eq:3.4.00}
D_t U(t, \xi)=\underbrace{\left(\begin{array}{cc}
\frac{D_t h_1(t, \xi)}{h_1(t, \xi)} & h_1(t, \xi) \\
\frac{|\xi|^{2\sigma}}{h_1(t, \xi)} & i |\xi|^{2\delta} b(t)
\end{array}\right)}_{A(t, \xi)} U(t, \xi) .
\end{equation}

For any $t \geq t_1 \geq s$, let $E=E\left(t, t_1, \xi\right)$ be the fundamental solution to \eqref{eq:3.4.00} which solves
$
D_t E\left(t, t_1, \xi\right)=A(t, \xi) E\left(t, t_1, \xi\right), E\left(t_1, t_1, \xi\right)=I
$. For any $t \geq t_2 \geq t_1 \geq s$ we can write $E\left(t, t_1, \xi\right)=E\left(t, t_2, \xi\right) E\left(t_2, t_1, \xi\right)$.

On the other hand, we use the dissipative transformed Cauchy problem \eqref{eq:3.3} and define its energy by
$Y(t, \xi)=\left(i h_2(t, \xi) \widehat{y}(t, \xi), D_t \widehat{y}(t, \xi)\right)^{\mathrm{T}}$. Then, from \eqref{eq:3.3} we have
\begin{equation} \label{eq:3.4}
D_t Y(t, \xi)=\underbrace{\left(\begin{array}{ll}
\frac{D_t h_2(t, \xi)}{h_2(t, \xi)} & i h_2(t, \xi) \\
i \frac{m(t, \xi)}{h_2(t, \xi)} & 0
\end{array}\right)}_{A^Y(t, \xi)} Y(t, \xi) .
\end{equation}
We denote by $E^Y=E^Y\left(t, t_1, \xi\right)$ the fundamental solution to \eqref{eq:3.4} for any $t \geq t_1 \geq s$, i.e., the solution to
$$
\left\{\begin{array}{l}
D_t E^Y\left(t, t_1, \xi\right)=A^Y(t, \xi) E^Y\left(t, t_1, \xi\right) \\
E^Y\left(t_1, t_1, \xi\right)=I.
\end{array}\right.
$$

\subsection{Representation of the solutions}

Let us turn now to Cauchy problem \eqref{eq:3.1}. We introduce $\widehat{K}_1=\widehat{K}_1(t, s, \xi)$ as the solution to \eqref{eq:3.2} with initial conditions $\widehat{u}(s,\xi)=0$ and $\widehat{u}_t(s,\xi)=1$. Following the approach of \cite[Section 7.4]{DAbbicco2013},  we have
\begin{equation} \label{K_1}
\widehat{K}_1(t, s, \xi)  =\frac{1}{h_1(t, \xi)} E_{12}(t, s, \xi)=\frac{\lambda(s)}{\lambda(t)} \frac{E_{12}^Y(t, s, \xi)}{h_2(t, \xi)},
\end{equation}
\begin{equation} \label{DtK_1}
D_t \widehat{K}_1(t, s, \xi)  =E_{22}(t, s, \xi)=\frac{\lambda(s)}{\lambda(t)}\left(E_{22}^Y(t, s, \xi)+ i \frac{|\xi|^{2 \delta}b(t)}{2 h_2(t, \xi)} E_{12}^Y(t, s, \xi)\right) .
\end{equation}
In the same way we consider $\widehat{K}_0=\widehat{K}_0(t, s, \xi)$ as the solution to \eqref{eq:3.2} with $s=0$ and initial conditions $\widehat{u}(0,\xi)=1$ and $\widehat{u}_t(0,\xi)=0$. Then, it holds that
$$
\begin{aligned}
\widehat{K}_0(t, 0, \xi) &= \frac{h_1(0, \xi)}{h_1(t, \xi)}E_{11}(t, 0, \xi)=\frac{1}{\lambda(t)} \frac{h_2(0, \xi)}{h_2(t, \xi)} E_{11}^Y(t, 0, \xi), \\
D_t \widehat{K}_0(t, 0, \xi) & =h_1(0, \xi)E_{21}(t, 0, \xi) =\frac{h_2(0, \xi)}{\lambda(t)}\left(E_{21}^Y(t, 0, \xi)+ i \frac{|\xi|^{2 \delta}b(t)}{2 h_2(t, \xi)} E_{11}^Y(t, 0, \xi)\right),
\end{aligned}
$$
where $h_1=h_1(t, \xi)$ is defined in \eqref{h_1} and $E:=E(t, s, \xi)$ is the fundamental solution to system \eqref{eq:3.4.00}. These above relations allow us to transfer properties of $E=E(t, s, \xi)$ and $E^Y=E^Y(t, s, \xi)$ to $\widehat{K}_1=\widehat{K}_1(t, s, \xi)$ and $E=E(t, 0, \xi)$, $E^Y=E^Y(t, 0, \xi)$ to $\widehat{K}_0=\widehat{K}_0(t, 0, \xi)$, 
where $h_2=h_2(t, \xi)$ is defined in \eqref{h} and $E^Y:=E^Y(t, s, \xi)$ is the fundamental solution to system \eqref{eq:3.4}. 
\subsection{Estimates for the multipliers and their time derivatives}
We need to estimate  multipliers $|\widehat{K}_1(t, s, \xi)|$, $|\widehat{K}_0(t, 0, \xi)|$ and  $|\partial_t \widehat{K}_1(t, s, \xi)|$, $|\partial_t \widehat{K}_0(t, 0, \xi)|$ in each zone of the  phase space. Let us consider the following estimate for $|\widehat{K}_1(t, s, \xi)|$ in $Z_{\text {red }}(\varepsilon)$:
\begin{equation} \label{ineq:3.5}
|\widehat{K}_1^{\mathrm{red}}(t, s, \xi)| \lesssim \frac{1}{|\xi|^{\sigma}}\left(\frac{\lambda(s)}{\lambda(t)}\right)^{1-2 \beta},
\end{equation}
where we choose $\varepsilon>0$ such that $\beta:=C \varepsilon<\frac{1}{2}$. Then, it is easy to see that we can estimate $|\widehat{K}_1(t, s, \xi)|$ in $Z_{\text {hyp }}(N)$ and $Z_{\mathrm{pd}}(N, \varepsilon)$ by \eqref{ineq:3.5}. Thus, we can glue $Z_{\mathrm{red}}(\varepsilon)$ to $Z_{\text {hyp }}(N)$ and  define a new region by
\begin{align*}
\Pi_{\text {hyp }}=Z_{\text {red }}(\varepsilon) \cup Z_{\text {pd }}(N, \varepsilon) \cup Z_{\text {hyp }}(N), \;\;
\Pi_{\mathrm{ell}}=Z_{\mathrm{ell}}\left(\varepsilon, d_0\right) \cup Z_{\mathrm{diss}}\left(d_0\right).
\end{align*}
We denote by $t_{\mathrm{ell}}=t_{\mathrm{ell}}(|\xi|)$ the separating line between $\Pi_{\mathrm{ell}}$ and $\Pi_{\mathrm{hyp}}$. This curve is given by
$$
\frac{b^2\left(t_{\mathrm{ell}}\right)}{4}-|\xi|^{2 \sigma - 4 \delta}=\varepsilon^2 \frac{b^2\left(t_{\mathrm{ell}}\right)}{4}, \quad \text { i.e. } \quad t_{\mathrm{ell}}=b^{-1}\left(\frac{2|\xi|^{\sigma - 2 \delta}}{\sqrt{1-\varepsilon^2}}\right).
$$

\subsection{Estimates for the multiplier $\widehat{K}_1$}
For $s \geq 0$ let us  distinguish between two cases  in the extended phase space.\\

{\bf Small frequencies $\mathbf{|\xi|^{\sigma - 2 \delta} \leq \frac{b(s)}{2} \sqrt{1-\varepsilon^2}}$}. We have the following five sub-cases:
\begin{itemize}
\item Case 1: $0 \leq s \leq t \leq t_{\text {diss }}$
In this case $(t, \xi)$ and $(s, \xi)$ belong to $Z_{\text {diss }}\left(d_0\right)$. It holds $h_1(t, \xi)=\frac{1}{\left((1+t)b(t)\right)^\frac{\sigma}{2\delta}}$. Then, we have the following estimates for all $t \in\left[s, t_{\text {diss }}\right]$:
$$
\begin{aligned}
&\left|\hat{K}_1(t, s, \xi)\right|  \lesssim \frac{1}{|\xi|^{2\delta}b(t)} \exp \left(-|\xi|^{2\sigma-2\delta} (t-s) \right), \;\;\;
\left|\partial_t \hat{K}_1(t, s, \xi)\right|  \lesssim \exp \left(-|\xi|^{2 \delta} \widehat{\mathcal{B}}(s, t)\right),\\
&\left|\partial_t \hat{K}_1(t, s, \xi)\right|  \lesssim \exp \left(-|\xi|^{\sigma}(t-s)\right).
\end{aligned}
$$
\item Case 2: $0 \leq t_{\text {diss }} \leq s \leq t \leq t_{\text {ell }}$. In this case $(t, \xi)$ and $(s, \xi)$ belong to $Z_{\mathrm{ell}}\left(\varepsilon, d_0\right)$. It holds $h_2(t, \xi) \sim |\xi|^{2 \delta} b(t)$. Then, we have the following estimates from Lemma \ref{lemma2.4} for all $t \in\left[s, t_{\mathrm{ell}}\right]$:
$$
\begin{aligned}
|\widehat{K}_1(t, s, \xi)|  \lesssim \frac{1}{b(s) |\xi|^{2 \delta}} \exp \left(-C|\xi|^{2 \sigma - 2 \delta} \mathcal{B}(s, t)\right), \;
|\partial_t \widehat{K}_1(t, s, \xi)|  \lesssim \frac{|\xi|^{2 \sigma - 4 \delta}}{b(s) b(t)} \exp \left(-C|\xi|^{2 \sigma - 2 \delta} \mathcal{B}(s, t)\right) .
\end{aligned}
$$
\item Case 3: $0 \leq s \leq t_{\text {diss }} \leq t \leq t_{\text {ell}}$. In this case $(t, \xi) \in Z_{\text {ell }}\left(\varepsilon, d_0\right)$ and $(s, \xi) \in Z_{\text {diss }}\left(d_0\right)$ we glue the estimates in $Z_{\text {ell }}\left(\varepsilon, d_0\right)$ from Lemma \ref{lemma2.4} and in $Z_{\text {diss }}\left(d_0\right)$ from Lemma
\ref{lemma_diss_1}. It holds $h_2(t, \xi) \sim |\xi|^{2 \delta} b(t)$. That is, we use the representations $E(t, s, \xi)=E_{\text {ell }}\left(t, t_{\text {diss }}, \xi\right) E_{\text {diss }}\left(t_{\text {diss }}, s, \xi\right)$. Hence, we arrive
at the following statement:
$$
\begin{aligned}
|\widehat{K}_1(t, s, \xi)| & \lesssim \frac{1}{b(t) |\xi|^{2 \delta}} \exp \left(-C|\xi|^{2 \sigma - 2 \delta} \mathcal{B}(s, t)\right), \\
|\partial_t \widehat{K}_1(t, s, \xi)| & \lesssim \max\left(\frac{1}{b(s)}, \frac{1}{b(t)} \right) \frac{|\xi|^{2 \sigma - 4 \delta}}{b(t)} \exp \left(-C|\xi|^{2 \sigma - 2 \delta} \mathcal{B}(s, t)\right) \\
&\lesssim \left(\frac{1}{b(s)} + \frac{1}{b(t)} \right) \frac{|\xi|^{2 \sigma - 4 \delta}}{b(t)} \exp \left(-C|\xi|^{2 \sigma - 2 \delta} \mathcal{B}(s, t)\right).
\end{aligned}
$$
\item Case 4: $0 \leq t_{\text {diss }} \leq t_{\text {ell }} \leq s \leq t$. In this case $(t, \xi)$ and $(s, \xi)$ belong to $ Z_{\text {hyp }}\left(N\right)$. It holds $h_2(t, \xi) \sim |\xi|^{\sigma}$. Then, from the estimate in $Z_{\mathrm{red}}(\varepsilon)$ we get the following estimates
\begin{align*}
|\widehat{K}_1(t, s, \xi)|  \lesssim \frac{1}{|\xi|^{\sigma}}\left(\frac{\lambda(s)}{\lambda(t)}\right)^{1-2 \beta},\;\;\;
|\partial_t \widehat{K}_1(t, s, \xi)|  \lesssim\left(\frac{\lambda(s)}{\lambda(t)}\right)^{1-2 \beta}.
\end{align*}
\item Case 5: $0 \leq t_{\text {diss }} \leq s \leq t_{\text {ell }} \leq t$. In this case, thanks to $(t, \xi) \in Z_{\mathrm{red}}(\varepsilon)$ and $(s, \xi) \in Z_{\text {hyp }}\left(N\right)$,  we can glue the estimates in $Z_{\text {red}}\left(\varepsilon\right)$ from Lemma \ref{lemma2.5} and in $Z_{\text {ell }}\left(\varepsilon, d_0\right)$ from Lemma \ref{lemma2.4}. It holds that $h_2(t, \xi) \sim |\xi|^{ \sigma}$.  That is, we use the representations $E(t, s, \xi)=E_{\text {red }}\left(t, t_{\text {ell }}, \xi\right) E_{\text {ell }}\left(t_{\text {ell }}, s, \xi\right)$. We arrive at the following
$$
\begin{aligned}
|\widehat{K}_1(t, s, \xi)|  \lesssim \frac{1}{b(s) |\xi|^{2 \delta}} \exp \left(-C|\xi|^{2 \sigma - 2 \delta} \mathcal{B}(s, t)\right), \;
|\partial_t \widehat{K}_1(t, s, \xi)|  \lesssim \frac{|\xi|^{2 \sigma - 4 \delta}}{b(s) b(t)} \exp \left(-C|\xi|^{2 \sigma - 2 \delta} \mathcal{B}(s, t)\right) .
\end{aligned}
$$
\end{itemize}

{\bf Large frequencies $\mathbf{|\xi|^{\sigma - 2 \delta} \geq \frac{b(s)}{2} \sqrt{1 -\varepsilon^2}}$}. We consider the following two sub-cases:
\begin{itemize}
\item Case 1: $0 \leq s \leq t \leq t_{\text {ell }}$. If $b=b(t)$ is increasing, then $(t, \xi)$ and $(s, \xi)$ belong to $\Pi_{\text {hyp }}(N, \varepsilon)$. Taking $h_2(t, \xi) \sim|\xi|^{\sigma}$ we have
\begin{equation} \label{ineq:3.6}
|\widehat{K}_1(t, s, \xi)| \lesssim \frac{1}{|\xi|^{\sigma}}\left(\frac{\lambda(s)}{\lambda(t)}\right)^{1-2 \beta},
\end{equation}
\begin{equation} \label{ineq:3.7}
|\partial_t \widehat{K}_1(t, s, \xi)| \lesssim\left(\frac{\lambda(s)}{\lambda(t)}\right)^{1-2 \beta},
\end{equation}
which are derived from  \eqref{K_1} and \eqref{DtK_1} and the estimate in $\Pi_{\text {hyp }}(N, \varepsilon)$. Indeed, \eqref{DtK_1} implies that
$$
\begin{aligned}
|\partial_t \widehat{K}_1(t, s, \xi)| & \leq \frac{\lambda(s)}{\lambda(t)}\left(\left|E_{22}^Y(t, s, \xi)\right|+\frac{b(t)|\xi|^{2\delta}}{2 h_2(t, \xi)}\left|E_{12}^Y(t, s, \xi)\right|\right)  \lesssim \frac{\lambda(s)}{\lambda(t)}\left({\frac{\lambda(t)}{\lambda(s)}}\right)^{2 \beta} \lesssim\left(\frac{\lambda(s)}{\lambda(t)}\right)^{1-2 \beta},
\end{aligned}
$$
thanks to the estimates of $E_{12}^Y(t, s, \xi)$ and $E_{22}^Y(t, s, \xi)$ from Lemma \ref{lemma2.6}. We note that   \eqref{ineq:3.6} and \eqref{ineq:3.7} remain true for large frequencies if  $b=b(t)$ is decreasing (then we have only $Z_{\text {hyp }}(N)$ for large frequencies).
\item Case 2: $0 \leq s \leq t_{\text {ell }} \leq t$. We remark that this case comes into play only if $b=b(t)$ is increasing and there is no separating line if $|\xi|^{\sigma - 2 \delta} \geq b_{\infty} \sqrt{1-\varepsilon^2}$. Then,  for all $t_{\text {ell }} \leq t$:
$$
\begin{gathered}
|\widehat{K}_1(t, s, \xi)| \lesssim \frac{1}{b(s)} \exp \left(-C^{\prime}|\xi|^{2 \sigma - \ 2 \delta} \mathcal{B}(s, t)\right), \;\;\;
|\partial_t \widehat{K}_1(t, s, \xi)| \lesssim \frac{|\xi|^{2 \sigma - 4 \delta}}{b(s) b(t)} \exp \left(-C^{\prime}|\xi|^{2 \sigma - 2 \delta} \mathcal{B}(s, t)\right) .
\end{gathered}
$$
\end{itemize}
\subsection{Final estimates}
\subsubsection{For $\delta \in (0, \frac{\sigma}{2})$:}
We define $
\Omega(s, t):=\left(\max \{b(s), b(t)\} \frac{\sqrt{1 -\varepsilon^2}}{2}\right)^{\frac{1}{\sigma - 2 \delta}}$
for any $t \geq s$ with $s \in [0, \infty)$, and $\Lambda(t):=\left( \frac{d_0}{(1+t)b(t)} \right)^\frac{1}{2\delta}$
for $t \geq 0$. Summarizing the  above estimates we arrived at the following  estimates for $|\widehat{K}_1(t, s, \xi)|$ and $|\partial_t \widehat{K}_1(t, s, \xi)|$ with $t \geq s \geq 0$.
\begin{corollary} \label{Corollary_4.1}
If $|\xi| \geq \Omega(s, t)$, then:
\begin{equation} \label{ineq:3.8}
|\widehat{K}_1(t, s, \xi)| \lesssim \frac{1}{|\xi|^{\sigma}}\left(\frac{\lambda(s)}{\lambda(t)}\right)^{1-2 \beta},
\end{equation}
\begin{equation} \label{ineq:3.9}
|\partial_t \widehat{K}_1(t, s, \xi)| \lesssim\left(\frac{\lambda(s)}{\lambda(t)}\right)^{1-2 \beta} .
\end{equation}
If $\Lambda(t) \leq |\xi| \leq \Omega(s, t)$, then:
\begin{equation} \label{ineq:3.10}
|\widehat{K}_1(t, s, \xi)| \lesssim \frac{1}{b(t)|\xi|^{2 \delta}} \exp \left(-C^{\prime}|\xi|^{2 \sigma - 2 \delta} \mathcal{B}(s, t)\right),
\end{equation}
\begin{equation} \label{ineq:3.11}
|\partial_t \widehat{K}_1(t, s, \xi)| \lesssim \left(\frac{1}{b(s)} + \frac{1}{b(t)} \right) \frac{|\xi|^{2 \sigma - 4 \delta}}{b(t)} \exp \left(-C|\xi|^{2 \sigma - 2 \delta} \mathcal{B}(s, t)\right).
\end{equation}
If $|\xi| \leq \Lambda(t)$, then:
\begin{equation} \label{ineq:3.1000}
|\widehat{K}_1(t, s, \xi)| \lesssim \frac{1}{|\xi|^{2 \delta}b(t)} \exp \left(-|\xi|^{2\sigma-2\delta} (t-s) \right),
\end{equation}
\begin{equation} \label{ineq:3.1100}
|\partial_t \widehat{K}_1(t, s, \xi)| \lesssim \exp \left(-|\xi|^{2 \delta} \widehat{\mathcal{B}}(s, t)\right),  \end{equation}
\begin{equation} \label{ineq:3.1101}
 |\partial_t \widehat{K}_1(t, s, \xi)| \lesssim \exp \left(-|\xi|^{\sigma}(t-s)\right).
\end{equation}
\end{corollary}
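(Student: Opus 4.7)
The statement is a structured consolidation of the seven sub-case bounds for $|\widehat{K}_1|$ and $|\partial_t\widehat{K}_1|$ that were just established. My plan is therefore to assemble a dictionary between the frequency regions appearing in the corollary and the zones $Z_{\mathrm{hyp}}, Z_{\mathrm{pd}}, Z_{\mathrm{red}}, Z_{\mathrm{ell}}, Z_{\mathrm{diss}}$, and then to observe that each of the seven case-estimates collapses into one of the three bullet points \eqref{ineq:3.8}--\eqref{ineq:3.1101} once we exploit the composition formula $E(t,s,\xi)=E(t,t_\ast,\xi)E(t_\ast,s,\xi)$ at the separating times and invoke Lemma \ref{lemmaB}. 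The first step is geometric: the curve $|\xi|=\Omega(s,t)$ is exactly $|\xi|^{\sigma-2\delta}=\max\{b(s),b(t)\}\sqrt{1-\varepsilon^2}/2$, i.e.\ the elliptic/hyperbolic separation as seen along the whole time interval $[s,t]$, while $|\xi|=\Lambda(t)$ is the $t$-time trace of the dissipative boundary $t_{\mathrm{diss}}$.

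With this identification, the regime $|\xi|\ge\Omega(s,t)$ forces both $(s,\xi)$ and $(t,\xi)$ into $\Pi_{\mathrm{hyp}}$, so the "large frequencies, Case 1" bounds \eqref{ineq:3.6}--\eqref{ineq:3.7} apply unchanged and give \eqref{ineq:3.8}--\eqref{ineq:3.9}. In the middle regime $\Lambda(t)\le|\xi|\le\Omega(s,t)$, the point $(t,\xi)$ belongs to $Z_{\mathrm{ell}}(\varepsilon,d_0)$, while $(s,\xi)$ may sit in $Z_{\mathrm{ell}}$, in $Z_{\mathrm{diss}}$, or (if $b$ is decreasing) in a $Z_{\mathrm{red}}\cup Z_{\mathrm{hyp}}$ piece; these are exactly Cases 2, 3, and 5 of the small-frequency analysis. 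The crux is that in each of these cases Lemma \ref{lemma2.3} converts the elliptic growth factor $\exp\!\bigl(\int_s^t\langle\xi\rangle_{b(\tau)}\,d\tau\bigr)$ into the exponential decay $\exp\!\bigl(-C|\xi|^{2\sigma-2\delta}\mathcal{B}(s,t)\bigr)$. When the trajectory crosses zone boundaries, I would compose the partial estimates at the relevant $t_\ast$ and use Lemma \ref{lemmaB} to certify $\mathcal{B}(s,t_\ast)+\mathcal{B}(t_\ast,t)\approx\mathcal{B}(s,t)$; this produces the unified bounds \eqref{ineq:3.10}--\eqref{ineq:3.11}. Finally, for $|\xi|\le\Lambda(t)$ we have $(t,\xi)\in Z_{\mathrm{diss}}$ and, in view of the monotonicity of $b$ together with Assumption \textbf{(B5)}, also $(s,\xi)\in Z_{\mathrm{diss}}$, putting us in Case 1: the $(1,2)$-entry of Lemma \ref{lemma_diss_1} yields \eqref{ineq:3.1000}, and the two-fold bound on the $(2,2)$-entry $H=\exp\!\bigl(-\max\{|\xi|^{\sigma}(t-s),|\xi|^{2\delta}\widehat{\mathcal{B}}(s,t)\}\bigr)$ gives both \eqref{ineq:3.1100} and \eqref{ineq:3.1101}.

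The main obstacle is the bookkeeping in the intermediate regime, where different sub-cases produce prefactors $\tfrac{1}{b(s)|\xi|^{2\delta}}$ (Cases 2 and 5) or $\tfrac{1}{b(t)|\xi|^{2\delta}}$ (Case 3), and different prefactors for $|\partial_t\widehat{K}_1|$ involving $b(s)$, $b(t)$, or both. To land on the single unified form \eqref{ineq:3.10}--\eqref{ineq:3.11}, I would use \textbf{(B6)} together with the defining inequality $|\xi|^{\sigma-2\delta}\le\max\{b(s),b(t)\}\sqrt{1-\varepsilon^2}/2$ of the middle region to compare $b(s)$ and $b(t)$ on this frequency range, modulo a constant absorbed by $\mathcal{B}(s,t)$ growth. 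Once this comparison is done, the consolidated estimates follow by inspection of the seven cases.
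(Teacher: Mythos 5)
Your overall plan is correct and is essentially the same implicit argument the paper uses: Corollary~\ref{Corollary_4.1} is nothing but a consolidation of the case-by-case estimates developed just above it, and a dictionary between the three frequency bands $|\xi|\ge\Omega(s,t)$, $\Lambda(t)\le|\xi|\le\Omega(s,t)$, $|\xi|\le\Lambda(t)$ and the zone structure is exactly the right way to organize that consolidation. Your treatment of the extreme regimes (pure hyperbolic, pure dissipative) is fine.

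There is, however, a genuine error in the zone assignment for the middle regime. You assert that $(t,\xi)\in Z_{\mathrm{ell}}(\varepsilon,d_0)$ always and that $(s,\xi)$ may sit in a $Z_{\mathrm{red}}\cup Z_{\mathrm{hyp}}$ piece if $b$ is decreasing. This has $s$ and $t$ reversed. Since $\Omega(s,t)$ is built from $\max\{b(s),b(t)\}$: when $b$ is decreasing, $\Omega$ is set by $b(s)$, so the inequality $|\xi|\le\Omega(s,t)$ pins $(s,\xi)$ (not $(t,\xi)$) in the elliptic side, while it is $(t,\xi)$ that may have crossed into $Z_{\mathrm{red}}\cup Z_{\mathrm{hyp}}$ — that is small-frequency Case~5, where the composition reads $E(t,s,\xi)=E_{\mathrm{red}}(t,t_{\mathrm{ell}},\xi)\,E_{\mathrm{ell}}(t_{\mathrm{ell}},s,\xi)$. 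Conversely, when $b$ is increasing, it is $(t,\xi)$ that is pinned in $Z_{\mathrm{ell}}$ and $(s,\xi)$ that may still lie in $\Pi_{\mathrm{hyp}}$ — the paper's large-frequency Case~2, which your list of relevant sub-cases omits entirely. Following your description literally would put the factors of the fundamental solution in the wrong temporal order at the crossing time.

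The second gap is the prefactor unification, which you acknowledge but resolve only by appealing to ``a constant absorbed by $\mathcal{B}(s,t)$ growth.'' This does not obviously work. In Case~5 the coefficient $b$ is decreasing, so $1/b(s)\le 1/b(t)$ and the replacement is immediate with no absorption at all. In Case~2 (both points elliptic, $b$ decreasing), the same monotonicity handles it. The delicate situation is large-frequency Case~2 with $b$ increasing, where the given bound carries $1/b(s)\ge 1/b(t)$ and assumption $\mathbf{(B3)}$ only yields the polynomially growing bound $b(t)/b(s)\le\bigl((1+t)/(1+s)\bigr)^{C_1}$; there is no constant to absorb. You should verify, sub-case by sub-case, precisely how the stated prefactor $\tfrac{1}{b(t)|\xi|^{2\delta}}$ is reached in this increasing-$b$ branch rather than invoking an unspecified absorption.
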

We have similar estimates for $|\widehat{K}_0(t, 0, \xi)|$ and $|\partial_t \widehat{K}_0(t, 0, \xi)|$.
\begin{corollary}
If $|\xi| \geq \Omega(0, t)$, then
\begin{equation} \label{ineq:3.12}
|\widehat{K}_0(t, 0, \xi)| \lesssim\left(\frac{1}{\lambda(t)}\right)^{1-2 \beta},
\end{equation}
\begin{equation} \label{ineq:3.13}
|\partial_t \widehat{K}_0(t, 0, \xi)| \lesssim|\xi|^{\sigma}\left(\frac{1}{\lambda(t)}\right)^{1-2 \beta}.
\end{equation}
If $\Lambda(t) \leq |\xi| \leq \Omega(0, t)$, then:
\begin{equation} \label{ineq:3.14}
|\widehat{K}_0(t, 0, \xi)| \lesssim \exp \left(-C^{\prime}|\xi|^{2 \sigma - 2 \delta} \mathcal{B}(0, t)\right),
\end{equation}
\begin{equation} \label{ineq:3.15}
|\partial_t \widehat{K}_0(t, 0, \xi)| \lesssim \frac{|\xi|^{2 \sigma - 2 \delta}}{b(t)} \exp \left(-C^{\prime}|\xi|^{2 \sigma - 2 \delta} \mathcal{B}(0, t)\right) .
\end{equation}
If $|\xi| \leq \Lambda(t)$, then:
\begin{equation} \label{ineq:3.1400}
|\widehat{K}_0(t, 0, \xi)| \lesssim \exp \left(-|\xi|^{2\sigma-2\delta} t \right),
\end{equation}
\begin{equation} \label{ineq:3.1500}
|\partial_t \widehat{K}_0(t, 0, \xi)| \lesssim \frac{|\xi|^{2 \sigma - 2 \delta}}{b(t)} \exp \left(-|\xi|^{2\sigma-2\delta} t \right).
\end{equation}
\end{corollary}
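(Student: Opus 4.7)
The plan is to mirror, step by step, the argument that produced Corollary \ref{Corollary_4.1}, now specialised to $s=0$. The starting point is the pair of representations
\begin{equation*}
\widehat{K}_0(t,0,\xi) = \frac{h_1(0,\xi)}{h_1(t,\xi)}\, E_{11}(t,0,\xi) = \frac{1}{\lambda(t)}\,\frac{h_2(0,\xi)}{h_2(t,\xi)}\, E_{11}^Y(t,0,\xi),
\end{equation*}
\begin{equation*}
\partial_t \widehat{K}_0(t,0,\xi) = h_1(0,\xi)\, E_{21}(t,0,\xi) = \frac{h_2(0,\xi)}{\lambda(t)}\left(E_{21}^Y(t,0,\xi) + i\,\frac{|\xi|^{2\delta}b(t)}{2\,h_2(t,\xi)}\, E_{11}^Y(t,0,\xi)\right),
\end{equation*}
combined with the zone by zone estimates on the fundamental solutions $E$ and $E^Y$ established in Section~\ref{section_3}. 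Since $\lambda(0)=1$, the factors carry only through $h_1(0,\xi)$ and $h_2(0,\xi)$; for all zone decompositions I use the same threshold curves $\Omega(0,t)$ and $\Lambda(t)$ as in the $\widehat K_1$ analysis, together with $t_{\rm diss}$ and $t_{\rm ell}$ evaluated at initial time $s=0$.

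First I handle the large frequency case $|\xi|\geq\Omega(0,t)$. Since $|\xi|^{\sigma-2\delta}\geq \tfrac{b(0)}{2}\sqrt{1-\varepsilon^2}$, the initial point $(0,\xi)$ lies in $\Pi_{\mathrm{hyp}}$, so $h_2(0,\xi)\sim|\xi|^\sigma$. Either the whole trajectory stays in $Z_{\mathrm{red}}(\varepsilon)\cup Z_{\mathrm{pd}}(N,\varepsilon)\cup Z_{\mathrm{hyp}}(N)$ (always true if $b$ is monotone decreasing), or it crosses $t_{\mathrm{ell}}$ and enters $Z_{\mathrm{ell}}(\varepsilon,d_0)$. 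In the first subcase I invoke Lemmas \ref{lemma2.1}, \ref{lemma2.5}, \ref{lemma2.6} for $|E_{11}^Y|$ and $|E_{21}^Y|$, each of which produces a factor $(1/\lambda(t))^{-2\beta}$ (where $\beta=C\varepsilon<1/2$); using $h_2(t,\xi)\sim|\xi|^\sigma$ yields \eqref{ineq:3.12} and, after noting the additional $|\xi|^{2\delta}b(t)/h_2(t,\xi)\lesssim|\xi|^\sigma$ contribution from the second term, \eqref{ineq:3.13}. In the second subcase the gluing $E(t,0,\xi) = E_{\mathrm{ell}}(t,t_{\mathrm{ell}},\xi)E(t_{\mathrm{ell}},0,\xi)$ is dominated by the first subcase up to a constant because on $t_{\mathrm{ell}}$ one has $\lambda(t_{\mathrm{ell}})^{1-2\beta}\sim$ const.

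The intermediate case $\Lambda(t)\leq|\xi|\leq\Omega(0,t)$ is where the genuine elliptic decay appears. Here $(t,\xi)\in Z_{\mathrm{ell}}(\varepsilon,d_0)$ but $(0,\xi)\in \Pi_{\mathrm{hyp}}$, so $h_2(0,\xi)\sim|\xi|^\sigma$ while $h_2(t,\xi)\sim|\xi|^{2\delta}b(t)$. I decompose $E(t,0,\xi) = E_{\mathrm{ell}}(t,t_{\mathrm{ell}},\xi)E(t_{\mathrm{ell}},0,\xi)$, noting that the second factor is bounded as in the large frequency case while the first yields, via Lemma~\ref{lemma2.4} and the backward transform \eqref{ell}, the exponential factor $\exp\bigl(-C|\xi|^{2\sigma-2\delta}\mathcal{B}(0,t)\bigr)$. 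Feeding this into the representations of $\widehat K_0$ and $\partial_t\widehat K_0$ and tracking the algebraic factors produces \eqref{ineq:3.14}, \eqref{ineq:3.15}; in particular the $|\xi|^{2\sigma-2\delta}/b(t)$ coefficient in \eqref{ineq:3.15} emerges from the entry $\Psi_1$ estimated inside the proof of Lemma~\ref{lemma2.4} combined with the $h_1(0,\xi)$ prefactor.

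Finally, the dissipative case $|\xi|\leq\Lambda(t)$ uses Lemma~\ref{lemma_diss_1} directly. Both $(0,\xi)$ and $(t,\xi)$ lie in $Z_{\mathrm{diss}}(d_0)$, so $h_1(0,\xi)=\gamma(0)=1$ and $h_1(t,\xi)=\gamma(t)$. Then
\begin{equation*}
\widehat K_0(t,0,\xi) = \frac{1}{\gamma(t)} E_{\mathrm{diss}}^{(11)}(t,0,\xi), \qquad \partial_t\widehat K_0(t,0,\xi) = E_{\mathrm{diss}}^{(21)}(t,0,\xi),
\end{equation*}
and plugging the corresponding bounds from Lemma~\ref{lemma_diss_1} (with $\gamma(0)=1$) yields \eqref{ineq:3.1400} and \eqref{ineq:3.1500}, since the factor $\gamma(t)/\gamma(0)\cdot 1/\gamma(t)=1$ cancels.

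The main technical nuisance, as in Corollary~\ref{Corollary_4.1}, lies in the intermediate case: one must verify that the elliptic exponent $\exp(-C|\xi|^{2\sigma-2\delta}\mathcal{B}(0,t))$ dominates the algebraic factors coming from the $h$-weights at $s=0$ versus $s=t$ and from the matrices $T(0,\xi), T^{-1}(t,\xi)$ in \eqref{ell}. I would carry out this dominance check exactly by the integration-by-parts argument appearing in the proof of Lemma~\ref{lemma2.4}, using assumptions $\mathbf{(B2)}$ and $\mathbf{(B6)}$ together with the choice of $d_0$ large enough to absorb the $b'(t)/b(t)$ correction.
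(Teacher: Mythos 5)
Your proof sketch is essentially the same argument the paper has in mind: the paper does not write out a proof of this corollary, it only remarks that the estimates for $\widehat K_0$ follow "similarly" to those of Corollary~\ref{Corollary_4.1}, and your plan of specializing the five-case small-frequency and two-case large-frequency analysis to $s=0$ with initial data $(1,0)$ is exactly that. The use of the representations of $\widehat K_0, \partial_t\widehat K_0$ in terms of $E$, $E^Y$ and the zone lemmas (Lemmas~\ref{lemma2.1}, \ref{lemma 2.2}, \ref{lemma2.4}, \ref{lemma_diss_1}, \ref{lemma2.5}, \ref{lemma2.6}) is correct, and for the small-frequency regime the cancellation $\frac{\gamma(0)}{\gamma(t)}\cdot\frac{\gamma(t)}{\gamma(0)}=1$ combined with $\exp(-|\xi|^\sigma t)\le\exp(-|\xi|^{2\sigma-2\delta}t)$ (valid for $|\xi|<1$ since $\sigma>2\delta$) gives \eqref{ineq:3.1400}, \eqref{ineq:3.1500} as claimed.

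Two small inaccuracies, neither of which changes the conclusion. First, $\gamma(0)=\frac{1}{b(0)^{\sigma/(2\delta)}}$, which is not $1$ unless $b(0)=1$; this is, however, a fixed positive constant and is harmlessly absorbed into $\lesssim$, so the "cancellation" you describe is really between $\gamma(0)/\gamma(t)$ and $\gamma(t)/\gamma(0)$, not between $1/\gamma(t)$ and $\gamma(t)$. Second, in the intermediate regime $\Lambda(t)\le|\xi|\le\Omega(0,t)$ the location of $(0,\xi)$ is sign-dependent: for $b'<0$ one has $\max\{b(0),b(t)\}=b(0)$, so $(0,\xi)$ lies in $\Pi_{\mathrm{ell}}$ (not $\Pi_{\mathrm{hyp}}$), and Lemma~\ref{lemma2.4} applies directly to the pair $(0,\xi),(t,\xi)$ without any gluing; for $b'\ge0$ one has $\max\{b(0),b(t)\}=b(t)$, so $(t,\xi)$ is elliptic while $(0,\xi)$ may be either, and the gluing via $E(t,0,\xi)=E_{\mathrm{ell}}(t,t_{\mathrm{ell}},\xi)E(t_{\mathrm{ell}},0,\xi)$ is needed in the hyperbolic start sub-case. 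Your description singles out only one of these possibilities, but the resulting bounds coincide in all sub-cases (this is precisely what the five-case analysis for $\widehat K_1$ is checking), so the final estimates \eqref{ineq:3.14}, \eqref{ineq:3.15} still hold.
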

\subsubsection{For $\delta = \frac{\sigma}{2}$:} 

\begin{corollary} \label{Corollary_4.3} 
\begin{enumerate}[i)]
\item If $|\xi| \geq M$, then
\begin{equation} \label{ineq:3.8.1}
|\widehat{K}_1(t, s, \xi)| \lesssim \frac{1}{|\xi|^\sigma} \left(\frac{\lambda(s)}{\lambda(t)}\right)^{1-2 \beta},
\end{equation}
\begin{equation} \label{ineq:3.9.1}
|\partial_t \widehat{K}_1(t, s, \xi)| \lesssim\left(\frac{\lambda(s)}{\lambda(t)}\right)^{1-2 \beta} .
\end{equation}
\item If $|\xi| \leq M$, then
\begin{equation} \label{ineq:3.10.1}
|\widehat{K}_1(t, s, \xi)| \lesssim \frac{1}{|\xi|^{\sigma}b(t)} \exp \left(-|\xi|^{\sigma} (t-s) \right) + \frac{1}{b(t)|\xi|^{\sigma}} \exp \left(-C^{\prime}|\xi|^{\sigma} \mathcal{B}(s, t)\right),
\end{equation}
\begin{equation} \label{ineq:3.11.1}
|\partial_t \widehat{K}_1(t, s, \xi)| \lesssim \exp \left(-|\xi|^{\sigma} \widehat{\mathcal{B}}(s, t)\right) + \left(\frac{1}{b(s)} + \frac{1}{b(t)} \right) \frac{1}{b(t)} \exp \left(-C^{\prime}|\xi|^{\sigma} \mathcal{B}(s, t)\right),
\end{equation}
\begin{equation} \label{ineq:3.11.2}
|\partial_t \widehat{K}_1(t, s, \xi)| \lesssim \exp \left(-|\xi|^{\sigma} (t-s)\right) + \left(\frac{1}{b(s)} + \frac{1}{b(t)} \right) \frac{1}{b(t)} \exp \left(-C^{\prime}|\xi|^{\sigma} \mathcal{B}(s, t)\right).
\end{equation}
\end{enumerate}
\end{corollary}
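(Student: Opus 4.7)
The strategy is to parallel the proof of Corollary~\ref{Corollary_4.1}, specializing every step to $\delta=\sigma/2$. Starting from the multiplier identities \eqref{K_1} and \eqref{DtK_1}, one substitutes in each zone the fundamental-solution estimates of Lemmas~\ref{lemma2.1}, \ref{lemma 2.2}, \ref{lemma2.4}, \ref{lemma_diss_1}, \ref{lemma2.5} and \ref{lemma2.6}, using throughout the simplifications $|\xi|^{\sigma-2\delta}\equiv 1$, $|\xi|^{2\sigma-2\delta}=|\xi|^\sigma$ and $|\xi|^{2\sigma-4\delta}=1$. Under these relations the weight factorizes as $\langle\xi\rangle_{b(t)}=|\xi|^\sigma\sqrt{|1-b^2(t)/4|}$, the hyperbolic/elliptic partition depends only on $b(t)$, and the only genuinely $|\xi|$-dependent interior boundary in the phase space is the dissipative curve $|\xi|^\sigma=d_0/((1+t)b(t))$. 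The threshold $M$ in the statement is therefore chosen so that, for $|\xi|\ge M$, the trajectory $t\mapsto(t,\xi)$ avoids $Z_{\mathrm{diss}}(d_0)$ (for instance $M^\sigma\gtrsim d_0/b(0)$ works, since $(1+t)b(t)\ge b(0)$).

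For large frequencies $|\xi|\ge M$, the estimates \eqref{ineq:3.8.1}--\eqref{ineq:3.9.1} are obtained by running the large-frequency analysis preceding Corollary~\ref{Corollary_4.1} verbatim: on the portion of the trajectory inside $Z_{\mathrm{hyp}}(N)\cup Z_{\mathrm{pd}}(N,\varepsilon)\cup Z_{\mathrm{red}}(\varepsilon)$ one uses Lemmas~\ref{lemma2.1}, \ref{lemma2.5}, \ref{lemma2.6} (where $h_2(t,\xi)\sim|\xi|^\sigma$), and on the portion inside $Z_{\mathrm{ell}}(\varepsilon,d_0)$ one uses Lemma~\ref{lemma2.4} together with Lemma~\ref{lemma2.3}; the bound $(\lambda(s)/\lambda(t))^{1-2\beta}$ then emerges from gluing these pieces through \eqref{K_1}--\eqref{DtK_1}, with $\varepsilon>0$ taken small so that $\beta=C\varepsilon<1/2$.

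For small frequencies $|\xi|\le M$ the trajectory can in addition enter $Z_{\mathrm{diss}}(d_0)$, separated from $Z_{\mathrm{ell}}(\varepsilon,d_0)$ by the curve $t_{\mathrm{diss}}(|\xi|)$. I would apply Lemma~\ref{lemma_diss_1} on the dissipative segment, Lemma~\ref{lemma2.4} on the elliptic segment, and glue by the composition $E(t,s,\xi)=E_{\mathrm{ell}}(t,t_{\mathrm{diss}},\xi)\,E_{\mathrm{diss}}(t_{\mathrm{diss}},s,\xi)$, exactly as in Case~3 of the small-frequency analysis preceding Corollary~\ref{Corollary_4.1}. After the substitutions $|\xi|^{2\delta}=|\xi|^\sigma$, $|\xi|^{2\sigma-2\delta}=|\xi|^\sigma$, $|\xi|^{2\sigma-4\delta}=1$, the dissipative contribution produces the first summand in \eqref{ineq:3.10.1}--\eqref{ineq:3.11.2}, with the two alternative decays $\exp(-|\xi|^\sigma(t-s))$ and $\exp(-|\xi|^\sigma\widehat{\mathcal{B}}(s,t))$ inherited from the two bounds on $|E_{\mathrm{diss}}^{(22)}|$ in Lemma~\ref{lemma_diss_1} (which is precisely why both \eqref{ineq:3.11.1} and \eqref{ineq:3.11.2} hold), while the elliptic contribution produces the second summand with decay $\exp(-C'|\xi|^\sigma\mathcal{B}(s,t))$ and prefactors $1/b(t)$ and $1/(b(s)b(t))$ respectively.

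The principal technical obstacle is the gluing step at $t=t_{\mathrm{diss}}$: one must verify that the transition factors $b(t_{\mathrm{diss}})$ and $\lambda(t_{\mathrm{diss}})$ are controlled by the endpoint data $b(s),b(t),\lambda(s),\lambda(t)$ so that the composition of $E_{\mathrm{ell}}$ and $E_{\mathrm{diss}}$ yields the additive bound stated in \eqref{ineq:3.10.1}--\eqref{ineq:3.11.2} rather than an unwanted product, and that at the matching point $t_{\mathrm{diss}}$ the dissipative estimate agrees with the elliptic one up to universal constants. This is achieved by using the defining relation $|\xi|^\sigma(1+t_{\mathrm{diss}})b(t_{\mathrm{diss}})\sim d_0$ together with the monotonicity hypothesis $\mathbf{(B6)}$ on $b$, in complete analogy with the gluing already carried out in the proof of Corollary~\ref{Corollary_4.1}.
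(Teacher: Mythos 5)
The overall strategy is right and mirrors what the paper does (the paper states this corollary as a summary of the preceding zone-by-zone case analysis, without an explicit proof), and your handling of the small-frequency case by gluing the dissipative estimates from Lemma~\ref{lemma_diss_1} with the elliptic estimates from Lemma~\ref{lemma2.4} via $E(t,s,\xi)=E_{\mathrm{ell}}(t,t_{\mathrm{diss}},\xi)E_{\mathrm{diss}}(t_{\mathrm{diss}},s,\xi)$, using the substitutions $|\xi|^{2\delta}=|\xi|^{2\sigma-2\delta}=|\xi|^\sigma$ and $|\xi|^{2\sigma-4\delta}=1$, correctly reproduces \eqref{ineq:3.10.1}--\eqref{ineq:3.11.2}.

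The gap is in the large-frequency case (i). For $\delta=\sigma/2$ the hyperbolic/elliptic dividing line is determined purely by $b$: one has $\langle\xi\rangle_{b(t)}=|\xi|^\sigma\sqrt{|1-b^2(t)/4|}$, so the conditions defining $Z_{\mathrm{hyp}},Z_{\mathrm{pd}},Z_{\mathrm{red}},Z_{\mathrm{ell}}$ reduce to inequalities on $b(t)$ alone, and no frequency threshold $M$ (in particular, none of the form $M^\sigma\gtrsim d_0/\inf_t(1+t)b(t)$, which controls only membership in $Z_{\mathrm{diss}}$) can force the trajectory $[s,t]\ni\tau\mapsto(\tau,\xi)$ to stay out of $Z_{\mathrm{ell}}$. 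In contrast, the analogous bound \eqref{ineq:3.8} in Corollary~\ref{Corollary_4.1} is tied to the threshold $\Omega(s,t)$, which by its very definition $\Omega(s,t)^{\sigma-2\delta}=\max\{b(s),b(t)\}\sqrt{1-\varepsilon^2}/2$ forces $(\tau,\xi)\in\Pi_{\mathrm{hyp}}$ for all $\tau\in[s,t]$; for $\delta=\sigma/2$ that role has no frequency-only analogue. Your claim that on the elliptic segment Lemma~\ref{lemma2.4} plus Lemma~\ref{lemma2.3} ``glues'' to yield the bound $(\lambda(s)/\lambda(t))^{1-2\beta}$ would require
$\exp\bigl(-C'|\xi|^\sigma\mathcal{B}(s,t)\bigr)\lesssim\exp\bigl(-\tfrac{1-2\beta}{2}|\xi|^\sigma\widehat{\mathcal{B}}(s,t)\bigr)$ there, i.e.\ $C'\mathcal{B}(s,t)\gtrsim\widehat{\mathcal{B}}(s,t)$; but inside the elliptic zone $b(\tau)>2$ gives $\mathcal{B}(s,t)<\tfrac14\widehat{\mathcal{B}}(s,t)$, and this ratio becomes arbitrarily small if $b$ is allowed to be large (e.g.\ $b\equiv b_0$ constant, which satisfies $\mathbf{(B1)}$--$\mathbf{(B6)}$ for any $b_0>0$). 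For such $b_0\gg2$ the fundamental solution decays only like $\exp(-|\xi|^\sigma(t-s)/b_0)=\exp(-|\xi|^\sigma\mathcal{B}(s,t))$, which is \emph{not} dominated by $(\lambda(s)/\lambda(t))^{1-2\beta}=\exp(-(1-2\beta)\tfrac{|\xi|^\sigma}{2}b_0(t-s))$. The correct reading of part (i) must therefore be that it applies when the trajectory stays inside $\Pi_{\mathrm{hyp}}$ --- a restriction on $(s,t)$, via $b$ --- rather than a restriction on $|\xi|$ alone; if the trajectory enters the elliptic region, the estimate to carry forward is the elliptic-type bound with $\exp(-C'|\xi|^\sigma\mathcal{B}(s,t))$ appearing in part (ii). You need to either restate the threshold in those terms or explain why, under the hypotheses you are using, the elliptic excursion cannot occur for $|\xi|\ge M$.
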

\begin{corollary}
If $|\xi| \geq M$, then
\begin{equation} \label{ineq:3.12.1}
|\widehat{K}_0(t, 0, \xi)| \lesssim\left(\frac{1}{\lambda(t)}\right)^{1-2 \beta},
\end{equation}
\begin{equation} \label{ineq:3.13.1}
|\partial_t \widehat{K}_0(t, 0, \xi)| \lesssim |\xi|^\sigma \left(\frac{1}{\lambda(t)}\right)^{1-2 \beta} .
\end{equation}
If $|\xi| \leq M$, then
\begin{equation} \label{ineq:3.14.1}
|\widehat{K}_0(t, 0, \xi)| \lesssim \exp \left(-|\xi|^{\sigma} t \right) + \exp \left(-C^{\prime}|\xi|^{\sigma} \mathcal{B}(0, t)\right),
\end{equation}
\begin{equation} \label{ineq:3.15.1}
|\partial_t \widehat{K}_0(t, 0, \xi)| \lesssim \frac{|\xi|^{\sigma}}{b(t)} \exp \left(-|\xi|^{\sigma} t \right) + \frac{|\xi|^{\sigma}}{b(t)} \exp \left(-C^{\prime}|\xi|^{\sigma} \mathcal{B}(0, t)\right) .
\end{equation}
\end{corollary}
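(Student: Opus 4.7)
\medskip
\noindent\textbf{Proof proposal.} The plan is to mirror the derivation of Corollary \ref{Corollary_4.3} for $\widehat{K}_1$, but using the representations
\begin{equation*}
\widehat{K}_0(t,0,\xi)=\frac{1}{\lambda(t)}\frac{h_2(0,\xi)}{h_2(t,\xi)}E_{11}^Y(t,0,\xi),\qquad D_t\widehat{K}_0(t,0,\xi)=\frac{h_2(0,\xi)}{\lambda(t)}\Bigl(E_{21}^Y(t,0,\xi)+i\tfrac{|\xi|^{\sigma}b(t)}{2h_2(t,\xi)}E_{11}^Y(t,0,\xi)\Bigr),
\end{equation*}
in which $2\delta=\sigma$ has been inserted. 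Because $\delta=\sigma/2$, the elliptic/hyperbolic splitting collapses to the single threshold $|\xi|\sim b(t)^{1/0}$ (formally $|\xi|$ compared to a fixed constant $M$ reflecting $b(0)$ and $b_\infty$), which explains why Corollary \ref{Corollary_4.3} and the present corollary are only stated in the two regimes $|\xi|\gtrless M$. The task reduces to transporting the zone-wise estimates for $E^Y$ through these two identities, keeping track of $h_2(0,\xi)$ which equals $\varepsilon|\xi|^{\sigma}b(0)/2$ or $\langle\xi\rangle_{b(0)}$ depending on which side of the threshold $\xi$ lies.

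\medskip
\noindent\textbf{Large frequencies $|\xi|\geq M$.} For $M$ sufficiently large, the point $(0,\xi)$ lies in $\Pi_{\mathrm{hyp}}=Z_{\mathrm{red}}(\varepsilon)\cup Z_{\mathrm{pd}}(N,\varepsilon)\cup Z_{\mathrm{hyp}}(N)$, so $h_2(0,\xi)\sim|\xi|^{\sigma}$, and similarly $h_2(t,\xi)\sim|\xi|^{\sigma}$ for all $t\geq 0$ when $|\xi|$ is large enough (uniformly, since $b$ is either monotone increasing with finite or infinite limit, or decreasing). Invoking Lemma \ref{lemma2.6} in $Z_{\mathrm{pd}}$, Lemma \ref{lemma2.5} in $Z_{\mathrm{red}}$ and Lemma \ref{lemma2.1} in $Z_{\mathrm{hyp}}$ and gluing them along the separating curves yields $|E_{11}^Y(t,0,\xi)|,|E_{21}^Y(t,0,\xi)|\lesssim (\lambda(t)/\lambda(0))^{2\beta}$ with $\beta=C\varepsilon<1/2$. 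Plugging this into the two representation formulas and using $\lambda(0)=1$ produces at once \eqref{ineq:3.12.1} and \eqref{ineq:3.13.1}, where the factor $|\xi|^{\sigma}$ in \eqref{ineq:3.13.1} comes from $h_2(0,\xi)$ together with $|\xi|^{\sigma}b(t)/h_2(t,\xi)=O(b(t))$ absorbed into the $\lambda$-factor (here the fact that $\delta=\sigma/2$ is crucial because it keeps everything balanced).

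\medskip
\noindent\textbf{Small frequencies $|\xi|\leq M$.} Now $(0,\xi)\in\Pi_{\mathrm{ell}}$ and depending on $t$ the point $(t,\xi)$ may remain in $\Pi_{\mathrm{ell}}$ (with further subdivision into $Z_{\mathrm{diss}}(d_0)$ and $Z_{\mathrm{ell}}(\varepsilon,d_0)$) or cross over into $\Pi_{\mathrm{hyp}}$. Just as in the derivation of Corollary \ref{Corollary_4.3}, each of these subcases is handled by applying Lemma \ref{lemma_diss_1} inside the dissipative zone and Lemma \ref{lemma2.4} inside the elliptic zone, then composing the fundamental solutions along the separating curve $t_{\mathrm{ell}}$ whenever $t$ crosses it. Substituting the resulting bound on $E_{11}^Y$ and $E_{21}^Y$ into the representation of $\widehat{K}_0$ produces exactly the sum of two exponentials: the term $\exp(-|\xi|^{\sigma}t)$ comes from the dissipative contribution (recall that for $\delta=\sigma/2$ the dissipative estimate in Lemma \ref{lemma_diss_1} reduces to $\exp(-|\xi|^{\sigma}(t-s))$ at $s=0$), while $\exp(-C'|\xi|^{\sigma}\mathcal{B}(0,t))$ arises from the elliptic estimate in Lemma \ref{lemma2.4} with $2\sigma-2\delta=\sigma$. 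The $1/b(t)$ factor in \eqref{ineq:3.15.1} is produced by the combination $h_2(0,\xi)|\xi|^{\sigma}b(t)/h_2(t,\xi)\cdot(1/\lambda(t))$ together with $|\xi|^{\sigma}/b(t)$ factors appearing naturally from the off-diagonal entries in Lemma \ref{lemma2.4}.

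\medskip
\noindent\textbf{Expected main difficulty.} The bookkeeping for the small-frequency case is the delicate point: one must verify that when $(0,\xi)$ and $(t,\xi)$ lie in different sub-zones of $\Pi_{\mathrm{ell}}$, or when $t$ crosses $t_{\mathrm{ell}}$ into $\Pi_{\mathrm{hyp}}$, the gluing is consistent and actually yields a \emph{sum} of the two exponentials rather than a product, and that the $h_2(0,\xi)$ prefactor (which behaves differently in the two regimes) interacts correctly with the $1/h_2(t,\xi)$ and $\lambda(t)^{-1}$ factors so as to produce precisely the powers of $|\xi|^{\sigma}$ and $b(t)$ stated in \eqref{ineq:3.14.1}--\eqref{ineq:3.15.1}. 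Apart from this combinatorial care, the argument is routine once Corollary \ref{Corollary_4.3} has been established.
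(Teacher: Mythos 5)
Your overall strategy---transporting the zone-wise bounds on $E^Y$ through the representations $\widehat{K}_0(t,0,\xi)=\lambda(t)^{-1}\tfrac{h_2(0,\xi)}{h_2(t,\xi)}E_{11}^Y(t,0,\xi)$ and $D_t\widehat{K}_0(t,0,\xi)=\tfrac{h_2(0,\xi)}{\lambda(t)}\bigl(E_{21}^Y+i\tfrac{|\xi|^{2\delta}b(t)}{2h_2(t,\xi)}E_{11}^Y\bigr)$, in parallel with the detailed case analysis the paper carries out for $\widehat{K}_1$---is exactly what the paper's (unwritten) proof of this corollary is meant to do. The arithmetic by which the prefactors $|\xi|^\sigma$ and $1/b(t)$ are produced is also consistent.

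There is, however, a genuine gap in your zone bookkeeping for $\delta=\sigma/2$. Since $|\xi|^{\sigma-2\delta}=1$ here, one has $\Pi_{\mathrm{hyp}}=\{(t,\xi):b(t)<2\}$ and $\Pi_{\mathrm{ell}}=\{(t,\xi):b(t)>2\}$; these are determined by $b(t)$ alone and do \emph{not} depend on $|\xi|$. Hence the assertion that ``for $M$ sufficiently large the point $(0,\xi)$ lies in $\Pi_{\mathrm{hyp}}$'' is simply false whenever $b(0)>2$, and then the appeal to Lemmas \ref{lemma2.1}, \ref{lemma2.5}, \ref{lemma2.6} (all restricted to $\Pi_{\mathrm{hyp}}$) for the large-frequency estimates \eqref{ineq:3.12.1}--\eqref{ineq:3.13.1} is unjustified; symmetrically, the claim ``$(0,\xi)\in\Pi_{\mathrm{ell}}$'' when $|\xi|\leq M$ only holds when $b(0)>2$. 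The constant $M$ does not formalize a collapsed hyperbolic/elliptic threshold; rather, it separates frequencies that can ever enter $Z_{\mathrm{diss}}(d_0)$ (which still does depend on $|\xi|$ through $|\xi|^\sigma\leq d_0/((1+t)b(t))$, giving $M\approx (d_0/b(0))^{1/\sigma}$) from those that cannot. The correct case analysis for $\delta=\sigma/2$ must therefore be organized by the sign of $b(\cdot)-2$ on $[0,t]$ (which, by monotonicity of $b$, changes at most once), in combination with the $\Lambda(t)$-threshold for the dissipative zone, not by frequency alone. You need to close this gap before the large-frequency bounds can be claimed in full generality, for instance by gluing the elliptic estimate of Lemma \ref{lemma2.4} across a possible crossing time $t_*$ with $b(t_*)=2$.
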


\subsection{Decay estimates of solutions of the linear Cauchy problem}
\label{section4.6}
We establish now the Matsumura-type decay estimates of the linear solutions. We consider two problems:
\begin{equation} \label{eq:3.16}
\begin{cases}v_{t t}+(-\Delta)^\sigma v+b(t) (-\Delta)^\delta v_t=0, & (t, x) \in[s, \infty) \times \mathbb{R}^n, \\ v(s, x)=0, \quad v_t(s, x)= g(s,x), & x \in \mathbb{R}^n,
\end{cases}
\end{equation}
and
\begin{equation} \label{eq:3.17}
\begin{cases}w_{t t}+(-\Delta)^\sigma w+b(t) (-\Delta)^\delta w_t=0, & (t, x) \in[0, \infty) \times \mathbb{R}^n, \\ w(0, x)=f(x), \quad w_t(0, x)=0, & x \in \mathbb{R}^n .\end{cases}
\end{equation}
We denoted the solutions to \eqref{eq:3.16} and \eqref{eq:3.17} by $K_1=K_1(t, s, x)$ and $K_0=K_0(t, 0, x)$ with data $g(x)=g(0,x)=u_1(x)=\delta_0$ and $f(x)=u_0(x)=\delta_0$, where $\delta_0$ is the Dirac distribution  in $x=0$. Hence
\begin{align*}
\|v(t, \cdot)\|_{L^2}  =\|\widehat{v}(t, \cdot)\|_{L^2} \leq\left\|\widehat{K}_1(t, 0, \xi) \widehat{u}_1(s,\xi) \right\|_{L^2}, 
~\|w(t, \cdot)\|_{L^2}  =\|\widehat{w}(t, \cdot)\|_{L^2} \leq\left\|\widehat{K}_0(t, 0, \xi) \widehat{u}_0(\xi) \right\|_{L^2} .
\end{align*}
\begin{proof}[Proof of Theorem 1.1]
We will present the detailed proof for  $\delta \in (0, \frac{\sigma}{2})$. The proofs for  $\delta=0$ and $\delta=\sigma/2$ can be proceeded similarly, and therefore will be omitted for sake of brevity in the article presentation. Our analysis below relies on the notion of decay characters of the initial data.\\

$\bullet$  {\it Estimating for $\|u(t, \cdot)\|_{\dot{H}^\alpha}$}. From \eqref{ineq:3.8} and the definition of $\lambda(t)$ in \eqref{lamda}, applying Parseval’s identity,  we see that
\begin{align} \label{ineq:3.18}
& \left\||\xi|^\alpha  \widehat{K}_1(t, 0, \xi) \widehat{u}_1(\xi) \right\|_{L^2 \left (|\xi| \geq \Omega(0, t)\right )}^2 
\lesssim \int_{|\xi| \geq \Omega(0, t)}|\xi|^{2\alpha- 2\sigma} \exp \left(-(1-2 \beta)|\xi|^{2 \delta} \widehat{\mathcal{B}}(0,t) \right) \left|\widehat{u_1}(\xi)\right|^2 d \xi \notag\\
\lesssim & \frac{1}{\left(\Omega(0, t) \right)^{2\sigma-2\alpha}} \exp \left(-(1-2 \beta) \left(\Omega(0, t) \right)^{2 \delta} \widehat{\mathcal{B}}(0,t) \right)  \| u_1 \|_{L^2}^2.
\end{align}
From \eqref{ineq:3.12} and the definition of $\lambda(t)$ in \eqref{lamda}, by Parseval’s identity, we obtain
\begin{align} \label{ineq:3.19}
&\left\||\xi|^\alpha  \widehat{K}_0(t, 0, \xi) \widehat{u}_0(\xi) \right\|_{L^2 \left (|\xi| \geq \Omega(0, t)\right )}^2 \notag\\
\lesssim & \int_{|\xi| \geq \Omega(0, t)}|\xi|^{2\alpha} \exp \left(-(1-2 \beta)|\xi|^{2 \delta} \widehat{\mathcal{B}}(0,t) \right) \left|\widehat{u_0}(\xi)\right|^2 d \xi 
\lesssim \exp \left(-(1-2 \beta) \left(\Omega(0, t) \right)^{2 \delta} \widehat{\mathcal{B}}(0,t) \right) \| u_0 \|_{H^\alpha}^2.
\end{align}
Let us turn to estimate $\left\||\xi|^\alpha  \widehat{K}_0(t, 0, \xi) \widehat{u}_1(\xi) \right\|_{L^2 \left (\Lambda(t) \leq |\xi| \leq \Omega(0, t)\right )}^2$ and $\left\||\xi|^\alpha  \widehat{K}_1(t, 0, \xi) \widehat{u}_0(\xi) \right\|_{L^2 \left (\Lambda(t) \leq |\xi| \leq \Omega(0, t)\right )}^2$.\\

\noindent{\it For $\alpha \in [0, 2 \delta)$}: Put $\widehat{\vartheta}=|\xi|^{\alpha-2 \delta} \widehat{u}_1$. Since $\frac{n}{2}+r^*\left(u_1\right)+\alpha-2 \delta>0$, we can apply Theorem \ref{decay_4} to obtain that $\vartheta:=\mathcal{F}^{-1}(\widehat{\vartheta}) \in L^2\left(\mathbb{R}^n\right)$ and $r^*(\vartheta)=r^*\left(u_1\right)+\alpha-2 \delta$. Hence,
from \eqref{ineq:3.10}, by Theorem \ref{decay_1}, we get
\begin{align*}
& \left\||\xi|^{\alpha}  \widehat{K}_1(t, 0, \xi) \widehat{u}_1(\xi) \right\|_{L^2 \left (\Lambda(t) \leq |\xi| \leq \Omega(0, t)\right )}^2 
\lesssim  \frac{1}{b^2(t)} \int_{\Lambda(t) \leq |\xi| \leq \Omega(0, t)}|\xi|^{2\alpha- 4 \delta} \exp \left(-2C^{\prime}|\xi|^{2 \sigma - 2 \delta} \mathcal{B}(0,t)\right) \left|\widehat{u_1}(\xi)\right|^2 d \xi \\
= & \frac{1}{b^2(t)} \int_{\Lambda(t) \leq |\xi| \leq \Omega(0, t)} \exp \left(-2C^{\prime}|\xi|^{2 \sigma - 2 \delta} \mathcal{B}(0,t)\right) \left|\widehat{\vartheta}(\xi)\right|^2 d \xi \\
\lesssim & \frac{1}{b^2(t)} \left(P_{r^*(\vartheta)}(\vartheta)\right)\left(C_3+\mathcal{B}(0,t) \right)^{-\frac{r^{*}(\vartheta)+ \frac{n}{2}}{ \sigma -  \delta}}+ \frac{1}{b^2(t)} \|\vartheta\|_{L^2}^2\left(C_3+\mathcal{B}(0,t) \right)^{-m}
\end{align*}
for any $m>\frac{r^{*}(\vartheta)+ \frac{n}{2}}{ \sigma -  \delta}$, for sufficiently large $C_3=$ const $>0$. By fixing $m>\frac{r^{*}(\vartheta)+ \frac{n}{2}}{ \sigma -  \delta}$ and choosing $C_3$ large enough, we always can achieve that
$$
\|\vartheta\|_{L^2}^2\left(C_3+\mathcal{B}(0,t)\right)^{-m} \lesssim\left(P_{r^*\left(u_1\right)}\left(u_1\right)+\left\|u_1\right\|_{L^2}^2\right)(1+\mathcal{B}(0,t))^{-\frac{r^{*}(\vartheta)+ \frac{n}{2}}{ \sigma -  \delta}}.
$$
Recalling the definition of the norm $\|\cdot\|_{P}$ we obtain
\begin{equation} \label{ineq:3.20.1}
\left\||\xi|^{\alpha}  \widehat{K}_1(t, 0, \xi) \widehat{u}_1(\xi) \right\|_{L^2 \left (\Lambda(t) \leq |\xi| \leq \Omega(0, t)\right )}^2 \lesssim \|u_1\|_{P}^2 \frac{1}{b^2(t)} \left(1+\mathcal{B}(0,t)\right)^{-\frac{r^{*}(u_1) + \frac{n}{2} + \alpha - 2 \delta}{\sigma - \delta}}.
\end{equation}
\vskip0.5cm
\noindent{\it For $\alpha \geq 2 \delta$:} From \eqref{ineq:3.10}, applying Young's inequality, Lemma \ref{lemmae^-c} and Theorem \ref{decay_1}, we get
\begin{align} \label{ineq:3.20}
& \left\||\xi|^{\alpha}  \widehat{K}_1(t, 0, \xi) \widehat{u}_1(\xi) \right\|_{L^2 \left (\Lambda(t) \leq |\xi| \leq \Omega(0, t)\right )}^2 
\lesssim  \frac{1}{b^2(t)} \int_{\Lambda(t) \leq |\xi| \leq \Omega(0, t)}|\xi|^{2\alpha- 4 \delta} \exp \left(-2C^{\prime}|\xi|^{2 \sigma - 2 \delta} \mathcal{B}(0,t)\right) \left|\widehat{u_1}(\xi)\right|^2 d \xi \notag\\
\lesssim & \frac{1}{b^2(t)} \left\|F^{-1}\left( |\xi|^{\alpha - 2 \delta} \exp \left(-\frac{C^{\prime}}{2}|\xi|^{2 \sigma - 2 \delta} \mathcal{B}(0,t)\right)\right)\right\|_{L^1}^2  \left\|\exp \left(- \frac{C^{\prime}}{2} |\xi|^{2 \sigma - 2 \delta} \mathcal{B}(0,t)\right) \widehat{u}_1(\xi)\right\|_{L^2}^2 \notag\\
\lesssim & \|u_1\|_{P}^2 \frac{1}{b^2(t)} \left(1+\mathcal{B}(0,t)\right)^{-\frac{r^{*}(u_1) + \frac{n}{2} + \alpha - 2 \delta}{\sigma - \delta}}.
\end{align}
From \eqref{ineq:3.14}, by Theorem \ref{decay_3}, we obtain
\begin{align} \label{ineq:3.21}
&\left\||\xi|^\alpha  \widehat{K}_0(t, 0, \xi) \widehat{u}_0(\xi) \right\|_{L^2 \left (\Lambda(t) \leq |\xi| \leq \Omega(0, t)\right )}^2 
\lesssim  \int_{\Lambda(t) \leq |\xi| \leq \Omega(0, t)}|\xi|^{2\alpha} \exp \left(-2C^{\prime}|\xi|^{2 \sigma - 2 \delta} \mathcal{B}(0,t)\right) \left|\widehat{u_0}(\xi)\right|^2 d \xi \notag\\
\lesssim &  \|u_0\|_{P, \alpha}^2 \left(1+\mathcal{B}(0,t)\right)^{-\frac{r^{*}(u_0) + \frac{n}{2} + \alpha}{\sigma - \delta}}.
\end{align}
Next, we focus on the estimates for small frequencies: $  |\xi| \leq \Lambda(t)$.\\

\noindent{\it For $\alpha \in [0, 2 \delta)$}: Put $\widehat{\vartheta}=|\xi|^{\alpha-2 \delta} \widehat{u}_1$. Since $\frac{n}{2}+r^*\left(u_1\right)+\alpha-2 \delta>0$, we can apply Theorem \ref{decay_4} to obtain that $\vartheta:=\mathcal{F}^{-1}(\widehat{\vartheta}) \in L^2\left(\mathbb{R}^n\right)$ and $r^*(\vartheta)=r^*\left(u_1\right)+\alpha-2 \delta$. Hence,
from \eqref{ineq:3.1000}, applying Theorem \ref{decay_1}, we get
\begin{align*} 
& \left\||\xi|^{\alpha}  \widehat{K}_1(t, 0, \xi) \widehat{u}_1(\xi) \right\|_{L^2 \left (|\xi| \leq \Lambda(t)\right )}^2 \\
\lesssim & \frac{1}{b^2(t)} \int_{|\xi| \leq \Lambda(t) }|\xi|^{2\alpha- 4 \delta} \exp \left(-2|\xi|^{ \sigma} t \right) \left|\widehat{u_1}(\xi)\right|^2 d \xi 
=  \frac{1}{b^2(t)} \int_{|\xi| \leq \Lambda(t)} \exp \left(-2|\xi|^{\sigma } t\right) \left|\widehat{\vartheta}(\xi)\right|^2 d \xi \\
\lesssim & \frac{1}{b^2(t)} \left(P_{r^*(\vartheta)}(\vartheta)\right)\left(C_3+t \right)^{-\frac{r^{*}(\vartheta)+ \frac{n}{2}}{ \sigma/2}}+ \frac{1}{b^2(t)} \|\vartheta\|_{L^2}^2\left(C_3+t \right)^{-m},
\end{align*}
for any $m>\frac{r^{*}(\vartheta)+ \frac{n}{2}}{ \sigma/2}$ and for sufficiently large $C_3=$ const $>0$. By fixing $m>\frac{r^{*}(\vartheta)+ \frac{n}{2}}{ \sigma /2}$ and choosing $C_3$ large enough, we see that
$
\|\vartheta\|_{L^2}^2\left(C_3+t\right)^{-m} \lesssim\left(P_{r^*\left(u_1\right)}\left(u_1\right)+\left\|u_1\right\|_{L^2}^2\right)(1+t)^{-\frac{r^{*}(\vartheta)+ \frac{n}{2}}{ \sigma/2}}.
$
Therefore, recalling the definition of the norm $\|\cdot\|_{P}$ we obtain
\begin{equation} \label{ineq:3.20.1000}
\left\||\xi|^{\alpha}  \widehat{K}_1(t, 0, \xi) \widehat{u}_1(\xi) \right\|_{L^2 \left ( |\xi| \leq \Lambda(t)\right )}^2 \lesssim \|u_1\|_{P}^2 \frac{1}{b^2(t)} \left(1+t\right)^{-\frac{r^{*}(u_1) + \frac{n}{2} + \alpha - 2 \delta}{\sigma/2}}.
\end{equation}
\noindent{\it For $\alpha \geq 2 \delta$:} 
From \eqref{ineq:3.1000}, by Young's inequality, Lemma \ref{lemmae^-c} and Theorem \ref{decay_1}, we get
\begin{align} \label{ineq:3.2000}
& \left\||\xi|^{\alpha}  \widehat{K}_1(t, 0, \xi) \widehat{u}_1(\xi) \right\|_{L^2 \left (|\xi| \leq \Lambda(t)\right )}^2 
\lesssim  \frac{1}{b^2(t)} \int_{|\xi| \leq \Lambda(t)}|\xi|^{2\alpha- 4 \delta} \exp \left(-2|\xi|^{ \sigma } t\right) \left|\widehat{u_1}(\xi)\right|^2 d \xi \notag\\
\lesssim & \frac{1}{b^2(t)} \left\|F^{-1}\left( |\xi|^{\alpha - 2 \delta} \exp \left(-\frac{1}{2}|\xi|^{ \sigma} t\right)\right) \right\|_{L^1}^2 \cdot \left\|\exp \left(- \frac{1}{2} |\xi|^{ \sigma} t\right) \widehat{u}_1(\xi)\right\|_{L^2}^2 \notag\\
\lesssim & \|u_1\|_{P}^2 \frac{1}{b^2(t)} \left(1+t\right)^{-\frac{r^{*}(u_1) + \frac{n}{2} + \alpha - 2 \delta}{\sigma/2}}.
\end{align}
Finally,  \eqref{ineq:3.1400} and Theorem \ref{decay_3}  imply that
\begin{align} \label{ineq:3.2100}
\left\||\xi|^\alpha  \widehat{K}_0(t, 0, \xi) \widehat{u}_0(\xi) \right\|_{L^2 \left (|\xi| \leq \Lambda(t)\right )}^2 
\lesssim  \int_{|\xi| \leq \Lambda(t)}|\xi|^{2\alpha} \exp \left(-2|\xi|^{ \sigma} t\right) \left|\widehat{u_0}(\xi)\right|^2 d \xi 
\lesssim  \|u_0\|_{P, \alpha}^2 \left(1+t\right)^{-\frac{r^{*}(u_0) + \frac{n}{2} + \alpha}{\sigma /2}}.
\end{align}
From \eqref{ineq:3.18}-\eqref{ineq:3.2100} and Parseval's identity, we have
\begin{align*}
\|u(t,\cdot)\|_{\dot{H}^\alpha} \lesssim & \|u_1\|_{P} \frac{1}{b(t)} \left(1+\mathcal{B}(0,t)\right)^{-\frac{r^{*}(u_1) + \frac{n}{2} + \alpha - 2 \delta}{2\sigma - 2\delta}} + \|u_0\|_{P, \alpha} \left(1+\mathcal{B}(0,t)\right)^{-\frac{r^{*}(u_0) + \frac{n}{2} + \alpha}{2\sigma - 2\delta}} \\
+& \|u_1\|_{P} \frac{1}{b(t)} \left(1+t\right)^{-\frac{r^{*}(u_1) + \frac{n}{2} + \alpha - 2 \delta}{2\sigma - 2\delta}} + \|u_0\|_{P, \alpha} \left(1+t\right)^{-\frac{r^{*}(u_0) + \frac{n}{2} + \alpha}{2\sigma - 2\delta}}.
\end{align*}
$\bullet$ {\it Estimating for $\|u_t(t, \cdot)\|_{L^2}$.}\\
From \eqref{ineq:3.9} and the definition of $\lambda(t)$ in \eqref{lamda}, applying Parseval’s identity we get
\begin{align} \label{ineq:3.24}
 \left\|\partial_t \widehat{K}_1(t, 0, \xi) \widehat{u}_1(\xi) \right\|_{L^2 \left (|\xi| \geq \Omega(0, t)\right )}^2
\lesssim  & \int_{|\xi| \geq \Omega(0, t)} \exp \left(-(1-2 \beta)|\xi|^{2 \delta} \widehat{\mathcal{B}}(0,t) \right) \left|\widehat{u_1}(\xi)\right|^2 d \xi \notag\\
\lesssim & \exp \left(-(1-2 \beta) \left(\Omega(0, t) \right)^{2 \delta} \widehat{\mathcal{B}}(0,t) \right) \| u_1 \|_{L^2}^2.
\end{align}
Meanwhile, from \eqref{ineq:3.13}  we obtain
\begin{align} \label{ineq:3.25}
& \left\|\partial_t \widehat{K}_0(t, 0, \xi) \widehat{u}_0(\xi) \right\|_{L^2 \left (|\xi| \geq \Omega(0, t)\right )}^2 
\lesssim  \int_{|\xi| \geq \Omega(0, t)} |\xi|^{2\sigma} \exp \left(-(1-2 \beta)|\xi|^{2 \delta} \widehat{\mathcal{B}}(0,t) \right) \left|\widehat{u_0}(\xi)\right|^2 d \xi \notag\\
\lesssim &  \exp \left(-(1-2 \beta) \left(\Omega(0, t) \right)^{2 \delta} \widehat{\mathcal{B}}(0,t) \right) \| u_0 \|_{H^\sigma}^2.
\end{align}
Let us estimate for   $\Lambda(t) \leq |\xi| \leq \Omega(0, t)$.  Young's inequality, \eqref{ineq:3.11}, Lemma \ref{lemmae^-c} and Theorem \ref{decay_1} yield
\begin{align} \label{ineq:3.26}
& \left\|\partial_t \widehat{K}_1(t, 0, \xi) \widehat{u}_1(\xi) \right\|_{L^2 \left (\Lambda(t) \leq |\xi| \leq \Omega(0, t)\right )}^2 \notag\\
\lesssim & \left(\frac{1}{b(0)}+\frac{1}{b(t)} \right)^2 \frac{1}{b^2(t)} \int_{\Lambda(t) \leq |\xi| \leq \Omega(0, t)}|\xi|^{4\sigma - 8 \delta} \exp \left(-2C^{\prime}|\xi|^{2 \sigma - 2 \delta} \mathcal{B}(0,t)\right) \left|\widehat{u_1}(\xi)\right|^2 d \xi \notag\\
\lesssim & \left(\frac{1}{b(0)}+\frac{1}{b(t)} \right)^2 \frac{1}{b^2(t)} \left\|F^{-1}\left(|\xi|^{2 \sigma - 4 \delta} \exp \left(-\frac{C^{\prime}}{2}|\xi|^{2 \sigma - 2 \delta} \mathcal{B}(0,t)\right)\right) \right\|_{L^1}^2 \notag\\ 
& \times \left\|\exp \left(-\frac{C^{\prime}}{2}|\xi|^{2 \sigma - 2 \delta} \mathcal{B}(0,t)\right) \widehat{u}_1(\xi)\right\|_{L^2}^2 
\lesssim \|u_1\|_{P}^2 \left(\frac{1}{b(0)}+\frac{1}{b(t)} \right)^2 \frac{1}{b^2(t)}  \left(1+\mathcal{B}(0,t)\right)^{-\frac{r^{*}(u_1) + \frac{n}{2} + 2 \sigma - 4 \delta}{\sigma - \delta}}.
\end{align}
From \eqref{ineq:3.15},  Young's inequality, Lemma \ref{lemmae^-c} and Theorem \ref{decay_3}, we obtain
\begin{align} \label{ineq:3.27}
& \left\| \partial_t \widehat{K}_0(t, 0, \xi) \widehat{u}_0(\xi) \right\|_{L^2 \left (\Lambda(t) \leq |\xi| \leq \Omega(0, t)\right )}^2 
\lesssim  \frac{1}{b^2(t)} \int_{\Lambda(t) \leq |\xi| \leq \Omega(0, t)} |\xi|^{4 \sigma - 4 \delta} \exp \left(-2C^{\prime}|\xi|^{2 \sigma - 2 \delta} \mathcal{B}(0,t)\right) \left|\widehat{u_0}(\xi)\right|^2 d \xi \notag\\
\lesssim & \frac{1}{b^2(t)} \left\| F^{-1}\left(|\xi|^{\sigma - 2 \delta} \exp \left(-\frac{C^{\prime}}{2}|\xi|^{2 \sigma - 2 \delta} \mathcal{B}(0,t)\right)\right) \right\|_{L^1}^2  \left\||\xi|^\sigma \exp \left(-\frac{C^{\prime}}{2}|\xi|^{2 \sigma - 2 \delta} \mathcal{B}(0,t)\right) \widehat{u}_0(\xi)\right\|_{L^2}^2 \notag\\
\lesssim & \|u_0\|_{P, \sigma}^2 \frac{1}{b^2(t)} \left(1+\mathcal{B}(0,t)\right)^{-\frac{r^{*}(u_0) + \frac{n}{2} + 2 \sigma - 2 \delta}{\sigma - \delta}}.
\end{align}
Let us proceed  with small frequencies $|\xi| \leq \Lambda(t)$. From \eqref{ineq:3.1100} and Theorem \ref{decay_1} we get
\begin{align} \label{ineq:3.2600} \left\|\partial_t \widehat{K}_1(t, 0, \xi) \widehat{u}_1(\xi) \right\|_{L^2 \left (|\xi| \leq \Lambda(t)\right )}^2 
\lesssim \int_{|\xi| \leq \Lambda(t)} \exp \left(-2|\xi|^{2 \delta} \widehat{\mathcal{B}}(0, t)\right) \left|\widehat{u_1}(\xi)\right|^2 d \xi 
\lesssim \|u_1\|_{P}^2 \left(1+\widehat{\mathcal{B}}(0,t)\right)^{-\frac{r^{*}(u_1) + \frac{n}{2}}{\delta}}.
\end{align}
Similarly, from \eqref{ineq:3.1101} and Theorem \ref{decay_1} we  also can obtain
\begin{align} \label{ineq:3.2601} \left\|\partial_t \widehat{K}_1(t, 0, \xi) \widehat{u}_1(\xi) \right\|_{L^2 \left (|\xi| \leq \Lambda(t)\right )}^2 \lesssim \|u_1\|_{P}^2 (1+t)^{-\frac{r^{*}(u_1) + \frac{n}{2}}{\sigma/2}}.
\end{align}
Meanwhile, from \eqref{ineq:3.1500}, Young's inequality, Lemma \ref{lemmae^-c} and Theorem \ref{decay_3}, we get
\begin{align} \label{ineq:3.2700}
& \left\| \partial_t \widehat{K}_0(t, 0, \xi) \widehat{u}_0(\xi) \right\|_{L^2 \left (\Lambda(t) \leq |\xi| \leq \Omega(0, t)\right )}^2 
\lesssim  \frac{1}{b^2(t)} \int_{|\xi| \leq \Lambda(t)} |\xi|^{4 \sigma - 4 \delta} \exp \left(-2|\xi|^{2 \sigma - 2 \delta} t\right) \left|\widehat{u_0}(\xi)\right|^2 d \xi \notag\\
\lesssim & \frac{1}{b^2(t)} \left\| F^{-1}\left(|\xi|^{\sigma - 2 \delta} \exp \left(-\frac{1}{2}|\xi|^{2 \sigma - 2 \delta} t\right)\right) \right\|_{L^1}^2  \left\||\xi|^\sigma \exp \left(-\frac{1}{2}|\xi|^{2 \sigma - 2 \delta} t\right) \widehat{u}_0(\xi)\right\|_{L^2}^2 \notag\\
\lesssim & \|u_0\|_{P, \sigma}^2 \frac{1}{b^2(t)} \left(1+t\right)^{-\frac{r^{*}(u_0) + \frac{n}{2} + 2 \sigma - 2 \delta}{\sigma/2}}.
\end{align}
From \eqref{ineq:3.24}, \eqref{ineq:3.25}, \eqref{ineq:3.26}, \eqref{ineq:3.27}, \eqref{ineq:3.2600} and \eqref{ineq:3.2700}, by applying Parseval's identity and using $\delta\in [0,\sigma/2],\\ \frac{n}{2} + r^{*}(u_1) > 2 \delta$, we obtain the required estimate:
\begin{align*}
&\|u_t(t,\cdot)\|_{L^2} \\ \lesssim & \|u_1\|_{P} \left(\frac{1}{b(0)}+\frac{1}{b(t)} \right) \frac{1}{b(t)}  \left(1+\mathcal{B}(0,t)\right)^{-\frac{r^{*}(u_1) + \frac{n}{2} + 2 \sigma - 4 \delta}{2\sigma - 2\delta}} + \|u_0\|_{P, \sigma} \frac{1}{b(t)} \left(1+\mathcal{B}(0,t)\right)^{-\frac{r^{*}(u_0) + \frac{n}{2} + 2 \sigma - 2 \delta}{2\sigma - 2\delta}} \\
& + \|u_1\|_{P} \left(1+\widehat{\mathcal{B}}(0,t)\right)^{-\frac{r^{*}(u_1) + \frac{n}{2} + 2 \sigma - 4 \delta}{2\sigma - 2\delta}} + \|u_0\|_{P, \sigma} \frac{1}{b(t)} \left(1+t\right)^{-\frac{r^{*}(u_0) + \frac{n}{2} + 2 \sigma - 2 \delta}{2\sigma - 2\delta}}.
\end{align*}
The proof of Theorem \ref{theorem:1.1} is now complete.
\end{proof}

\section{Global (in time) existence of solutions} \label{section_5}
If  $K_0(t, 0, x)$ and $K_1(t, 0, x)$ are the fundamental solutions to the corresponding linear equation of \eqref{eq:1.0}, then
$
u^{\operatorname{lin}}:=K_0(t, 0, x) *_{(x)} u_0(x)+K_1(t, 0, x) *_{(x)} u_1(x)
$
is the solution to linear Cauchy problem \eqref{eq:2.1}. For $T>0$, we define the following operator $N: u \in X(T) \mapsto N u=N u(t, x):=u^{\operatorname{lin}}(t, x)+u^{\text {nl }}(t, x)$,
where $X(T)$ is an evolution space which will be constructed,  and $u^{\text {nl }}(t, x)$ is expressed by the  integral operator:
$
u^{\mathrm{nl}}(t, x):=\displaystyle\int_0^t K_1(t, s, x) *_{(x)}|u(s, x)|^p d s
$. The global (in time) Sobolev solution to semi-linear Cauchy problem \eqref{eq:1.0} is considered as a fixed point of the operator $N$. For the existence of this fixed point, we will show that the solution mapping $N$ satisfies the following  estimates:
\begin{equation} \label{ineq:4.1}
\|N u\|_{X(T)} \lesssim\left\|\left(u_0, u_1\right)\right\|_{D^\sigma}+\|u\|_{X(T)}^p,
\end{equation}
\begin{equation} \label{ineq:4.2}
\|N u-N v\|_{X(T)} \lesssim\|u-v\|_{X(T)}\left(\|u\|_{X(T)}^{p-1}+\|v\|_{X(T)}^{p-1}\right),
\end{equation}
where the data space $D^\sigma$ is fixed in the statement of Theorem \ref{theorem:1.3}. Providing that $\left\|\left(u_0, u_1\right)\right\|_{D^\sigma}=\varepsilon$ is sufficiently small, then estimates \eqref{ineq:4.1} and \eqref{ineq:4.2} imply the existence of a unique local (in time) large data solution and a unique global (in time) small data solution in $X(T)$ by a standard argument.
\begin{proof}[Proof of Theorem 1.2]
We will consider only the case $\delta \in (0, \frac{\sigma}{2}]$, since the case with $\delta=0$ can be processed in a similar fashion. We study separately the problems with the $b'(t)\ge 0$ and $b'(t)< 0$. \\
\textbf{A. The case when} $\mathbf{b^{\prime}(t) \geq 0}$. 

In this case it is obvious that $1+\mathcal{B}(s, t) \leq 1+t-s \leq 1 + \widehat{\mathcal{B}}(s, t)$. 
\begin{proposition} \label{proposition:1.3}
Assume that $\delta \in\left(0, \frac{\sigma}{2}\right]$. Let $h \in L^\omega \cap L^2$, with $1 \leq \omega < 2$. We assume further that $\frac{1}{\omega}-\frac{1}{2}-\frac{2 \delta}{n} \geq 0$. Then for all $\alpha \in [0, \sigma]$ the following estimates are valid
\begin{align*}
&\left\||D|^\alpha\left(\mathcal{F}^{-1}\left(\widehat{K}_1(t-s, \xi)\right) * h\right)\right\|_{L^2} \lesssim \frac{1}{b(t)} (1+\mathcal{B}(s,t))^{-\frac{n}{2(\sigma-\delta)}\left(\frac{1}{\omega}-\frac{1}{2}\right)-\frac{\alpha}{2(\sigma-\delta)}+\frac{\delta}{\sigma-\delta}}\left(\|h\|_{L^\omega}+\|h\|_{L^2}\right), \\
&\left\|\partial_t\left(\mathcal{F}^{-1}\left(\widehat{K}_1(t-s, \xi)\right) * h\right)\right\|_{L^2} \lesssim \frac{1}{b(s)} \frac{1}{b(t)}(1+\mathcal{B}(s,t))^{-\frac{n}{2(\sigma-\delta)}\left(\frac{1}{\omega}-\frac{1}{2}\right)+\frac{\delta}{\sigma-\delta}-1}\left(\|h\|_{L^\omega}+\|h\|_{L^2}\right) .
\end{align*}
\end{proposition}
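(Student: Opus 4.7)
The plan is to move everything to the Fourier side via Plancherel's identity,
\[
\bigl\||D|^\alpha\bigl(\mathcal{F}^{-1}(\widehat{K}_1(t,s,\xi))*h\bigr)\bigr\|_{L^2}=\bigl\||\xi|^\alpha \widehat{K}_1(t,s,\xi)\,\widehat{h}(\xi)\bigr\|_{L^2},
\]
and split the $\xi$-integral according to the zones from Corollary~\ref{Corollary_4.1} (and Corollary~\ref{Corollary_4.3} when $\delta=\sigma/2$), namely the dissipative zone $\{|\xi|\le\Lambda(t)\}$, the intermediate zone $\{\Lambda(t)\le|\xi|\le\Omega(s,t)\}$ and the hyperbolic zone $\{|\xi|\ge\Omega(s,t)\}$. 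On the hyperbolic piece, estimate~\eqref{ineq:3.8} yields $|\xi|^\alpha|\widehat{K}_1|\lesssim |\xi|^{\alpha-\sigma}\exp(-(1-2\beta)|\xi|^{2\delta}\widehat{\mathcal{B}}(s,t)/2)$; since $\alpha\le\sigma$ and $|\xi|\ge\Omega(s,t)\sim b(t)^{1/(\sigma-2\delta)}$, Plancherel produces a super-polynomially small factor in $\mathcal{B}(s,t)$ multiplying $\|h\|_{L^2}$, which is absorbed into the claimed bound.

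The heart of the argument is the intermediate band, where~\eqref{ineq:3.10} gives
\[
|\xi|^\alpha|\widehat{K}_1(t,s,\xi)|\lesssim \frac{|\xi|^{\alpha-2\delta}}{b(t)}\exp\!\bigl(-C|\xi|^{2\sigma-2\delta}\mathcal{B}(s,t)\bigr).
\]
I would apply H\"older with conjugate exponents $r$ and $\omega'=\omega/(\omega-1)$ satisfying $1/r+1/\omega'=1/2$, so that $1/r=1/\omega-1/2$, followed by the Hausdorff--Young inequality (valid since $1\le\omega<2$) to replace $\|\widehat{h}\|_{L^{\omega'}}$ by $\|h\|_{L^\omega}$. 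The remaining $L^r$-norm of the symbol is computed by the dilation $\xi=\mathcal{B}(s,t)^{-1/(2\sigma-2\delta)}\eta$, which pulls out exactly the factor
\[
(1+\mathcal{B}(s,t))^{-\frac{n}{2(\sigma-\delta)}(\frac{1}{\omega}-\frac{1}{2})-\frac{\alpha-2\delta}{2(\sigma-\delta)}}
\]
and leaves an absolute constant. The assumption $\tfrac{1}{\omega}-\tfrac{1}{2}-\tfrac{2\delta}{n}\ge 0$ is precisely what guarantees integrability of the rescaled density $|\eta|^{r(\alpha-2\delta)}\exp(-Cr|\eta|^{2\sigma-2\delta})$ near the origin. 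The dissipative piece $|\xi|\le\Lambda(t)$ is handled identically using~\eqref{ineq:3.1000} with $t-s$ in place of $\mathcal{B}(s,t)$, and since $t-s\gtrsim \mathcal{B}(s,t)$ on this sub-zone the resulting bound is dominated.

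The time-derivative estimate follows the same three-zone scheme; the only new ingredient is the amplified symbol from~\eqref{ineq:3.11}, namely $|\partial_t\widehat{K}_1|\lesssim (b(s)^{-1}+b(t)^{-1})b(t)^{-1}|\xi|^{2\sigma-4\delta}\exp(-C|\xi|^{2\sigma-2\delta}\mathcal{B}(s,t))$ on the intermediate zone. Compared with the previous calculation, the integrand is multiplied by $|\xi|^{2(\sigma-2\delta)}$, which the same rescaling absorbs into an additional factor $(1+\mathcal{B}(s,t))^{-1}$ accounting for the $-1$ shift of the exponent in the claim; the prefactor $(b(s)^{-1}+b(t)^{-1})b(t)^{-1}$ reproduces the $b(s)^{-1}b(t)^{-1}$ of the statement. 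I expect the main obstacle to be the careful bookkeeping of the different weight prefactors across zones and making sure all the sub-cases amalgamate into a single rate $(1+\mathcal{B}(s,t))$; for this one has to use Lemma~\ref{lemmaB} together with the remark following Lemma~\ref{lemmaB^} in order to convert the various $\mathcal{B}(s,t)$, $\widehat{\mathcal{B}}(s,t)$ and $(1+t)$ factors arising from the different zone estimates into the unified time-scale of the proposition.
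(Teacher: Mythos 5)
Your plan is essentially the argument that the paper itself delegates to: after recording the comparison $1+\widehat{\mathcal{B}}(s,t)\gtrsim b^2(t)(1+\mathcal{B}(s,t))$ for $b'\ge 0$, the paper simply invokes Corollaries~\ref{Corollary_4.1}/\ref{Corollary_4.3} and cites Proposition~1 of \cite{Dao2023}; your zone split with H\"older--Hausdorff--Young on the low/intermediate band and Plancherel on the hyperbolic band is exactly the content of that cited proof, and your exponent bookkeeping checks out. One small slip: the $\partial_t$ estimate picks up an extra $|\xi|^{2\sigma-2\delta}=|\xi|^{2(\sigma-\delta)}$ relative to the $\alpha=0$ case (not $|\xi|^{2(\sigma-2\delta)}$), which after the rescaling is precisely what produces the $-1$ shift in the decay exponent.
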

\begin{proof}[Proof of Proposition 5.1]
From Lemma \ref{lemmaB}, Lemma \ref{lemmaB^} and using $b^{\prime}(t) \geq 0$,  we have
\begin{align} \label{approx_1.1}
1+\widehat{\mathcal{B}}(s, t) & \approx (t+1)b(t)-(s+1)b(s)=b^2(t) \frac{t+1}{b(t)}- b^2(s) \frac{s+1}{b(s)} \notag\\
& \gtrsim b^2(t) \left( \frac{t+1}{b(t)}- \frac{s+1}{b(s)} \right) \gtrsim b^2(t) \left(1+\mathcal{B}(s, t) \right).
\end{align}
By applying the estimates in Corollary \ref{Corollary_4.1} and Corollary \ref{Corollary_4.3}, the statement in Proposition above is entirely demonstrated in the same manner as in Proposition 1 of \cite{Dao2023}.
\end{proof}
We define the  solutions space $X(T):=\mathcal{C}\left([0, T], H^\sigma\right) \cap \mathcal{C}^1 \left([0, T], L^2 \right)
$
with the norm
\begin{align*}
&\|u\|_{X(T)}:= \sup _{0 \leq t \leq T}\left[\left(1+\mathcal{B}(0,t)\right)^{\frac{\frac{n}{2} + \min \left\{ r^{*}(u_0), r^{*}(u_1) - 2 \delta \right\} }{2 \sigma - 2 \delta}} \|u(t, \cdot)\|_{L^2} \right.\\
&\left.
+(1+\mathcal{B}(0, t))^{\frac{\frac{n}{2} + \sigma + \min \left\{ r^{*}(u_0), r^{*}(u_1) - 2 \delta \right\} }{2 \sigma - 2 \delta}}\left\||D|^\sigma u(t, \cdot)\right\|_{L^2} +b(t) \left(1+\mathcal{B}(0,t)\right)^{\frac{\frac{n}{2} + 2 \sigma - 2 \delta + \min \left\{ r^{*}(u_0) , r^{*}(u_1)- 2 \delta \right\} }{2 \sigma - 2 \delta}} \|u_t(t, \cdot)\|_{L^2} \right].
\end{align*}
With $\omega:=\frac{n}{n+\min \left\{ r^{*}(u_0), r^{*}(u_1) - 2 \delta \right\} + 2 \delta}$,  it is easy to see that $1 \leq \omega < 2$. The application of the fractional Gagliardo-Nirenberg inequality with interpolation exponents $\theta_1(2 p):=\frac{n(p-1)+2\gamma p}{2 \sigma p}, \quad \theta_2(\omega p):=\frac{n(\omega p-2)+2 \omega \gamma p}{2 \omega \sigma p}$ from Lemma \ref{Gagliardo} and the definition of the evolution space leads to
\begin{align} \label{ineq:4.3}
\left\| \left| |D|^{\gamma} u(\tau, \cdot)\right|^p\right\|_{L^2}&=\left\||D|^{\gamma} u(\tau, \cdot)\right\|_{L^{2 p}}^p  \lesssim (1+\mathcal{B}(0, \tau))^{-\frac{n + \min \left\{ r^{*}(u_0), r^{*}(u_1) - 2 \delta \right\}+\gamma}{2 \sigma - 2 \delta} p+\frac{n}{4 \sigma - 4 \delta}}\|u\|_{X(T)}^p,\\
\left\| \left| |D|^{\gamma} u(\tau, \cdot)\right|^p\right\|_{L^\omega \cap L^2}&=\left\||D|^{\gamma} u(\tau, \cdot)\right\|_{L^{\omega p}}^p + \left\||D|^{\gamma} u(\tau, \cdot)\right\|_{L^{2 p}}^p \notag \\
&\lesssim(1+\mathcal{B}(0, \tau))^{-\frac{n + \min \left\{ r^{*}(u_0), r^{*}(u_1) - 2 \delta \right\}+\gamma}{2 \sigma - 2 \delta} p+\frac{n}{\omega ( 2 \sigma - 2 \delta)}}\|u\|_{X(T)}^p,\label{ineq:4.4}
\end{align}
provided that
$$
p \in\left[\frac{2}{\omega}, \infty\right) \text { if } n \leq 2 \sigma - 2 \gamma, \text { or } p \in\left[\frac{2}{\omega}, \frac{n}{n-2 \sigma+2 \gamma}\right] \text { if } 2 \sigma - 2 \gamma < n \leq \frac{4 \sigma - 4 \gamma}{2-\omega}.
$$
First, we will prove the inequality \eqref{ineq:4.1}. The estimates for solutions to \eqref{eq:2.1} from Theorem \ref{theorem:1.1} imply
$$
\begin{aligned}
\left\|u^{\operatorname{lin}}(t, \cdot)\right\|_{L^2} & \lesssim(1+\mathcal{B}(0, t))^{-\frac{\frac{n}{2} + \min \left\{ r^{*}(u_0), r^{*}(u_1) - 2 \delta \right\} }{2 \sigma - 2 \delta}}\left\|\left(u_0, u_1\right)\right\|_{D^\sigma}, \\
\left\||D|^\sigma u^{\operatorname{lin}}(t, \cdot)\right\|_{L^2} & \lesssim(1+\mathcal{B}(0, t))^{-\frac{\frac{n}{2} + \sigma + \min \left\{ r^{*}(u_0), r^{*}(u_1) - 2 \delta \right\} }{2 \sigma - 2 \delta}}\left\|\left(u_0, u_1\right)\right\|_{D^\sigma}, \\
\left\|u^{\operatorname{lin}}_t(t,\cdot)\right\|_{L^2} & \lesssim  \frac{1}{b(t)} \left(1+\mathcal{B}(0,t)\right)^{-\frac{\frac{n}{2} + 2 \sigma - 2 \delta + \min \left\{ r^{*}(u_0), r^{*}(u_1)- 2 \delta \right\} }{2 \sigma - 2 \delta}} \left\|\left(u_0, u_1\right)\right\|_{D^\sigma}.
\end{aligned}
$$
This means that the linear part fulfills
\begin{align*}
&(1+\mathcal{B}(0, t))^{\frac{\frac{n}{2} + \min \left\{ r^{*}(u_0), r^{*}(u_1) - 2 \delta \right\} }{2 \sigma - 2 \delta}}\left\|u^{\operatorname{lin}}(t, \cdot)\right\|_{L^2} 
+(1+\mathcal{B}(0, t))^{\frac{\frac{n}{2} + \sigma + \min \left\{ r^{*}(u_0), r^{*}(u_1) - 2 \delta \right\} }{2 \sigma - 2 \delta}}\left\||D|^\sigma u^{\operatorname{lin}}(t, \cdot)\right\|_{L^2} \\
&+  \left(1+\mathcal{B}(0,t)\right)^{\frac{\frac{n}{2} + 2 \sigma - 2 \delta + \min \left\{ r^{*}(u_0), r^{*}(u_1)- 2 \delta \right\} }{2 \sigma - 2 \delta}} \left\|u^{\operatorname{lin}}_t(t, \cdot)\right\|_{L^2} \lesssim \left\|\left(u_0, u_1\right)\right\|_{D^\sigma}.
\end{align*}
From the above we  see that $u^{\text {lin }} \in X(T)$.  It remains to prove that $
 \left\|u^{\text {nl }}\right\|_{X(T)} \lesssim\|u\|_{X(T)}^p$.
 
Since $b^{\prime}(t) \geq 0$ we may estimate the $L^2$ norm of $|D|^\sigma u^{\text {nl }}(t, \cdot)$  by applying $\left(L^2 \cap L^\omega\right)-L^2$ estimates in $[0, t / 2]$ and $L^2-L^2$ estimates in $[t / 2, t]$ from Proposition \ref{proposition:1.3} as follows:
\begin{align*}
\left\||D|^\sigma u^{\operatorname{nl}}(t, \cdot)\right\|_{L^2} \leq & \int_0^{\frac{t}{2}} b(t)^{-1}(1+\mathcal{B}(s, t))^{-\frac{n}{2 \sigma - 2 \delta}\left(\frac{1}{\omega}-\frac{1}{2}\right)-\frac{\sigma - 2 \delta}{2 \sigma - 2 \delta}}\left\| \left| |D|^{\gamma} u(s, \cdot)\right|^p\right\|_{L^{\omega} \cap L^2} d s \\
&+ \int_{\frac{t}{2}}^t b(t)^{-1}(1+\mathcal{B}(s, t))^{-\frac{\sigma - 2 \delta}{2 \sigma - 2 \delta}}\left\| \left| |D|^{\gamma} u(s, \cdot)\right|^p\right\|_{L^2} d s \\
\leq & \int_0^{\frac{t}{2}} b(s)^{-1}(1+\mathcal{B}(s, t))^{-\frac{n}{2 \sigma - 2 \delta}\left(\frac{1}{\omega}-\frac{1}{2}\right)-\frac{\sigma - 2 \delta}{2 \sigma - 2 \delta}}\left\| \left| |D|^{\gamma} u(s, \cdot)\right|^p\right\|_{L^{\omega} \cap L^2} d s \\
&+ \int_{\frac{t}{2}}^t b(s)^{-1}(1+\mathcal{B}(s, t))^{-\frac{\sigma - 2 \delta}{2 \sigma - 2 \delta}}\left\| \left| |D|^{\gamma} u(s, \cdot)\right|^p\right\|_{L^2} d s.
\end{align*}
On the one hand,  the first integral in the right hand part of the last inequality can be estimated as
\begin{align*}
& \int_0^{\frac{t}{2}} b(s)^{-1}(1+\mathcal{B}(s, t))^{-\frac{n}{2 \sigma - 2 \delta}\left(\frac{1}{\omega}-\frac{1}{2}\right)-\frac{\sigma - 2 \delta}{2 \sigma - 2 \delta}}\left\| \left| |D|^{\gamma} u(s, \cdot)\right|^p\right\|_{L^{\omega} \cap L^2} d s \\
\lesssim & \|u\|_{X(T)}^p(1+\mathcal{B}(0, t))^{-\frac{n}{2 \sigma - 2 \delta}\left(\frac{1}{\omega}-\frac{1}{2}\right)-\frac{\sigma - 2 \delta}{2 \sigma - 2 \delta}} 
 \int_0^{\frac{t}{2}} b(s)^{-1}(1+\mathcal{B}(0, s))^{-\frac{n + \min \left\{ r^{*}(u_0), r^{*}(u_1) - 2 \delta \right\}+\gamma}{2 \sigma - 2 \delta} p+\frac{n}{\omega ( 2 \sigma - 2 \delta)}} d s \\
\lesssim & \|u\|_{X(T)}^p(1+\mathcal{B}(0, t))^{-\frac{n}{2 \sigma - 2 \delta}\left(\frac{1}{\omega}-\frac{1}{2}\right)-\frac{\sigma - 2 \delta}{2 \sigma - 2 \delta}} = \|u\|_{X(T)}^p(1+\mathcal{B}(0, t))^{-\frac{\frac{n}{2} + \sigma + \min \left\{ r^{*}(u_0), r^{*}(u_1) - 2 \delta \right\} }{2 \sigma - 2 \delta}},
\end{align*}
by inequality \eqref{ineq:4.4}, Lemma \ref{lemmaB} and the definition of $\omega$. Since $p>p^*=\frac{n + 2\omega(\sigma-\delta)}{\omega n + \omega \min \left\{ r^{*}(u_0), r^{*}(u_1) - 2\delta \right\}+\omega \gamma}$, it follows  that $-\frac{n + \min \left\{ r^{*}(u_0), r^{*}(u_1) - 2 \delta \right\}+\gamma}{2 \sigma - 2 \delta} p+\frac{n}{\omega ( 2 \sigma - 2 \delta)} < -1$. On the other hand, for the second integral, using inequality \eqref{ineq:4.3}, Lemma \ref{lemmaB} and definition of $\omega$ we have
\begin{align*}
& \int_{\frac{t}{2}}^t b(s)^{-1}(1+\mathcal{B}(s, t))^{-\frac{\sigma-2\delta}{2 \sigma-2\delta}}\left\| \left| |D|^{\gamma} u(s, \cdot)\right|^p\right\|_{L^2} d s \\
\lesssim & \|u\|_{X(T)}^p(1+\mathcal{B}(0, t))^{-\frac{n + \min \left\{ r^{*}(u_0), r^{*}(u_1) - 2 \delta \right\}+\gamma}{2 \sigma - 2 \delta} p+\frac{n}{4 \sigma - 4 \delta}} \int_{\frac{t}{2}}^t b(s)^{-1}(1+\mathcal{B}(s, t))^{-\frac{\sigma-2\delta}{2 \sigma-2\delta}} d s \\ \lesssim & \|u\|_{X(T)}^p(1+\mathcal{B}(0, t))^{-\frac{n + \min \left\{ r^{*}(u_0), r^{*}(u_1) - 2 \delta \right\}+\gamma}{2 \sigma - 2 \delta} p+\frac{n}{4 \sigma - 4 \delta}}(1+\mathcal{B}(0, t))^{1-\frac{\sigma-2\delta}{2 \sigma-2\delta}} \\
\lesssim & \|u\|_{X(T)}^p(1+\mathcal{B}(0, t))^{-\frac{\frac{n}{2} + \sigma + \min \left\{ r^{*}(u_0), r^{*}(u_1) - 2 \delta \right\} }{2 \sigma - 2 \delta}},
\end{align*}
for $p>\frac{n + 2\omega(\sigma-\delta)}{\omega n + \omega \min \left\{ r^{*}(u_0), r^{*}(u_1) - 2\delta \right\}+\omega \gamma}$. Therefore,
$$
\left\| |D|^\sigma u^{\operatorname{nl}}(t, \cdot)\right\|_{L^2} \lesssim(1+\mathcal{B}(0, t))^{-\frac{\frac{n}{2} + \sigma + \min \left\{ r^{*}(u_0), r^{*}(u_1) - 2 \delta \right\} }{2 \sigma - 2 \delta}}\|u\|_{X(T)}^p.$$
In the same way we can derive
\begin{equation*}
\left\|u^{\operatorname{nl}}(t, \cdot)\right\|_{L^2} \lesssim (1+\mathcal{B}(0, t))^{-\frac{\frac{n}{2} + \min \left\{ r^{*}(u_0), r^{*}(u_1) - 2 \delta \right\} }{2 \sigma - 2 \delta}}\|u\|_{X(T)}^p.
\end{equation*}
In the same manner as in estimating $
\left\| |D|^\sigma u^{\operatorname{nl}}(t, \cdot)\right\|_{L^2}$  we can obtain
$$
\left\| u^{\operatorname{nl}}_t(t, \cdot)\right\|_{L^2} \lesssim \frac{1}{b(t)}  \left(1+\mathcal{B}(0, t) \right)^{-\frac{\frac{n}{2} + 2\sigma- 2\delta + \min \left\{ r^{*}(u_0), r^{*}(u_1) - 2 \delta \right\} }{2 \sigma - 2 \delta}}\|u\|_{X(T)}^p.$$
From the definition of the norm $X(T)$ the  inequality \eqref{ineq:4.1} is verified.

Let us prove  \eqref{ineq:4.2}. We see that
$$
\|N u-N v\|_{X(T)}=\left\|\int_0^t K_1(t, s, x) *_{(x)}\left( \left ||D|^{\gamma} u(s, x)\right|^p-\left| |D|^{\gamma} v(s, x)\right|^p\right) d s\right\|_{X(T)}.$$
Thanks to the estimates for the solutions from Proposition \ref{proposition:1.3} we can estimate
\begin{align} \label{ineq:4.5}
& \left\||D|^{\sigma} K_1(t, s, x) *_{(x)}\left( \left| |D|^{\gamma} u(s, x)\right|^p-\left| |D|^{\gamma} v(s, x)\right|^p\right)\right\|_{L^2} \notag\\
\lesssim & \begin{cases}b(s)^{-1}(1+\mathcal{B}(s, t))^{-\frac{n}{2 \sigma - 2 \delta}\left(\frac{1}{\omega}-\frac{1}{2}\right)-\frac{\sigma - 2 \delta}{2 \sigma - 2 \delta}} 
 \left\| \left| |D|^{\gamma} u(s, x)\right|^p-\left| |D|^{\gamma} v(s, x)\right|^p\right\|_{L^\omega \cap L^2} & \text { if } s \in[0, t / 2], \\
b(s)^{-1}(1+\mathcal{B}(s, t))^{-\frac{\sigma - 2 \delta}{2 \sigma - 2 \delta}} 
 \left\| \left| |D|^{\gamma} u(s, x)\right|^p-\left| |D|^{\gamma} v(s, x)\right|^p\right\|_{L^2} & \text { if } s \in[t / 2, t] .\end{cases}
\end{align}
Since
\begin{align*}
\left| \left| |D|^{\gamma} v(s, x)\right|^p-\left| |D|^{\gamma} v(s, x)\right|^p \right| 
\lesssim  \left| |D|^{\gamma} u(s, x)-|D|^{\gamma} v(s, x)\right| \left(\left| |D|^{\gamma} u(s, x)|^{p-1}+||D|^{\gamma} v(s, x)\right|^{p-1}\right),
\end{align*}
by Hölder's inequality we obtain
\begin{align*}
 \left\| \left| |D|^{\gamma} u(s, \cdot)\right|^p-\left| |D|^{\gamma} v(s, \cdot)\right|^p\right\|_{L^\omega} 
\lesssim \left\| |D|^{\gamma} u(s, \cdot)-|D|^{\gamma} v(s, \cdot)\right\|_{L^{\omega p}}\left(\left\| |D|^{\gamma} u(s, \cdot)\|_{L^{\omega p}}^{p-1}+\||D|^{\gamma} v(s, \cdot)\right\|_{L^{\omega p}}^{p-1}\right), \\
 \left\| \left| |D|^{\gamma} u(s, \cdot)\right|^p-\left| |D|^{\gamma} v(s, \cdot)\right|^p\right\|_{L^2} 
\lesssim  \left\| |D|^{\gamma} u(s, \cdot)-|D|^{\gamma} v(s, \cdot)\right\|_{L^{2 p}}\left( \left\| |D|^{\gamma} u(s, \cdot)\right\|_{L^{2 p}}^{p-1}+\left\| |D|^{\gamma} v(s, \cdot)\right\|_{L^{2 p}}^{p-1}\right).
\end{align*}
In a similar way to the proof of \eqref{ineq:4.1},  we apply again the fractional Gagliardo-Nirenberg inequality from Proposition \ref{Gagliardo} to the following terms $
\left \||D|^{\gamma} u(s, \cdot)-|D|^{\gamma} v(s, \cdot)\right\|_{L^h},  \left\||D|^{\gamma} u(s, \cdot)\right\|_{L^h} $ and $\left\||D|^{\gamma} v(s, \cdot)\right\|_{L^h} $,
with $h=\omega p$ and $h=2 p$. After plugging these estimates in \eqref{ineq:4.5},  by the same ideas as it was done in estimating the norms $\left\||D|^\sigma u^{\text {nl }}(t, \cdot)\right\|_{L^2}$ we may conclude inequality \eqref{ineq:4.2}. \\
\textbf{B.  The case when} $\mathbf{b^{\prime}(t) < 0}$.  In that case we see that $1+\mathcal{B}(s, t) \geq 1+t-s \geq 1 + \widehat{\mathcal{B}}(s, t)$.
\begin{proposition} \label{proposition_b(t)_increasing}
Let $\delta \in\left(0, \frac{\sigma}{2}\right]$, $h \in L^\omega \cap L^2$, with $1 \leq \omega < 2$. We assume further that $\frac{1}{\omega}-\frac{1}{2}-\frac{2\sigma+ 2\delta}{n} \geq 0$. Then for all $\alpha \in [0, \sigma]$ the following estimates are valid
\begin{align*}
&\left\||D|^\alpha\left(\mathcal{F}^{-1}\left(\widehat{K}_1(t-s, \xi)\right) * h\right)\right\|_{L^2}  \lesssim b(t) \left(1+\widehat{\mathcal{B}}(s,t)\right)^{-\frac{n}{2(\sigma-\delta)}\left(\frac{1}{\omega}-\frac{1}{2}\right)-\frac{\alpha}{2(\sigma-\delta)}+\frac{\delta}{\sigma-\delta}}\left(\|h\|_{L^\omega}+\|h\|_{L^2}\right), \\
&\left\|\partial_t\left(\mathcal{F}^{-1}\left(\widehat{K}_1(t-s, \xi)\right) * h\right)\right\|_{L^2} \lesssim b^2(t)\left(1+\widehat{\mathcal{B}}(s,t)\right)^{-\frac{n}{2(\sigma-\delta)}\left(\frac{1}{\omega}-\frac{1}{2}\right)+\frac{\delta}{\sigma-\delta}-1}\left(\|h\|_{L^\omega}+\|h\|_{L^2}\right) .
\end{align*}
\end{proposition}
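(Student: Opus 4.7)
The proof mirrors that of Proposition \ref{proposition:1.3}: decompose the frequency space into the zones from Corollary \ref{Corollary_4.1} (or Corollary \ref{Corollary_4.3} when $\delta = \sigma/2$), estimate the multiplier norms zone by zone via Hausdorff--Young and Plancherel, and put the estimates together through Young's convolution inequality. Since $b'(t) < 0$, the roles of $\mathcal{B}$ and $\widehat{\mathcal{B}}$ in the final decay factors are essentially swapped compared to Proposition \ref{proposition:1.3}, and the $b(t)$ (respectively $b^2(t)$) prefactors encode the mismatch between these two integrals, which in the decreasing case satisfies $1+\mathcal{B}(0,t)\approx b^{-2}(t)(1+\widehat{\mathcal{B}}(0,t))$ by the remark following Lemma \ref{lemmaB^}.

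\textbf{Zone-by-zone estimates.} For large frequencies $|\xi| \ge \Omega(s,t)$, I combine \eqref{ineq:3.8}--\eqref{ineq:3.9} with the identity $\lambda(s)/\lambda(t)=\exp\bigl(-\tfrac{|\xi|^{2\delta}}{2}\widehat{\mathcal{B}}(s,t)\bigr)$, which gives exponential decay directly in $\widehat{\mathcal{B}}(s,t)$; standard Gaussian-type integrals of the form $\int |\xi|^{2a}\exp(-c|\xi|^{2\delta}\widehat{\mathcal{B}}(s,t))\,d\xi \lesssim \widehat{\mathcal{B}}(s,t)^{-(n+2a)/(2\delta)}$ then yield the claimed polynomial decay. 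For the elliptic zone $\Lambda(t)\le|\xi|\le\Omega(s,t)$, bound \eqref{ineq:3.10} carries $\exp(-C'|\xi|^{2\sigma-2\delta}\mathcal{B}(s,t))$; here I would use the comparison $\mathcal{B}(s,t)\gtrsim \widehat{\mathcal{B}}(s,t)/(b(s)b(t))$ (a consequence of Lemmas \ref{lemmaB}, \ref{lemmaB^} and the monotonicity $b(s)\ge b(t)$) to rewrite the exponential as $\exp(-c'|\xi|^{2\sigma-2\delta}\widehat{\mathcal{B}}(s,t)/(b(s)b(t)))$. Integration in $\xi$ produces factors $(b(s)b(t))^{c}\widehat{\mathcal{B}}(s,t)^{-c}$ which, combined with the kernel's $b^{-1}(t)$ (respectively the $(1/b(s)+1/b(t))b^{-1}(t)$ in \eqref{ineq:3.11}), give the announced $b(t)$ (respectively $b^2(t)$) prefactor after using $1/b(s)+1/b(t)\lesssim 1/b(t)$ for decreasing $b$. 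For the dissipative zone $|\xi|\le\Lambda(t)$, estimates \eqref{ineq:3.1000}--\eqref{ineq:3.1101} admit the same treatment since $t-s\gtrsim \widehat{\mathcal{B}}(s,t)/b(s)$ and $\widehat{\mathcal{B}}(s,t)$ itself appears directly in \eqref{ineq:3.1100}.

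\textbf{Assembly and main obstacle.} Combining the zone bounds via Parseval and then using Hausdorff--Young on the $L^\omega$ component of $h$ (with the exponent condition $\frac{1}{\omega}-\frac{1}{2}-\frac{2\sigma+2\delta}{n}\ge 0$ guaranteeing that the kernel lies in the relevant $L^r$ space), and the Plancherel identity on the $L^2$ component, I obtain the desired $\bigl(\|h\|_{L^\omega}+\|h\|_{L^2}\bigr)$ majorant. The technically delicate point is the precise $b(t)$-bookkeeping: one must verify that in every zone the combination of (i)~the intrinsic $b^{-1}(t)$ or $b^{-2}(t)$ of the kernel, (ii)~the $b(t)$-dependence introduced by the Hausdorff--Young norm of $|\xi|^\alpha \widehat{K}_1$, and (iii)~the powers $b^{c}(t)$ arising from converting $\mathcal{B}$ or $t-s$ to $\widehat{\mathcal{B}}$, collapse to exactly $b(t)$ and $b^2(t)$ irrespective of the particular value of the decay exponent $c$. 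This cancellation is the analogue, in the decreasing case, of the corresponding computation in \cite[Proposition 1]{Dao2023}, and once it is checked in the large- and small-frequency regimes the statement follows.
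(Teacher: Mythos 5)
Your overall strategy — zone decomposition via Corollaries \ref{Corollary_4.1}/\ref{Corollary_4.3}, conversion of the time scales $\mathcal{B}(s,t)$ and $t-s$ into $\widehat{\mathcal{B}}(s,t)$, and assembly via Hausdorff--Young and Plancherel — is exactly the route the paper takes, and the elementary observation $1/b(s)+1/b(t)\lesssim 1/b(t)$ for $b'<0$ is also used. However, the specific comparison inequalities you invoke are too weak to close the estimate, and this is a genuine gap.

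The paper's proof hinges on the inequalities
$$1+t-s\gtrsim \frac{1}{b(t)}\bigl(1+\widehat{\mathcal{B}}(s,t)\bigr),\qquad 1+\mathcal{B}(s,t)\gtrsim \frac{1}{b^2(t)}\bigl(1+\widehat{\mathcal{B}}(s,t)\bigr),$$
in which the factor on the right depends on $t$ alone. You replace these by $\mathcal{B}(s,t)\gtrsim \widehat{\mathcal{B}}(s,t)/(b(s)b(t))$ and $t-s\gtrsim \widehat{\mathcal{B}}(s,t)/b(s)$. These are correct (the proof is essentially the same computation), but since $b(s)\ge b(t)$ for decreasing $b$ they are strictly weaker, and the $s$-dependence is fatal: after integrating, say, $\int|\xi|^{r(\alpha-2\delta)}\exp\bigl(-Cr|\xi|^{2\sigma-2\delta}\mathcal{B}(s,t)\bigr)\,d\xi$ in the elliptic zone you obtain a factor $(b(s)b(t))^{c'}\widehat{\mathcal{B}}(s,t)^{-c'}$, which combined with the kernel's $1/b(t)$ gives $b(s)^{c'}b(t)^{c'-1}$. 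Since $b(s)$ can be as large as $b(0)$, this is bounded only by a constant, not by $b(t)$ — and a constant is not $\lesssim b(t)$ once $b(t)\to 0$. The paper's version yields $b^{2c'-1}(t)=b(t)\,b^{2c'-2}(t)$, which is $\lesssim b(t)$ precisely because $b$ is bounded above and $c'\ge 1$; the latter inequality is exactly what the strengthened hypothesis $\tfrac{1}{\omega}-\tfrac12-\tfrac{2\sigma+2\delta}{n}\ge 0$ (stronger than Proposition \ref{proposition:1.3}'s $\tfrac{1}{\omega}-\tfrac12-\tfrac{2\delta}{n}\ge 0$) guarantees, even at $\alpha=0$. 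Your write-up neither uses the sharper comparison nor explains the role of this hypothesis in the bookkeeping, so the cancellation you describe as "delicate but works out" would in fact fail as stated; replace your comparison by $1+\mathcal{B}(s,t)\gtrsim b^{-2}(t)(1+\widehat{\mathcal{B}}(s,t))$ (and likewise $1+t-s\gtrsim b^{-1}(t)(1+\widehat{\mathcal{B}}(s,t))$ for the dissipative zone) and make explicit that $c'\ge 1$ to close the argument.
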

\begin{proof}[Proof of Proposition \ref{proposition_b(t)_increasing}]
From Lemma \ref{lemmaB}, Lemma \ref{lemmaB^} and $b^{\prime}(t) <0$, we have:
\begin{equation*} 
1+\widehat{\mathcal{B}}(s, t) \approx (t+1)b(t)-(s+1)b(s) \lesssim b(t) (1+t-s),
\end{equation*}
\begin{align*}
1+\mathcal{B}(s, t) & \approx \frac{t+1}{b(t)}-\frac{s+1}{b(s)}=\frac{1}{b^2(t)} (t+1)b(t)- \frac{1}{b^2(s)} (s+1)b(s) \notag\\
& \gtrsim \frac{1}{b^2(t)} \left( (t+1)b(t)- (s+1)b(s) \right) \gtrsim \frac{1}{b^2(t)} \left(1+\widehat{\mathcal{B}}(s, t) \right).
\end{align*}
Therefore,
\begin{equation} \label{approx_1.2}
1+t-s \gtrsim \frac{1}{b(t)} \left(1+\widehat{\mathcal{B}}(s, t) \right),
\end{equation}
\begin{equation} \label{approx_1.3}
1+\mathcal{B}(s, t) \gtrsim \frac{1}{b^2(t)} \left(1+\widehat{\mathcal{B}}(s, t) \right).
\end{equation}
Now by applying estimates \eqref{ineq:3.1101} and \eqref{ineq:3.11.2} from   Corollary \ref{Corollary_4.1} and Corollary \ref{Corollary_4.3}, the statement of  Proposition \ref{proposition_b(t)_increasing}  is proved in the same manner as in Proposition 1 of \cite{Dao2023}.
\end{proof}
We define  the solution space $X(T)$ by $
X(T):=\mathcal{C}\left([0, T], H^\sigma\right) \cap \mathcal{C}^1 \left([0, T], L^2 \right)$,
with the  norm
\begin{align*}
\|u\|_{X(T)}:= 
\sup _{0 \leq t \leq T}&\left[\left(1+\widehat{\mathcal{B}}(0,t)\right)^{\frac{\frac{n}{2} + \min \left\{ r^{*}(u_0), r^{*}(u_1) - 2 \delta \right\} }{2 \sigma - 2 \delta}} \|u(t, \cdot)\|_{L^2} \right.\\
&\;\;\; \left.
+\left(1+\widehat{\mathcal{B}}(0,t)\right)^{\frac{\frac{n}{2} + \sigma + \min \left\{ r^{*}(u_0), r^{*}(u_1) - 2 \delta \right\} }{2 \sigma - 2 \delta}}\left\||D|^\sigma u(t, \cdot)\right\|_{L^2} \right.\\
&\;\;\; \left.
+ \frac{1}{b(t)} \left(1+\widehat{\mathcal{B}}(0,t)\right)^{\frac{\frac{n}{2} + 2 \sigma - 2 \delta + \min \left\{ r^{*}(u_0), r^{*}(u_1)- 2 \delta \right\} }{2 \sigma - 2 \delta}} \|u_t(t, \cdot)\|_{L^2} \right].
\end{align*}

The  fractional Gagliardo-Nirenberg inequality with  exponents $\theta_1(2 p):=\frac{n(p-1)+2\gamma p}{2 \sigma p}, \; \theta_2(\omega p):=\frac{n(\omega p-2)+2 \omega \gamma p}{2 \omega \sigma p}$ from Lemma \ref{Gagliardo} and the definition of $X(t)$ imply that
\begin{align} \label{ineq:4.3}
\left\| \left| |D|^{\gamma} u(\tau, \cdot)\right|^p\right\|_{L^2}=\left\||D|^{\gamma} u(\tau, \cdot)\right\|_{L^{2 p}}^p  \lesssim \left(1+\widehat{\mathcal{B}}(0,\tau)\right)^{-\frac{n + \min \left\{ r^{*}(u_0), r^{*}(u_1) - 2 \delta \right\}+\gamma}{2 \sigma - 2 \delta} p+\frac{n}{4 \sigma - 4 \delta}}\|u\|_{X(T)}^p,
\end{align}
\begin{align} \label{ineq:4.4}
\left\| \left| |D|^{\gamma} u(\tau, \cdot)\right|^p\right\|_{L^\omega \cap L^2}&=\left\||D|^{\gamma} u(\tau, \cdot)\right\|_{L^{\omega p}}^p + \left\||D|^{\gamma} u(\tau, \cdot)\right\|_{L^{2 p}}^p \notag \\
&\lesssim \left(1+\widehat{\mathcal{B}}(0,\tau)\right)^{-\frac{n + \min \left\{ r^{*}(u_0), r^{*}(u_1) - 2 \delta \right\}+\gamma}{2 \sigma - 2 \delta} p+\frac{n}{\omega ( 2 \sigma - 2 \delta)}}\|u\|_{X(T)}^p,
\end{align}
provided that $
p \in\left[\frac{2}{\omega}, \infty\right)$ if $ n \leq 2 \sigma - 2 \gamma$,  or $p \in\left[\frac{2}{\omega}, \frac{n}{n-2 \sigma+2 \gamma}\right]$ if $2 \sigma - 2 \gamma < n \leq \frac{4 \sigma - 4 \gamma}{2-\omega}$. 

Let us prove inequality \eqref{ineq:4.1}. Similarly as in the previous case we have $$
\begin{aligned}
\left\|u^{\operatorname{lin}}(t, \cdot)\right\|_{L^2} & \lesssim \left(1+\widehat{\mathcal{B}}(0,\tau)\right)^{-\frac{\frac{n}{2} + \min \left\{ r^{*}(u_0), r^{*}(u_1) - 2 \delta \right\} }{2 \sigma - 2 \delta}}\left\|\left(u_0, u_1\right)\right\|_{D^\sigma}, \\
\left\||D|^\sigma u^{\operatorname{lin}}(t, \cdot)\right\|_{L^2} & \lesssim \left(1+\widehat{\mathcal{B}}(0,\tau)\right)^{-\frac{\frac{n}{2} + \sigma + \min \left\{ r^{*}(u_0), r^{*}(u_1) - 2 \delta \right\} }{2 \sigma - 2 \delta}}\left\|\left(u_0, u_1\right)\right\|_{D^\sigma}, \\
\left\|u^{\operatorname{lin}}_t(t,\cdot)\right\|_{L^2} & \lesssim b(t)\left(1+\widehat{\mathcal{B}}(0,t)\right)^{-\frac{\frac{n}{2} + 2 \sigma - 2 \delta + \min \left\{ r^{*}(u_0), r^{*}(u_1)- 2 \delta \right\} }{2 \sigma - 2 \delta}} \left\|\left(u_0, u_1\right)\right\|_{D^\sigma}.
\end{aligned}
$$
This means that the linear part fulfills
\begin{align*}
& \left(1+\widehat{\mathcal{B}}(0,\tau)\right)^{\frac{\frac{n}{2} + \min \left\{ r^{*}(u_0), r^{*}(u_1) - 2 \delta \right\} }{2 \sigma - 2 \delta}}\left\|u^{\operatorname{lin}}(t, \cdot)\right\|_{L^2} + \left(1+\widehat{\mathcal{B}}(0,\tau)\right)^{\frac{\frac{n}{2} + \sigma + \min \left\{ r^{*}(u_0), r^{*}(u_1) - 2 \delta \right\} }{2 \sigma - 2 \delta}}\left\||D|^\sigma u^{\operatorname{lin}}(t, \cdot)\right\|_{L^2} \\
&+ \frac{1}{b(t)}\left(1+\widehat{\mathcal{B}}(0,t)\right)^{\frac{\frac{n}{2} + 2 \sigma - 2 \delta + \min \left\{ r^{*}(u_0), r^{*}(u_1)- 2 \delta \right\} }{2 \sigma - 2 \delta}} \left\|u^{\operatorname{lin}}_t(t, \cdot)\right\|_{L^2}~~ \lesssim \left\|\left(u_0, u_1\right)\right\|_{D^\sigma}.
\end{align*}
Thus $u^{\text {lin }} \in X(T)$. Now we show that $\left\|u^{\text {nl}}\right\|_{X(T)} \lesssim\|u\|_{X(T)}^p.$
We estimate  $L^2$ norm of $|D|^\sigma u^{\text {nl}}(t, \cdot)$  by applying $\left(L^2 \cap L^\omega\right)-L^2$ estimates in $[0, t / 2]$ and $L^2-L^2$ estimates in $[t / 2, t]$ from Proposition \ref{proposition:1.3}:  
\begin{align*}
\left\||D|^\sigma u^{\operatorname{nl}}(t, \cdot)\right\|_{L^2} \leq & \int_0^{\frac{t}{2}} b(t) \left(1+\widehat{\mathcal{B}}(s,t)\right)^{-\frac{n}{2 \sigma - 2 \delta}\left(\frac{1}{\omega}-\frac{1}{2}\right)-\frac{\sigma - 2 \delta}{2 \sigma - 2 \delta}}\left\| \left| |D|^{\gamma} u(s, \cdot)\right|^p\right\|_{L^{\omega} \cap L^2} d s \\
&+ \int_{\frac{t}{2}}^t b(t) \left(1+\widehat{\mathcal{B}}(s,t)\right)^{-\frac{\sigma - 2 \delta}{2 \sigma - 2 \delta}}\left\| \left| |D|^{\gamma} u(s, \cdot)\right|^p\right\|_{L^2} d s \\
\leq & \int_0^{\frac{t}{2}} b(s) \left(1+\widehat{\mathcal{B}}(s,t)\right)^{-\frac{n}{2 \sigma - 2 \delta}\left(\frac{1}{\omega}-\frac{1}{2}\right)-\frac{\sigma - 2 \delta}{2 \sigma - 2 \delta}}\left\| \left| |D|^{\gamma} u(s, \cdot)\right|^p\right\|_{L^{\omega} \cap L^2} d s \\
&+ \int_{\frac{t}{2}}^t b(s) \left(1+\widehat{\mathcal{B}}(s,t)\right)^{-\frac{\sigma - 2 \delta}{2 \sigma - 2 \delta}}\left\| \left| |D|^{\gamma} u(s, \cdot)\right|^p\right\|_{L^2} d s.
\end{align*}
The first integral above can be estimated as follows
\begin{align} \label{M_1(t)}
& \int_0^{\frac{t}{2}} b(s) \left(1+\widehat{\mathcal{B}}(s,t)\right)^{-\frac{n}{2 \sigma - 2 \delta}\left(\frac{1}{\omega}-\frac{1}{2}\right)-\frac{\sigma - 2 \delta}{2 \sigma - 2 \delta}}\left\| \left| |D|^{\gamma} u(s, \cdot)\right|^p\right\|_{L^{\omega} \cap L^2} d s \notag \\
\lesssim & \|u\|_{X(T)}^p \left(1+\widehat{\mathcal{B}}(0,t)\right)^{-\frac{n}{2 \sigma - 2 \delta}\left(\frac{1}{\omega}-\frac{1}{2}\right)-\frac{\sigma - 2 \delta}{2 \sigma - 2 \delta}}  \int_0^{\frac{t}{2}} b(s) \left(1+\widehat{\mathcal{B}}(0,s)\right)^{-\frac{n + \min \left\{ r^{*}(u_0), r^{*}(u_1) - 2 \delta \right\}+\gamma}{2 \sigma - 2 \delta} p+\frac{n}{\omega ( 2 \sigma - 2 \delta)}} d s \notag 
\\ 
\lesssim & \|u\|_{X(T)}^p \left(1+\widehat{\mathcal{B}}(0,t)\right)^{-\frac{n}{2 \sigma - 2 \delta}\left(\frac{1}{\omega}-\frac{1}{2}\right)-\frac{\sigma - 2 \delta}{2 \sigma - 2 \delta}} = \|u\|_{X(T)}^p \left(1+\widehat{\mathcal{B}}(0,t)\right)^{-\frac{\frac{n}{2} + \sigma + \min \left\{ r^{*}(u_0), r^{*}(u_1) - 2 \delta \right\} }{2 \sigma - 2 \delta}},
\end{align}
by inequality \eqref{ineq:4.4}, Lemma \ref{lemmaB} and the definition of $\omega$. Since $p>\frac{n + 2\omega(\sigma-\delta)}{\omega n + \omega \min \left\{ r^{*}(u_0), r^{*}(u_1) - 2\delta \right\}+\omega \gamma}$ it follows that $-\frac{n + \min \left\{ r^{*}(u_0), r^{*}(u_1) - 2 \delta \right\}+\gamma}{2 \sigma - 2 \delta} p+\frac{n}{\omega ( 2 \sigma - 2 \delta)} < -1$.  On the other hand, for the second integral using inequality \eqref{ineq:4.3}, Lemma \ref{lemmaB} and definition of $\omega$ we have
\begin{align*}
& \int_{\frac{t}{2}}^t b(s) \left(1+\widehat{\mathcal{B}}(s,t)\right)^{-\frac{\sigma - 2 \delta}{2 \sigma - 2 \delta}}\left\| \left| |D|^{\gamma} u(s, \cdot)\right|^p\right\|_{L^2} d s \\
\lesssim & \|u\|_{X(T)}^p \left(1+\widehat{\mathcal{B}}(0,t)\right)^{-\frac{n + \min \left\{ r^{*}(u_0), r^{*}(u_1) - 2 \delta \right\}+\gamma}{2 \sigma - 2 \delta} p+\frac{n}{4 \sigma - 4 \delta}} \int_{\frac{t}{2}}^t b(s) \left(1+\widehat{\mathcal{B}}(0,t)\right)^{-\frac{\sigma-2\delta}{2 \sigma-2\delta}} d s \\ \lesssim & \|u\|_{X(T)}^p \left(1+\widehat{\mathcal{B}}(0,t)\right)^{-\frac{n + \min \left\{ r^{*}(u_0), r^{*}(u_1) - 2 \delta \right\}+\gamma}{2 \sigma - 2 \delta} p+\frac{n}{4 \sigma - 4 \delta}} \left(1+\widehat{\mathcal{B}}(0,t)\right)^{1-\frac{\sigma-2\delta}{2 \sigma-2\delta}} \\
\lesssim & \|u\|_{X(T)}^p \left(1+\widehat{\mathcal{B}}(0,t)\right)^{-\frac{\frac{n}{2} + \sigma + \min \left\{ r^{*}(u_0), r^{*}(u_1) - 2 \delta \right\} }{2 \sigma - 2 \delta}}
\end{align*}
for $p > \frac{n + 2\omega(\sigma-\delta)}{\omega n + \omega \min \left\{ r^{*}(u_0), r^{*}(u_1) - 2\delta \right\}+\omega \gamma}$.
Therefore, 
$$
\left\| |D|^\sigma u^{\operatorname{non}}(t, \cdot)\right\|_{L^2} \lesssim  \left(1+\widehat{\mathcal{B}}(0,t)\right)^{-\frac{\frac{n}{2} + \sigma + \min \left\{ r^{*}(u_0), r^{*}(u_1) - 2 \delta \right\} }{2 \sigma - 2 \delta}}\|u\|_{X(T)}^p.$$
In the same way we can derive
$$
\left\|u^{\operatorname{nl}}(t, \cdot)\right\|_{L^2} \lesssim \left(1+\widehat{\mathcal{B}}(0,t)\right)^{-\frac{\frac{n}{2} + \min \left\{ r^{*}(u_0), r^{*}(u_1) - 2 \delta \right\} }{2 \sigma - 2 \delta}}\|u\|_{X(T)}^p,$$ and
$$
\left\| u^{\operatorname{nl}}_t(t, \cdot)\right\|_{L^2} \lesssim b(t) \left(1+\widehat{\mathcal{B}}(0,t)\right)^{-\frac{\frac{n}{2} + 2\sigma- 2\delta + \min \left\{ r^{*}(u_0), r^{*}(u_1) - 2 \delta \right\} }{2 \sigma - 2 \delta}}\|u\|_{X(T)}^p .$$The definition of the norm $X(T)$ immediately   implies inequality \eqref{ineq:4.1}.

The subsequent steps are carried out in a similar manner as in the case where $b^{\prime}(t) \geq 0$. The decay rates of the global (in time) solution and its derivatives stated in Theorem \ref{theorem:1.3} are deduced straightforwardly thanks to the special construction of the norm in $X(t)$. In this way,  the proof of Theorem \ref{theorem:1.3} is completed.
\end{proof}

\section{Blow-up results} \label{section_6}
\begin{proof}[Proof of Theorem 1.3]
From assumption (B2), it follows that $b^{\prime}(t) \ge 0$ or $b^{\prime}(t) <0$.
\subsection{The case when ${b^{\prime}(t)}\leq 0$}
\subsubsection{The parameter $\mathbf{\sigma}$ is integer and the parameter $\mathbf{\delta}$ is fractional from $(0, 1)$} \label{section_7.1.1}
First, we introduce the functions $\varphi=\varphi(|x|):=\langle x\rangle^{-n-2 \delta}$ and  $\eta=\eta(t)$ with the following properties:
\begin{itemize}
\item $\eta \in \mathcal{C}_0^{\infty}([0, \infty))$ and $\eta(t)= \begin{cases}1 & \text { if } 0 \leq t \leq \frac{1}{2}, \\ \text { decreasing } & \text { if } \frac{1}{2} \leq t \leq 1, \\ 0 & \text { if } t \geq 1,\end{cases}$
\item \begin{equation} \label{ineq:5.1.000}
\eta^{-\frac{p^{\prime}}{p}}(t)\left(\left|\eta^{\prime}(t)\right|^{p^{\prime}}+\left|\eta^{\prime \prime}(t)\right|^{p^{\prime}}\right) \leq C \quad \text{for any} \quad t \in \left[\frac{1}{2},1 \right],
\end{equation}
where $p^{\prime}$ is the conjugate of $p>1$ and $C$ is a suitable positive constant.
\end{itemize}
Let $R$ be a large parameter in $[0, \infty)$. We define the test function $
\phi_{R}(t, x)=\eta_R(t) \varphi_R(x)$,
where $
\eta_R(t):=\eta\left(R^{-2\sigma+2\delta} t\right)$ and $ \varphi_R(x):=\varphi\left(R^{-1} x\right)$.
We consider $$
I_R:=\displaystyle\int_0^{\infty} \displaystyle\int_{\mathbb{R}^n}|u(t, x)|^p \phi_R(t, x) d x d =\displaystyle\int_0^{R^{2\sigma-2\delta}} \int_{\mathbb{R}^n}|u(t, x)|^p \phi_R(t, x) d x d t,$$and
$
I_{R, t}:=\displaystyle\int_{\frac{R^{2\sigma-2\delta}}{2}}^{R^{2\sigma-2\delta}} \displaystyle\int_{\mathbb{R}^n}|u(t, x)|^p \phi_R(t, x) d x d t$. Suppose that $u=u(t, x)$ is a global (in time) Sobolev solution from $C\left([0, \infty), L^2\right)$ to \eqref{eq:1.0} with $\gamma=0$. Multiplying both sides of  \eqref{eq:1.0} by $\phi_R=\phi_R(t, x)$ and using  integration by parts  we obtain
\begin{align} \label{ineq:testfunction}
0 \leq I_R= & -\int_{\mathbb{R}^n} \left( u_1(x) + b(0) (-\Delta)^\delta u_0(x) \right)  \varphi_R(x) d x \notag\\
& +\int_{\frac{R^{2\sigma-2\delta}}{2}}^{R^{2\sigma-2\delta}} \int_{\mathbb{R}^n} u(t, x) \partial_t^2 \eta_R(t) \varphi_R(x) d x d t+\int_0^{\infty} \int_{\mathbb{R}^n} \eta_R(t) \varphi_R(x)(-\Delta)^\sigma u(t, x) d x d t \notag\\
& -\int_0^{\infty} \int_{\mathbb{R}^n} b(t) \partial_t\eta_R(t) \varphi_R(x) (-\Delta)^\delta u(t, x) d x d t -\int_0^{\infty} \int_{\mathbb{R}^n}  b^{\prime}(t) \eta_R(t) \varphi_R(x) (-\Delta)^\delta u(t, x) d x d t \notag\\
= & :  -\int_{\mathbb{R}^n} u_1(x) \varphi_R(x) d x- \int_{\mathbb{R}^n} b(0) (-\Delta)^\delta u_0(x)  \varphi_R(x) d x + J_1+J_2-J_3-J_4.
\end{align}
Applying H\"older's inequality with $\frac{1}{p}+\frac{1}{p^{\prime}}=1$ we may estimate $J_1$ as follows:
$$
\begin{aligned}
\left|J_1\right| & \leq \int_{\frac{R^{2\sigma-2\delta}}{2}}^{R^{2\sigma-2\delta}} \int_{\mathbb{R}^n}|u(t, x)|\left|\partial_t^2 \eta_R(t)\right| \varphi_R(x) d x d t \\
& \lesssim\left(\int_{\frac{R^{2\sigma-2\delta}}{2}}^{R^{2\sigma-2\delta}} \int_{\mathbb{R}^n}\left|u(t, x) \phi_R^{\frac{1}{p}}(t, x)\right|^p d x d t\right)^{\frac{1}{p}}\left(\int_{\frac{R^{2\sigma-2\delta}}{2}}^{R^{2\sigma-2\delta}} \int_{\mathbb{R}^n}\left|\phi_R^{-\frac{1}{p}}(t, x) \partial_t^2 \eta_R(t) \varphi_R(x)\right|^{p^{\prime}} d x d t\right)^{\frac{1}{p^{\prime}}} \\
& \lesssim I_{R, t}^{\frac{1}{p}}\left(\int_{\frac{R^{2\sigma-2\delta}}{2}}^{R^{2\sigma-2\delta}} \int_{\mathbb{R}^n} \eta_R^{-\frac{p^{\prime}}{p}}(t)\left|\partial_t^2 \eta_R(t)\right|^{p^{\prime}} \varphi_R(x) d x d t\right)^{\frac{1}{p^{\prime}}}.
\end{aligned}
$$
By the change of variables $\tilde{t}:=R^{-2\sigma+2\delta} t$ and $\tilde{x}:=R^{-1} x$, a straight-forward calculation gives
\begin{equation} \label{ineq:J_1}
\left|J_1\right| \lesssim I_{R, t}^{\frac{1}{p}} R^{-4\sigma+4\delta+\frac{n+2\sigma-2\delta}{p^{\prime}}}\left(\int_{\mathbb{R}^n}\langle\tilde{x}\rangle^{-n-2 \delta} d \tilde{x}\right)^{\frac{1}{p^{\prime}}} .
\end{equation}
Here we used $\partial_t^2 \eta_R(t)=R^{-4\sigma+4\delta} \eta^{\prime \prime}(\tilde{t})$ and assumption \eqref{ineq:5.1.000}. Now let us turn to estimate $J_2$ and $J_3$. First, by using $\varphi_R \in H^{2 \sigma}$ and $u \in C\left([0, \infty), L^2\right)$ we apply Lemma \ref{lemma:2.12} to conclude the following relations:
$$
\begin{aligned}
& \int_{\mathbb{R}^n} \varphi_R(x)(-\Delta)^\sigma u(t, x) d x=\int_{\mathbb{R}^n}|\xi|^{2 \sigma} \hat{\varphi}_R(\xi) \widehat{u}(t, \xi) d \xi=\int_{\mathbb{R}^n} u(t, x)(-\Delta)^\sigma \varphi_R(x) d x, \\
& \int_{\mathbb{R}^n} \varphi_R(x)(-\Delta)^\delta u(t, x) d x=\int_{\mathbb{R}^n}|\xi|^{2 \delta} \hat{\varphi}_R(\xi) \widehat{u}(t, \xi) d \xi=\int_{\mathbb{R}^n} u(t, x)(-\Delta)^\delta \varphi_R(x) d x.
\end{aligned}
$$
Hence, we obtain
$$
J_2=\int_0^{\infty} \int_{\mathbb{R}^n} \eta_R(t) \varphi_R(x)(-\Delta)^\sigma u(t, x) d x d t=\int_0^{\infty} \int_{\mathbb{R}^n} \eta_R(t) u(t, x)(-\Delta)^\sigma \varphi_R(x) d x d t,
$$
and
$$
J_3=\int_{\frac{R^{2\sigma-2\delta}}{2}}^{R^{2\sigma-2\delta}} b(t) \int_{\mathbb{R}^n} \partial_t \eta_R(t) \varphi_R(x)(-\Delta)^\delta u(t, x) d x d t=\int_{\frac{R^{2\sigma-2\delta}}{2}}^{R^{2\sigma-2\delta}} b(t) \int_{\mathbb{R}^n} \partial_t \eta_R(t) u(t, x)(-\Delta)^\delta \varphi_R(x) d x d t .
$$
Applying H\"older's inequality again as we estimated $J_1$ leads to
$$
\left|J_2\right| \leq I_R^{\frac{1}{p}}\left(\int_0^{R^{2\sigma-2\delta}} \int_{\mathbb{R}^n} \eta_R(t) \varphi_R^{-\frac{p^{\prime}}{p}}(x)\left|(-\Delta)^\sigma \varphi_R(x)\right|^{p^{\prime}} d x d t\right)^{\frac{1}{p^{\prime}}}.
$$

In order to control the above  integral, we rely on Lemmas \ref{lemma:2.10.000}, \ref{lemma:2.10.0} and \ref{lemma:2.11.0}. Namely, by the change of variables $\tilde{t}:=R^{-2\sigma+2\delta} t$ and $\tilde{x}:=R^{-1} x$ we arrive at
$$
\begin{aligned}
\left|J_2\right| & \lesssim I_R^{\frac{1}{p}} R^{-2 \sigma+\frac{n+2\sigma-2\delta}{p^{\prime}}}\left(\int_0^1 \int_{\mathbb{R}^n} \eta(\tilde{t}) \varphi^{-\frac{p^{\prime}}{p}}(\tilde{x})\left|(-\Delta)^\sigma(\varphi)(\tilde{x})\right|^{p^{\prime}} d \tilde{x} d \tilde{t}\right)^{\frac{1}{p^{\prime}}} \\
& \lesssim I_R^{\frac{1}{p}} R^{-2 \sigma+\frac{n+2\sigma-2\delta}{p^{\prime}}}\left(\int_{\mathbb{R}^n} \varphi^{-\frac{p^{\prime}}{p}}(\tilde{x})\left|(-\Delta)^\sigma(\varphi)(\tilde{x})\right|^{p^{\prime}} d \tilde{x}\right)^{\frac{1}{p^{\prime}}},
\end{aligned}
$$
where we note that $(-\Delta)^\sigma \varphi_R(x)=R^{-2 \sigma}(-\Delta)^\sigma \varphi(\tilde{x})$, since  $\sigma$ is an integer.  Lemma \ref{lemma:2.10.000} implies that
\begin{equation} \label{ineq:J_2}
\left|J_2\right| \lesssim I_R^{\frac{1}{p}} R^{-2 \sigma+\frac{n+2\sigma-2\delta}{p^{\prime}}}\left(\int_{\mathbb{R}^n}\langle\tilde{x}\rangle^{-n-2 \delta-2 \sigma p^{\prime}} d \tilde{x}\right)^{\frac{1}{p^{\prime}}} .
\end{equation}
Next, again by the change of variables $\tilde{t}:=R^{-2\sigma+2\delta} t$ and $\tilde{x}:=R^{-1} x$, using Lemma \ref{lemma:2.11.0} and condition $b(t) < b(0) \lesssim 1$ (since $b^{\prime}(t) < 0, \forall t \geq 0$), we estimate $J_3$ as follows:
$$
\begin{aligned}
\left|J_3\right| & \lesssim I_{R, t}^{\frac{1}{p}} R^{-2\sigma+\frac{n+2\sigma-2\delta}{p^{\prime}}}\left(\int_{\frac{1}{2}}^1 \int_{\mathbb{R}^n} \eta^{-\frac{p^{\prime}}{p}}(\tilde{t})\left|\eta^{\prime}(\tilde{t})\right|^{p^{\prime}} \varphi^{-\frac{p^{\prime}}{p}}(\tilde{x})\left|(-\Delta)^\delta(\varphi)(\tilde{x})\right|^{p^{\prime}} d \tilde{x} d \tilde{t}\right)^{\frac{1}{p^{\prime}}} \\
& \lesssim I_{R, t}^{\frac{1}{p}} R^{-2\sigma+\frac{n+2\sigma-2\delta}{p^{\prime}}}\left(\int_{\mathbb{R}^n} \varphi^{-\frac{p^{\prime}}{p}}(\tilde{x})\left|(-\Delta)^\delta(\varphi)(\tilde{x})\right|^{p^{\prime}} d \tilde{x}\right)^{\frac{1}{p^{\prime}}} .
\end{aligned}
$$
Here we used $\partial_t \eta_R(t)=R^{-2\sigma+2\delta} \eta^{\prime}(\tilde{t})$ and the assumption \eqref{ineq:5.1.000}. To deal with the last integral, we apply Lemma \ref{lemma:2.10.0} with $q=n+2 \delta$ and $\gamma=\delta$, that is, $m=0$ and $s=\delta$, to get
\begin{equation} \label{ineq:J_3}
\left|J_3\right| \lesssim I_{R, t}^{\frac{1}{p}} R^{-2 \sigma+\frac{n+2\sigma-2\delta}{p^{\prime}}}\left(\int_{\mathbb{R}^n}\langle\tilde{x}\rangle^{-n-2 \delta} d \tilde{x}\right)^{\frac{1}{p^{\prime}}} .
\end{equation}
To handle the term $J_4$, since $\varphi_R \in H^{2 \delta}$ and $u \in \mathcal{C}\left([0, \infty), L^2\right)$, by using Lemma \ref{lemma:2.12} we get the  identities
$$
\int_{\mathbb{R}^n} \varphi_R(x)(-\Delta)^\delta u(t, x) d x=\int_{\mathbb{R}^n}|\xi|^{2 \delta} \hat{\varphi}_R(\xi) \widehat{u}(t, \xi) d \xi=\int_{\mathbb{R}^n} u(t, x)(-\Delta)^\delta \varphi_R(x) d x .
$$
Therefore, using assumption $\mathbf{(B2)}$, we get
\begin{align*}
\left|J_4 \right|=& \left| \int_{\frac{R^{2\sigma-2\delta}}{2}}^{R^{2\sigma-2\delta}} \int_{\mathbb{R}^n} b^{\prime}(t) \eta_R(t) \varphi_R(x)(-\Delta)^\delta u(t, x) d x d t \right|\\
=& \left| \int_{\frac{R^{2\sigma-2\delta}}{2}}^{R^{2\sigma-2\delta}} \int_{\mathbb{R}^n} b^{\prime}(t) \eta_R(t) u(t, x) (-\Delta)^\delta \varphi_R(x) d x d t \right| 
\lesssim  \int_{\frac{R^{2\sigma-2\delta}}{2}}^{R^{2\sigma-2\delta}} \int_{\mathbb{R}^n} \frac{b(t)}{1+t} |u(t, x)| \eta_R (t) \left|(-\Delta)^\delta \varphi_R(x)\right| d(x, t).
\end{align*}
An application of H\"older's inequality as  in estimating $J_1$ gives
\begin{align*}
\left|J_4\right| \leq I_{R, t}^{\frac{1}{p}}\left(\int_{\frac{R^{2\sigma-2\delta}}{2}}^{R^{2\sigma-2\delta}} \int_{\mathbb{R}^n} \left(\frac{b(t)}{1+t} \right)^{p^\prime} \eta_R(t) \varphi_R^{-\frac{p^{\prime}}{p}}(x)\left|(-\Delta)^\delta \varphi_R(x)\right|^{p^{\prime}} d x d t\right)^{\frac{1}{p^{\prime}}}.
\end{align*}
To control the above integral we apply  Lemmas \ref{lemma:2.10.0} and \ref{lemma:2.11.0} as the key tools. In particular, by the change of variables $\tilde{x}:=R^{-1} x$ we get the following relation from Lemma \ref{lemma:2.11.0}:
$$
(-\Delta)^\delta \varphi_R(x)=R^{-2 \delta}(-\Delta)^\delta(\varphi)(\tilde{x}) .
$$
By the change of variables $\tilde{t}:=R^{-2\sigma+2\delta} t$ and inequality \eqref{ineq:5.1.000} we achieve that
\begin{align*}
\left|J_4\right| \lesssim & I_{R, t}^{\frac{1}{p}} R^{-2 \delta+\frac{n}{p^{\prime}}} \left(\int_{R^{2\sigma-2\delta} / 2}^{R^{2\sigma-2\delta}} \left(\frac{b(t)}{1+t} \right)^{p^\prime} d t\right)^{\frac{1}{p^{\prime}}} \left(\int_{\mathbb{R}^n} \varphi^{-\frac{p^{\prime}}{p}}(\tilde{x})\left|(-\Delta)^\delta(\varphi)(\tilde{x})\right|^{p^{\prime}} d \tilde{x}\right)^{\frac{1}{p^{\prime}}} \\
\lesssim & I_{R, t}^{\frac{1}{p}} R^{-2 \delta+\frac{n}{p^{\prime}}} \max _{t \in[R^{2\sigma-2\delta} / 2, R^{2\sigma-2\delta}]} \frac{b(t)}{1+t} \left(\int_{R^{2\sigma-2\delta} / 2}^{R^{2\sigma-2\delta}} 1 d t\right)^{\frac{1}{p^{\prime}}} \left(\int_{\mathbb{R}^n} \varphi^{-\frac{p^{\prime}}{p}}(\tilde{x})\left|(-\Delta)^\delta(\varphi)(\tilde{x})\right|^{p^{\prime}} d \tilde{x}\right)^{\frac{1}{p^{\prime}}} \\
\lesssim & I_{R, t}^{\frac{1}{p}} R^{-2 \delta+\frac{n}{p^{\prime}}} \max _{t \in[R^{2\sigma-2\delta} / 2, R^{2\sigma-2\delta}]} \frac{1}{t} \left(\int_{R^{2\sigma-2\delta} / 2}^{R^{2\sigma-2\delta}} 1 d t\right)^{\frac{1}{p^{\prime}}} \left(\int_{\mathbb{R}^n} \varphi^{-\frac{p^{\prime}}{p}}(\tilde{x})\left|(-\Delta)^\delta(\varphi)(\tilde{x})\right|^{p^{\prime}} d \tilde{x}\right)^{\frac{1}{p^{\prime}}} \\
\lesssim & I_{R, t}^{\frac{1}{p}} R^{-2\sigma+\frac{n+2\sigma-2\delta}{p^{\prime}}} \left(\int_{\mathbb{R}^n} \varphi^{-\frac{p^{\prime}}{p}}(\tilde{x})\left|(-\Delta)^\delta(\varphi)(\tilde{x})\right|^{p^{\prime}} d \tilde{x}\right)^{\frac{1}{p^{\prime}}},
\end{align*}
where we have used the relation $
\eta_R^{\prime}(t)=R^{-2\sigma+2\delta} \eta^{\prime}(\tilde{t})$, the condition
$b^{\prime}(t) \leq 0$ and the assumption \eqref{ineq:5.1.000}. 

Next, an application of Lemma \ref{lemma:2.10.0} leads to the  estimate
\begin{equation} \label{ineq:J_4}
\left|J_4\right| \lesssim I_{R, t}^{\frac{1}{p}} R^{-2\sigma+\frac{n+2\sigma-2\delta}{p^{\prime}}} \left(\int_{\mathbb{R}^n}\langle\tilde{x}\rangle^{-n-2 \delta} d \tilde{x}\right)^{\frac{1}{p^{\prime}}}.
\end{equation}
Thanks to  assumption \eqref{condition_u0u1}, there exists a sufficiently large constant $R_0>0$, such that for all $R>R_0$ it holds that
\begin{equation} \label{ineq:cd_b'<0}
\int_{\mathbb{R}^n} \left( u_1(x) + b(0) (-\Delta)^\delta u_0(x) \right)  \varphi_R(x) d x >0.
\end{equation}
Combining the estimates from \eqref{ineq:J_1} to \eqref{ineq:cd_b'<0}, for all $R>R_0$ we obtain that
\begin{equation} \label{ineq:500.1}
I_R^{1-\frac{1}{p}} \lesssim R^{-2 \sigma+\frac{n+2 \sigma-2\delta}{p^{\prime}}}.
\end{equation}
It is clear that assumption \eqref{p_critical} is equivalent to $-2 \sigma+\frac{n+2 \sigma-2\delta}{p^{\prime}} \leq 0$. Now let us consider the subcritical and critical cases.
\begin{enumerate}[a)]
\item If $
p<1+\frac{2 \sigma}{n-2\delta}$, i.e. $-2 \sigma+\frac{n+2 \sigma-2\delta}{p^{\prime}}<0$,
then passing $R \rightarrow \infty$ in \eqref{ineq:500.1} we obtain
$
\int_0^{\infty} \int_{\mathbb{R}^n}^{\infty} |u(t, x)|^p d x d t=0 .
$
This implies that $u \equiv 0$, which contradicts to  \eqref{condition_u0u1}. Therefore, there is no global (in time) Sobolev solution to \eqref{eq:1.0} with $\gamma=0$ in the subcritical case.

\item If
$
p=1+\frac{2 \sigma}{n-2\delta}$,  i.e. $-2 \sigma+\frac{n+2 \sigma-2\delta}{p^{\prime}}=0$,
then from \eqref{ineq:500.1} there exists a positive constant $C_0$ such that
$$
I_R=\int_{Q_{R}} |u(t, x)|^p \phi_R(t, x) d(x, t) \leq C_0
$$
for a sufficiently large $R$. Thus, it follows that
\begin{equation} \label{ineq:500.2}
\int_{\bar{Q}_{R}} |u(t, x)|^p \phi_R(t, x) d(x, t) \rightarrow 0  \;\text { as } \;R \rightarrow \infty,
\end{equation}
where we introduce the notation
$$
Q_{R}:=[0, R^{2\sigma-2\delta}] \times B_R \quad \text { with } \quad B_R:=\left\{x \in \mathbb{R}^n:|x| \leq R\right\} ,
$$
$$
\bar{Q}_{R}:=Q_{R} \backslash\left([0, R^{2\sigma-2\delta} / 2] \times B_{R / 2}\right) \quad \text { with } \quad B_{R / 2}:=\left\{x \in \mathbb{R}^n:|x| \leq R / 2\right\} .
$$
Since
$$
\partial_t^2 \phi_R(t, x)=\partial_t \phi_R(t, x)=(-\Delta)^\sigma \phi_R(t, x)=0 \text { in }\left(\mathbb{R}_{+} \times \mathbb{R}^n\right) \backslash \bar{Q}_{R},
$$
we repeat  the arguments from \eqref{ineq:testfunction} to \eqref{ineq:500.1} to conclude that
\begin{align} \label{ineq:500.3}
&I_R+\int_{\mathbb{R}^n} \left( u_1(x) + b(0) (-\Delta)^\delta u_0(x) \right)  \varphi_R(x) d x \notag \\
\lesssim & \left(\int_{\bar{Q}_{T, R}} |u(t, x)|^p \phi_R(t, x) d(x, t)\right)^{\frac{1}{p}} R^{-2 \sigma+\frac{n+2 \sigma-2\delta}{p^\prime}} =
\left(\int_{\bar{Q}_{T, R}} |u(t, x)|^p \phi_R(t, x) d(x, t)\right)^{\frac{1}{p}},
\end{align}
since $-2 \sigma+\frac{n+2 \sigma-2\delta}{p^{\prime}}=0$. By using \eqref{ineq:500.2}, we let $R \rightarrow \infty$ in \eqref{ineq:500.3} to derive that
$$
\int_0^{\infty} \int_{\mathbb{R}^n} |u(t, x)|^p d x d t+\int_{\mathbb{R}^n} u_1(x) + b(0) (-\Delta)^\delta u_0(x) d x=0 .
$$
This again contradicts to  \eqref{condition_u0u1}. Therefore,  there is no global (in time) Sobolev solution to \eqref{eq:1.0} with $\gamma=0$ in the critical case. 
\end{enumerate}
 Hence, our proof of Theorem \ref{theorem:1.5} in the case when $b^{\prime}(t) < 0$, the parameter $\sigma$ is integer and the parameter $\delta$ is fractional from $(0, 1)$ is now completed.
\subsubsection{\textit{The parameter $\sigma$ is integer and the parameter $\delta$ is fractional from $(1, \frac{\sigma}{2}]$}} \label{section_7.1.2}
We follow ideas from the proof of Section \ref{section_7.1.1}.  Let us denote $s_\delta:=\delta-\lfloor\delta\rfloor$. We introduce test functions $\eta=\eta(t)$ as in Section \ref{section_7.1.1}, and $\varphi=\varphi(x):=\langle x\rangle^{-n-2 s_\delta}$. Repeating the estimates for $J_1$ and $J_2$ as in the proof of Section \ref{section_7.1.1} we can conclude
\begin{equation} \label{ineq:J_1_100}
\left|J_1\right| \lesssim I_{R, t}^{\frac{1}{p}} R^{-4\sigma+4\delta+\frac{n+2\sigma-2\delta}{p^{\prime}}},
\end{equation}
\begin{equation} \label{ineq:J_2_100}
\left|J_2\right| \lesssim I_R^{\frac{1}{p}} R^{-2 \sigma+\frac{n+2\sigma-2\delta}{p^{\prime}}}.
\end{equation}
We turn to estimate $J_3$, where $\delta$ is any fractional number in $(1, \sigma)$. Applying Lemma \ref{lemma:2.12} and H\"older's inequality leads to
$
\left|J_3\right| \leq I_{R, t}^{\frac{1}{p}}\left(\displaystyle\int_{\frac{R^{2\sigma-2\delta}}{2}}^{R^{2\sigma-2\delta}} \int_{\mathbb{R}^n} \eta_R^{-\frac{p^{\prime}}{p}}(t)\left|\partial_t \eta_R(t)\right|^{p^{\prime}} \varphi_R^{-\frac{p^{\prime}}{p}}(x)\left|(-\Delta)^\delta \varphi_R(x)\right|^{p^{\prime}} d x d t\right)^{\frac{1}{p^{\prime}}}$.

Now we rewrite $\delta=m_\delta+s_\delta$, where $m_\delta:=\lfloor\delta\rfloor \geqslant 1$ is integer and $s_\delta$ is a fractional number in $(0,1)$. By Lemma \ref{lemma:2.10.00} we see that
$
(-\Delta)^\delta \varphi_R(x)=(-\Delta)^{s_\delta}\left((-\Delta)^{m_\delta} \varphi_R(x)\right)$. 
By the change of variables $\bar{x}:=R^{-1} x$ we have
$
(-\Delta)^{m_\delta} \varphi_R(x)=R^{-2 m_\delta}(-\Delta)^{m_\delta}(\varphi)(\bar{x}),
$
since $m_\delta$ is an integer. Using  formula (9.18) in the proof of Lemma 9.2.3 in  \cite{Dao2020},  we may rewrite
\begin{align*}
&(-\Delta)^{m_\delta} \varphi_R(x)=\notag\\
&=(-1)^{m_\delta} R^{-2 m_\delta} \prod_{j=0}^{m_\delta-1} (q+2 j)\left(\prod_{j=1}^{m_\delta}(-n+q+2 j)\langle\tilde{x}\rangle^{-q-2 m_\delta} -C_{m_\delta}^1 \prod_{j=2}^{m_\delta}(-n+q+2 j)\left(q+2 m_\delta\right)\langle\tilde{x}\rangle^{-q-2 m_\delta-2}\right. \notag \\
&  \left. +C_{m_\delta}^2 \prod_{j=3}^{m_\delta}(-n+q+2 j)\left(q+2 m_\delta\right)\left(q+2 m_\delta+2\right)\langle\tilde{x}\rangle^{-q-2 m_\delta-4} +\cdots+(-1)^{m_\delta} \prod_{j=0}^{m_\delta-1}\left(q+2 m_\delta+2 j\right)\langle\tilde{x}\rangle^{-q-4 m_\delta}\right),
\end{align*}
where $q:=n+2 s_\delta$. For simplicity, we introduce the following functions:
$\varphi_k(x):=\langle x\rangle^{-q-2 m_\delta-2 k}$ and $ \varphi_{k, R}(x):=\varphi_k\left(R^{-1} x\right)=\langle\tilde{x}\rangle^{-q-2 m_\delta-2 k}$,
with $k=0, \dots, m_\delta$. As a result, by Lemma \ref{lemma:2.11.0} we obtain
\begin{align*}
&(-\Delta)^\delta \varphi_R(x)=\\
&(-1)^{m_\delta} R^{-2 m_\delta} \prod_{j=0}^{m_\delta-1}(q+2 j)\left(\prod_{j=1}^{m_\delta}(-n+q+2 j)(-\Delta)^{s_\delta}\varphi_{0, R}(x)\right. -C_{m_\delta}^1 \prod_{j=2}^{m_\delta}(-n+q+2 j)\left(q+2 m_\delta\right)(-\Delta)^{s_\delta}\varphi_{1, R}(x)+ \\
& C_{m_\delta}^2 \prod_{j=3}^{m_\delta}(-n+q+2 j)\left(q+2 m_\delta\right)\left(q+2 m_\delta+2\right)(-\Delta)^{s_\delta}\varphi_{2, R}(x) \left.+\cdots+(-1)^{m_\delta} \prod_{j=0}^{m_\delta-1}\left(q+2 m_\delta+2 j\right)(-\Delta)^{s_\delta}\varphi_{m_\delta, R}(x)\right) \\
=&(-1)^{m_\delta} R^{-2 m_\delta-2 s_\delta} \prod_{j=0}^{m_\delta-1}(q+2 j)\left(\prod_{j=1}^{m_\delta}(-n+q+2 j)(-\Delta)^{s_\delta}\varphi_0(\tilde{x})\right. -C_{m_\delta}^1 \prod_{j=2}^{m_\delta}(-n+q+2 j)\left(q+2 m_\delta\right)(-\Delta)^{s_\delta}\varphi_1(\tilde{x}) \\
& +C_{m_\delta}^2 \prod_{j=3}^{m_\delta}(-n+q+2 j)\left(q+2 m_\delta\right)\left(q+2 m_\delta+2\right)(-\Delta)^{s_\delta}\varphi_2(\tilde{x})  \left.+\cdots+(-1)^{m_\delta} \prod_{j=0}^{m_\delta-1}\left(q+2 m_\delta+2 j\right)(-\Delta)^{s_\delta}\varphi_{m_\delta}(\tilde{x})\right) \\
=&R^{-2 \delta}(-\Delta)^\delta\varphi(\tilde{x}). &
\end{align*}
Thus, by the change of variables $\tilde{t}:=R^{-2\sigma+2\delta} t$ we obtain
$$
\begin{aligned}
\left|J_3\right| & \lesssim I_{R, t}^{\frac{1}{p}} R^{-2\sigma+\frac{n+2\sigma-2\delta}{p^{\prime}}}\left(\int_{\frac{1}{2}}^1 \int_{\mathbb{R}^n} \eta^{-\frac{p^{\prime}}{p}}(\tilde{t})\left|\eta^{\prime}(\tilde{t})\right|^{p^{\prime}} \varphi^{-\frac{p^{\prime}}{p}}(\tilde{x})\left|(-\Delta)^\delta(\varphi)(\tilde{x})\right|^{p^{\prime}} d \tilde{x} d \tilde{t}\right)^{\frac{1}{p^{\prime}}} \\
& \lesssim I_{R, t}^{\frac{1}{p}} R^{-2\sigma+\frac{n+2\sigma-2\delta}{p^{\prime}}}\left(\int_{\mathbb{R}^n} \varphi^{-\frac{p^{\prime}}{p}}(\tilde{x})\left|(-\Delta)^\delta(\varphi)(\tilde{x})\right|^{p^{\prime}} d \tilde{x}\right)^{\frac{1}{p^{\prime}}},
\end{aligned}
$$
since $\partial_t \eta_R(t)=R^{-2\sigma+2\delta} \eta^{\prime}(\tilde{t})$ and thanks to the assumption \eqref{ineq:5.1.000}. After applying Lemma \ref{lemma:2.10.0} with $q=n+2 s_\delta$ and $\gamma=\delta$, i.e. $m=m_\delta$ and $s=s_\delta$, we  conclude that
\begin{equation} \label{ineq:J_3_100}
\left|J_3\right| \lesssim I_{R, t}^{\frac{1}{p}} R^{-2\sigma+\frac{n+2\sigma-2\delta}{p^{\prime}}}\left(\int_{\mathbb{R}^n}\langle\tilde{x}\rangle^{-n-2 s_\delta} d \tilde{x}\right)^{\frac{1}{p^{\prime}}} \lesssim I_{R, t}^{\frac{1}{p}} R^{-2\sigma+\frac{n+2\sigma-2\delta}{p^{\prime}}} .
\end{equation}
Similarly, we can estimate $J_4$ as
\begin{equation} \label{ineq:J_4_100}
\left|J_4\right| \lesssim I_{R, t}^{\frac{1}{p}} R^{-2\sigma+\frac{n+2\sigma-2\delta}{p^{\prime}}} .
\end{equation}
Finally, combining \eqref{ineq:J_1_100} to \eqref{ineq:J_4_100} and repeating arguments as in Section \ref{section_7.1.1} we complete the proof of Theorem \ref{theorem:1.5} when $b^{\prime}(t) < 0$, the parameter $\sigma$ is integer and the parameter $\delta$ is fractional from $(1, \frac{\sigma}{2}]$.
\subsubsection{\textit{The parameter $\sigma$ is fractional from $(1, \infty)$ and the
parameter $\delta$ is integer}} \label{section_7.1.3}
The proof in that case is similar to the arguments  from Sections \ref{section_7.1.1} and \ref{section_7.1.2}. At first, we denote $s_\sigma:=\sigma-\lfloor\sigma\rfloor$. We introduce test functions $\eta=\eta(t)$ as in Section \ref{section_7.1.1} and $\varphi=\varphi(x):=\langle x\rangle^{-n-2 s_\sigma}$. Then, repeating the proof of Sections \ref{section_7.1.1} and \ref{section_7.1.2} we may conclude the desired statement.
\subsubsection{\textit{The  parameter $\sigma$ is fractional from $(1, \infty)$ and the
parameter $\delta$ is fractional from $(0, 1)$}} \label{section_7.1.4}
We apply the arguments  analogous to  the proofs of Sections \ref{section_7.1.1} and \ref{section_7.1.3}. First, we denote $s_\sigma:=\sigma-\lfloor\sigma\rfloor$. Next, we put $s^*:=\min \left\{s_\sigma, \delta\right\}$. It is obvious that $s^*$ is fractional from $(0,1)$. Let us introduce test functions $\eta=\eta(t)$ as in Section \ref{section_7.1.1} and $\varphi=\varphi(x):=\langle x\rangle^{-n-2 s^*}$. Then, repeating the proof of Sections \ref{section_7.1.1} and \ref{section_7.1.3} we may conclude the required statement.
\subsubsection{\textit{The  parameter $\sigma$ is fractional from $(1, \infty)$ and the
parameter $\delta$ is fractional from $(1, \frac{\sigma}{2})$}}
We proceed to prove in that case in similar manner to arguments used in Sections \ref{section_7.1.1} and \ref{section_7.1.4}.  We denote $s_\sigma:=\sigma-\lfloor\sigma\rfloor$ and $s_\delta:=\delta-\lfloor\delta\rfloor$. Then, we put $s^*:=\min \left\{s_\sigma, s_\delta\right\}$. It is obvious that $s^*$ is fractional from $(0,1)$. Let us introduce test functions $\eta=\eta(t)$ as in Section \ref{section_7.1.1} and $\varphi=\varphi(x):=\langle x\rangle^{-n-2 s^*}$. Repeating the proof of Sections \ref{section_7.1.1} and \ref{section_7.1.4} we may conclude what we wanted to prove.
\subsubsection{\textit{Both parameters $\sigma$ and $\delta$ are integers}}
The main arguments for proving in that case are analogous to proofs of Sections \ref{section_7.1.1} and \ref{section_7.1.3}.  We introduce the test functions $\eta=\eta(t)$ and $\varphi=\varphi(x)$ with the following properties 
\begin{itemize}
\item $\eta \in \mathcal{C}_0^{\infty}([0, \infty))$ and $\eta(t)= \begin{cases}1 & \text { if } 0 \leq t \leq \frac{1}{2}, \\ \text { decreasing } & \text { if } \frac{1}{2} \leq t \leq 1, \\ 0 & \text { if } t \geq 1,\end{cases}$
\item $\varphi \in \mathcal{C}_0^{\infty}\left(\mathbb{R}^n\right)$ and $\varphi(x)= \begin{cases}1 & \text { if }|x| \leq \frac{1}{2}, \\ \text { decreasing } & \text { if } \frac{1}{2} \leq|x| \leq 1, \\ 0 & \text { if }|x| \geq 1,\end{cases}$
\item $
\eta^{-\frac{p^{\prime}}{p}}(t)\left(\left|\eta^{\prime}(t)\right|^{p^{\prime}}+\left|\eta^{\prime \prime}(t)\right|^{p^{\prime}}\right) \leq C, \; \text{for any} \;t \in \left[\frac{1}{2},1 \right]$; 
$\varphi^{-\frac{p^{\prime}}{p}}(x)\left|\Delta^\sigma \varphi(x)\right|^{p^{\prime}} \leq C,\; \text{for any} \; x \in \mathbb{R}^n$ such that $ |x| \in \left[\frac{1}{2},1 \right]$, 
where $p^{\prime}$ is the conjugate of $p$ and $C$ is a suitable positive constant. In addition, we suppose that $\varphi=\varphi(|x|)$ is a radial function satisfying $\varphi(|x|) \leq \varphi(|y|)$ for any $|x| \geq|y|$. 
\end{itemize}
Repeating the proof of Sections \ref{section_7.1.1} and \ref{section_7.1.3} we may conclude the required assessment.
\subsection{The case when $b^{\prime}(t) \ge 0$}
\subsubsection{\textit{The parameter $\sigma$ is integer and the parameter $\delta$ is fractional from $(0, 1)$}} \label{section_7.2.1}
First, we introduce the functions $\varphi=\varphi(|x|):=\langle x\rangle^{-n-2 \delta}$ and  $\eta=\eta(t)$ with  properties:
\begin{itemize}
\item $\eta \in \mathcal{C}_0^{\infty}([0, \infty))$ and $\eta(t)= \begin{cases}1 & \text { if } 0 \leq t \leq \frac{1}{2}, \\ \text { decreasing } & \text { if } \frac{1}{2} \leq t \leq 1, \\ 0 & \text { if } t \geq 1,\end{cases}$
\item \begin{equation} \label{ineq:5.1}
\eta^{-\frac{p^{\prime}}{p}}(t)\left(\left|\eta^{\prime}(t)\right|^{p^{\prime}}+\left|\eta^{\prime \prime}(t)\right|^{p^{\prime}}\right) \leq C \quad \text{for any} \quad t \in \left[\frac{1}{2},1 \right],
\end{equation}
where $p^{\prime}$ is the conjugate of $p>1$ and $C$ is a suitable positive constant.
\end{itemize}
Let $R$ be a large parameter in $[0, \infty)$. We define the  test function
$
\phi_{R}(t, x)=\eta_R(t) \varphi_R(x)$,
where
$
\eta_R(t):=\eta\left(R^{-2\sigma+2\delta} t\right)$ and $ \varphi_R(x):=\varphi\left(R^{-1} x\right)$.
Assume that $g(t)$ is the solution to \eqref{eq:5.0}. After multiplying the first equation in \eqref{eq:1.0} by $g(t)$ and performing a direct calculation, one achieves
\begin{align} \label{eq:5.4}
& \left[g(t) u(t, x)\right]_{tt}+(-\Delta)^\sigma \left[g(t) u(t, x)\right]-2\left[g^{\prime}(t)u(t, x)\right]_t+g^{\prime \prime}(t)u(t, x) \notag \\
& +(-\Delta)^\delta \left[\left(g^{\prime}(t)+1\right) u(t, x)\right]_t - (-\Delta)^\delta \left[ g^{\prime \prime}(t)u(t, x)\right]=g(t)|u(t, x)|^p.
\end{align}
Now we define 
$
I_R:=\displaystyle\int_0^{\infty} \displaystyle\int_{\mathbb{R}^n} g(t)|u(t, x)|^p \phi_{R}(t, x) d x d t=\displaystyle\int_{Q_{R}} g(t)|u(t, x)|^p \phi_{R}(t, x) d(x, t)$, 
where
$
Q_{R}:=[0, R^{2\sigma-2\delta}] \times B_R$ with $B_R:=\left\{x \in \mathbb{R}^n:|x| \leq R\right\}$.
Suppose that $u=u(t, x)$ is a global (in time) Sobolev solution from $\mathcal{C}\left([0, \infty), L^2\right)$ to \eqref{eq:1.0}. Multiplying \eqref{eq:5.4} by $\phi_{R}=\phi_{R}(t, x)$, by integration by parts we obtain
\begin{align} \label{eq:5.5}
&I_R+\int_{B_R}\left(-g^{\prime}(0) u_0(x) + g(0) u_1(x) + (-\Delta)^\delta \left[ g^{\prime}(0) u_0(x) \right] + (-\Delta)^\delta u_0(x)\right) \varphi_R(x) dx \notag \\
= & \int_{Q_{R}} g(t) u(t, x) \eta_R^{\prime \prime}(t) \varphi_R(x) d(x, t)+\int_{Q_{R}} g(t) \eta_R(t) \varphi_R(x) (-\Delta)^\sigma u(t, x) d(x, t) \notag \\
&+ 2 \int_{Q_{R}} g^{\prime}(t) u(t, x) \eta_R^{\prime}(t) \varphi_R(x) d(x, t) + \int_{Q_{R}} g^{\prime \prime}(t) u(t, x) \eta_R(t) \varphi_R(x) d(x, t) \notag \\
&- \int_{Q_{R}} \left(g^{\prime}(t)+1\right) \eta_R^{\prime}(t) \varphi_R(x) (-\Delta)^\delta u(t, x) d(x, t) - \int_{Q_{R}} g^{\prime \prime}(t) \eta_R(t) \varphi_R(x) (-\Delta)^\delta u(t, x) d(x, t) \notag \\
=&: I_{1, R}+I_{2, R}+I_{3, R}+I_{4, R}+I_{5, R}+I_{6, R}.
\end{align}
Using H\"older's inequality with $\frac{1}{p}+\frac{1}{p^{\prime}}=1$ we can proceed further as follows:
\begin{align*}
\left|I_{1, R}\right| \leq & \int_{Q_{R}} g(t)|u(t, x)|\left|\eta_R^{\prime \prime}(t)\right| \varphi_R(x) d(x, t) \\
\lesssim & \left(\int_{Q_{R}}\left|g(t)^{\frac{1}{p}} u(t, x) \phi_{R}^{\frac{1}{p}}(t, x)\right|^p d(x, t)\right)^{\frac{1}{p}}  \left(\int_{Q_{R}}\left|g(t)^{\frac{1}{p^{\prime}}} \phi_{R}^{-\frac{1}{p}}(t, x) \eta_R^{\prime \prime}(t) \varphi_R(x)\right|^{p^{\prime}} d(x, t)\right)^{\frac{1}{p^{\prime}}} \\
\lesssim & I_R^{\frac{1}{p}}\left(\int_{Q_{R}} g(t) \eta_R^{-\frac{p^{\prime}}{p}}(t)\left|\eta_R^{\prime \prime}(t)\right|^{p^{\prime}} \varphi_R(x) d(x, t)\right)^{\frac{1}{p^{\prime}}} .
\end{align*}
By the change of variables $\tilde{t}:=R^{-2\sigma+2\delta} t$ and $\tilde{x}:=R^{-1} x$, we derive that
$
\left|I_{1, R}\right| \lesssim I_R^{\frac{1}{p}} R^{-4\sigma+4\delta+{\frac{n}{p^{\prime}}}}\left(\displaystyle\int_{ R^{2\sigma-2\delta} / 2}^{R^{2\sigma-2\delta}} g(t) d t\right)^{\frac{1}{p^{\prime}}},$
where we have used the relation
$
\eta_R^{\prime \prime}(t)=R^{-4\sigma+4\delta} \eta^{\prime \prime}(\bar{t}).
$
and assumption \eqref{ineq:5.1}. To estimate the above integral,  we use Lemma \ref{lemma:b(t)g(t)} as an important tool. The behavior $g(t) \sim b(t)^{-1}$ from the assertion \textbf{i)} of Lemma \ref{lemma:b(t)g(t)} leads to
\begin{align} \label{ineq:5.6}
\left|I_{1, R}\right| \lesssim  I_R^{\frac{1}{p}} R^{-4\sigma+4\delta+{\frac{n}{p^{\prime}}}} \left(\int_{R^{2\sigma-2\delta} / 2}^{R^{2\sigma-2\delta}} \frac{1}{b(t)} d t\right)^{\frac{1}{p^{\prime}}}
\lesssim  I_R^{\frac{1}{p}} R^{-4\sigma+4\delta+{\frac{n}{p^{\prime}}}} \left(\int_{R^{2\sigma-2\delta} / 2}^{R^{2\sigma-2\delta}} 1 d t\right)^{\frac{1}{p^{\prime}}} \lesssim I_R^{\frac{1}{p}} R^{-4\sigma+4\delta+\frac{n+2 \sigma-2\delta}{p^{\prime}}},
\end{align}
due to the relation $b(t) \geq b(0) > 0$ ( since $b^{\prime}(t) \ge 0, \quad \forall t \geq 0$). By the integration by parts we obtain
\begin{align*}
I_{2, R}=\int_{Q_{R}} g(t) \eta_R(t) \varphi_R(x) (-\Delta)^\sigma u(t, x) d(x, t)=\int_{Q_{R}} g(t) u(t, x) \eta_R(t)(-\Delta)^\sigma \varphi_R(x) d(x, t).
\end{align*}
Applying H\"older's inequality again as in estimating $I_{1, R}$ we get
$$
\left|I_{2, R}\right| \leq I_R^{\frac{1}{p}}\left(\int_{Q_{R}} g(t) \eta_R(t) \varphi_R^{-\frac{p^{\prime}}{p}}(x)\left|(-\Delta)^\sigma \varphi_R(x)\right|^{p^{\prime}} d(x, t)\right)^{\frac{1}{p^{\prime}}}.
$$
In order to control the above two integrals, we rely on  Lemmas \ref{lemma:2.10.000}, \ref{lemma:2.10.0} and \ref{lemma:2.11.0}. Namely, by the change of variables $\tilde{t}:=R^{-2\sigma+2\delta} t$ and $\tilde{x}:=R^{-1} x$, we see that
$$
\begin{aligned}
\left|I_{2, R}\right| & \lesssim I_R^{\frac{1}{p}} R^{-2 \sigma+\frac{n}{p^{\prime}}} \left(\int_{ R^{2\sigma-2\delta} / 2}^{R^{2\sigma-2\delta}} g(t) d t\right)^{\frac{1}{p^{\prime}}} \left(\int_{\mathbb{R}^n} \varphi^{-\frac{p^{\prime}}{p}}(\tilde{x})\left|(-\Delta)^\sigma(\varphi)(\tilde{x})\right|^{p^{\prime}} d \tilde{x}\right)^{\frac{1}{p^{\prime}}} \\
& \lesssim I_R^{\frac{1}{p}} R^{-2 \sigma+\frac{n+2\sigma-2\delta}{p^{\prime}}}\left(\int_{\mathbb{R}^n} \varphi^{-\frac{p^{\prime}}{p}}(\tilde{x})\left|(-\Delta)^\sigma(\varphi)(\tilde{x})\right|^{p^{\prime}} d \tilde{x}\right)^{\frac{1}{p^{\prime}}},
\end{aligned}
$$
noting that
$
(-\Delta)^\sigma \varphi_R(x)=R^{-2 \sigma}(-\Delta)^\sigma \varphi(\tilde{x}),
$
since $\sigma$ is an integer number. Here, we have estimated $\left(\int_{ R^{2\sigma-2\delta} / 2}^{R^{2\sigma-2\delta}} g(t) d t\right)^{\frac{1}{p^{\prime}}}$ similarly to  $I_{1, R}$.  Lemma \ref{lemma:2.10.000} implies the following 
\begin{equation} \label{ineq:5.7}
\left|I_{2, R}\right| \lesssim I_R^{\frac{1}{p}} R^{-2 \sigma+\frac{n+2\sigma-2\delta}{p^{\prime}}}\left(\int_{\mathbb{R}^n}\langle\tilde{x}\rangle^{-n-2 \delta-2 \sigma p^{\prime}} d \tilde{x}\right)^{\frac{1}{p^{\prime}}}.
\end{equation}
Next, in order to  estimate $I_{3, R}$, first we see  from Lemma \ref{lemma:b(t)g(t)} that the term $\left|g^{\prime}(t)\right|$ is bounded. Combining this and the assertion \textbf{i)} of Lemma \ref{lemma:b(t)g(t)}, we can deduce
\begin{equation} \label{lessim_g'}
\left|g^{\prime}(t)\right| \sim b(t) g(t) .
\end{equation}
Hence, it follows immediately that
$
\left|I_{3, R}\right| \lesssim \int_{Q_{R}} b(t) g(t)|u(t, x)|\left|\eta_R^{\prime}(t)\right| \varphi_R(x) d(x, t) .
$
In an analogous procedure as in estimating $I_{1, R}$, we may conclude that
\begin{align*}
\left|I_{3, R}\right| \lesssim & I_R^{\frac{1}{p}} R^{-2\sigma+2\delta+\frac{n}{p^{\prime}}}\left(\int_{R^{2\sigma-2\delta} / 2}^{R^{2\sigma-2\delta}} g(t) b(t)^{p^{\prime}} d t\right)^{\frac{1}{p^{\prime}}} 
\lesssim  I_R^{\frac{1}{p}} R^{-2\sigma+2\delta+\frac{n}{p^{\prime}}} \max _{t \in[R^{2\sigma-2\delta} / 2, R^{2\sigma-2\delta}]} b(t)\left(\int_{R^{2\sigma-2\delta} / 2}^{R^{2\sigma-2\delta}} g(t) d t\right)^{\frac{1}{p^{\prime}}},
\end{align*}
where we have used the relation
$
\eta_R^{\prime}(t)=R^{-2\sigma+2\delta} \eta^{\prime}(\tilde{t}),
$
and assumption \eqref{ineq:5.1}. 

For any $t \in[R^{2\sigma-2\delta} / 2, R^{2\sigma-2\delta}]$, one rewrites
$
g(t)=g(0)+\int_0^t g^{\prime}(s) d s \sim g(0)+C t \sim t,
$
thanks to the assertion \textbf{ii)} of Lemma \ref{lemma:b(t)g(t)}, whenever $R$ is chosen to be sufficiently large. This means that $b(t) \sim t^{-1} \sim R^{-2\sigma+2\delta}$ for any $t \in[R^{2\sigma-2\delta} / 2, R^{2\sigma-2\delta}]$ with a sufficiently large number $R$. From this observation, we obtain
\begin{equation} \label{ineq:5.8}
\left|I_{3, R}\right| \lesssim I_R^{\frac{1}{p}} R^{-4\sigma+4\delta+\frac{n}{p^{\prime}}}\left(\int_{R^{2\sigma-2\delta} / 2}^{R^{2\sigma-2\delta}} \frac{1}{b(t)} d t\right)^{\frac{1}{p^{\prime}}} \lesssim I_R^{\frac{1}{p}} R^{-4\sigma+4\delta+\frac{n+2 \sigma - 2\delta}{p^{\prime}}} .
\end{equation}
To handle the term $I_{4, R}$, we use \eqref{eq:5.0}, \eqref{lessim_g'},  $\mathbf{(B3)}$ and condition $b(t)\geq b(0)>0$ to deduce that
\begin{equation} \label{lessim_g''(t)}
\left|g^{\prime \prime}(t)\right| = \left|b^{\prime}(t)g(t)+b(t)g^{\prime}(t)\right| \sim \left| \left(b^{\prime}(t)+b^2(t) \right)\right|g(t) \lesssim b^2(t) g(t).
\end{equation}
Thus, it follows immediately that
$
\left|I_{4, R}\right| \lesssim \int_{Q_{R}} b^2(t) g(t)|u(t, x)| \eta_R (t) \varphi_R(x) d(x, t).
$
In an analogous procedure as in the estimation of $I_{1, R}$ and $I_{3, R}$, we may conclude that
\begin{align} \label{ineq:5.9}
\left|I_{4, R}\right| \lesssim & I_R^{\frac{1}{p}} R^{\frac{n}{p^{\prime}}}\left(\int_{R^{2\sigma-2\delta} / 2}^{R^{2\sigma-2\delta}} g(t) b(t)^{2p^{\prime}} d t\right)^{\frac{1}{p^{\prime}}} 
\lesssim I_R^{\frac{1}{p}} R^{\frac{n}{p^{\prime}}} \max _{t \in[R^{2\sigma-2\delta} / 2, R^{2\sigma-2\delta}]} b^2(t)\left(\int_{R^{2\sigma-2\delta} / 2}^{R^{2\sigma-2\delta}} \frac{1}{b(t)} d t\right)^{\frac{1}{p^{\prime}}} \notag \\
\lesssim & I_R^{\frac{1}{p}} R^{-4\sigma+4\delta+\frac{n+2\sigma-2\delta}{p^{\prime}}},
\end{align}
where we have used the relation
$b(t) \sim t^{-1} \sim R^{-2\sigma+2\delta}$ for any $t \in[R^{2\sigma-2\delta} / 2, R^{2\sigma-2\delta}]$, with a sufficiently large number $R$ and $b(t) \geq b(0) > 0$.

Let us focus our attention on estimating $I_{5, R}$. Since $\varphi_R \in H^{2 \delta}$ and $u \in \mathcal{C}\left([0, \infty), L^2\right)$, we apply Lemma \ref{lemma:2.12} to get:
$\displaystyle\int_{\mathbb{R}^n} \varphi_R(x)(-\Delta)^\delta u(t, x) d x=\int_{\mathbb{R}^n}|\xi|^{2 \delta} \hat{\varphi}_R(\xi) \widehat{u}(t, \xi) d \xi=\int_{\mathbb{R}^n} u(t, x)(-\Delta)^\delta \varphi_R(x) d x$.
Therefore, 
\begin{align*}
I_{5, R}= \int_0^{\infty} \int_{\mathbb{R}^n} \left(g^\prime(t)+1\right) \eta_R^{\prime}(t) \varphi_R(x)(-\Delta)^\delta u(t, x) d x d t 
= \int_0^{\infty} \int_{\mathbb{R}^n} b(t) g(t) \eta_R^{\prime}(t) u(t, x)(-\Delta)^\delta \varphi_R(x) d x d t,
\end{align*}
where we have used equation \eqref{eq:5.0}. Another application of H\"older's inequality  as in estimating $I_{1, R}$ gives
\begin{align*}
\left|I_{5, R}\right| \leq I_R^{\frac{1}{p}}\left(\int_0^{R^{2\sigma-2\delta}} \int_{\mathbb{R}^n} b(t)^{p^\prime} g(t) \eta_R^{-\frac{p^{\prime}}{p}}(t) \left|\eta_R^{\prime}(t) \right|^{p^\prime} \varphi_R^{-\frac{p^{\prime}}{p}}(x)\left|(-\Delta)^\delta \varphi_R(x)\right|^{p^{\prime}} d x d t\right)^{\frac{1}{p^{\prime}}}.
\end{align*}
To control the above integral we  apply  Lemmas \ref{lemma:2.10.0} and \ref{lemma:2.11.0} as  key tools. In particular, by  the change of variables $\tilde{x}:=R^{-1} x$ we get   from Lemma \ref{lemma:2.11.0} that
$
(-\Delta)^\delta \varphi_R(x)=R^{-2 \delta}(-\Delta)^\delta(\varphi)(\tilde{x}) .
$
By the change of variables $\tilde{t}:=R^{-2\sigma+2\delta} t$ and inequality \eqref{ineq:5.1} we achieve that
\begin{align*}
\left|I_{5, R}\right| \lesssim & I_R^{\frac{1}{p}} R^{-2 \sigma+\frac{n}{p^{\prime}}} \left(\int_{R^{2\sigma-2\delta} / 2}^{R^{2\sigma-2\delta}} g(t) b(t)^{p^{\prime}} d t\right)^{\frac{1}{p^{\prime}}} \left(\int_{\mathbb{R}^n} \varphi^{-\frac{p^{\prime}}{p}}(\tilde{x})\left|(-\Delta)^\delta(\varphi)(\tilde{x})\right|^{p^{\prime}} d \tilde{x}\right)^{\frac{1}{p^{\prime}}} \\
\lesssim & I_R^{\frac{1}{p}} R^{-4 \sigma+2\delta+\frac{n+2\sigma-2\delta}{p^{\prime}}} \left(\int_{\mathbb{R}^n} \varphi^{-\frac{p^{\prime}}{p}}(\tilde{x})\left|(-\Delta)^\delta(\varphi)(\tilde{x})\right|^{p^{\prime}} d \tilde{x}\right)^{\frac{1}{p^{\prime}}},
\end{align*}
where we have estimated $\left(\int_{R^{2\sigma-2\delta} / 2}^{R^{2\sigma-2\delta}} g(t) b(t)^{p^{\prime}} d t\right)^{\frac{1}{p^{\prime}}}$  similarly as for $I_{3, R}$. Now Lemma \ref{lemma:2.10.0} leads to the following estimate
\begin{equation} \label{ineq:5.10}
\left|I_{5, R}\right| \lesssim I_R^{\frac{1}{\tilde{p}}} R^{-4 \sigma+2\delta+\frac{n+2\sigma-2\delta}{p^{\prime}}} \left(\int_{\mathbb{R}^n}\langle\tilde{x}\rangle^{-n-2 \delta} d \tilde{x}\right)^{\frac{1}{p^{\prime}}}.
\end{equation}
To estimate $I_{6, R}$, since $\varphi_R \in H^{2 \delta}$ and $u \in \mathcal{C}\left([0, \infty), L^2\right)$, we apply Lemma \ref{lemma:2.12} to derive that 

$\displaystyle\int_{\mathbb{R}^n} \varphi_R(x)(-\Delta)^\delta u(t, x) d x=\displaystyle\int_{\mathbb{R}^n}|\xi|^{2 \delta} \hat{\varphi}_R(\xi) \widehat{u}(t, \xi) d \xi=\displaystyle\int_{\mathbb{R}^n} u(t, x)(-\Delta)^\delta \varphi_R(x) d x$. 
Therefore, by \eqref{lessim_g''(t)}:
\begin{align*}
\left|I_{6, R}\right|=& \left| \int_0^{\infty} \int_{\mathbb{R}^n} g^{\prime \prime}(t) \eta_R(t) \varphi_R(x)(-\Delta)^\delta u(t, x) d x d t \right|
= \left| \int_0^{\infty} \int_{\mathbb{R}^n} g^{\prime \prime}(t) \eta_R(t) u(t, x) (-\Delta)^\delta \varphi_R(x) d x d t \right| \\
\lesssim & \int_{Q_{R}} b^2(t) g(t)|u(t, x)| \eta_R (t) \left|(-\Delta)^\delta \varphi_R(x)\right| d(x, t).
\end{align*}
Another application of H\"older's inequality  as in estimating $I_{1, R}$ gives
\begin{align*}
\left|I_{6, R}\right| \leq I_R^{\frac{1}{p}}\left(\int_0^{R^{2\sigma-2\delta}} \int_{\mathbb{R}^n} b(t)^{2 p^\prime} g(t) \eta_R(t) \varphi_R^{-\frac{p^{\prime}}{p}}(x)\left|(-\Delta)^\delta \varphi_R(x)\right|^{p^{\prime}} d x d t\right)^{\frac{1}{p^{\prime}}}.
\end{align*}
To control the above integral we apply  Lemmas \ref{lemma:2.10.0} and \ref{lemma:2.11.0}. In particular,  by the change of variables $\tilde{x}:=R^{-1} x$ we get  from Lemma \ref{lemma:2.11.0} that
$
(-\Delta)^\delta \varphi_R(x)=R^{-2 \delta}(-\Delta)^\delta(\varphi)(\tilde{x}) .
$
By the change of variables $\tilde{t}:=R^{-2\sigma+2\delta} t$ and inequality \eqref{ineq:5.1} we see that
\begin{align*}
\left|I_{6, R}\right| \lesssim & I_R^{\frac{1}{p}} R^{-2 \delta+\frac{n}{p^{\prime}}} \left(\int_{R^{2\sigma-2\delta} / 2}^{R^{2\sigma-2\delta}} g(t) b(t)^{2 p^{\prime}} d t\right)^{\frac{1}{p^{\prime}}} \left(\int_{\mathbb{R}^n} \varphi^{-\frac{p^{\prime}}{p}}(\tilde{x})\left|(-\Delta)^\delta(\varphi)(\tilde{x})\right|^{p^{\prime}} d \tilde{x}\right)^{\frac{1}{p^{\prime}}} \\
\lesssim & I_R^{\frac{1}{p}} R^{-4 \sigma+2\delta+\frac{n+2\sigma-2\delta}{p^{\prime}}} \left(\int_{\mathbb{R}^n} \varphi^{-\frac{p^{\prime}}{p}}(\tilde{x})\left|(-\Delta)^\delta(\varphi)(\tilde{x})\right|^{p^{\prime}} d \tilde{x}\right)^{\frac{1}{p^{\prime}}},
\end{align*}
where the term $\left(\int_{R^{2\sigma-2\delta} / 2}^{R^{2\sigma-2\delta}} g(t) b(t)^{2 p^{\prime}} d t\right)^{\frac{1}{p^{\prime}}}$ is estimated similarly as $I_{4, R}$. Now,  Lemma \ref{lemma:2.10.0} implies that \begin{equation} \label{ineq:5.11}
\left|I_{6, R}\right| \lesssim I_R^{\frac{1}{p}} R^{-4 \sigma+2\delta+\frac{n+2\sigma-2\delta}{p^{\prime}}} \left(\int_{\mathbb{R}^n}\langle\tilde{x}\rangle^{-n-2 \delta} d \tilde{x}\right)^{\frac{1}{p^{\prime}}}.
\end{equation}
Plugging \eqref{ineq:5.6}, \eqref{ineq:5.7}, \eqref{ineq:5.8}, \eqref{ineq:5.9}, \eqref{ineq:5.10} and \eqref{ineq:5.11} into \eqref{eq:5.5}, we see that
\begin{align} \label{ineq:5.12}
&I_R + \int_{B_R}\left(-g^{\prime}(0) u_0(x) + g(0) u_1(x) + (-\Delta)^\delta \left[ g^{\prime}(0) u_0(x) \right] + (-\Delta)^\delta u_0(x)\right) \varphi_R(x) dx \notag \\
\lesssim & I_R^{\frac{1}{p}} \left(R^{-2 \sigma+\frac{n+2 \sigma-2\delta}{p^{\prime}}} + R^{-4 \sigma+4\delta+\frac{n+2\sigma-2\delta}{p^{\prime}}} + R^{-4 \sigma+2\delta+\frac{n+2\sigma-2\delta}{p^{\prime}}} \right) \lesssim I_R^{\frac{1}{p}} R^{-2 \sigma+\frac{n+2 \sigma-2\delta}{p^{\prime}}}.
\end{align}
Due to assumption \eqref{condition_u0u1}, there exists a sufficiently large constant $R_0>0$ such that 
\begin{align*}
\int_{B_R}\left(-g^{\prime}(0) u_0(x) + g(0) u_1(x) + (-\Delta)^\delta \left[ g^{\prime}(0) u_0(x) \right] + (-\Delta)^\delta u_0(x)\right) \varphi_R(x) dx >0,
\end{align*}
that is,
\begin{equation} \label{ineq:5.13}
\int_{B_R}\left(-g^{\prime}(0) u_0(x) + g(0) u_1(x) + (-\Delta)^\delta \left[ g^{\prime}(0) u_0(x) \right] + (-\Delta)^\delta u_0(x)\right) \varphi_R(x) dx >0,     
\end{equation} 
for any $R>R_0$. From \eqref{ineq:5.12} and \eqref{ineq:5.13}, one gets
\begin{equation} \label{ineq:5.14}
I_R^{1-\frac{1}{p}} \lesssim R^{-2 \sigma+\frac{n+2 \sigma-2\delta}{p^{\prime}}}.
\end{equation}
It is clear that \eqref{p_critical} is equivalent to $-2 \sigma+\frac{n+2 \sigma-2\delta}{p^{\prime}} \leq 0$. 

Now, by arguments analogous to the case with $b'(t)<0$,  both subcritical case with $p<1+\frac{2 \sigma}{n-2\delta}$, ( i.e. $-2 \sigma+\frac{n+2 \sigma-2\delta}{p^{\prime}}<0,$) and critical case with $p=1+\frac{2 \sigma}{n-2\delta}$,  (i.e. $-2 \sigma+\frac{n+2 \sigma-2\delta}{p^{\prime}}=0$)  lead to a contradiction to assumption \eqref{condition_u0u1}. 

Therefore, there is no global (in time) Sobolev solution to \eqref{eq:1.0} with $\gamma=0$ in these subcritical and critical cases.
Hence, our proof of Theorem \ref{theorem:1.5} in the case when $b^{\prime}(t) >0$, the parameter $\sigma$ is integer and the parameter $\delta$ is fractional from $(0, 1)$ is completed.
\subsubsection{\textit{ $\sigma$ is integer and $\delta$ is fractional from $(1, \frac{\sigma}{2}]$}}\label{section_7.2.2}

The proof in that case is similar to  the proofs of Sections \ref{section_7.2.1} and \ref{section_7.1.2}. We denote $s_\delta:=\delta-\lfloor\delta\rfloor$ and introduce test functions $\eta=\eta(t)$ as in Section \ref{section_7.2.1} and $\varphi=\varphi(x):=\langle x\rangle^{-n-2 s_\delta}$. Repeating the proof of Sections \ref{section_7.2.1} and \ref{section_7.1.2} we may conclude  the desired statement.
\subsubsection{\textit{ $\sigma$ is fractional from $(1, \infty)$ and  $\delta$ is integer}} \label{section_7.2.3}
We apply the arguments analogous to those used in the proofs of Sections \ref{section_7.2.1} and \ref{section_7.2.2}. We denote $s_\sigma:=\sigma-\lfloor\sigma\rfloor$ and introduce test functions $\eta=\eta(t)$ as in Section \ref{section_7.2.1} and $\varphi=\varphi(x):=\langle x\rangle^{-n-2 s_\sigma}$. Repeating the proof of Sections \ref{section_7.2.1} and \ref{section_7.2.2} we may arrive at  the desired statement.
\subsubsection{\textit{$\sigma$ is fractional from $(1, \infty)$ and  $\delta$ is fractional from $(0, 1)$}} \label{section_7.2.4}
We proceed the proof in that case in a similar manner as in proofs of Sections \ref{section_7.2.1} and \ref{section_7.2.3}. We denote $s_\sigma:=\sigma-\lfloor\sigma\rfloor$ and put $s^*:=\min \left\{s_\sigma, \delta\right\}$. It is obvious that $s^*$ is fractional from $(0,1)$. Let us introduce test functions $\eta=\eta(t)$ as in Section \ref{section_7.2.1} and $\varphi=\varphi(x):=\langle x\rangle^{-n-2 s^*}$. Repeating  the proof of Sections \ref{section_7.2.1} and \ref{section_7.2.3}  we get the desired statement.

\subsubsection{\textit{ $\sigma$ is fractional from $(1, \infty)$ and 
 $\delta$ is fractional from $(1, \frac{\sigma}{2})$}}
The main ideas of proving in that case are analogous to those from the proofs of Sections \ref{section_7.2.1} and \ref{section_7.2.4}. We denote $s_\sigma:=\sigma-\lfloor\sigma\rfloor$ and $s_\delta:=\delta-\lfloor\delta\rfloor$ and put $s^*:=\min \left\{s_\sigma, s_\delta\right\}$. It is obvious that $s^*$ is fractional from $(0,1)$. Let us introduce test functions $\eta=\eta(t)$ as in Section \ref{section_7.2.1} and $\varphi=\varphi(x):=\langle x\rangle^{-n-2 s^*}$. Repeating the proof of Sections \ref{section_7.2.1} and \ref{section_7.2.4} we may conclude the required statement.
\subsubsection{\textit{The case when both parameters $\sigma$ and $\delta$ are integers}}
Our proof in that case is analogous to  the proofs of Sections \ref{section_7.2.1} and \ref{section_7.2.3}. We introduce the test functions $\eta=\eta(t)$ and $\varphi=\varphi(x)$ with following properties (see, for example, \cite{DAmbrosio2003, Mitidieri2001}):
\begin{itemize}
\item $\eta \in \mathcal{C}_0^{\infty}([0, \infty))$ and $\eta(t)= \begin{cases}1 & \text { if } 0 \leq t \leq \frac{1}{2}, \\ \text { decreasing } & \text { if } \frac{1}{2} \leq t \leq 1, \\ 0 & \text { if } t \geq 1,\end{cases}$
\item $\varphi \in \mathcal{C}_0^{\infty}\left(\mathbb{R}^n\right)$ and $\varphi(x)= \begin{cases}1 & \text { if }|x| \leq \frac{1}{2}, \\ \text { decreasing } & \text { if } \frac{1}{2} \leq|x| \leq 1, \\ 0 & \text { if }|x| \geq 1,\end{cases}$
\item $
\eta^{-\frac{p^{\prime}}{p}}(t)\left(\left|\eta^{\prime}(t)\right|^{p^{\prime}}+\left|\eta^{\prime \prime}(t)\right|^{p^{\prime}}\right) \leq C,\; \forall t \in \left[\frac{1}{2},1 \right]$, and
$\varphi^{-\frac{p^{\prime}}{p}}(x)\left|\Delta^\sigma \varphi(x)\right|^{p^{\prime}} \leq C,\; \forall x \in \mathbb{R}^n \; \text{such that} \; |x| \in \left[\frac{1}{2},1 \right],
$
where $p^{\prime}$ is the conjugate of $p$ and $C$ is a suitable positive constant. In addition, we suppose that $\varphi=\varphi(|x|)$ is a radial function satisfying $\varphi(|x|) \leq \varphi(|y|)$ for any $|x| \geq|y|$. 
\end{itemize}
Repeating the arguments used in Sections \ref{section_7.2.1} and \ref{section_7.2.3} we  arrive at the desired conclusion. The proof for Theorem \ref{theorem:1.5} is now complete.
\end{proof}

\begin{appendices}
\renewcommand{\thesection}{\Alph{section}}
\section{Decay indicator and decay character}\label{sec2}
The notions of decay character have been introduced by C. Bjorland, M.E. Schonbek, C. J. Niche in \cite{Bjorland2009}, \cite{Ferreira2017}, \cite{Niche2015} to measure the algebraic order of the Fourier transform $\hat{u}(\xi)$ of the function $u \in L^2$ near the point $\xi=0$.

Let $\Lambda=(-\Delta)^{1 / 2}$. We also use $|D|^s$ to denote $\Lambda^s$. First, we recall the notion of decay indicator.
\begin{definition}[Decay indicator]\label{definition_2.1}
For $u_0 \in L^2\left(\mathbb{R}^n\right), s>0$ and $r \in(-n / 2+s, \infty)$, the decay indicator $P_r^s\left(u_0\right)$ corresponding to $\Lambda^s u_0$ is defined by
$$
P_r^s\left(u_0\right)=\lim _{\rho \rightarrow 0} \rho^{-2 r-n} \int_{B(\rho)}|\xi|^{2 s}\left|\widehat{u_0}(\xi)\right|^2 d \xi,
$$
where $B(\rho)$ is the ball centered at origin with radius $\rho$.
\end{definition} 
\begin{definition}[Decay character] \label{definition_2.2} The decay character of $\Lambda^s u_0$, denoted by $r_s^*=r_s^*\left(u_0\right)$ is the unique $r \in(-n / 2+s, \infty)$ such that $0<P_r^s\left(u_0\right)<\infty$, if this number $P_r^s\left(u_0\right)$ exists. If such $P_r^s\left(u_0\right)$ does not exist, we set $r_s^*=-n / 2+s$ when $P_r^s\left(u_0\right)=+\infty$ for all $r \in(-n / 2+s, \infty)$, or $r_s^*=+\infty$ when $P_r^s\left(u_0\right)=0$ for all $r \in(-n / 2+s, \infty)$.
For $s=0$, we denote $P_r^0\left(u_0\right)=P_r\left(u_0\right)$ and $r_0^*=r^*$.
\end{definition} 

\begin{theorem} \label{decay_1}
Let $v_0 \in L^2\left(\mathbb{R}^n\right)$ with the decay character $r^*\left(v_0\right)=r^*$, where $v_0=v(0,x)$, $v=v(t,x)$. Let $v(t)$ be a solution to $v_t=-a(t)(-\Delta)^{\alpha} v$ with the initial datum $v_0$, where $a(t)$ is positive and continuous with $a(t) \notin L^1([0, \infty)) $ and $\alpha$ is a positive constant. If $-\frac{n}{2}<r^*<\infty$, then 
$$
\left(1+\int\limits_{0}^{t} a(\tau) d \tau \right)^{-\frac{2 r^{*} +n}{2 \alpha}} \lesssim \|v(t)\|_{L^2}^2 \lesssim \left(1+\int\limits_{0}^{t} a(\tau) d \tau \right)^{-\frac{2 r^{*} +n}{2 \alpha}}.
$$
\end{theorem}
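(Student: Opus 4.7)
The plan is to reduce the problem to the constant-coefficient fractional heat equation and then apply Schonbek's Fourier splitting method together with the defining properties of the decay character. Applying the Fourier transform to $v_t = -a(t)(-\Delta)^\alpha v$, one finds
\begin{equation*}
\widehat{v}(t,\xi) = \exp\bigl(-|\xi|^{2\alpha} A(t)\bigr)\widehat{v_0}(\xi), \qquad A(t) := \int_0^t a(\tau)\,d\tau.
\end{equation*}
Since $a \notin L^1([0,\infty))$, $A(t)$ is strictly increasing with $A(t)\to\infty$, so $\tau = A(t)$ is a bijection of $[0,\infty)$ onto itself. Setting $\widetilde{v}(\tau,\cdot) := v(A^{-1}(\tau),\cdot)$ reduces the equation to the constant-coefficient fractional heat equation $\widetilde{v}_\tau = -(-\Delta)^\alpha\widetilde{v}$ with $\widetilde{v}(0)=v_0$. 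Thus it suffices to establish $\|\widetilde{v}(\tau)\|_{L^2}^2 \approx (1+\tau)^{-(2r^*+n)/(2\alpha)}$ in this reduced setting, and then pull back via $\tau = A(t)$.

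For the upper bound, I would employ Schonbek's Fourier splitting. Combining Plancherel with the energy identity gives
\begin{equation*}
\frac{d}{d\tau}\|\widetilde{v}(\tau)\|_{L^2}^2 = -2\int_{\mathbb{R}^n} |\xi|^{2\alpha}|\widehat{\widetilde v}(\tau,\xi)|^2\,d\xi.
\end{equation*}
Choosing the splitting ball $S(\tau) = \{\xi : |\xi| \leq \rho(\tau)\}$ with $\rho(\tau)^{2\alpha} = k/(1+\tau)$ for $k>(2r^*+n)/(2\alpha)$, one uses $|\xi|^{2\alpha} \geq \rho(\tau)^{2\alpha}$ on $S(\tau)^c$, while on $S(\tau)$ one bounds $|\widehat{\widetilde v}(\tau,\xi)| \leq |\widehat{v_0}(\xi)|$ and invokes the definition of $r^*$ to obtain $\int_{B(\rho)}|\widehat{v_0}|^2\,d\xi \lesssim \rho^{2r^*+n}$ for $\rho$ small. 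This yields the differential inequality
\begin{equation*}
\frac{d}{d\tau}\|\widetilde v(\tau)\|_{L^2}^2 + \frac{2k}{1+\tau}\|\widetilde v(\tau)\|_{L^2}^2 \lesssim (1+\tau)^{-1-(2r^*+n)/(2\alpha)}.
\end{equation*}
Multiplying by the integrating factor $(1+\tau)^{2k}$ and integrating immediately delivers $\|\widetilde v(\tau)\|_{L^2}^2 \lesssim (1+\tau)^{-(2r^*+n)/(2\alpha)}$.

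For the matching lower bound, the definition of the decay character furnishes $\rho_0>0$ and $c>0$ with $\int_{B(\rho)}|\widehat{v_0}|^2\,d\xi \geq c\rho^{2r^*+n}$ for all $\rho \in (0,\rho_0)$. Restricting the integral $\int e^{-2|\xi|^{2\alpha}\tau}|\widehat{v_0}|^2\,d\xi$ to the ball $|\xi| \leq \tau^{-1/(2\alpha)}$ one has $e^{-2|\xi|^{2\alpha}\tau} \geq e^{-2}$, so $\|\widetilde v(\tau)\|_{L^2}^2 \gtrsim \tau^{-(2r^*+n)/(2\alpha)}$ for $\tau$ large, while the bound on a bounded time interval follows from continuity and $\|\widetilde v\|_{L^2} \leq \|v_0\|_{L^2}$.

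The main obstacle I foresee is ensuring that the Fourier-splitting differential inequality produces the sharp exponent $(2r^*+n)/(2\alpha)$ rather than a worse rate; this requires $k$ to be strictly larger than $(2r^*+n)/(2\alpha)$ so that the integral $\int_0^\tau (1+s)^{2k-1-(2r^*+n)/(2\alpha)}\,ds$ is dominated by $(1+\tau)^{2k-(2r^*+n)/(2\alpha)}$, yielding the precise decay after division by the integrating factor. A secondary technical point is verifying that the decay character condition $0<P_{r^*}(v_0)<\infty$ genuinely provides both the upper estimate $\int_{B(\rho)}|\widehat{v_0}|^2\,d\xi \lesssim \rho^{2r^*+n}$ and the lower estimate $\int_{B(\rho)}|\widehat{v_0}|^2\,d\xi \gtrsim \rho^{2r^*+n}$ for sufficiently small $\rho$, as encoded in Definition \ref{definition_2.2}; once this two-sided control is in place, the reduction via $\tau = A(t)$ transparently transfers the estimate to the original variable $t$.
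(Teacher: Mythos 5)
Your proposal is correct and takes essentially the same approach as the paper: the paper's proof simply invokes Theorem 2.10 of Niche--Schonbek for the constant-coefficient case $a(t)\equiv 1$, and your time reparametrization $\tau=A(t)$ followed by Schonbek's Fourier splitting is precisely how one transfers that result to the time-dependent coefficient. The paper omits the details; you supply them correctly, including the two-sided use of $0<P_{r^*}(v_0)<\infty$ for both bounds.
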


\begin{proof}
The proof mainly repeats the arguments used in the proof of Theorem 2.10  by Niche and Schonbek in \cite{Niche2015} for the case of coefficient $a(t)\equiv 1$, thus can be omitted here.
\end{proof}
Next, we  introduce the following notions of decay indicators and decay character with negative order.

\begin{definition}[Decay indicators and decay characters with negative orders \cite{ADL2024}] For $u \in L^2\left(\mathbb{R}^n\right), s>0$ and $r \in(-n / 2, \infty)$, the decay indicator $P_r^{-s}(u)$ corresponding to $\Lambda^{-s} u$ is defined by
$$
P_r^{-s}(u)=\lim _{\rho \rightarrow 0} \rho^{-2 r-n} \int_{B(\rho)}|\xi|^{-2 s}|\widehat{u}(\xi)|^2 d \xi,
$$
where $B(\rho)$ is the ball centered at origin with radius $\rho$.
If $v:=\mathcal{F}^{-1}\left(|\xi|^{-s} \widehat{u}\right) \in L^2(\mathbb{R}^n)$ for $s>0$, then we denote $r_{-s}^*(u):=r^*(v)$.
\end{definition}
\begin{theorem} \label{decay_3} Let $v_0 \in H^s\left(\mathbb{R}^n\right), s>0$ with the decay character $r_s^*=r_s^*\left(v_0\right)$. Let $v(t)$ be a solution to $v_t=-a(t)(-\Delta)^{\alpha} v$ with the initial datum $v_0$, where $a(t)$ is positive and continuous with $a(t)\notin L^1([0, \infty))$, $\alpha$ is a positive constant. If $-n / 2<r^*<\infty$, then
$$
\left(1+\int\limits_{s}^{t} a(\tau) d \tau \right)^{-\frac{2 r^{*} +n+2s}{2 \alpha}} \lesssim \|v(t)\|_{\dot{H}^s}^2 \lesssim \left(1+\int\limits_{s}^{t} a(\tau) d \tau \right)^{-\frac{2 r^{*} +n+2s}{2 \alpha}}.
$$
\end{theorem}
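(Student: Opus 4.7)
The plan is to reduce the two-sided estimate directly to Theorem \ref{decay_1} via the substitution $w := \Lambda^s v$. Since $\Lambda^s = (-\Delta)^{s/2}$ commutes with both $\partial_t$ and $(-\Delta)^\alpha$, the function $w$ satisfies the same equation $w_t = -a(t)(-\Delta)^\alpha w$ with initial datum $w_0 := \Lambda^s v_0 \in L^2(\mathbb{R}^n)$, the latter following from $v_0 \in H^s$. Moreover, by Plancherel, $\|w(t)\|_{L^2}^2 = \|v(t)\|_{\dot H^s}^2$. It therefore suffices to identify the $L^2$ decay character of $w_0$ and then invoke Theorem \ref{decay_1}.

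To compute $r^*(w_0)$ I would argue directly from Definition \ref{definition_2.1}. Since $\widehat{w_0}(\xi) = |\xi|^s \widehat{v_0}(\xi)$,
\[
P_r(w_0) = \lim_{\rho \to 0}\rho^{-2r-n}\int_{B(\rho)}|\widehat{w_0}(\xi)|^2\, d\xi = \lim_{\rho \to 0}\rho^{-2r-n}\int_{B(\rho)}|\xi|^{2s}|\widehat{v_0}(\xi)|^2\, d\xi = P_r^{s}(v_0),
\]
which is precisely the decay indicator of $\Lambda^s v_0$ from Definition \ref{definition_2.1}. From the hypothesis that $v_0$ has $L^2$ decay character $r^* = r^*(v_0)$, the asymptotic scaling $\int_{B(\rho)}|\widehat{v_0}|^2\, d\xi \asymp \rho^{2r^*+n}$ as $\rho\to 0$ translates via the extra factor $|\xi|^{2s}$ into $\int_{B(\rho)}|\xi|^{2s}|\widehat{v_0}|^2\, d\xi \asymp \rho^{2r^*+2s+n}$, so $r^*(w_0) = r^* + s$. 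Substituting this into the conclusion of Theorem \ref{decay_1} applied to $w$ then yields
\[
(1+A(t))^{-\frac{2r^*(w_0)+n}{2\alpha}} \lesssim \|w(t)\|_{L^2}^2 \lesssim (1+A(t))^{-\frac{2r^*(w_0)+n}{2\alpha}},
\]
with exponent $\frac{2(r^*+s)+n}{2\alpha} = \frac{2r^*+n+2s}{2\alpha}$, recovering exactly the claimed bound (writing $A(t):=\int_0^t a(\tau)\,d\tau$ for the cumulative damping).

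The main obstacle is the identification $r^*(\Lambda^s v_0) = r^*(v_0) + s$: while it is formally a scaling argument, making it rigorous relies on the precise asymptotic two-sided behavior of $\int_{B(\rho)}|\widehat{v_0}|^2\, d\xi$ at $\rho \to 0$, and has to be justified within the framework of decay characters. If this reduction is felt to be too delicate in borderline cases, a self-contained alternative is available: Fourier-transform the PDE to obtain $\widehat v(t,\xi) = e^{-|\xi|^{2\alpha}A(t)}\widehat{v_0}(\xi)$, write
\[
\|v(t)\|_{\dot H^s}^2 = \int_{\mathbb{R}^n}|\xi|^{2s}\,e^{-2|\xi|^{2\alpha}A(t)}|\widehat{v_0}(\xi)|^2\, d\xi,
\]
and run Schonbek's Fourier splitting directly. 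For the lower bound, restrict the integral to $B(\rho(t))$ with $\rho(t):=(1+A(t))^{-1/(2\alpha)}$, on which $e^{-2|\xi|^{2\alpha}A(t)}\geq e^{-2}$, and invoke the decay-character lower bound on $\int_{B(\rho(t))}|\xi|^{2s}|\widehat{v_0}|^2\, d\xi$. For the upper bound, differentiate to get the energy identity $\frac{d}{dt}\|v\|_{\dot H^s}^2 = -2a(t)\|v\|_{\dot H^{s+\alpha}}^2$ and use the Fourier-splitting inequality $\|v\|_{\dot H^{s+\alpha}}^2 \geq \rho^{2\alpha}\bigl(\|v\|_{\dot H^s}^2 - \int_{B(\rho)}|\xi|^{2s}|\widehat{v_0}|^2\, d\xi\bigr)$, followed by a Gr\"onwall-type integration with integrating factor $(1+A(t))^{k}$ for $k > (2r^*+n+2s)/(2\alpha)$.
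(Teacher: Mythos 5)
Your proposal is correct and follows essentially the route the paper intends (the paper in fact omits the proof of this theorem, deferring to the Niche--Schonbek arguments already invoked for Theorem \ref{decay_1}): reduce to Theorem \ref{decay_1} applied to $w=\Lambda^s v$ together with the identification $r^*(\Lambda^s v_0)=r^*(v_0)+s$. The step you flag as delicate is rigorous under the paper's Definition \ref{definition_2.1}, which requires the \emph{limit} defining $P_{r^*}(v_0)$ to exist in $(0,\infty)$: writing $\int_{B(\rho)}|\xi|^{2s}|\widehat{v_0}(\xi)|^2\,d\xi=\int_0^\rho r^{2s}\,dF(r)$ with $F(\rho)=\int_{B(\rho)}|\widehat{v_0}|^2\,d\xi\sim P_{r^*}(v_0)\,\rho^{2r^*+n}$ and integrating by parts gives $P_{r^*+s}(\Lambda^s v_0)=\tfrac{2r^*+n}{2r^*+n+2s}\,P_{r^*}(v_0)\in(0,\infty)$, which settles the matching lower bound that does not follow from the crude pointwise estimate $|\xi|^{2s}\le\rho^{2s}$ alone.
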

\begin{theorem}[\cite{ADL2024}] \label{decay_4}
Let $u \in L^2\left(\mathbb{R}^n\right)$ such that $r^*(u)-s>\frac{n}{2}$ for some $s \geq 0$. Then $v:=\mathcal{F}^{-1}\left(|\xi|^{-s} \widehat{u}\right) \in L^2\left(\mathbb{R}^n\right)$ and $r_{-s}^*(u)=r^*(v)=r^*(u)-s$.
\end{theorem}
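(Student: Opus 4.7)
I will verify in turn (i) that $v\in L^2(\mathbb{R}^n)$ and (ii) that the decay indicator $P_r(v)$ is finite and nonzero exactly at $r=r^*(u)-s$. The uniqueness clause in Definition~\ref{definition_2.2} then forces $r^*(v)=r^*(u)-s$, while the equality $r^*_{-s}(u)=r^*(v)$ is simply the definition of the negative-order decay character recalled in the excerpt. Throughout I work in the non-degenerate case $-n/2<r^*(u)<\infty$; the degenerate case $r^*(u)=+\infty$ (in which $P_r(u)=0$ for every $r$) is handled by a one-line variant of the same argument.

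For step (i), I split $\|\widehat v\|_{L^2}^2=\int_{\mathbb{R}^n}|\xi|^{-2s}|\widehat u|^2\,d\xi$ into $|\xi|\ge 1$ and $|\xi|<1$. On the outer region $|\xi|^{-2s}\le 1$ (since $s\ge 0$), so the piece is bounded by $\|u\|_{L^2}^2$. On the inner region I use a dyadic decomposition into annuli $\{2^{-k-1}\le|\xi|\le 2^{-k}\}_{k\ge 0}$ together with the bound $\int_{B(\rho)}|\widehat u|^2\,d\xi\lesssim \rho^{2r^*(u)+n}$ for small $\rho$, which follows from the finiteness of $P_{r^*(u)}(u)$. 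The resulting geometric series $\sum_k 2^{-k(2r^*(u)+n-2s)}$ converges precisely because $r^*(u)-s>n/2>-n/2$.

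For step (ii), set $F(\rho):=\int_{B(\rho)}|\widehat u|^2\,d\xi$, so that the definition of decay character gives $F(\rho)=P_{r^*(u)}(u)\,\rho^{2r^*(u)+n}(1+o(1))$ as $\rho\to 0^+$. Passing to polar coordinates and integrating by parts in the radial variable yields
\begin{equation*}
\int_{B(\rho)}|\xi|^{-2s}|\widehat u|^2\,d\xi=\rho^{-2s}F(\rho)+2s\int_0^\rho r^{-2s-1}F(r)\,dr,
\end{equation*}
the boundary term at $0$ vanishing because $r^{-2s}F(r)=O(r^{2r^*(u)+n-2s})\to 0$. Multiplying by $\rho^{-2(r^*(u)-s)-n}$ and letting $\rho\to 0^+$, the first summand contributes $P_{r^*(u)}(u)$ while the integral contributes $\frac{2s}{2r^*(u)+n-2s}\,P_{r^*(u)}(u)$, giving
\begin{equation*}
P_{r^*(u)-s}(v)=\frac{2r^*(u)+n}{2r^*(u)+n-2s}\,P_{r^*(u)}(u)\in(0,\infty),
\end{equation*}
and hence $r^*(v)=r^*(u)-s$ as required.

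The only delicate step is this last limit, because $F$ is only known pointwise-asymptotically yet must be inserted into the singular radial integrand $r^{-2s-1}F(r)$. I handle it by a standard $\varepsilon/\rho_0$ split: for any $\varepsilon>0$ one picks $\rho_0$ so small that $|F(r)r^{-2r^*(u)-n}-P_{r^*(u)}(u)|<\varepsilon$ on $(0,\rho_0)$, so that the integral below $\rho_0$ is controlled by $\varepsilon$ times an explicit constant (absolute integrability at $0$ is assured by $2r^*(u)+n-2s>n>0$), while the piece on $[\rho_0,\rho]$ becomes negligible after multiplying by $\rho^{-2r^*(u)+2s-n}$ and letting $\rho\to 0$. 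Everything else — the outer/inner split, the polar decomposition, and the integration by parts — is bookkeeping.
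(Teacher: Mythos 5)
Your argument is correct and uses the natural route: a dyadic decomposition near the origin to establish $v\in L^2$, followed by the polar-coordinate integration by parts
\begin{equation*}
\int_{B(\rho)}|\xi|^{-2s}|\widehat u(\xi)|^2\,d\xi=\rho^{-2s}F(\rho)+2s\int_0^\rho r^{-2s-1}F(r)\,dr,\qquad F(\rho):=\int_{B(\rho)}|\widehat u|^2\,d\xi,
\end{equation*}
from which the closed-form limit $P_{r^*(u)-s}(v)=\frac{2r^*(u)+n}{2r^*(u)+n-2s}\,P_{r^*(u)}(u)\in(0,\infty)$ follows by the $\varepsilon$-argument you describe, and the uniqueness clause in Definition~\ref{definition_2.2} then identifies $r^*(v)=r^*(u)-s$. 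Two small points of precision. First, in the final limit step, once $\rho<\rho_0$ the entire interval of integration $(0,\rho)$ already lies in the region where $|F(r)r^{-2r^*(u)-n}-P_{r^*(u)}(u)|<\varepsilon$, so there is no leftover $[\rho_0,\rho]$ piece to dispose of; the argument is slightly simpler than your description suggests. Second, your proof only ever uses the inequality $2r^*(u)+n-2s>0$ (for the vanishing boundary term, the convergent geometric series, and the integrability of $r^{-2s-1}F(r)$ near $0$), i.e.\ $r^*(u)-s>-\frac{n}{2}$; this is also exactly the condition invoked when the theorem is applied in Section~\ref{section4.6}, so the displayed hypothesis $r^*(u)-s>\frac{n}{2}$ appears to be a sign misprint, and your proof in fact covers the full range under which the result is actually needed and true.
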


\section{Some auxiliary inequalities}
\begin{lemma}[Gronwall's inequality] \label{Gronwall}
 Let $f$ and $h$ be continuous and nonnegative functions defined on $I=[a, b]$ and let $g$ be a continuous, positive and nondecreasing function defined on $I$. Then, the inequality
$$
f(t) \leq g(t)+\int_a^t h(r) f(r) d r, \quad t \in J,
$$
implies that
$$
f(t) \leq g(t) \exp \left(\int_a^t h(r) d r\right), \quad t \in J.
$$
\end{lemma}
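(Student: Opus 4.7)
My plan is to reduce the statement to the classical Gronwall inequality (with constant right-hand side equal to $1$) by using the monotonicity of $g$, and then prove that classical version by the integrating-factor technique.

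First I would divide the hypothesis through by $g(t)$, which is positive by assumption. Setting $\tilde f(r):=f(r)/g(r)$, I would use that $g$ is nondecreasing, so $g(r)\leq g(t)$ for every $r\in[a,t]$. This yields
$$
\tilde f(t)\;\leq\;1+\int_a^t h(r)\,\frac{f(r)}{g(t)}\,dr\;\leq\;1+\int_a^t h(r)\,\frac{f(r)}{g(r)}\,dr\;=\;1+\int_a^t h(r)\,\tilde f(r)\,dr.
$$
Hence it suffices to prove the statement in the special case $g\equiv 1$, namely that $\tilde f(t)\leq 1+\int_a^t h(r)\tilde f(r)\,dr$ implies $\tilde f(t)\leq \exp\!\left(\int_a^t h(r)\,dr\right)$.

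Next I would introduce the auxiliary function $U(t):=1+\int_a^t h(r)\tilde f(r)\,dr$, which is continuously differentiable and satisfies $U(a)=1$, $\tilde f(t)\leq U(t)$, and $U'(t)=h(t)\tilde f(t)\leq h(t)\,U(t)$ since $h\geq 0$. Because $U\geq 1>0$, I may divide and integrate: $\bigl(\log U(t)\bigr)'\leq h(t)$, whence
$$
\log U(t)-\log U(a)\;\leq\;\int_a^t h(r)\,dr,
$$
so that $U(t)\leq \exp\!\left(\int_a^t h(r)\,dr\right)$. Combining with $\tilde f(t)\leq U(t)$ and then multiplying back by $g(t)$ gives the desired conclusion
$$
f(t)\;\leq\;g(t)\exp\!\left(\int_a^t h(r)\,dr\right).
$$

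There is no real obstacle here; the only point requiring care is the reduction step, where one must use the monotonicity of $g$ to legitimately replace $g(t)$ by $g(r)$ in the denominator (the inequality goes the right way precisely because $g$ is nondecreasing and $f\geq 0$). If $g$ were merely positive and continuous without monotonicity, this reduction would fail and one would instead work directly with $F(t):=\int_a^t h(r)f(r)\,dr$, apply the integrating factor $\exp\!\bigl(-\int_a^t h(r)\,dr\bigr)$ to the differential inequality $F'(t)-h(t)F(t)\leq h(t)g(t)$, and integrate; this alternative route can be mentioned as a remark but is not needed under the stated hypothesis.
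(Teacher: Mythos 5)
Your proof is correct. The paper states this lemma in the appendix as a classical auxiliary result and gives no proof at all, so there is nothing to compare against; your argument (reduction to the case $g\equiv 1$ via the monotonicity of $g$, followed by the standard auxiliary-function/integrating-factor step for $U(t)=1+\int_a^t h(r)\tilde f(r)\,dr$) is the textbook proof and is complete as written.
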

\begin{lemma} \label{lemmae^-c}
The estimate
\begin{align*}
\left\|\mathcal{F}^{-1}\left(|\xi|^a e^{-c|\xi|^{2 \kappa} d(t)}\right)(t, \cdot)\right\|_{L^p} \lesssim \left(1+d(t)\right)^{-\frac{n}{2 \kappa}\left(1-\frac{1}{p}\right)-\frac{a}{2 \kappa}}
\end{align*}
holds for any $\kappa \in(0, \infty), \quad p \in[1, \infty], \quad t \geq 0, \quad d(t) \geq 0 \quad \forall t \geq 0, \quad d(t) \rightarrow \infty$ as $t \rightarrow \infty$ and for all dimensions $n \geq 1$. The numbers a and $c$ are,  supposed to be non-negative and positive, respectively.
\end{lemma}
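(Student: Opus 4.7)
The plan is to prove the estimate by a scaling argument that reduces everything to showing that the universal kernel $G(y):=\mathcal{F}^{-1}\bigl(|\eta|^{a}e^{-c|\eta|^{2\kappa}}\bigr)(y)$ belongs to $L^{p}(\mathbb{R}^{n})$ for every $p\in[1,\infty]$; the $d(t)$-dependence on the right-hand side then drops out automatically from the change of variables.

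First, assuming $d(t)>0$, set $\lambda:=d(t)^{1/(2\kappa)}$ and substitute $\eta=\lambda\xi$ in the defining inverse Fourier integral. A direct calculation gives
\begin{equation*}
\mathcal{F}^{-1}\bigl(|\xi|^{a}e^{-c|\xi|^{2\kappa}d(t)}\bigr)(x)=\lambda^{-(n+a)}\,G(x/\lambda),
\end{equation*}
so that, after the further substitution $y=x/\lambda$ in the $L^{p}$-norm,
\begin{equation*}
\bigl\|\mathcal{F}^{-1}\bigl(|\xi|^{a}e^{-c|\xi|^{2\kappa}d(t)}\bigr)\bigr\|_{L^{p}}=\lambda^{-(n+a)+n/p}\|G\|_{L^{p}}=d(t)^{-\frac{n}{2\kappa}(1-\frac{1}{p})-\frac{a}{2\kappa}}\|G\|_{L^{p}}.
\end{equation*}
This is the claim with $d(t)$ in place of $1+d(t)$. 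In the regime $d(t)\ge 1$ the two quantities are comparable; for $0\le d(t)\le 1$ the target right-hand side $(1+d(t))^{-\cdots}$ is bounded below by a positive constant, and a direct bound on the left-hand side by $\||\xi|^{a}e^{-c|\xi|^{2\kappa}d(t)}\|_{L^{1}}$ (which is uniformly controlled as $d(t)$ ranges over $[0,1]$ after a trivial further scaling) gives the desired uniform bound.

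It remains to verify that $\|G\|_{L^{p}}$ is finite for every $p\in[1,\infty]$. The $L^{\infty}$-endpoint is immediate: $\|G\|_{L^{\infty}}\lesssim\||\eta|^{a}e^{-c|\eta|^{2\kappa}}\|_{L^{1}(\mathbb{R}^{n})}<\infty$, since $a\ge 0$ guarantees local integrability at $\eta=0$ and the Gaussian-type factor provides rapid decay at infinity. For the $L^{1}$-endpoint I will split the symbol via a smooth cutoff $\chi\in C_{c}^{\infty}(\mathbb{R}^{n})$ equal to $1$ near $\eta=0$: the far piece $(1-\chi(\eta))|\eta|^{a}e^{-c|\eta|^{2\kappa}}$ lies in $\mathcal{S}(\mathbb{R}^{n})$ and its Fourier inverse is Schwartz, hence $L^{1}$; the compactly supported near piece is controlled through the identity $(ix)^{\alpha}G(x)=\mathcal{F}^{-1}\bigl(\partial_{\eta}^{\alpha}[\chi(\eta)|\eta|^{a}e^{-c|\eta|^{2\kappa}}]\bigr)(x)$ for multi-indices $\alpha$ of moderate length, yielding a polynomial decay rate $|G(x)|\lesssim|x|^{-n-\varepsilon}$ for some $\varepsilon>0$ and hence $G\in L^{1}$. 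Intermediate values of $p$ then follow by Riesz--Thorin interpolation between the endpoints $p=1$ and $p=\infty$.

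The main technical obstacle is to justify taking sufficiently many derivatives of the singular symbol near the origin when $a$ or $2\kappa$ are non-integer, since each differentiation of $|\eta|^{a}$ or $|\eta|^{2\kappa}$ produces an extra negative power of $|\eta|$. I will handle this by choosing $|\alpha|$ small enough that the local integrability condition $a+2\kappa-|\alpha|>-n$ is preserved: thanks to $a\ge 0$, $\kappa>0$ and $n\ge 1$, this holds for some positive integer $|\alpha|$, and the corresponding polynomial decay of $G$ at infinity is all that is needed to conclude $G\in L^{1}$ and complete the proof.
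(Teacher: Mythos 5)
The paper offers no proof of this lemma beyond a citation of \cite[Theorem 24.2.4]{Ebert2018}, so I am comparing your argument against the standard one. Your scaling reduction to the model kernel $G=\mathcal{F}^{-1}\bigl(|\eta|^{a}e^{-c|\eta|^{2\kappa}}\bigr)$, the $L^{\infty}$ endpoint, and the interpolation step are all fine; the whole lemma indeed reduces to $G\in L^{1}$. The gap is exactly at the point you yourself flag as the main obstacle, and your resolution does not close it. Writing $|\eta|^{a}e^{-c|\eta|^{2\kappa}}=\sum_{k}c_{k}|\eta|^{a+2\kappa k}$ near the origin, let $\mu_{0}$ be the smallest exponent in this expansion that is not an even integer (generically $\mu_{0}=a$; $\mu_{0}=a+2\kappa$ if $a\in2\mathbb{Z}$). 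The identity $(ix)^{\alpha}G=\mathcal{F}^{-1}(\partial^{\alpha}[\chi m])$ gives $|x|^{|\alpha|}|G(x)|\lesssim\|\partial^{\alpha}(\chi m)\|_{L^{1}}$, and this $L^{1}$ norm is finite only for $|\alpha|<n+\mu_{0}$. To deduce $G\in L^{1}$ you need an admissible \emph{integer} $|\alpha|>n$, i.e.\ an integer in $(n,n+\mu_{0})$, which exists only when $\mu_{0}>1$. When $\mu_{0}\le1$ — for instance $a\in(0,1]$, which actually occurs in the paper's applications (e.g.\ $a=\alpha-2\delta$ in \eqref{ineq:3.20}), or $a=0$ with $\kappa\le\tfrac12$ — the best available decay is $|G(x)|\lesssim|x|^{-n}$, which is not integrable at infinity, so the asserted rate $|x|^{-n-\varepsilon}$ does not follow from the argument as written. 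The standard repairs are either a dyadic decomposition in frequency combined with the Cauchy--Schwarz weight trick (for $f_{j}$ supported on $|\eta|\sim2^{j}$, $j\le0$, one gets $\|\mathcal{F}^{-1}f_{j}\|_{L^{1}}\lesssim2^{j\mu_{0}}$ using only $\lfloor n/2\rfloor+1$ derivatives measured in $L^{2}$, which sums), or the radial Bessel-function representation used in \cite{Ebert2018}.

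A secondary point: your treatment of $0\le d(t)\le1$ does not work either. Bounding the left-hand side by $\||\xi|^{a}e^{-c|\xi|^{2\kappa}d(t)}\|_{L^{1}}$ controls only the $L^{\infty}$ norm of the kernel, and your own scaling identity shows $\|K_{d}\|_{L^{p}}=d^{-\frac{n}{2\kappa}(1-\frac1p)-\frac{a}{2\kappa}}\|G\|_{L^{p}}$, which blows up as $d\to0^{+}$ whenever $a>0$ or $p>1$, while the right-hand side stays bounded. So no argument can give the stated inequality uniformly down to $d=0$; this is really an imprecision in the lemma as formulated (it should be read for $d(t)$ bounded away from zero, which is how it is used), but your claim of a uniform bound on $[0,1]$ should be dropped rather than asserted.
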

\begin{proof}
Lemma \ref{lemmae^-c} is entirely proved in the same manner as in  \cite[Theorem 24.2.4]{Ebert2018}.
\end{proof}
\begin{lemma}[Fractional Gagliardo-Nirenberg inequality \cite{Hajaiej2011}] \label{Gagliardo}
 Let $q, q_0, q_1 \in(1, \infty)$ and $\kappa \in[0, r)$ with $r>0$. Then, the following inequality holds for all $f \in L^{q_0}\left(\mathbb{R}^n\right) \cap \dot{H}_{q_1}^r\left(\mathbb{R}^n\right)$:
$$
\|f\|_{\dot{H}_q^{\kappa}} \lesssim\|f\|_{L^{q_0}}^{1-\theta}\|f\|_{\dot{H}_{q_1}^r}^\theta,
$$
where $\theta=\theta_{\kappa, r}\left(q, q_0, q_1, n\right):=\left(\frac{1}{q_0}-\frac{1}{q}+\frac{\kappa}{n}\right) /\left(\frac{1}{q_0}-\frac{1}{q_1}+\frac{r}{n}\right)$, and $\theta \in[\kappa / r, 1]$.
\end{lemma}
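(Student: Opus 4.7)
The plan is to establish this classical fractional Gagliardo--Nirenberg inequality via a Littlewood--Paley decomposition combined with Bernstein's inequality and an optimally chosen frequency cutoff. First, I would introduce a dyadic partition of unity in frequency space and decompose $f = \sum_{j \in \mathbb{Z}} \Delta_j f$, where each $\Delta_j f$ has Fourier support in the annulus $\{|\xi| \sim 2^j\}$. For $q \in (1,\infty)$, the Triebel--Lizorkin characterization of homogeneous Sobolev spaces gives
$$
\|f\|_{\dot H^\kappa_q} \sim \Bigl\|\Bigl(\sum_j 2^{2j\kappa}|\Delta_j f|^2\Bigr)^{1/2}\Bigr\|_{L^q},
$$
so it suffices to control this square-function norm.

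Next, Bernstein's inequality supplies two complementary estimates for a frequency-localized block: on one hand $\|\Delta_j f\|_{L^q} \lesssim 2^{jn(1/q_0 - 1/q)}\|\Delta_j f\|_{L^{q_0}}$, controlling $\Delta_j f$ via the $L^{q_0}$ norm (useful for low frequencies); on the other hand, writing $\Delta_j f = 2^{-jr}|D|^r (2^{jr}|D|^{-r}\Delta_j f)$ and invoking Bernstein again yields $2^{j\kappa}\|\Delta_j f\|_{L^q} \lesssim 2^{-j(r-\kappa)} 2^{jn(1/q_1 - 1/q)}\||D|^r \Delta_j f\|_{L^{q_1}}$, which is useful for high frequencies. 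I would then split the dyadic sum at a cutoff $j = J$, summing geometric series on each side (here the condition $\kappa < r$ ensures convergence on the high-frequency side, and the admissibility constraints on $q_0,q_1$ relative to $q$ ensure convergence on the low-frequency side), and finally optimize $J$ by balancing the two contributions. Scaling invariance under $f \mapsto f(\lambda\cdot)$, which transforms the three norms as $\lambda^{\kappa - n/q}$, $\lambda^{-n/q_0}$, and $\lambda^{r - n/q_1}$ respectively, pins down the only possible exponent:
$$
\kappa - \tfrac{n}{q} = (1-\theta)\bigl(-\tfrac{n}{q_0}\bigr) + \theta\bigl(r - \tfrac{n}{q_1}\bigr),
$$
which is equivalent to the stated formula for $\theta$.

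The main obstacle is not the Bernstein/cutoff machinery itself, which is routine, but rather verifying that the resulting $\theta$ indeed lies in the admissible range $[\kappa/r, 1]$ for the claimed choices of $q, q_0, q_1, \kappa, r$, and handling the cases where Bernstein's inequality requires $q_0 \le q$ or $q_1 \le q$ (otherwise one must argue by duality or by the Mihlin multiplier theorem applied to $|\xi|^{\kappa}\chi_j$ composed with $|\xi|^{-r}$). The lower bound $\theta \geq \kappa/r$ corresponds precisely to the breakeven condition at which the low-frequency geometric sum still converges, while $\theta \leq 1$ is equivalent to the scaling dimension of $\dot H^\kappa_q$ sitting between those of $L^{q_0}$ and $\dot H^r_{q_1}$. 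Since this inequality is exactly the result proved by Hajaiej, Molinet, Ozawa and Wang in \cite{Hajaiej2011} (in the more general Besov-space setting), the cleanest presentation is to cite that reference for the full argument, with the sketch above serving to indicate the structure of the proof.
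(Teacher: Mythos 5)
The paper does not prove this lemma; it is quoted verbatim as a known result from Hajaiej, Molinet, Ozawa and Wang \cite{Hajaiej2011}, so there is no in-paper argument to compare against. Your sketch correctly outlines the standard Littlewood--Paley/Bernstein proof with an optimized frequency cutoff, your scaling computation for $\theta$ matches the stated exponent, and your decision to defer the technical details (the square-function characterization, the duality/multiplier issues when $q_0>q$ or $q_1>q$, and the verification that $\theta\in[\kappa/r,1]$) to the cited reference is exactly what the paper itself does.
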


\end{appendices}

\vskip 0.5cm
	
\noindent{\bf Acknowledgements.} This research is funded by Vietnam National Foundation for Science and Technology Development (NAFOSTED) under grant number 101.02--2023.29. The authors also would like to thank Hanoi National University of Education for providing a fruitful working environment.


\begin{thebibliography}{99}
\bibliographystyle{amsplain}


\bibitem{Bjorland2009} C. Bjorland,  M. E. Schonbek. Poincare’s inequality and diffusive evolution equations. Adv. Differential Equations 14 (2009), 241-260.

\bibitem{Armando2022} A. S. Cardenas, C. J. Niche.  Decay estimates for the damped wave equation. J.   Math.  Anal.  Appl.  506 (2022), 125548.

\bibitem{Chen2023} W. Chen, M. Reissig. On the critical exponent and sharp lifespan estimates for semilinear damped wave equations with data from Sobolev spaces of negative order. J.  Evol.  Equ. 23 (2023), Paper No.  13, 21 pp.

\bibitem{ADL2024} T. A. Cung, T. D. Pham, T. L. Tang, Decay character and the semilinear structurally damped $\sigma$-evolution equations, J. Math. Anal. Appl.  534 (2024), 128053.  

\bibitem{DAbbicco2013} M. D’Abbicco, S. Lucente. A modified test function method for damped wave equations,  Adv. Nonlinear Stud. 13 (2013),
no. 4, 867–892.

\bibitem{DAbbicco_Reissig2013} M. D’Abbicco, S. Lucente, M. Reissig. Semi-linear wave equations with effective damping,  Chin. Ann. Math. 34B (2013),  345–380.



\bibitem{DAmbrosio2003} L. D’Ambrosio, S. Lucente. Nonlinear Liouville theorems for Grushin and Tricomi operators, J. Differential Equations 123 (2003), 511–541.




\bibitem{Dao2020} T. A. Dao. Global in time existence of Sobolev solutions to semi-linear damped $\sigma$-evolution equations in $L^q$-scales. Ph. D.
Dissertation. Technische Universit\"at Bergakademie Freiberg (2020).

\bibitem{Dao2021} T. A. Dao, M. Reissig. Blow-up results for semi-linear structurally damped $\sigma$-evolution equations, In: M. Cicognani, D.
Del Santo, A. Parmeggiani, M. Reissig (eds.), Anomalies in Partial Differential Equations, Springer INdAM Series, 43
(2021), 213–245.



\bibitem{Dao2023} T. A. Dao, H. S. Aslan. On the Cauchy problem for semi-linear $\sigma$-evolution equations with time-dependent damping.  Math. Methods Appl. Sci. (2023),  DOI: https://doi.org/10.1002/mma.9857.





\bibitem{Ebert2018} M. R. Ebert,  M. Reissig.  Methods for Partial Differential Equations. Qualitative Properties of Solutions, Phase Space Analysis, Semilinear Models,  Springer  (2018).





\bibitem{Ferreira2017} L. C. F. Ferreira, C. J. Niche, G. Planas. Decay of solutions to dissipative modified quasi-geostrophic equations.  Proc. Amer.
Math. Soc. 145 (2017), 287–301.

\bibitem{Hajaiej2011} H. Hajaiej, L. Molinet, T. Ozawa, B. Wang. Necessary and sufficient conditions for the fractional Gagliardo-Nirenberg inequalities and applications to Navier-Stokes and generalized boson equations. Harmonic Analysis and Nonlinear Partial Differential Equations, 159–175, RIMS Kôkyûroku Bessatsu, B26, Res. Inst. Math. Sci., (RIMS), Kyoto 2011.


\bibitem{Ikeda2019} M. Ikeda, T. Inui, The sharp estimate of the lifespan for the semilinear wave equation with time-dependent damping,  Differential Integral Equations 32 (2019), 1–36.


\bibitem{Ikeda2018} M. Ikeda, M. Sobajima. Life-span of solutions to semi-linear wave equation with time-dependent critical damping for
specially localized initial data, Math. Ann.  372 (2018), 1017–1040.

\bibitem{Sobajima2019} M. Ikeda, M. Sobajima, Y. Wakasugi. Sharp lifespan estimates of blowup solutions to semi-linear wave equations with time-dependent effective damping, J. Hyperbolic Differ. Equ.  16 (2019), 495–517.

\bibitem{Ikeda2020} M. Ikeda, Y. Wakasugi. Global well-posedness for the semilinear wave equation with time dependent damping in the
overdamping case, Proc. Amer. Math. Soc.  148 (2020), 157–172.



















\bibitem{Takamura2018} N.A. Lai , H. Takamura.  Blow-up for semi-linear damped wave equations with sub-strauss exponent in the scattering case,
Nonlinear Anal.  168 (2018), 222–237.

\bibitem{Nishihara2012} J. Lin, K. Nishihara, J. Zhai. Critical exponent for the semilinear wave equation with time-dependent damping, Discrete Contin. Dyn. Syst.  32 (2012), 4307–4320.





\bibitem{Matsumura1976} A. Matsumura. On the asymptotic behavior of solutions of semi-linear wave equations.  Publ. Res. Inst. Math. Sci. 12
(1976), 169-189.


\bibitem{Mitidieri2001} E. Mitidieri, S. I. Pohozaev. Non-existence of weak solutions for some degenerate elliptic and parabolic problems on $\mathbb{R}^N$, J. Evol. Equ.  1 (2001), 189–220.





\bibitem{Niche2015} C. J. Niche,  M. E. Schonbek. Decay characterization of solution to dissipative equations. J.  London Math.  Soc.  91 (2015), 573-595.







\bibitem{Duong2015} T. D. Pham, M. M.  Kainane, M. Reissig. Global existence for semi-linear structurally damped $\sigma$-evolution models.  J.  Math.  Anal.  Appl.  431 (2015), 569-596.



\bibitem{Duong2017} T. D. Pham, M. Reissig. The external damping Cauchy problems with general powers of the Laplacian. Trends in Mathematics, Birkhauser, Chambridge.  Springer Nature - New Trends in Analysis and Interdisciplinary Applications (2017), 537-543.



\bibitem{Schonbek1980} M. E. Schonbek. Decay of solutions to parabolic conservation laws. Comm. Partial Differential Equations 5 (1980), 449-473.

\bibitem{Schonbek1985} M. E. Schonbek.  $L^2$ decay for weak solutions of the Navier–Stokes equations.  Arch. Ration. Mech. Anal.  88 (1985),  209–222.

\bibitem{Schonbek1986} M. E. Schonbek. Large time behaviour of solutions to the Navier–Stokes equations.  Comm. Partial Differential Equations 11 (1986), 733-763.

\bibitem{Yordanov2001} G. Todorova, B. Yordanov. Critical exponent for a nonlinear wave equation with damping. J.  Differential Equations 174 (2001), 464-489.








\bibitem{Wirth2004} J. Wirth. Asymptotic properties of solutions to wave equations with time-dependent dissipation, PhD thesis, TU Bergakademie Freiberg, Germany, 2004.

\bibitem{Wirth2006} J. Wirth. Wave equations with time-dependent dissipation I. Non-effective dissipation,  J.  Differential Equations 222 (2006), 487-514.

\bibitem{Wirth2007} J. Wirth. Wave equations with time-dependent dissipation II. Effective dissipation, J. Differential Equations 232 (2007),  74–103.

\bibitem{Zhang1999} Qi S. Zhang. Blow-up results for non-linear parabolic equations on manifolds. Duke Math. J.  97 (1999), 515-539.



\end{thebibliography}
\end{document}